\documentclass[twoside,12pt]{article}
\DeclareMathSizes{20}{30}{16}{12}
\usepackage{float}
\usepackage{amsmath}
\usepackage{mathtools}
\usepackage{mathrsfs}
\usepackage{stmaryrd}
\usepackage{bbm}
\usepackage{yfonts}
\usepackage{amsfonts}
\usepackage{tcolorbox}
\usepackage{tikz-cd}
\usepackage[utf8]{inputenc}

\usepackage{caption}
\usepackage{indentfirst}
\usepackage[Symbol]{upgreek}
\usepackage{enumerate}   
\usepackage{upref}

\usepackage{graphicx}
\usepackage[margin=0.8 in]{geometry}
\usepackage{fancyhdr}
\usepackage{hyperref}
\hypersetup{
    colorlinks,
    citecolor=black,
    filecolor=black,
    linkcolor=black,
    urlcolor=black
}
\newcommand{\N}{\mathbb{N}}
\newcommand{\Z}{\mathbb{Z}}

\newcommand{\ra}{\rightarrow}

\newcommand{\calI}{\mathcal{I}}

\newcommand{\calE}{\mathcal{E}}

\newcommand{\calT}{\mathcal{T}}
\newcommand{\calD}{\mathcal{D}}

\newcommand{\calR}{\mathcal{R}}

\newcommand{\calM}{\mathcal{M}}

\newcommand{\calP}{\mathcal{P}}

\newcommand{\op}{\operatorname}
\newcommand{\w}{\widehat}

\newcommand{\Ox}{\mathcal{O}}
\newcommand{\F}{\mathcal{F}}

\newcommand{\ov}{\overline}

\newcommand{\frakX}{\mathfrak{X}}

\newcommand{\frakU}{\mathfrak{U}}

\newcommand{\frakS}{\mathcal{S}}
\newcommand{\frakY}{\mathfrak{Y}}
\newcommand{\frakT}{\mathfrak{T}}
\newcommand{\frakZ}{\mathfrak{Z}}

\newcommand{\frakG}{\mathfrak{G}}

\newcommand{\Sp}{\operatorname{Spec}}

\long\def\/*#1*/{}

\usepackage{unicode-math}
\usepackage{fontspec}

\usepackage{sectsty}
\sectionfont{\centering}

\usepackage[toc,page]{appendix}
\usepackage{amsthm}

\newtheorem*{proof*}{Proof}

\newtheorem{theorem}[subsection]{Theorem}
\newtheorem{theorem*}{Theorem}
\newtheorem{proposition}[subsection]{Proposition}
\newtheorem{corollaire}[subsection]{Corollary}
\newtheorem{lemma}[subsection]{Lemma}

\theoremstyle{definition}

\newtheorem{definition}[subsection]{Definition}
\newtheorem{remark}[subsection]{Remarks}

\newtheorem{exmp}[subsection]{Examples}

\theoremstyle{definition}
\newtheorem{parag}[subsection]{}

\numberwithin{equation}{subsection} 

\makeatletter
\renewcommand{\theequation}{%
  \ifnum\value{subsubsection}>0
    \thesubsubsection.\arabic{equation}%
  \else
    \thesubsection.\arabic{equation}%
  \fi
}
\makeatother

\title{Local Logarithmic Cartier Transform}
\author{Sami Fersi}
\date{}

\AtEndDocument{\bigskip{\footnotesize%
  \textsc{Sami Fersi, Laboratoire Alexander Grothendieck, Institut des Hautes Études Scientifiques, 35 Route de Chartres, 91440 Bures-sur-Yvette, France} \par
  \textit{E-mail address}: \texttt{fersi@ihes.fr} \par
}}
\bibliography{./references/ref}
\begin{document}
\maketitle

\begin{abstract}
This article is the first of three articles whose goal is to generalize the Cartier transform of Ogus and Vologodsky to the logarithmic setting. In this article, we generalize a local version, due to Shiho, of the Cartier transform to log smooth schemes. More precisely, let $k$ be a perfect field of positive characteristic and equip $\op{Spec}k$ with the trivial logarithmic structure. For a log smooth morphism of logarithmic schemes $X \ra S,$ where $S$ is log flat and locally of finite type over $\op{Spec}k,$ we obtain, under the assumption that the exact relative Frobenius lifts over the Witt vectors of $k$ to a morphism between log smooth schemes over $S,$ a fully faithful functor from the category of quasi-coherent modules on the base change $X'=X\times_{S,F_S}S$ of $X$ by the Frobenius $F_S$ of $S,$ equipped with a quasi-nilpotent Higgs field, to the category of quasi-coherent modules on $X$ equipped with a quasi-nilpotent integrable connection. For this, we prove a log flat descent theorem for morphisms, based on previous work by Kato.
\end{abstract}

\tableofcontents

\section{Introduction}

\begin{parag}
In this article and two related articles (\cite{SF2} and \cite{SF3}), our goal is to generalize the \emph{Cartier transform} of Ogus and Vologodsky to the logarithmic case. In the classical smooth case, this transform can be seen as a generalization of two results for schemes in positive characteristic: \emph{Cartier descent} and \emph{Deligne-Illusie decomposition of the De Rham complex}.
For a smooth morphism of schemes $X\ra S$ of positive characteristic $p>0,$ Cartier descent is an equivalence between the category of quasi-coherent $\Ox_X$-modules equipped with an integrable connection with vanishing $p$-curvature, and the category of quasi-coherent $\Ox_{X'}$-modules, where $X'=X\times_{S,F_S}S$ is the base change of $X$ by the absolute Frobenius $F_S$ of $S$ (\cite{Katz} 5.1). Under this equivalence, an $\Ox_{X'}$-module $\calE'$ corresponds to the module $F_{X/S}^*\calE',$ where $F_{X/S}:X\ra X'$ is the relative Frobenius, equipped with the canonical connection
$$
\nabla^{\mathrm{can}}:
\begin{array}[t]{clc}
F_{X/S}^*\calE' & \ra & \left (F_{X/S}^*\calE'\right ) \otimes_{\Ox_X}\Omega^1_{X/S}\\
aF_{X/S}^* x & \mapsto & \left (F_{X/S}^*x\right ) \otimes da,
\end{array}
$$
for local sections $a$ and $x$ of $\Ox_X$ and $\calE'$ respectively.
On the other hand, Deligne and Illusie proved the following: if $k$ is a perfect field of characteristic $p,$ $X\ra S=\Sp k$ is a smooth morphism of schemes and $W_2(k)$ is the ring of Witt vectors $W(k)$ of $k$ modulo $p^2,$ and if the relative Frobenius $F_{X/S}$ lifts to a morphism between smooth schemes over $W_2(k),$ then there exists a quasi-isomorphism
$$
\bigoplus_{i\in \N} \Omega^i_{X'/S}[-i] \xrightarrow{\sim} F_{X/S*}\Omega^{\bullet}_{X/S},
$$
inducing the Cartier isomorphism on cohomologies. If only $X$ lifts to a smooth scheme over $W_2(k),$ then there exists an isomorphism in $D(\Ox_{X'})$
$$
\bigoplus_{0\le i<p} \Omega_{X'/S}^i[-i] \xrightarrow{\sim} F_{X/S*}\tau_{<p} \Omega^{\bullet}_{X/S},
$$
where $\tau_{<p}$ is the truncation functor up to $p-1.$ The \emph{Cartier transform} of Ogus and Vologodsky comes as a generalization of these two results. More precisly, the Cartier transform is an equivalence between the categories of $\Ox_{X'}$-modules equipped with Higgs fields and $\Ox_X$-modules equipped with integrable connections, both satisfying certain nilpotence conditions. In addition, this transform is compatible with the natural cohomologies.

Shiho \cite{Shiho} provided a local construction of the Cartier transform under the assumption that the relative Frobenius $F_{X/S}$ lifts to a morphism of smooth formal schemes over the Witt vectors $W(k)$ of a perfect field $k$ of characteristic $p.$ More precisely, he interpreted modules with Higgs fields and integrable connections in terms of stratifications relative to certain groupoids. Using faithfully flat descent, he then proved the equivalence for modules with stratifications and deduced the one for Higgs fields and connections. The groupoids are defined as PD-envelopes of dilatations of the ideal of the diagonal embedding of the formal scheme. Oyama used similarly constructed groupoids to provide a crystalline-like interpretation of the Cartier transform \cite{Oyama}. As opposed to Shiho's, Oyama's construction is global and doesn't depend on a lifting of $F_{X/S}.$ In this article, we generalize Shiho's local construction to the logarithmic case. In a second article \cite{SF2}, we generalize Oyama's topos-theoretic construction.

While the Deligne-Illusie decomposition of the de Rham complex generalizes to the log smooth case without problems, as was proved by Kato in (\cite{Kat89} 4.12), one encounters a major problem when attempting to generalize Shiho's construction, as it relies heavily on faithfully flat descent applied to the relative Frobenius which is faithfully flat in the smooth setting. This is not the case in the log smooth setting. To bypass this hurdle, we prove that the relative Frobenius is log flat in the log smooth setting and then we prove, in this article and based on some work by Kato, a log flat descent theorem for morphisms. However, unlike the smooth setting where we get an equivalence of categories, our result allows us to get a fully faithful functor. The lack of essential surjectivity results from the lack of flatness of the relative Frobenius in the log smooth setting. We address this issue in an other paper \cite{SF3} by using the indexed algebras of Lorenzon \cite{Lor2000} and constructing an indexed logarithmic Cartier transform.

In this article, we start by generalizing Shiho functor to log smooth logarithmic schemes. When attempting to define groupoids similar to those introduced by Shiho, one major problem we encounter is the \emph{exactification} of the diagonal immersion: decomposing the diagonal immersion into an exact immersion followed by a log étale morphism. This is possible locally or, globally if the logarithmic schemes are equipped with \emph{frames}, a notion introduced by Kato and Saito in \cite{Saito04}. We hence work systematically with framed logarithmic schemes and we can thus construct the two groupoids corresponding to Higgs fields and integrable connections and the functor between them. We show that it is fully faithful using a log flat descent theorem for morphisms, that we prove based on results by Kato.
\end{parag}

\begin{parag}
We now describe our work in this article in more details. We fix a prime number $p.$ For a morphism $X\ra S$ of fs logarithmic schemes of characteristic $p,$ let $X''$ be the base change, in the category of fine logarithmic schemes, of $X$ by the absolute Frobenius morphism $F_S:S\ra S$ of $S.$ Let $F_{X/S}$ be the relative Frobenius morphism of $X$ with respect to $S$ i.e. the unique morphism $X\ra X''$ making the following diagram commutative
\begin{equation}
\begin{tikzcd}
X\ar{dr}{F_{X/S}}\ar[bend right=-30]{drr}{F_X}\ar[swap,bend right=30]{ddr} & & \\
 & X''\ar{r}\ar{d} & X\ar{d} \\
 & S\ar{r}{F_S} & S
\end{tikzcd}
\end{equation}
Recall that $F_{X/S}$ is \emph{weakly inseparable} (\cite{Ogus2018} III 2.4), so $F_{X/S}$ factors uniquely as a composition of a log étale morphism $G:X'\ra X''$ and an inseparable morphism $F:X\ra X'$ (\cite{Ogus2018} IV 3.3.8): 
\begin{equation}
\begin{tikzcd}
X\ar{r}{F}\ar{dr}\ar[bend right=30]{rdd} & X'\ar{d}{G}\ar{dr}{\pi} & \\
 & X''\ar{d}\ar{r} & X\ar{d} \\
 & S \ar{r}{F_S} & S
\end{tikzcd}
\end{equation}
The morphism $F:X\ra X'$ is called the \emph{exact relative Frobenius of $X$ with respect to $S$} (\cite{Ogus2018} IV 3.3.9).
In section 3, we prove that the exact relative Frobenius of a log smooth morphism of fs logarithmic schemes is log flat \eqref{thmlogflat}.
\end{parag}

\begin{parag}
We fix a perfect field $k$ of characteristic $p$ and we denote its ring of Witt vectors by $W(k).$ If a gothic letter $\frakX$ denotes a logarithmic $p$-adic formal scheme over $W(k),$ then the corresponding roman letter $X=\frakX \times_{\op{Spf}W(k)} \op{Spec}k$ will denote its special fiber.
We consider a log smooth morphism $f:\frakX \ra \frakS$ of fs $p$-adic logarithmic formal schemes, flat over $\op{Spf}W(k).$ Denote by $f_1:X \ra S$ its special fiber.
Following Shiho's construction, we suppose that the exact relative Frobenius  $F_1:X\ra X'$ lifts to an $\frakS$-morphism of logarithmic formal schemes $F:\frakX\ra \frakX',$ such that $\frakX'$ is flat over $\op{Spf}W(k).$ The flatness condition implies that there exists a unique morphism $\frac{dF}{p}$ fitting into the commutative diagram
$$
\begin{tikzcd}
F^*\omega^1_{\frakX'/\frakS} \ar{r}{dF} \ar[swap]{dr}{ \frac{dF}{p}} & p\omega^1_{\frakX / \frakS} \\
& \omega^1_{\frakX / \frakS} \ar{u}{\times p}.
\end{tikzcd}
$$
Let $n\ge 0$ be an integer. Let $\calE'$ be an $\Ox_{\frakX'}$-module equipped with a $p^{n+1}$-connection
$$
\nabla':\calE' \ra \calE' \otimes_{\Ox_{\frakX'}} \omega^1_{\frakX' / \frakS},
$$
i.e. an additive morphism satisfying, for local sections $a$ and $x$ of $\Ox_{\frakX'}$ and $\calE'$ respectively, the modified Leibniz rule
$$
\nabla'(ax)=a\nabla'(x)+p^{n+1}x\otimes da.
$$
We obtain an $\Ox_{\frakX}$-module $\calE=F^*\calE'$ equipped with a $p^n$-connection
$$
\nabla:\begin{array}[t]{clc}
\calE & \ra & \calE \otimes_{\Ox_{\frakX}} \omega^1_{\frakX / \frakS} \\
a\otimes x' & \mapsto &  \zeta(a\otimes x')+F^*x'\otimes da, 
\end{array}
$$
where $a$ and $x'$ are local sections of $\Ox_{\frakX}$ and $\calE'$ respectively and $\zeta$ is the composition
$$
\zeta:\calE \xrightarrow{F^*\nabla'} \calE \otimes_{\Ox_{\frakX}}F^*\omega^1_{\frakX' / \frakS} \xrightarrow{\op{Id}_{\calE} \otimes \frac{dF}{p}} \calE \otimes_{\Ox_{\frakX}}\omega^1_{\frakX / \frakS}.
$$
If $\nabla'$ is integrable then so is $\nabla.$
This construction thus yields a functor
$$
p^{n+1}\text{-}\op{MIC}(\frakX'/\frakS) \ra p^n\text{-}\op{MIC}(\frakX/\frakS),
$$
where $p^{n}\text{-}\op{MIC}(\frakX/\frakS)$ is the category of $\Ox_{\frakX}$-modules equipped with integrable $p^{n}$-connections.
In particular, if $n=0$ and if we work modulo $p^l$ for some positive integer $l,$ we have a functor
\begin{equation}\label{intro1}
\Phi_l:p\text{-}\op{MIC}(\frakX_l'/\frakS_l) \ra \op{MIC}(\frakX_l/\frakS_l),
\end{equation}
where $\frakX_l$ (resp $\frakS_l$ )denotes the logarithmic scheme obtained from $\frakX$ (resp $\frakS$) by reduction modulo $p^l.$
To study this functor, we interpret modules equipped with quasi-nilpotent integrable $p^n$-connections as modules with stratifications with respect to certain groupoids. To introduce these groupoids, we need to exactify the diagonal immersion. This is possible using \emph{frames}, a notion introduced by Saito and Kato \cite{Saito04}.
\end{parag}

\begin{parag}
Let $\boldsymbol{\op{L}}$ be the category of fine saturated (fs) logarithmic schemes. For an fs monoid $M,$ we denote by $[M]$ the presheaf
$$
[M]:\begin{array}[t]{clc}
\boldsymbol{\op{L}} & \ra & \boldsymbol{\op{Sets}} \\
T & \mapsto & \op{Hom} \left (M,\Gamma(T,\ov{\calM}_T) \right ).
\end{array}
$$
A frame on an fs logarithmic scheme or a logarithmic formal scheme $T$ is a morphism of presheaves $T \ra [M],$ or equivalently a morphism of monoids $M \ra \Gamma\left (T,\ov{\calM}_T \right ),$ 
that lifts, étale locally on $T,$ to a chart $T \ra \Gamma \left (T,\calM_T \right ).$ An important result we use is a direct generalization for formal schemes of a result by Kato and Saito (\cite{Saito04} 4.2.8): suppose that $\frakX$ and $\frakS$ are fs logarithmic $p$-adic formal schemes equipped with frames $Q\ra \Gamma(\frakX,\ov{\calM}_{\frakX})$ and $P\ra \Gamma(\frakS,\ov{\calM}_{\frakS})$ respectively and that we are given a morphism of monoids $\theta:P \ra Q$ such that $(f,\theta):(\frakX,Q) \ra (\frakS,P)$ is a morphism of framed logarithmic formal schemes, i.e. the diagram
$$
\begin{tikzcd}
P \ar{r}{\theta} \ar{d} & Q\ar{d} \\
\Gamma \left (\frakS,\ov{\calM}_{\frakS} \right ) \ar[swap]{r}{f^{\flat}} & \Gamma \left (\frakX,\ov{\calM}_{\frakX} \right )
\end{tikzcd}
$$
is commutative. The presheaf on $\boldsymbol{\op{L}}$
$$
\frakX \times_{\frakS,[Q]}^{\op{log}} \frakX: T \mapsto \frakX(T) \times_{\frakS(T)\times [Q](T)}^{\op{log}}\frakX(T),
$$
is then representable by a logarithmic $p$-adic formal scheme which is affine and log étale over $\frakX \times_{\frakS}^{\op{log}}\frakX.$
Furthermore, the diagonal immersion $\frakX \ra \frakX \times_{\frakS}^{\op{log}}\frakX$ factors as
$$
\begin{tikzcd}
 & \frakX \times_{\frakS,[Q]}^{\op{log}} \frakX \ar{d} \\
\frakX \ar{r} \ar{ur} & \frakX \times_{\frakS}^{\op{log}}\frakX,
\end{tikzcd}
$$
where $\frakX \ra \frakX \times_{\frakS,[Q]}^{\op{log}} \frakX$ is strict and the conormal sheaf of the immersion
$\frakX_l \ra \frakX_l \times_{\frakS_l,[Q]}^{\op{log}} \frakX_l$
is canonically isomorphic to the module of logarithmic differentials $\omega^1_{\frakX_l/\frakS_l},$ for all positive integers $l.$
\end{parag}

\begin{parag}\label{intro16}
We equip $\op{Spf}W(k)$ with the trivial logarithmic structure.
We consider an fs logarithmic $p$-adic formal scheme $\frakS$ log flat and locally of finite type over $\op{Spf}W(k).$ We also consider a log smooth (\cite{Ogus2018} III 2.5.1) morphism $f:(\frakX,Q) \ra (\frakS,P)$ of framed fs logarithmic $p$-adic formal schemes. Let $\calI$ be the ideal of the exact diagonal immersion $\frakX \ra \frakY=\frakX \times_{\frakS,[Q]}^{\op{log}}\frakX.$
For a positive integer $n,$ let $R_{\frakX,n}$ be the dilatation of $\calI+(p)$ with respect to $p^n$ in $\frakY.$ In other words, $R_{\frakX,n}$ is the largest open formal subscheme of the admissible blow-up of $\calI+(p^n)$ in $\frakY$ such that $\left ( \calI+(p^n) \right ) \Ox_{R_{\frakX,n}}=(p^n).$ We then consider, for every positive integer $l,$ the PD-envelope $\left (P_{\frakX/\frakS,n} \right )_l$ of the immersion $\frakX_l \ra \left (R_{\frakX,n} \right )_l$ and equip it with the logarithmic structure pull back of that of $\frakY_l.$ We consider the inductive limit
$$
P_{\frakX/\frakS,n} =\lim\limits_{\substack{\longrightarrow \\l\ge 1}}\left (P_{\frakX/\frakS,n}\right )_l.
$$
The logarithmic schemes $\left (P_{\frakX/\frakS,n} \right )_l$ provide stratified interpretations for $p^n$-connections. More precisely, we prove that the logarithmic schemes $R_{\frakX,n}$ and $P_{\frakX/\frakS,n}$ have natural structures of formal groupoids and that the category $n\text{-}\op{MHS}(\frakX_l/\frakS_l)$ of $\Ox_{\frakX_l}$-modules equipped with stratifications relative to the formal groupoid $\left (P_{\frakX/\frakS,n} \right )_l,$ is equivalent to the category $p^n\text{-}\op{MIC}^{\text{qn}}\left (\frakX_l/\frakS_l\right )$ of $\Ox_{\frakX_l}$-modules equipped with a quasi-nilpotent integrable $p^n$-connection \eqref{equivstrat}.
Furthermore, if we suppose that the exact relative Frobenius $F_1:X\ra X'$ lifts to a morphism of framed logarithmic formal $\frakS$-schemes $F:\frakX \ra \frakX',$ where $\frakX'$ is log smooth over $\frakS,$ then $F$ induces a morphism
\begin{equation}\label{Muzannu}
\varphi:P_{\frakX/\frakS,0} \ra P_{\frakX'/\frakS,1}.
\end{equation}
In addition, in \ref{lem96} by proving that the ideal of the diagonal $\frakX \ra \frakX\times_{\frakX'}^{\op{log}}\frakX$ has a unique PD-structure, we prove the existence of a morphism
$$
\Psi:\frakX\times_{\frakX'}^{\op{log}}\frakX \ra P_{\frakX/\frakS,0}.
$$
These morphisms fit into the commutative diagram
$$
\begin{tikzcd}
 & & \frakX'\ar{d}{\iota'} \\
 & P_{\frakX/\frakS,0} \ar{r}{\varphi} \ar{d} & P_{\frakX'/\frakS,1}\ar{d} \\
\frakX\times_{\frakX'}^{\op{log}}\frakX \ar{ur}{\psi} \ar[bend right=-30]{uurr} \ar{r} & \frakX\times^{\op{log}}_{\frakS,[Q]}\frakX \ar{r}{F^2} & \frakX'\times_{\frakS,[Q']}^{\op{log}}\frakX'.
\end{tikzcd}
$$
The morphism $\varphi$ allows us to define a functor
\begin{equation}\label{Psiintro}
\Psi_l:\begin{array}[t]{clc}
1\text{-}\op{MHS}\left (\frakX_l'/\frakS_l\right ) & \ra & 0\text{-}\op{MHS}\left (\frakX_l/\frakS_l\right ) \\
(\calE',\epsilon') & \mapsto & (F_l^*\calE',\varphi_l^*\epsilon').
\end{array}
\end{equation}
\begin{theorem}[\ref{THMX1}, \ref{THMX2}]
Let $f:(\frakX,Q) \ra (\frakS,P)$ be a log smooth morphism of framed fs logarithmic $p$-adic formal schemes such that $\frakS$ is log flat and locally of finite type over $\op{Spf}W(k).$ Denote by $X\ra S$ its special fiber. We suppose that the exact relative Frobenius $X\ra X'$ lifts to a morphism of framed fs $p$-adic logarithmic formal schemes $F:(\frakX,Q) \ra (\frakX',Q')$ over $(\frakS,P),$ where $Q'$ is a monoid defined from $Q$ and $P$ \eqref{PFrob} and such that $\frakX'$ is log smooth over $\frakS.$ Let $n$ be a positive integer. The functors $\Phi_l$ \eqref{intro1} and $\Psi_l$ \eqref{Psiintro} induce fully faithful functors between quasi-coherent and quasi-nilpotent objects, fitting into a commutative diagram
$$
\begin{tikzcd}
1\text{-}\op{MHS}^{qcoh}(\frakX_l'/\frakS_l) \ar{rr}{\Psi_l} \ar[swap,sloped]{d}{\sim} & & 0\text{-}\op{MHS}^{qcoh}(\frakX_l/\frakS_l) \ar[sloped]{d}{\sim} \\
p\text{-}\op{MIC}^{qcoh,qn}(\frakX_l'/\frakS_l) \ar{rr}{\Phi_l} & & \op{MIC}^{qcoh,qn}(\frakX_l/\frakS_l),
\end{tikzcd}
$$
where the vertical arrows are equivalences of categories \eqref{equivstrat} and $qcoh$ and $qn$ denote quasi-coherent and quasi-nilpotent objects respectively.
\end{theorem}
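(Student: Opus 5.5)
The proof splits into two parts: commutativity of the square, and full faithfulness of $\Psi_l$. Full faithfulness of $\Phi_l$ then follows because the vertical arrows are equivalences \eqref{equivstrat}.

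\textbf{Commutativity.} Take $(\calE',\epsilon')$ in $1\text{-}\op{MHS}^{qcoh}(\frakX_l'/\frakS_l)$, with associated $p$-connection $\nabla'$ obtained by restricting $\epsilon'$ to the first infinitesimal neighbourhood of the diagonal in $(P_{\frakX'/\frakS,1})_l$. The claim is that the connection attached to $\varphi_l^*\epsilon'$ on $F_l^*\calE'$ is the one defined in the introduction, namely $\zeta + F^*x'\otimes da$. This is a computation on conormal sheaves. The map $\varphi$ is induced by $F^2$ on $\frakX\times^{\op{log}}_{\frakS,[Q]}\frakX$. On the ideal of the diagonal it sends a local generator $1\otimes t'-t'\otimes 1$ of $\calI'$, which is divisible by $p$ on $R_{\frakX',1}$, to $F^*(1\otimes t'-t'\otimes 1)$. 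Modulo $\calI^{[2]}$ this is $p\cdot\frac{dF}{p}(dt')$. For the logarithmic coordinates one uses the log version, with $u$ units such that $1\otimes m = u\,(m\otimes 1)$ and $d\log$.

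Dividing by $p$ then yields the factor $\op{Id}\otimes\frac{dF}{p}$. The term $F^*x'\otimes da$ comes from the fact that the pulled-back stratification on $\Ox_{\frakX}\otimes F^{-1}\calE'$ is $\varphi^*\epsilon'$ tensored with the trivial stratification $1\otimes a\mapsto$ of $\Ox_{\frakX}$. One should check this is well defined on the level of $p^n$-connections modulo $p^l$, using flatness of $\frakX'$ over $W(k)$ so that division by $p$ makes sense before reducing.

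\textbf{Full faithfulness of $\Psi_l$.} Faithfulness is immediate, since $F_l$ is log flat (\ref{thmlogflat}, lifted to level $l$ by flatness over $W_l(k)$) and hence $F_l^*$ is faithful on quasi-coherent modules. For fullness, let $g:F_l^*\calE_1'\ra F_l^*\calE_2'$ be compatible with $\varphi_l^*\epsilon_1'$ and $\varphi_l^*\epsilon_2'$. Pull these stratifications back along $\psi:\frakX\times^{\op{log}}_{\frakX'}\frakX\ra P_{\frakX/\frakS,0}$. By the commutative diagram, $\varphi\circ\psi$ factors through $\iota':\frakX'\ra P_{\frakX'/\frakS,1}$, so $\psi^*\varphi^*\epsilon_i'$ is the canonical descent isomorphism $p_2^*F^*\calE_i'\simeq p_1^*F^*\calE_i'$. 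Hence $g$ is compatible with descent data for $F_l$. The log flat descent theorem for morphisms proved earlier then gives a unique $g':\calE_1'\ra\calE_2'$ with $F_l^*g'=g$.

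It remains to see that $g'$ is horizontal for $\epsilon'$. Take $\epsilon_2'\circ p_2^*g' - p_1^*g'\circ\epsilon_1'$ on $(P_{\frakX'/\frakS,1})_l$. Its pullback by $\varphi_l$ vanishes, so injectivity of $\varphi_l^*$ on such Hom-sheaves is needed. I would get this by showing $\varphi_l$ is log flat, or more simply that the induced map $\Ox_{P_{\frakX'/\frakS,1}}\ra\Ox_{P_{\frakX/\frakS,0}}$ is injective after base change. Locally $\Ox_{P_{\frakX'/\frakS,1}}$ is a divided-power polynomial algebra over $\Ox_{\frakX'}$ in the variables $\xi'=(1\otimes t'-t'\otimes1)/p$, which map to elements $\frac{dF}{p}(\xi')+\ldots$. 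So this reduces to flatness of $F_l$ together with a PD-filtration argument. Alternatively, one can pass to connections via the equivalence \eqref{equivstrat}. Horizontality of $g'$ for $\nabla'$ then follows from horizontality of $F^*g'$ for $\nabla$, because $\frac{dF}{p}$ is injective on $F^*\omega^1$ after tensoring, being an isomorphism onto its image by the Cartier-type computation mod $p$.

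\textbf{Main obstacle.} I expect the hardest step to be the descent step: identifying $\psi^*\varphi^*\epsilon'$ with the tautological descent datum, and checking that log flat descent applies to quasi-coherent modules that are not themselves descended objects. The theorem only gives full faithfulness, which is why we avoid essential surjectivity.
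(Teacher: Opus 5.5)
Your main line is the paper's. The square commutes by computing $\varphi^{\#}(\widetilde{\eta}'_i/p)$ modulo $\calI^{[2]}$ (formula \eqref{eqKoko1323}, compared with \eqref{surp2}). The morphism $g$ descends along $F_l$ after pulling back along $\psi$, using that $\varphi\circ\psi$ factors through $\iota'$, together with \ref{eraflogflatdescent} and \ref{propFnlogtop}. Full faithfulness of $\Phi_l$ then follows via \ref{equivstrat}. Faithfulness of $F_l^*$ should be attributed to the injectivity part of \ref{eraflogflatdescent} for the log flat covering $F_l$, not to log flatness as such, since $F_l$ is not flat.

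The step to correct is the horizontality of $g'$. Your first option, log flatness of $\varphi$, is exactly what the paper does (\ref{Khaminei145}). There $\varphi$ factors as a strict morphism $\widetilde{\varphi}:P_{\frakX/\frakS,0}\ra P_{\frakX'/\frakS,1}\times_{\frakX'}\frakX$, followed by the base change of $F$, to which log flat descent applies. The map $\widetilde{\varphi}$ is flat by the explicit formula: modulo $p$, the image of $\widetilde{\eta}'_i/p$ is $-\widetilde{\eta}_i^{[p]}$ plus terms in $\widetilde{\eta}^I$ with $I\in\llbracket 0,p-1\rrbracket^d$, as in Shiho.

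The two alternatives you call simpler do not work.

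\emph{The ring-theoretic one.} It reduces to \emph{flatness} of $F_l$, which is false in the log smooth setting; that failure is the whole reason the paper needs log flat descent. For example, for $X=\op{Spec}k[x,y]$ semistable over $\op{Spec}k[s]$ with $s\mapsto xy$, $F_1$ is the inclusion $k[x^p,y^p,xy]\subset k[x,y]$, which is not flat. Moreover, injectivity of $\Ox_{P_{\frakX'/\frakS,1}}\ra\Ox_{P_{\frakX/\frakS,0}}$ alone would not give injectivity after tensoring with an arbitrary quasi-coherent $\calE'_2$.

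\emph{The connection-level one.} It asserts that $\op{Id}_{F^*\calE'}\otimes\frac{dF}{p}$ is injective on $F^*\calE'\otimes F^*\omega^1_{\frakX'/\frakS}$. This fails for non-flat $\calE'$ already with trivial log structures. For $F^{\#}(t')=t^p$ one has $\frac{dF}{p}(dt')=t^{p-1}dt$. For $\calE'=\Ox_{X'}/(t')$ the map is then multiplication by $t^{p-1}$ on $\Ox_X/(t^p)$, which kills $t$. What that route really needs is universal injectivity of $\omega^1_{X'}\ra F_*\omega^1_X$ as $\Ox_{X'}$-modules. That is a Cartier-type statement you do not prove, and it is not readily available when $F_*\Ox_X$ is not locally free or modulo $p^l$.

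So keep the log flatness of $\varphi$ (citing \ref{Khaminei145}) and drop the other two routes.
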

To show the full faithfulness, we prove a kind of logarithmic flat descent theorem for morphisms of quasi-coherent modules, generalizing a result of Kato for structural rings \cite{Kat19}:
\begin{theorem}[\ref{eraflogflatdescent}]
Let $T$ be an fs logarithmic scheme, $(T_i \xrightarrow{f_i} T)_{i\in I}$ a log flat covering (\ref{logflattop}, note that $f_i$ is in particular of Kummer type) and $\calE$ a quasi-coherent $\Ox_T$-module. For any $i,j\in I,$ let $f_{ij}:T_i\times_T^{\op{log}}T_j \ra T$ be the canonical morphism. Then, the sequence of $\Ox_T$-modules
$$0 \ra \calE \ra \prod_{i\in I}f_{i*}f_i^*\calE \ra \prod_{i,j\in I}f_{ij*}f_{ij}^*\calE,$$
where the last arrow is the difference between the morphisms induced by the projections $T_i\times_T^{\op{log}}T_j \ra T_i$ and $T_i\times_T^{\op{log}}T_j \ra T_j,$ is exact.
\end{theorem}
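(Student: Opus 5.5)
The plan is to reduce to a local statement and then use Kato's results on the log flat topology. Exactness of the sequence is étale local on $T$, and log flat coverings are stable under base change by strict étale maps. So we may assume $T$ is affine and carries an fs chart $P\ra \calM_T$.

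The first step treats a single Kummer covering of standard type: $T_1=T\times_{\Sp \Z[P]}\Sp \Z[P']$ for a Kummer injection $u:P\ra P'$ of fs monoids whose cokernel order is invertible, or more generally for an arbitrary Kummer $u$. Here the log fibre product is explicit. By Kato,
\[
T_1\times_T^{\op{log}}T_1 \cong T\times_{\Sp\Z[P]}\Sp \Z[(P'\oplus_P P')^{\op{sat}}],
\]
and $(P'\oplus_P P')^{\op{sat}}\cong P'\times G$ with $G=(P'^{\op{gp}}/P^{\op{gp}})$ a finite group. So the sequence becomes a sequence of modules
\[
0\ra E\ra E\otimes_{\Z[P]}\Z[P']\ra E\otimes_{\Z[P]}\Z[P'\times G]
\]
with $E=\Gamma(T,\calE)$. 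The $\Z[P]$-module $\Z[P']$ decomposes as a direct sum over $P'^{\op{gp}}/P^{\op{gp}}$ of graded pieces, and the degree-zero piece is $\Z[P]$, since $P$ is saturated in $P'$ (Kummer with $P$ saturated). The two maps to the third term are the two coactions, and their equalizer is exactly the degree-zero part. This is a grading computation which is valid after tensoring with any $E$, because the decomposition is $\Z[P]$-linear and split. It yields exactness for arbitrary quasi-coherent $\calE$, not only for $\Ox_T$, which is precisely the generalization of Kato's structural-ring statement.

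The second step treats strict flat coverings. Here $T_i\times_T^{\op{log}}T_j=T_i\times_T T_j$ is the ordinary fibre product, and the statement is classical fpqc descent for quasi-coherent modules.

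The third step handles general log flat coverings. By Kato's structure theorem (the definition \ref{logflattop} and the accompanying results), every log flat covering can, étale locally on $T$, be refined by a composite of a Kummer covering of the standard type above followed by a strict flat covering of $T_1$. Two formal facts then finish the argument. First, if a covering is refined by another and the sequence is exact for the refinement, then it is exact for the original covering; for this only injectivity of the first map and the equalizer property are needed. Second, exactness for two coverings implies exactness for their composite, by the usual diagram chase using base change of quasi-coherent modules along the projections. The chase needs $f^*$ to commute with the relevant log fibre products, which holds because the log fibre product of a Kummer covering with anything is computed by saturated pushouts.

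I expect the main obstacle to be the first step, specifically identifying $T_1\times^{\op{log}}_T T_1$ and checking that the equalizer of the two coactions is the degree-zero part after tensoring with $E$. The difficulty is that $\Z[P']$ need not be flat over $\Z[P]$. I would handle it via the $G$-grading, which gives a $\Z[P]$-module direct sum splitting, so tensoring with $E$ causes no harm.
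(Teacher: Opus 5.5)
Your proof follows essentially the paper's route. The standard Kummer covering is handled as in Proposition~\ref{Katolftop}, via $(P'\oplus_P P')^{\mathrm{sat}}\cong P'\oplus P'^{gp}/P^{gp}$ (Lemma~\ref{dirsum2}) and a $\Z[P]$-linear splitting of $\Z[P']$. Your grading by $G$ is a finer version of the paper's splitting $\Z[P]\oplus\Z[P'\setminus P]$ (Lemmas~\ref{dirsum}, \ref{exactseq}); both make the equalizer a split degree-zero summand, so tensoring with $E$ is harmless. The strict flat factor is classical faithfully flat descent. Your ``composite'' step is the diagram chase in the proof of Theorem~\ref{logflatdescent}, including the injectivity along $V\times_T^{\op{log}}V\ra T_1\times_T^{\op{log}}T_1$, which holds because $V\ra T_1$ is strict and faithfully flat.

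The organisation differs. The paper reduces to finite $I$ (openness), localizes fppf on the base (Lemma~\ref{lemwiwfppf}) and applies Proposition~\ref{INTlog}. You use a refinement lemma instead, which has the merit of also handling the fact that the chart is only local on the source. The paper's base-only localization leaves that point implicit.

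Two steps need repair as written.

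\emph{Locality of the chart.} The Kummer chart of Proposition~\ref{INTlog} exists only fppf locally around $x$ and $y$: one must extract $n$-th roots of units, possibly with $p\mid n$. It is not available étale locally on $T$. So either justify étale locality separately, or add an fppf localization on $T$ (flat base change plus faithfully flat descent, as in Lemma~\ref{lemwiwfppf}). Equivalently, allow an extra strict flat covering at the start of your composite.

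\emph{The refinement lemma.} It does not follow from injectivity over $T$ plus the equalizer property alone. Take $(s_i)$ in the equalizer and $e\in\calE$ agreeing with the $s_{i(k)}$ on the refining pieces. Then the differences $s_i-e|_{T_i}$ are only known to vanish after pullback along $T_i\times_T^{\op{log}}V\ra T_i$. You therefore need injectivity for the pulled-back covering $\{T_i\times_T^{\op{log}}V\ra T_i\}$ and the module $f_i^*\calE$.

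This is available because your class of coverings is stable under fs base change. Étale locally on $T_i$, choose a chart $P\ra Q$ of $T_i\ra T$. Then $T_i\times_T^{\op{log}}T_1=T_i\times_{A[Q]}A[(Q\oplus_PP')^{\mathrm{sat}}]$, and $Q\ra(Q\oplus_PP')^{\mathrm{sat}}$ is Kummer. Strict flat coverings pull back to strict flat coverings. With these two additions the argument goes through.
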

\end{parag}

\textbf{Acknowledgement.} This article is the first part of my thesis prepared at Université Paris-Saclay and IHES. I express my greatest gratitude to my PhD advisor Ahmed Abbes for introducing me to this topic, his guidance and his patience. I also thank Atsushi Shiho and Daxin Xu for their helpful comments and suggestions.

\section{Notations and conventions}

\begin{parag}
In this article, we fix a universe $\mathbb{U}$ and a prime number $p.$
\end{parag}

\begin{parag}\label{Not9}
For multi-indices $I=(I_1,\hdots,I_d),J=(J_1,\hdots,J_d)\in \N^d,$ we denote by $I!$ the product
$I!=\prod_{i=1}I_i!$
and by $\begin{pmatrix}I \\ J \end{pmatrix}$ the binomial coefficient
$
\begin{pmatrix}I \\ J \end{pmatrix}=\prod_{i=1}^d\begin{pmatrix}I_i \\ J_i \end{pmatrix}.
$
\end{parag}

\begin{parag}\label{Not8}
If $X$ is a scheme or a formal scheme, we denote by $|X|$ its underlying topological space.
\end{parag}

\begin{parag}
If $\F$ is a sheaf of sets on a site $X,$ the notation $x\in \F$ means that there exists an object $U$ of $X$ such that $x\in \F(U).$
\end{parag}

\begin{parag}
For morphisms of fs logarithmic schemes $X\ra S$ and $Y\ra S,$ we denote by $X\times_S^{\op{log}}Y$ their fiber product in the category of fs logarithmic schemes. We keep the notation $X\times_SY$ for the fiber product of $X\ra S$ and $Y\ra S$ in the category of logarithmic schemes.
\end{parag}

\begin{parag}\label{Not2}
If $A$ is a ring, $a\in A$ and $M$ is an $A$-module, we denote by $M_{/a\text{-tor}}$ the quotient of $M$ by the submodule of $a$-torsion of $M$ i.e. the submodule consisting of elements $x\in M$ such that $a^kx=0$ for some positive integer $k.$
\end{parag}

\begin{parag}\label{Not1}
The logarithmic structures considered in this article are all defined on the small étale site.  
For a morphism of logarithmic schemes $X\ra S,$ we denote by $\omega^1_{X/S}$ the sheaf of logarithmic differentials of $X$ over $S$ of order $1$ (\cite{Ogus2018} IV 1.2.4). For any positive integer $i,$ we set $\omega^i_{X/S}=\Lambda^i\omega^1_{X/S}.$
\end{parag}

\begin{parag}\label{Not3}
For a logarithmic scheme $T$ of positive characteristic $p,$ we denote by $F_T$ the absolute Frobenius morphism of $T,$ i.e. the morphism given by the identity on the underlying topological spaces, $\Ox_T\ra \Ox_T,\ x\mapsto x^p$ and $\calM_T\ra \calM_T,\ m\mapsto pm.$
\end{parag}

\begin{parag}\label{Not4}
Let $R$ be a ring and $P$ a monoid, we denote by $R[P]$ the free $R$-algebra on $P$ and by $A_R[P]$ the scheme $\Sp R[P]$ equipped with the logarithmic structure induced by the canonical chart $P\ra R[P].$ Let $p$ be a prime number and $n\ge 1$ an integer. We denote by $A_n[P]$ (resp. $A[P]$) the logarithmic scheme $A_{\Z/p^n\Z}[P]$ (resp. $A_{\Z}[P]$).
\end{parag}

\begin{parag}\label{Not5}
If $M$ is a monoid, we denote by $M^{\times}$ the subset of $M$ consisting of invertible elements, by $\ov{M}$ the quotient $M/M^{\times}$ and by $M^{gp}$ the group associated to $M.$ We also denote by $M^{int}$ the image of $M$ by the canonical morphism $M\ra M^{gp}.$ Similarly, if $\calM$ is a sheaf of monoids, we denote by $\calM^{\times}$ the subsheaf of invertible sections, by $\ov{\calM}$ the quotient sheaf $\calM/\calM^{\times}$ i.e. the sheaf associated to the presheaf $U\mapsto \ov{\calM(U)}=\calM(U)/\calM(U)^{\times},$ by $\calM^{gp}$ the sheaf associated to the presheaf $U\mapsto \calM(U)^{gp}$ and by $\calM^{int}$ the sheaf associated to the presheaf $U\mapsto \calM(U)^{int}.$
\end{parag}

\begin{parag}
A morphism of monoids (resp. sheaves of monoids) is said to be surjective if it is so as a map of sets (resp. morphism of sheaves of sets).
\end{parag}

\begin{parag}\label{Not7}
We say that a morphism $u:M\ra N$ of monoids is \emph{strict} if the induced morphism $\ov{u}:\ov{M} \ra \ov{N}$ is an isomorphism.
\end{parag}

\begin{parag}\label{parag42}
We denote by $\boldsymbol{\op{L}}$ the category of fs logarithmic schemes and by $\boldsymbol{\widehat{\op{L}}}$ the category of presheaves of sets on $\boldsymbol{\op{L}}.$ Recall that the Yoneda embedding $\boldsymbol{\op{L}}\ra \boldsymbol{\widehat{\op{L}}}$ commutes with representable projective limits and in particular with fiber products.
\end{parag}

\begin{parag}
For a scheme $X,$ we denote by $\text{ét}_{/X}$ the small étale site on $X$ and by $X_{\text{ét}}$ the small étale topos on $X.$
\end{parag}

\begin{parag}\label{Not6}
For any scheme $X,$ we denote by $\op{QCoh}(X_{\text{zar}})$ (resp. $\op{QCoh}(X_{\text{ét}})$) the category of quasi-coherent $\Ox_X$-modules on the small Zariski site of $X$ (resp. the category of quasi-coherent $\Ox_X$-modules on the small étale site of $X$).
By (\cite{SP} \href{https://stacks.math.columbia.edu/tag/03DX}{Proposition 03DX}), $\op{QCoh}(X_{\text{zar}})$ and $\op{QCoh}(X_{\text{ét}})$ are canonically equivalent.
\end{parag}

\begin{parag}\label{dxuflat}
Let $k$ be a perfect field of characteristic $p,$ $W(k)$ its ring of Witt vectors and $\frakS=\op{Spf}W(k).$ Following (\cite{DXU19} 2.5), we say that a $p$-adic formal $\frakS$-scheme $\frakX$ is \emph{flat over $\frakS$} or that $\frakX$ is a \emph{flat formal $\frakS$-scheme} if multiplication by $p$ on $\Ox_{\frakX}$ is injective. This is equivalent to the fact that, for every affine open formal subscheme $U$ of $\frakX,$ the algebra $\Gamma(U,\Ox_{\frakX})$ is flat over $W(k).$ In this sense, a $p$-adic formal $\frakS$-scheme $\frakX$ is flat over $\frakS$ if and only if $\frakX_n$ is flat over $\frakS_n$ for all positive integers $n,$ where $\frakX_n$ and $\frakS_n$ are obtained from $\frakX$ and $\frakS$ respectively by reduction modulo $p^n.$ Indeed, let $A$ be a $p$-adic $W(k)$-algebra and suppose that, for all integers $n\ge 1,$ $A_n=A/p^nA$ is flat over $W_n(k)=W(k)/p^nW(k).$ Let $x\in A$ such that $px=0.$ By the flatness of $A_n$ over $W_n(k),$ the image of $x$ in $A_n$ belongs to $p^{n-1}A/p^nA.$ Since $A$ is separated, $x=0$ and so $A$ is flat over $W(k).$ 
\end{parag}

\section{The exact relative Frobenius}

\begin{parag}\label{PFrob}
Let $f:X\ra S$ be a morphism of fine logarithmic schemes of characteristic $p.$ Let $X''$ be the base change, in the category of fine logarithmic schemes, of $X$ by the absolute Frobenius morphism $F_S:S\ra S$ of $S$ (\ref{Not3}). Let $F_{X/S}$ be the relative Frobenius morphism of $X$ with respect to $S$ i.e. the unique morphism $X\ra X''$ making the following diagram commutative
\begin{equation}\label{diag481}
\begin{tikzcd}
X\ar{dr}{F_{X/S}}\ar[bend right=-30]{drr}{F_X}\ar[swap,bend right=30]{ddr}{f} & & \\
 & X''\ar{r}\ar{d} & X\ar{d}{f} \\
 & S\ar{r}{F_S} & S
\end{tikzcd}
\end{equation}
Recall that $F_{X/S}$ is \emph{weakly inseparable} (\cite{Ogus2018} III 2.4), so $F_{X/S}$ factors uniquely as a composition of a log étale morphism $G:X'\ra X''$ and an inseparable morphism $F:X\ra X'$ (\cite{Ogus2018} IV 3.3.8): 
\begin{equation}\label{diag51}
\begin{tikzcd}
X\ar{r}{F}\ar{dr}\ar[bend right=30]{rdd} & X'\ar{d}{G}\ar{dr}{\pi} & \\
 & X''\ar{d}\ar{r} & X\ar{d}{f} \\
 & S \ar{r}{F_S} & S
\end{tikzcd}
\end{equation}
The morphism $F:X\ra X'$ is called the \emph{exact relative Frobenius of $X$ with respect to $S$} (\cite{Ogus2018} IV 3.3.9). If $f$ is of Cartier type (\cite{Kat89} 4.8) then $X'=X''$ and $F=F_{X/S}.$

In the rest of this article, we introduce the following notations: if $P$ is a monoid, we denote by
$$F_P:P \ra P,\ x\mapsto px$$
the Frobenius morphism of $P.$ If $\theta:P\ra Q$ is a morphism of fine monoids, we set $Q''=(Q\oplus_{P,F_P}P)^{int}$ and denote by $Q'$ the inverse image of $Q$ by
\begin{equation}\label{eqfrobmon1}
Q''^{gp} \ra Q^{gp},\ (x,y) \mapsto px+\theta^{gp}(y).
\end{equation}
Denote by $v:Q' \ra Q$ the morphism induced by \eqref{eqfrobmon1}. 
If $\theta:P\ra Q$ is a chart for $f,$ then, by the proof of (\cite{Kat89} 4.10), the canonical morphism $X''\ra A_1[Q'']$ is a chart, $X'=X''\times_{A_1[Q'']}A_1[Q']$ and $v:Q' \ra Q$ is a chart of the exact relative Frobenius $F:X\ra X'.$
Note that if $Q$ is saturated, then $Q'$ is clearly saturated. It follows that if $X$ and $S$ are fs, then so is $X'.$
\end{parag}

\begin{definition}\label{defkummer}[\cite{Kat19} 2.1 and 2.2]~ 
\begin{enumerate}
\item A morphism $u:M\ra N$ of fs monoids is said to be \emph{Kummer} if it is injective and for any $y\in N,$ there exists a positive integer $n$ such that $ny\in u(M).$
\item A morphism $f:X\ra Y$ of fs logarithmic schemes is said to be \emph{of Kummer type} if, for every geometric point $\ov{x}$ of $X$ and $\ov{y}=f(\ov{x}),$ the morphism of monoids
$f^{\flat}_{\ov{x}}:\ov{\calM}_{Y,\ov{y}} \ra \ov{\calM}_{X,\ov{x}}$
is Kummer.
\end{enumerate}
\end{definition}

\begin{proposition}\label{propfrob12}
Let $\theta:P\ra Q$ a morphism of fine monoids such that $\theta^{gp}$ is injective and the torsion subgroup of $\op{coker}\theta^{gp}$ is finite of order coprime with $p.$ Let $Q'$ and $v:Q' \ra Q$ be as defined in \ref{PFrob}. Then $v$ is Kummer.
\end{proposition}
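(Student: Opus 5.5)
The plan is to work entirely at the level of associated groups. The point is that $Q'$ is, by definition, a submonoid of the group $Q''^{gp}$, and $v$ is the restriction to $Q'$ of the group homomorphism
$$w: Q''^{gp} \ra Q^{gp},\quad (x,y)\mapsto px+\theta^{gp}(y)$$
of \eqref{eqfrobmon1}. The two conditions in Definition~\ref{defkummer}(1) are then checked separately: injectivity of $v$ will follow from injectivity of $w$, and the ``multiple'' condition will be witnessed by the multiple $p$.

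First I make $Q''^{gp}$ explicit. Since $Q''=(Q\oplus_{P,F_P}P)^{int}$ and passing to $(-)^{gp}$ commutes with pushouts (and kills the difference between a monoid and its integral image), we get
$$Q''^{gp}=(Q^{gp}\oplus P^{gp})/\{(\theta^{gp}(a),-pa)\ :\ a\in P^{gp}\}.$$
In particular $w$ is well defined, because it sends $(\theta^{gp}(a),-pa)$ to $p\theta^{gp}(a)-p\theta^{gp}(a)=0$.

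For injectivity, let $(x,y)\in Q^{gp}\oplus P^{gp}$ with $px+\theta^{gp}(y)=0$. The class $[x]\in\op{coker}\theta^{gp}$ satisfies $p[x]=0$, so $[x]$ lies in the torsion subgroup. That subgroup is finite of order prime to $p$, so multiplication by $p$ is injective on it, and hence $[x]=0$. Thus $x=\theta^{gp}(a)$ for some $a\in P^{gp}$.

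Substituting, $\theta^{gp}(y+pa)=0$, and the injectivity of $\theta^{gp}$ gives $y=-pa$. Therefore $(x,y)=(\theta^{gp}(a),-pa)$, which is zero in $Q''^{gp}$. So $w$ is injective, and so is its restriction $v$.

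For the Kummer condition, let $q\in Q$. The element $(q,0)\in Q''^{gp}$ has image $w(q,0)=pq\in Q$, so $(q,0)\in Q'$ and $pq=v(q,0)\in v(Q')$. Hence $n=p$ works uniformly for every $q\in Q$.

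The only step with real content is the torsion argument in the injectivity proof; this is exactly where both hypotheses on $\theta^{gp}$ are used. One should also remark that the definition of Kummer in Definition~\ref{defkummer} is stated for fs monoids, but the argument only uses integrality of $Q'$ (as a submonoid of a group), so it applies verbatim to fine monoids. When $Q$ is saturated, $Q'$ is saturated as noted in \ref{PFrob}.
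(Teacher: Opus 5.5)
Your proof is correct and takes essentially the same approach as the paper. The paper checks injectivity on pairs $(x,y),(x',y')\in Q'$ with $v(x,y)=v(x',y')$, using first that $\op{coker}\theta^{gp}$ has no $p$-torsion and then that $\theta^{gp}$ is injective; you run the same two steps on the kernel of the group map $w$, and your witness $v(q,0)=pq$ for the Kummer condition is the one the paper uses.
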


\begin{proof}
Let $(x,y),(x',y')\in Q'$ such that $v(x,y)=v(x',y').$
Then
$px+\theta^{gp}(y)=px'+\theta^{gp}(y')\in Q$
and
$p(x-x')=\theta^{gp}(y'-y).$
It follows that $p(x-x')=0$ in $\op{coker}\theta^{gp}.$ And since the torsion subgroup of $\op{coker}\theta^{gp}$ has a finite order coprime with $p,$ we deduce that $x=x'+\theta^{gp}(t)$ for a certain element $t\in P^{gp}.$ Then
$$\theta^{gp}(y+pt)=\theta^{gp}(y)+p(x-x')=\theta^{gp}(y').$$
Since $\theta^{gp}$ is injective, we get $y+pt=y'$ and so, in $Q',$
$$(x,y)=(x'+\theta^{gp}(t),y)=(x',y+pt)=(x',y').$$
Finally, for every $x\in Q,$ $(x,0)\in Q'$ and $v(x,0)=px.$
\end{proof}

\begin{proposition}\label{FKummer}
Let $f:X\ra S$ be a morphism of fs logarithmic schemes of characteristic $p$ and $F:X\ra X'$ the exact relative Frobenius. Then $F$ is of Kummer type \eqref{defkummer}.
\end{proposition}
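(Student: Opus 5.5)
The approach is to check the Kummer condition of Definition \ref{defkummer} on stalks, using the chart $v:Q'\ra Q$ of \ref{PFrob} together with the identity $\pi\circ F=F_X$. Both conditions are étale local on $X$. So fix a geometric point $\ov{x}$ of $X$ with images $\ov{x}'$ in $X'$ and $\ov{s}$ in $S$, and choose, étale locally, a chart $\theta:P\ra Q$ of $f$ by fs monoids. By \ref{PFrob}, $v:Q'\ra Q$ is then a chart of $F$, where $Q'\subset Q''^{gp}=(Q^{gp}\oplus P^{gp})/\langle(\theta^{gp}(t),-pt)\rangle$.

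\emph{Divisibility.} By \eqref{diag481} and \eqref{diag51}, $\pi\circ F=F_X$. The absolute Frobenius $F_X$ acts on $\ov{\calM}_{X,\ov{x}}$ as multiplication by $p$. Hence $F^{\flat}_{\ov{x}}\circ\pi^{\flat}=p\cdot\op{Id}$ on $\ov{\calM}_{X,\ov{x}}$, and every $y\in\ov{\calM}_{X,\ov{x}}$ satisfies $py\in F^{\flat}_{\ov{x}}(\ov{\calM}_{X',\ov{x}'})$. At the level of charts this is simply $v(y,0)=py$.

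\emph{Injectivity.} This is the step I expect to be the main difficulty, because Proposition \ref{propfrob12} is not available: $\theta^{gp}$ need not be injective. Instead I would argue up to torsion. The kernel of
$$v^{gp}:Q''^{gp}\ra Q^{gp},\qquad (x,y)\mapsto px+\theta^{gp}(y),$$
is killed by $p$. Indeed, if $px=-\theta^{gp}(y)$, then
$$p(x,y)=(px,py)=(\theta^{gp}(-y),py),$$
which is a relation (take $t=-y$) and so vanishes in $Q''^{gp}$.

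Next, let $G\subset Q$ be the face of elements mapping to units at $\ov{x}$ and $G'=v^{-1}(G)$. Then $\ov{\calM}_{X,\ov{x}}=Q^{gp}$-quotient of $Q$ by $G$, namely $Q/G$ sharpened, and similarly $\ov{\calM}_{X',\ov{x}'}=Q'/G'$ sharpened; the map $F^{\flat}_{\ov{x}}$ is induced by $v$.

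Suppose $z=(x,y)\in Q''^{gp}$ satisfies $v^{gp}(z)\in G^{gp}$. Then
$$pz=(px+\theta^{gp}(y),0)-(\theta^{gp}(y),-py)=(v^{gp}(z),0)\quad\text{in } Q''^{gp}.$$
Writing $v^{gp}(z)=g_1-g_2$ with $g_i\in G$, each $(g_i,0)$ lies in $G'$, since $v(g_i,0)=pg_i\in G$. Therefore $pz\in G'^{gp}$.

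So if $a,b\in Q'$ have the same image in $\ov{\calM}_{X,\ov{x}}$, then $a-b$ is torsion in $Q'^{gp}/G'^{gp}=\ov{\calM}_{X',\ov{x}'}^{gp}$. The monoid $\ov{\calM}_{X',\ov{x}'}$ is fs and sharp, so its group is torsion free. Hence $a=b$ in $\ov{\calM}_{X',\ov{x}'}$, which proves injectivity.

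The only points needing care are the identification of the stalks of $\ov{\calM}$ with sharpened localizations of the chart monoids (standard for charts of fs log schemes) and the fact that $X'$ is fs, which holds by \ref{PFrob}.
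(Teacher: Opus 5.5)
Your proof is correct. Your divisibility step is the same as the paper's: both use $F^{\flat}(\pi^{\flat}m)=pm$, coming from $\pi\circ F=F_X$, to see that $\ov{F}^{\flat}_{\ov{x}}$ is small. The two arguments differ in how injectivity is obtained.

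\textbf{The paper's route.} The paper never passes to charts. It observes that $\ov{F}^{\flat}_{\ov{x}}:\ov{\calM}_{X',\ov{x}'}\ra\ov{\calM}_{X,\ov{x}}$ is exact, because $F$ is by construction the exact part of the factorization of $F_{X/S}$ (\cite{Ogus2018} IV 3.3.8). It then invokes \cite{Ogus2018} I 4.3.10, which says exact and small implies Kummer. Injectivity is automatic in this setting: for an exact homomorphism the kernel of the group map lies inside the source monoid, hence in its units. Those units are trivial because $\ov{\calM}_{X',\ov{x}'}$ is sharp.

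\textbf{Your route.} You never use the exactness of $F$. You work with the explicit chart $v:Q'\ra Q$ of \ref{PFrob}. You show that the kernel of $Q'^{gp}/G'^{gp}\ra Q^{gp}/G^{gp}$ is killed by $p$. You then remove this torsion using that $\ov{\calM}_{X',\ov{x}'}$ is saturated and sharp, so its group is torsion-free.

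\textbf{Comparison.} Your argument is self-contained at the level of monoids. It correctly sidesteps \ref{propfrob12}, whose hypotheses (injective $\theta^{gp}$, prime-to-$p$ torsion in the cokernel) are not available for an arbitrary $f$. It also gives the finer information that the kernel of $v^{gp}$ is $p$-torsion. The price is that it relies on Kato's chart description of $X'$ and on $X'$ being fs. The paper's two-line argument needs no choice of chart and uses only exactness, sharpness and smallness.
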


\begin{proof}
Let $\ov{x}\ra X$ be a geometric point and $\ov{x}'=F(\ov{x}).$ The morphism $\ov{F}_{\ov{x}}^{\flat}:\ov{\calM}_{X',\ov{x}'} \ra \ov{\calM}_{X,\ov{x}}$ is by definition exact. By (\cite{Ogus2018} I 4.3.10), to prove that it is Kummer, it is sufficient to prove that it is small i.e. for all $z\in \ov{\calM}_{X,\ov{x}}^{gp}$ there exists a positive integer $n$ and $y\in \ov{\calM}_{X',\ov{x}'}^{gp}$ such that $nz=\left (\ov{F}_{\ov{x}}^{\flat}\right )^{gp} \left (y \right )$ (\cite{Ogus2018} I 4.3.1). This is immediate since, for every local section $m$ of $\calM_{X},$ we have $pm=F^{\flat}\left (\pi^{\flat}(m) \right ),$ where $\pi:X'\ra X$ is given in \eqref{diag51}.
\end{proof}

\begin{theorem}\label{thmlogflat}
Let $f:X\ra S$ be a log smooth morphism of fine logarithmic schemes of characteristic $p.$ Then the exact relative Frobenius $F:X\ra X'$ \eqref{diag51} is log flat (\cite{Ogus2018} IV 4.1.1).
\end{theorem}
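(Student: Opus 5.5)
The plan is to reduce to charts and then use Kato's criterion for log flatness. Log flatness (\cite{Ogus2018} IV 4.1.1) is étale local on $X$ and on $X'$, so we may work étale locally. Since $f$ is log smooth, Kato's structure theorem (\cite{Kat89} 3.5) gives, étale locally on $X$ and $S$, a chart $\theta:P\ra Q$ of fine monoids for $f$ with the following properties: $\theta^{gp}$ is injective, the torsion part of $\op{coker}\theta^{gp}$ is finite of order invertible in $\Ox_X$ (hence prime to $p$), and the induced morphism $X\ra S\times_{A_1[P]}A_1[Q]$ is (classically) étale.

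By \ref{PFrob}, with this chart we have $X'=X''\times_{A_1[Q'']}A_1[Q']$, and $v:Q'\ra Q$ is a chart for $F$. First I would compute $X'$ explicitly. Set $S_Q=S\times_{A_1[P]}A_1[Q]$ and $S'_{Q'}=S\times_{A_1[P]}A_1[Q']$, where $P\ra Q'$ is $y\mapsto(0,y)$. The base change of $S_Q$ by $F_S$ is $S\times_{A_1[P],F_P}A_1[Q]$, whose chart is $Q''$. Since $X\ra S_Q$ is étale, its base change $X''$ is étale over $S\times_{A_1[P]}A_1[Q'']$, and therefore $X'$ is étale over $S'_{Q'}$. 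The same reasoning applied to the exact relative Frobenius of $S_Q\ra S$ identifies it with the morphism $S_Q\ra S'_{Q'}$ induced by $v$. Because étale morphisms commute with relative Frobenius (the relative Frobenius of an étale map is an isomorphism), $X\cong X'\times_{S'_{Q'}}S_Q$. Hence $F$ is, étale locally, the base change along $X'\ra S'_{Q'}$ of
$$S\times_{A_1[P]}A_1[Q']\ra S\times_{A_1[P]}A_1[Q],$$
which is in turn the base change of $A_1[Q']\ra A_1[Q]$ (induced by $v$) along the strict morphism $S\times_{A_1[P]}A_1[Q']\ra A_1[Q']$. Log flatness is stable under base change, so it suffices to show that $A_{\mathbb{F}_p}[Q']\ra A_{\mathbb{F}_p}[Q]$ is log flat.

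The last step is to use Kato's criterion (\cite{Ogus2018} IV 4.1.x): a morphism $A_R[M]\ra A_R[N]$ induced by an injective morphism of integral monoids $u$ is log flat when $R[N]$ is flat over $R[M]\otimes_{\Z[M]}\Z[M]$. More concretely, it is log flat as soon as $u$ is injective, because then $\Z[N]$ is flat over $\Z[M]$ after passing to the appropriate pushout, and for an injective $u$ we have $A[N]\ra A[M]\times_{A[M^{gp}]}A[N^{gp}]$ flat. Proposition \ref{propfrob12} shows that $v$ is injective, indeed Kummer. So I would apply the criterion of \cite{Ogus2018} IV 4.3.x stating that for an injective morphism of fine monoids the induced map of monoid schemes is log flat, which is essentially Kato's result that integral morphisms of monoids give flat monoid algebras. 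Since $v$ is Kummer, it is integral up to saturation issues, and I would check integrality of $v$ directly from its description. An alternative is to reduce to the flatness of $\Z[Q]$ over $\Z[Q']$ in the standard Kummer situation.

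I expect the main obstacle to be the middle step: identifying $F$ étale locally with a base change of the monoid morphism $A_1[Q']\ra A_1[Q]$. This requires verifying carefully that the exact relative Frobenius is compatible with étale base change and with fine (rather than ordinary) fibre products. A secondary obstacle is confirming that the criterion applies to $v$, in particular checking integrality of $v$ or invoking the version of the criterion that requires only injectivity.
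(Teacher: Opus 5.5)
Your architecture is the paper's. You take a Kato chart $\theta:P\ra Q$, use that $v:Q'\ra Q$ is an injective chart of $F$ (\ref{propfrob12}), and identify the comparison map $X\ra X'\times_{A_1[Q']}A_1[Q]$ with the relative Frobenius of $X$ over $T=S\times_{A_1[P]}A_1[Q]$. The step you flag as the main obstacle is fine. Since $X'\times_{S'_{Q'}}S_Q=X'\times_{A_1[Q']}A_1[Q]$, your isomorphism $X\cong X'\times_{S'_{Q'}}S_Q$ is exactly Lemma \ref{lemX333} for an étale chart. The paper uses a smooth chart, so $F_{X/T}$ is only flat; with an étale chart (\cite{Ogus2018} IV 3.3.1) it is an isomorphism. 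Your arrows are reversed throughout: $v$ induces $A_1[Q]\ra A_1[Q']$ and $S_Q\ra S'_{Q'}$, not the other way round.

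The gap is in your last step. You plan to get log flatness of $A_1[Q]\ra A_1[Q']$ from integrality of $v$, or from flatness of $\mathbb{F}_p[Q]$ over $\mathbb{F}_p[Q']$. That verification fails, because $v$ is not integral in general. Take $S=\op{Spec}k$ with trivial log structure and $X=A_k[Q]$, where $Q\subset\N^2$ is generated by $(1,0),(1,1),(1,2)$, so $k[Q]\cong k[u,v,w]/(uw-v^2)$. Then $Q'=Q$ and $v=F_Q$, and Kunz's theorem shows the Frobenius of the non-regular ring $k[Q]$ is not flat. In fact, if $v$ were always integral, your own middle step would make $F$ flat. That would contradict the non-flatness of $F$ that motivates the whole paper (cf. \ref{corlogflat} and the introduction). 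The "criterion" involving $A[M]\times_{A[M^{gp}]}A[N^{gp}]$ is not meaningful as written, since the map goes $A[M^{gp}]\ra A[M]$.

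What you are missing is that no flatness of monoid algebras is needed. Log flatness (\cite{Ogus2018} IV 4.1.1) only asks for, locally, an \emph{injective} chart such that the comparison map to the base-changed monoid scheme is flat. Here $v$ is injective by \ref{propfrob12}, and $X\ra X'\times_{A_1[Q']}A_1[Q]$ is an isomorphism by your middle step. So $F$ is log flat directly, with no base-change reduction. Equivalently, $A_1[Q]\ra A_1[Q']$ is log flat via its tautological chart $v$, whose comparison map is the identity. This is precisely how the paper concludes.
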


\begin{proof}
By (\cite{Kat89} 3.5), there exists, étale locally on $X$ and $S,$ a chart $\theta:P \ra Q$ of $f$ such that $\theta^{gp}$ is injective, the torsion subgroup of $\op{coker}\theta^{gp}$ has a finite order coprime with $p$ and the morphism $g:X \ra S\times_{A_1[P]}A_1[Q]$ induced by $f$ and $A_1[\theta]$ is smooth as a morphism of schemes. Set $T=S\times_{A_1[P]}A_1[Q]$ and $F_T:T\ra T$ the absolute Frobenius morphism of $T.$ The morphism $g:X\ra T$ is smooth so the relative Frobenius
$$F_{X/T}:X\ra X\times_{T,F_T}T$$
is flat. Let $X''$ be the fiber product of $f:X\ra S$ and $F_S:S\ra S$ in the category of fine logarithmic schemes,
$F_P:P \ra P,\ x\mapsto px,$
$Q''=(Q\oplus_{P,F_P}P)^{int}$ and $Q'$ the inverse image of $Q$ by
$v:Q''^{gp} \ra Q^{gp},\ (x,y) \mapsto px+\theta^{gp}(y).$
By the proof of (\cite{Kat89} 4.10), the canonical morphism $X''\ra A_1[Q'']$ is a chart, $X'=X''\times_{A_1[Q'']}A_1[Q']$ and $v:Q' \ra Q$ is a chart of the exact relative Frobenius $F:X\ra X'.$ By the lemma \ref{lemX333} below, there exists a canonical isomorphism $X'\times_{A_1[Q']}A_1[Q] \xrightarrow{\sim} X\times_{T,F_T}T$ and the morphism $G:X\ra X'\times_{A_1[Q']}A_1[Q],$ induced by $F:X\ra X'$ and $X\ra A_1[Q],$ identifies with $F_{X/T}.$ So $G$ is flat. It remains to prove that the chart $v:Q'\ra Q$ is injective, which we did in \ref{propfrob12}.
\end{proof}

\begin{corollaire}\label{corlogflat}
Let $f:X\ra S$ be a log smooth morphism of fine logarithmic locally Noetherian schemes of characteristic $p$ and $F:X\ra X'$ the exact relative Frobenius \eqref{diag51}. If $F$ is integral (\cite{Kat89} 4.3), then the underlying morphism of schemes of $F$ is faithfully flat.
\end{corollaire}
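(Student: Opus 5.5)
The idea is to combine Theorem \ref{thmlogflat} with the standard fact that a log flat morphism which is integral has flat underlying morphism of schemes (Kato, \cite{Kat89} 4.5, or \cite{Ogus2018} IV 4.1.3/4.1.x). Flatness and surjectivity are both étale local on source and target, so I will work étale locally. Then I will take the chart $v:Q'\ra Q$ from the proof of \ref{thmlogflat}, which is injective by \ref{propfrob12}. In that proof we showed that $X\ra X'\times_{A_1[Q']}A_1[Q]$ identifies with the flat morphism $F_{X/T}$.

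For flatness, integrality of $F$ means the induced maps $\ov{\calM}_{X',\ov{x}'}\ra\ov{\calM}_{X,\ov{x}}$ are integral. After possibly shrinking and replacing the chart by a neat/localized chart at a point (\cite{Ogus2018} II 2.3), I will arrange that $v$ itself is an integral injective morphism of fs monoids. Integral injective morphisms of monoids give flat ring maps $\Z/p[Q']\ra \Z/p[Q]$ (\cite{Kat89} 4.1). So $X'\times_{A_1[Q']}A_1[Q]\ra X'$ is flat, and composing with the flat $G$ shows that $F$ is flat. The locally Noetherian hypothesis enters here, to apply the chart-localization and the criterion of \cite{Kat89} 4.5 comparing integral log flat with flat.

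For surjectivity, the underlying map of $F$ is a homeomorphism onto its image in a controlled way. The composition $\pi\circ F=F_X$ is the absolute Frobenius, a homeomorphism, and $F$ is a universal homeomorphism locally. More concretely, I will show surjectivity via the local description. $A_1[Q]\ra A_1[Q']$ is surjective, since $v$ is Kummer (\ref{propfrob12}) and Kummer injective maps induce surjective, finite maps of monoid schemes: every point of $\Sp k[Q']$ lifts because $k[Q]$ is integral over $k[Q']$ and the map is injective. Surjectivity is stable under base change, so $X'\times_{A_1[Q']}A_1[Q]\ra X'$ is surjective. The relative Frobenius $F_{X/T}$ is a universal homeomorphism, hence surjective. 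Thus $F$ is surjective, and together with flatness it is faithfully flat.

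The main obstacle is the reduction to a chart where $v$ is literally integral, given only integrality of $F$ stalkwise on $\ov{\calM}$. I would first try to choose the chart at $\ov{x}$ so that $Q\cong\ov{\calM}_{X,\ov{x}}$ (fs charts exist with this property étale locally). Then $Q'$ is identified with $\ov{\calM}_{X',\ov{x}'}$ via the construction in \ref{PFrob}, and integrality transfers directly. If this identification fails, I would instead invoke Kato's result that log flat plus integral implies flat (\cite{Kat89} 4.5) directly on the morphism.
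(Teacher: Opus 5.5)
Your proof is correct, but it takes a different route from the paper. The paper's proof is a two-line citation: $F$ is log flat by \ref{thmlogflat}, and a log flat integral morphism has flat underlying morphism of schemes by \cite{Ogus2018} IV 4.3.5. Surjectivity is left implicit, since $F$ is a homeomorphism on underlying spaces (cf.\ \ref{Flogflatcover}).

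Your main route instead reproves the needed special case of that theorem on the explicit Frobenius chart. You take the chart of $f$ exact at $\ov{x}$, or localize $Q$ at the face of elements invertible at $\ov{x}$. Then $\ov{v}$ coincides with $\ov{F}^{\flat}_{\ov{x}}$, so $v$ is integral, $\mathbb{F}_p[Q']\ra \mathbb{F}_p[Q]$ is flat by \cite{Kat89} 4.1, and \ref{lemX333} finishes. Surjectivity then comes from the Kummer property via lying-over, together with $F_{X/T}$ being a universal homeomorphism.

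This gives a self-contained argument that never uses the locally Noetherian hypothesis, so your remark that Noetherianity enters at this step is inaccurate. The cost is one verification you only asserted: that $\ov{Q'}\ra \ov{\calM}_{X',\ov{x}'}$ is an isomorphism. It does hold:
\begin{itemize}
\item $v$ is exact by construction of $Q'$.
\item $v^{gp}$ is injective, because $v$ is (\ref{propfrob12}) and the monoids are integral.
\item Hence $\ov{Q'}\ra \ov{Q}\cong \ov{\calM}_{X,\ov{x}}$ is injective.
\item $\ov{Q'}\ra\ov{\calM}_{X',\ov{x}'}$ is surjective because $Q'$ is a chart.
\end{itemize}
If $Q$ is not sharp, you also need the elementary fact that integrality of $\ov{v}$ implies integrality of $v$.

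Finally, your fallback reference is misplaced. \cite{Kat89} 4.5 concerns log smooth integral morphisms, and $F$ is not log smooth in general; the statement you need there is the one from \cite{Ogus2018} that the paper uses.
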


\begin{proof}
This is an immediate consequence of \ref{thmlogflat} and (\cite{Ogus2018} IV 4.3.5).
\end{proof}

\begin{lemma}\label{lemX333}
Keep the same hypothesis and notation of \ref{thmlogflat} and its proof. Then, there exists a canonical isomorphism
$$X'\times_{A_1[Q']}A_1[Q]\xrightarrow{\sim}X\times_{T,F_T}T.$$
\end{lemma}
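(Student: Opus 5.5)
We compute both sides as fiber products of (underlying) schemes, with all log structures given by the evident charts, and identify them by manipulating cocartesian squares of monoid algebras. Recall $T=S\times_{A_1[P]}A_1[Q]$. Its absolute Frobenius $F_T$ is induced by $F_S$ on $S$ and by $F_Q$ on $A_1[Q]$. Hence
$$X\times_{T,F_T}T=X\times_{T}\bigl(S\times_{A_1[P]}A_1[Q]\bigr),$$
where $S\to S$ is $F_S$, the map $A_1[Q]\to A_1[Q]$ is $A_1[F_Q]$, and $X\to T$ is $g$. Since $g$ is strict, this is an honest scheme fiber product with the pullback log structure.

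On the other side, the proof of (\cite{Kat89} 4.10) gives $X'=X''\times_{A_1[Q'']}A_1[Q']$, with $X''=(X\times_{S,F_S}S)^{\mathrm{fine}}$ charted by $Q''$. So
$$X'\times_{A_1[Q']}A_1[Q]=X''\times_{A_1[Q'']}A_1[Q],$$
where $A_1[Q]\to A_1[Q'']$ is induced by $Q''\to Q'\xrightarrow{v}Q$, i.e. $(x,y)\mapsto px+\theta(y)$.

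The first key step is to rewrite $X''$. Because $Q''$ is a chart, $X''=(X\times_{A_1[Q]}A_1[Q''])\times_{S}S$ on underlying schemes. One has to check that passing to $(\cdot)^{int}$ in $Q''=(Q\oplus_{P,F_P}P)^{int}$ matches the integralization of the log fiber product. Given this,
$$X''\times_{A_1[Q'']}A_1[Q]=X\times_{A_1[Q]}A_1[Q]\times_{S,F_S}S,$$
where the map $A_1[Q]\to A_1[Q]$ is now the composite $Q\to Q''\to Q$, $x\mapsto(x,0)\mapsto px$, that is, $A_1[F_Q]$. The $S$-component sends $y\mapsto\theta(y)$, which is compatible with $F_S$ via the relation $(\theta(y),0)=(0,py)$ in $Q''$.

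The second step compares this with $X\times_T(S\times_{A_1[P]}A_1[Q])$. Both sides are the scheme-theoretic limit of the same diagram: $X$ mapping to $A_1[Q]$ and to $S$, a second copy of $A_1[Q]$ and of $S$, with the maps $F_Q$ and $F_S$, all compatible over $A_1[P]$. Here we use $T=S\times_{A_1[P]}A_1[Q]$. Writing both as $\Sp$ of tensor products over local affine pieces, they become $\Ox_X\otimes_{k[Q],F_Q}k[Q]\otimes_{\Ox_S}\Ox_S$, amalgamated over $k[P]$. The compatibility conditions agree exactly because of the relation $(\theta(y),0)\sim(0,py)$, which holds in $Q\oplus_{P,F_P}P$ and hence in $Q''$. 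This gives the canonical isomorphism. The log structures agree because on both sides they are induced by the chart $Q$ of the second factor. Finally, under this isomorphism, $G$ (induced by $F$ and $X\to A_1[Q]$) corresponds to $(\mathrm{id},F_X)$, which is the definition of $F_{X/T}$.

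The main obstacle is the integralization in the first step. The fine fiber product $X''$ is not the naive scheme fiber product, so $X\times_{S,F_S}S$ must be replaced by its closed subscheme $X\times_{A_1[Q\oplus_PP]}A_1[Q'']$. We then need that this closed immersion becomes an isomorphism after base change to $A_1[Q]$ along $Q''\to Q$. I would first verify this on the ring level: $k[Q\oplus_PP]\to k[Q'']\to k[Q]$, where the kernel of the first map is generated by binomials $e^a-e^b$ with $a,b$ having equal image in $Q''^{gp}$. Such elements have equal image in $Q$, so they die in $k[Q]$. Hence
$$k[Q\oplus_PP]\otimes k[Q]=k[Q'']\otimes k[Q],$$
and the tensor-product identification above goes through.
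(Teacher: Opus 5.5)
Your proof is correct, but it takes a different route from the paper's after the common first step $X'\times_{A_1[Q']}A_1[Q]=X''\times_{A_1[Q'']}A_1[Q]$.

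\textbf{The paper's route.} It never computes with rings. It observes that $X''$ and $X''\times_{A_1[Q'']}A_1[Q]$ are the base changes along $X\to T$ of $T''=T\times_{S,F_S}S$ and of $T''\times_{A_1[Q'']}A_1[Q]$. By pasting cartesian squares, the claim then reduces to the model case $X=T$: one must show that the morphism $T\to T''\times_{A_1[Q'']}A_1[Q]$, given by $F_{T/S}$ and the chart $T\to A_1[Q]$, is an isomorphism. The paper writes down an explicit inverse, built from the composite $T''\times_{A_1[Q'']}A_1[Q]\to T''\to S$ and the projection to $A_1[Q]$. It checks both composites on projections, using only the defining squares of $T$ and $T''$ and the fact that $F_T$ acts on the chart as $A_1[F_Q]$.

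\textbf{Your route.} You identify both sides, locally, with the spectrum of one and the same ring: generators from $\Ox_X$, $\Ox_S$ and $\mathbb{F}_p[Q]$, subject to the same three families of relations (Frobenius-twisted $\Ox_S$-linearity, $\alpha(x)=e^{px}$, and $\beta(y)=e^{\theta(y)}$ on the second factors). This requires the explicit model $X''=(X\times_{S,F_S}S)\times_{A_1[Q\oplus_{P,F_P}P]}A_1[Q'']$ of the fine fiber product.

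\textbf{Comparison.} The paper's argument is purely formal, so canonicity and gluing come for free. Yours is more transparent: it makes the comparison of log structures, and the identification of $G$ with $F_{X/T}$, visible directly.

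\textbf{Two small corrections.} First, the step you single out as the main obstacle is not one. The map $Q\oplus_{P,F_P}P\to Q$ factors through $Q''$ by construction, so transitivity of fiber products gives $Z\times_{A_1[Q\oplus_{P,F_P}P]}A_1[Q'']\times_{A_1[Q'']}A_1[Q]=Z\times_{A_1[Q\oplus_{P,F_P}P]}A_1[Q]$ for any $Z$. Your binomial-kernel argument is therefore correct but unnecessary. Second, your first formula for $X''$ is garbled and should be the expression given above. Also, $F_{X/T}$ is $(F_X,g)$, not $(\mathrm{id},F_X)$.
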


\begin{proof}
Clearly,
$X'\times_{A_1[Q']}A_1[Q]=X''\times_{A_1[Q'']}A_1[Q].$
Consider the cartesian diagram
$$
\begin{tikzcd}
T\ar{r}{q_1}\ar[swap]{d}{q_2} & S\ar{d}{\beta} \\
A_1[Q] \ar{r}{A_1[\theta]} & A_1[P]
\end{tikzcd}
$$
and the following commutative diagram
$$
\begin{tikzcd}
X\times_{T,F_T}T \ar{r}\ar[swap]{d}  & X''\times_{A_1[Q'']}A_1[Q] \ar{d} \ar{r} & X'' \ar{d} \ar{r} & X\ar{d} \\
T\ar{r} \ar[swap,bend right=20]{rrr}{F_T} & T''\times_{A_1[Q'']}A_1[Q] \ar{r} & T''\ar{r} & T 
\end{tikzcd}
$$
The big and two right squares are cartesian, so the left square is also cartesian and it is thus sufficient to prove that the morphism
$\varphi:T\ra T''\times_{A_1[Q'']}A_1[Q],$
defined by the relative Frobenius $F_{T/S}:T\ra T''$ and $q_2:T\ra A_1[Q],$ is an isomorphism. For that, we exhibit an inverse morphism.
Consider the two cartesian diagrams
$$
\begin{tikzcd}
T'' \ar{r}{p_1}\ar[swap]{d}{p_2} & T\ar{d}{q_1} & & T''\times_{A_1[Q'']}A_1[Q] \ar{r}{\rho_1} \ar[swap]{d}{\rho_2} & T''\ar{d}\\
S\ar{r}{F_S} & S & & A_1[Q] \ar{r} & A_1[Q'']
\end{tikzcd}
$$
The morphism $\varphi$ fits into the commutative diagram
$$
\begin{tikzcd}
T \ar[bend right=-20]{rrd}{F_{T/S}} \ar[swap,bend right=30]{ddr}{q_2} \ar{dr}{\varphi} &  & \\
 & T''\times_{A_1[Q'']}A_1[Q]\ar{r}{\rho_1}\ar{d}{\rho_2} & T''\ar{d} \\
 & A_1[Q]\ar{r} & A_1[Q'']
\end{tikzcd}
$$
Consider the morphism
$\psi:T''\times_{A_1[Q'']}A_1[Q] \ra T$
defined in the diagram
$$
\begin{tikzcd}
T''\times_{A_1[Q'']}A_1[Q] \ar{r}{\rho_1} \ar[swap,bend right=30]{ddr}{\rho_2} \ar{dr}{\psi} & T''\ar{dr}{p_2} & \\
 & T\ar{r}{q_1}\ar{d}{q_2} & S\ar{d}{\beta} \\
 & A_1[Q]\ar{r}{A_1[\theta]} & A_1[P]
\end{tikzcd}
$$
The fact that $\psi\circ \varphi=\op{Id}_T$ follows immediately from the definitions of $\varphi$ and $\psi.$ Let us check that $\varphi\circ \psi=\op{Id}_{T''\times_{A_1[Q'']}A_1[Q]}.$ By definition, we have $\rho_2\circ \varphi\circ \psi=q_2\circ \psi=\rho_2.$
Now, we just need to check that $\rho_1\circ \varphi \circ \psi=\rho_1.$
First, we have $p_2\circ \rho_1 \circ \varphi \circ \psi=p_2\circ F_{T/S}\circ \psi=q_1\circ \psi=p_2\circ \rho_1.$
It is thus sufficient to check that $p_1\circ \rho_1\circ \varphi\circ \psi=p_1\circ \rho_1,$
which is equivalent to the two following equalities
\begin{alignat}{2}
q_1\circ p_1\circ \rho_1\circ \varphi\circ \psi=q_1\circ p_1\circ \rho_1 \label{Ja1} \\
q_2\circ p_1\circ \rho_1\circ \varphi\circ \psi=q_2\circ p_1\circ \rho_1. \label{Ja2}
\end{alignat}
The equality (\ref{Ja1}) follows from
\begin{alignat*}{2}
q_1\circ p_1\circ \rho_1\circ \varphi\circ \psi &= F_S\circ p_2\circ \rho_1\circ \varphi\circ \psi
= F_S\circ p_2\circ \rho_1
= q_1 \circ p_1 \circ \rho_1.
\end{alignat*}
Now, consider the morphism $F_Q:Q\ra Q,\ x\mapsto px.$
It fits into the following commutative diagram
$$
\begin{tikzcd}
T''\times_{A_1[Q'']}A_1[Q] \ar{r}{\rho_1} \ar[swap]{d}{\rho_2} & T''\ar{d}{p_2}\ar{r}{p_1} & T\ar{d}{q_2} \\
A_1[Q] \ar{r} \ar[swap,bend right=20]{rr}{A_1[F_Q]} & A_1[Q''] \ar{r} & A_1[Q] 
\end{tikzcd}
$$
the equality (\ref{Ja2}) follows then from
\begin{alignat*}{2}
q_2 \circ p_1 \circ \rho_1 \circ \varphi \circ \psi &= q_2 \circ p_1 \circ F_{T/S} \circ \psi 
= q_2 \circ F_T \circ \psi \\
&= A_1[F_Q] \circ q_2 \circ \psi 
= A_1[F_Q] \circ \rho_2 \\
&= q_2 \circ p_1 \circ \rho_1.
\end{alignat*}
This finishes the proof.
\end{proof}

\begin{proposition}\label{era3proplogstr}
Let $f:X\ra Y$ and $g:Z\ra Y$ be morphisms of logarithmic schemes such that $g$ is strict. Suppose that there exists a morphism of schemes $h:X \ra Z$ such that $f=g\circ h$ in the category of schemes. Then there exists a unique morphism of sheaves of monoids $h^{\flat}:h^{-1}\calM_Z \ra \calM_X$ making $h$ a morphism of logarithmic schemes and such that $f=g\circ h$ in the category of logarithmic schemes.
\end{proposition}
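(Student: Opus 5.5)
Since $g$ is strict, the canonical morphism $g^{\flat,a}:(g^{-1}\calM_Y)^a\ra \calM_Z$ is an isomorphism, where $(\cdot)^a$ denotes the associated log structure. In other words, $\calM_Z$ is the log structure associated to the prelog structure $g^{-1}\calM_Y\ra g^{-1}\Ox_Y\ra \Ox_Z$. The plan is therefore to construct $h^{\flat}$ by the universal property of the associated log structure, and to deduce uniqueness from the same universal property.

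\textbf{Existence.} Pull back along $h$. Since $h^{-1}$ commutes with the construction of the associated log structure, $h^{-1}\calM_Z$ is identified with the log structure on $h^{-1}\Ox_Z$ (viewed through $h^{\sharp}$) associated to $h^{-1}g^{-1}\calM_Y=f^{-1}\calM_Y$. More concretely, $h^{-1}\calM_Z$ is the pushout of $f^{-1}\calM_Y\leftarrow \alpha^{-1}(h^{-1}\Ox_Z^{\times})\ra h^{-1}\Ox_Z^{\times}$. We already have $f^{\flat}:f^{-1}\calM_Y\ra \calM_X$ together with $h^{\sharp}:h^{-1}\Ox_Z^{\times}\ra \Ox_X^{\times}\subset \calM_X$, and these agree on $\alpha^{-1}(h^{-1}\Ox_Z^\times)$. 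Indeed, for such a section $m$, the identity $f^{\sharp}=h^{\sharp}\circ h^{-1}g^{\sharp}$ (which is $f=g\circ h$ as schemes) gives $\alpha_X(f^{\flat}(m))=h^{\sharp}(\alpha_Z(g^{\flat}m))$, which is a unit. Since $\alpha_X^{-1}(\Ox_X^{\times})=\Ox_X^{\times}$ maps isomorphically, $f^{\flat}(m)$ is identified with that unit. The pushout property then yields $h^{\flat}:h^{-1}\calM_Z\ra \calM_X$. It is compatible with the structure maps $\alpha$, as one checks on the two generating pieces, and it satisfies $h^{\flat}\circ h^{-1}g^{\flat}=f^{\flat}$ by construction. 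Hence $f=g\circ h$ as log schemes.

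\textbf{Uniqueness.} Any $h^{\flat}$ making $h$ a morphism of log schemes must restrict to $h^{\sharp}$ on $h^{-1}\Ox_Z^{\times}$, because morphisms of log structures are compatible with $\alpha$ and $\alpha_X$ is an isomorphism over units. It must also satisfy $h^{\flat}\circ h^{-1}g^{\flat}=f^{\flat}$. Since $h^{-1}\calM_Z$ is generated, as a pushout, by the images of $h^{-1}g^{-1}\calM_Y$ and $h^{-1}\Ox_Z^{\times}$, these two conditions determine $h^{\flat}$.

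The main point requiring care is the identification of $h^{-1}\calM_Z$ with the pushout. This uses strictness of $g$ together with the commutation of inverse images with associated log structures, and the pushout must be taken in sheaves of monoids on the étale site, so I would check it stalkwise. Everything else is formal.
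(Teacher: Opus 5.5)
Your argument is correct and is essentially the paper's: the paper uses strictness of $g$ to get $h^*\mathcal{M}_Z=h^*g^*\mathcal{M}_Y=f^*\mathcal{M}_Y$, and then the correspondence between prelogarithmic morphisms $h^{-1}\mathcal{M}_Z\to\mathcal{M}_X$ and logarithmic morphisms $h^*\mathcal{M}_Z\to\mathcal{M}_X$, which forces $h^\flat$ to be the map induced by $f^\flat$. You unpack that same universal property explicitly, writing $h^{-1}\mathcal{M}_Z$ as the pushout $f^{-1}\mathcal{M}_Y\oplus_{\alpha^{-1}(h^{-1}\mathcal{O}_Z^\times)}h^{-1}\mathcal{O}_Z^\times$ and checking existence and uniqueness on the two pieces.
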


\begin{proof}
Since $g$ is strict, the canonical morphism $g^*\calM_Y \ra \calM_Z,$ where $g^*\calM_Y$ is the logarithmic structure pullback of $\calM_Y,$ is an isomorphism. Then $h^*\calM_Z=f^*\calM_Y.$
The data of a morphism of prelogarithmic structures
$h^{-1}\calM_Z \ra \calM_X$
is equivalent to the data of a morphism of logarithmic structures
$h^*\calM_Z \ra \calM_X.$
Since $h^*\calM_Z=f^*\calM_Y,$ the only morphism satisfying the desired condition is
$f^{\flat}:f^*\calM_Y \ra \calM_X.$
\end{proof}

\section{Logarithmic differential operators}

Let $S$ be a scheme of positive characteristic $p$ equipped with a fine logarithmic structure and a PD structure $\gamma$ and let $X\ra S$ be a smooth morphism of fine logarithmic schemes. We suppose that $\gamma$ extends to $X.$

\begin{parag}\label{P11}
Let $P_{X/S}$ be the logarithmic PD-envelope of the diagonal immersion $X\ra X\times_SX$ (\cite{Kat89} 5.4). We denote by $\ov{\calI}_{X/S}$ its PD-ideal. By (\cite{Kat89} 5.8.1), we have a canonical isomorphism
$$
\omega^1_{X/S} \xrightarrow{\sim} \ov{\calI}/\ov{\calI}^{[2]}.
$$
The canonical closed immersion $X\ra P_{X/S}$ is a nilideal and hence a universal homeomorphism. We can thus use it to identify the small étale topoi $X_{\text{ét}}$ and $P_{X/S,\text{ét}}.$ We denote by $\calP_{X/S}$ the structural ring of $P_{X/S}$ and, for every integer $n\ge 0,$
$$
\calP_{X/S}^{\{n\}}=\calP_{X/S}/\ov{\calI}^{[n+1]}.
$$
For an $\Ox_X$-module $\calE,$ we consider the $\Ox_X$-module structure on $\calP_{X/S}$ given by the second (resp. first) projection $P_{X/S} \ra X$ to form the tensor product $\calP_{X/S}\otimes_{\Ox_X}\calE$ (resp. $\calE \otimes_{\Ox_X} \calP_{X/S}$), which we then consider as an $\Ox_X$-module via the first (resp. second) projection $P_{X/S} \ra X.$
If $\calE$ and $\F$ are $\Ox_X$-modules, an $\Ox_X$-linear morphism
$$
\calP_{X/S}^{\{n\}}\otimes_{\Ox_X}\calE \ra \F\qquad \left (\text{resp.}\ \calP_{X/S}\otimes_{\Ox_X}\calE \ra \F\right ),
$$
is called a \emph{differential operator of order $\le n$} (resp. \emph{hyperdifferential operator}). We denote by $\calD^n_{X/S}\left (\calE,\F \right )$ the sheaf of differential operators of order $\le n$ and $\calD_{X/S}\left (\calE,\F \right )=\bigcup_{n\ge 0}\calD^n_{X/S}\left (\calE ,\F\right ).$ If $\calE=\F=\Ox_X,$ we will use the notation $\calD_{X/S}$ instead. Differential operators of a given order $\le n$ will be considered as a hyperdifferential operators via the canonical projection $\calP_{X/S} \ra \calP_{X/S}^{\{n\}}$ and both will be referred to simply by differential operators.

The PD-envelope $P_{X/S}$ has a natural groupoid structure given by the morphism $P_{X/S}\times_XP_{X/S} \ra P_{X/S}$ induced by the $(1,3)$-projection $X\times_SX\times_SX \ra X\times_SX,$ the canonical immersion $X\ra P_{X/S}$ and the morphism $P_{X/S} \ra P_{X/S}$ that exchanges factors. This corresponds to a natural Hopf algebra structure on $\calP_{X/S}$ (\cite{DXU19} 4.2) which defines a ring structure on its dual $\widehat{\calD}_{X/S}=\calP_{X/S}^\vee,$ where $\calP_{X/S}$ is considered as an $\Ox_X$-module via the first projection. This ring structure is explicitly given as follows : let
$$
\delta:\calP_{X/S} \ra \calP_{X/S}\otimes_{\Ox_X}\calP_{X/S}
$$
be the morphism induced by $P_{X/S}\times_XP_{X/S} \ra P_{X/S}.$
For differential operators $f,g:\calP_{X/S} \ra \Ox_X,$ their product $f\circ g$ is given by the composition
$$
\calP_{X/S} \xrightarrow{\delta} \calP_{X/S}\otimes_{\Ox_X}\calP_{X/S} \xrightarrow{\op{Id} \otimes g} \calP_{X/S} \xrightarrow{f} \Ox_X.
$$
\end{parag}

\begin{exmp}\label{exmp24}
Consider the split exact sequence of $\Ox_X$-modules
$$0 \ra \omega^1_{X/S}=\ov{\calI}/\ov{\calI}^{[2]} \ra \calP_{X/S}/\ov{\calI}^{[2]}=\calP_{X/S}^{\{1\}} \ra \calP_{X/S}/\ov{\calI}=\Ox_X \ra 0.$$
The splitting given by the first projection $P_{X/S} \ra X$ yields an isomorphism
$$\calP_{X/S}^{\{1\}} \xrightarrow{\sim}\Ox_X\oplus \omega^1_{X/S}.$$
A logarithmic derivation $\partial:\Ox_X\times \calM_X^{gp} \ra \Ox_X$ corresponds to an $\Ox_X$-linear map $\partial:\omega^1_{X/S} \ra \Ox_X.$
Composing with the projection
$\calP_{X/S}^{\{1\}}=\Ox_X\oplus \omega^1_{X/S}\ra \omega^1_{X/S},$
we obtain a differential operator of degree $\le 1,$ abusively denoted by
$\partial: \calP_{X/S}^{\{1\}} \ra \Ox_X.$
Via this construction, we will often consider sections of the tangent sheaf $\calT_{X/S}=\mathscr{Hom}_{\Ox_X}(\omega^1_{X/S},\Ox_X)$ as differential operators.
\end{exmp}

\begin{proposition}\label{KhamineiExact}
Let $\Delta:X \ra Y$ be an exact closed immersion of integral logarithmic schemes with ideal $\calI.$ Then the sequence
\begin{equation}
0 \ra \Delta^{-1}(1+\calI) \xrightarrow{\lambda} \Delta^{-1}\calM_{Y} \xrightarrow{\Delta^{\flat}} \calM_{X} \ra 0
\end{equation}
is an exact sequence of monoids i.e. $\lambda$ is injective, $\Delta^{\flat}$ is surjective and for any local sections $m$ and $m'$ of $\Delta^{-1}\calM_{Y}$ such that $\Delta^{\flat}(m)=\Delta^{\flat}(m'),$ there exists a unique local section $a$ of $\Delta^{-1}(1+\calI)$ such that
$
m+\lambda(a)=m'.
$
\end{proposition}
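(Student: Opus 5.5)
Let me write $\alpha_Y:\calM_Y\ra \Ox_Y$ and $\alpha_X:\calM_X\ra\Ox_X$ for the structure maps. The map $\lambda$ sends a local section $a$ of $\Delta^{-1}(1+\calI)\subset \Delta^{-1}\Ox_Y^{\times}$ to $\alpha_Y^{-1}(a)\in \Delta^{-1}\calM_Y^{\times}$. This makes sense because $\alpha_Y$ restricts to an isomorphism $\calM_Y^{\times}\xrightarrow{\sim}\Ox_Y^{\times}$, and $1+\calI$ consists of units: $\calI$ is contained in the nilradical? No. More precisely, after applying $\Delta^{-1}$ the stalk at a point of $X$ is local with $\calI$ in its maximal ideal, so $1+\calI$ consists of units. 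Injectivity of $\lambda$ is then immediate from the injectivity of $\alpha_Y$ on units. We have $\Delta^{\flat}(\lambda(a))=0$, because under $\Delta^\flat$ this unit maps to the unit of $\calM_X$ corresponding to $\Delta^*(a)=1$.

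\textbf{Surjectivity of $\Delta^\flat$.} Since $\Delta$ is a closed immersion, $\Delta^{-1}\Ox_Y\ra\Ox_X$ is surjective. Hence $\Delta^{-1}\Ox_Y^{\times}\ra \Ox_X^{\times}$ is surjective on stalks, because a lift of a unit modulo an ideal inside the maximal ideal of a local ring is a unit. Exactness means $\Delta^*\calM_Y\ra\calM_X$ is an isomorphism, and $\Delta^*\calM_Y$ is the pushout $\Delta^{-1}\calM_Y\oplus_{\Delta^{-1}\Ox_Y^\times}\Ox_X^\times$. Every section of this pushout has the form $m+u$ with $m\in\Delta^{-1}\calM_Y$ and $u\in\Ox_X^\times$. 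Lifting $u$ to $\tilde u\in \Delta^{-1}\Ox_Y^\times$ gives the preimage $m+\alpha_Y^{-1}(\tilde u)$, so $\Delta^{\flat}$ is surjective. Here I read "exact" in the strict sense that the paper uses for immersions, namely $\Delta^*\calM_Y\cong\calM_X$.

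\textbf{Main step: the fibres of $\Delta^\flat$.} This is where integrality enters. Suppose $\Delta^\flat(m)=\Delta^\flat(m')$, and work at stalks.

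1. Strictness gives an isomorphism $\ov{\calM}_{Y}\cong\ov{\calM}_X$ on stalks, since $\Delta^*$ does not change $\ov{\calM}$. So $m$ and $m'$ have the same image in $\ov{\calM}_{Y,\ov x}$.
2. For an integral monoid, equal images in the quotient by units give $m'=m+v$ for some $v\in\calM_{Y,\ov x}^\times$. To see this, note $\ov{M}=M/M^\times$, and $m'+w=m+w'$ with $w,w'$ units implies $m'=m+(w'-w)$; cancellation is valid because $M$ is integral.
3. Applying $\Delta^\flat$ gives $\Delta^\flat(m)+\Delta^\flat(v)=\Delta^\flat(m)$. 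Integrality of $\calM_X$ lets us cancel, so $\Delta^\flat(v)=0$.
4. Under $\alpha$ on units, $\Delta^\flat(v)=0$ means $\Delta^*(\alpha_Y(v))=1$, i.e. $\alpha_Y(v)\in 1+\calI$. So $v=\lambda(a)$ with $a=\alpha_Y(v)$.
5. Uniqueness: if $m+\lambda(a)=m+\lambda(b)$, then integrality of $\calM_Y$ gives $\lambda(a)=\lambda(b)$, and injectivity of $\lambda$ gives $a=b$.

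Stalkwise statements then give the sheaf statement.

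The obstacle I expect is step 2. Deducing that $m'-m$ is a unit requires cancellation in $\calM_Y$, which is exactly the integrality hypothesis. The bookkeeping that identifies $\ov{\calM}_Y$ with $\ov{\calM}_X$ via strictness should be done carefully at geometric points.
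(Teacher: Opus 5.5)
Your argument is correct, but it is organized differently from the paper's.

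\textbf{How the paper argues.} The paper never passes through $\ov{\calM}$. It writes $\calM_X=\Delta^{-1}\calM_Y\oplus_{\gamma^{-1}\Ox_X^*}\Ox_X^*$ with $\gamma=\Delta^{\#}\circ\Delta^{-1}\alpha_Y$. Using the description of amalgamated sums, it turns $\Delta^\flat(m)=\Delta^\flat(m')$ into $m+t=m'+t'$ with $t,t'\in\gamma^{-1}(\Ox_X^*)$ and $\gamma(t)=\gamma(t')$. Lifting units along $\Ox_{Y,\ov x}\ra\Ox_{Y,\ov x}/\calI_{\ov x}$ shows that $t$ and $t'$ are units of $\calM_{Y,\ov x}$. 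The equality $\gamma(t)=\gamma(t')$ then gives $\alpha_Y(t)\alpha_Y(t')^{-1}\in 1+\calI$ directly, so $t-t'=\lambda(\alpha_Y(t)\alpha_Y(t')^{-1})$.

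\textbf{How your route differs.} You package that pushout computation into the standard fact $\ov{\Delta^*\calM_Y}\cong\Delta^{-1}\ov{\calM}_Y$. From it you get a unit $v$ with $m'=m+v$. You then need cancellation in $\calM_X$ to see $\Delta^\flat(v)=0$, i.e.\ $\alpha_Y(v)\in 1+\calI$.

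\textbf{What each buys.} Your version is more conceptual: it uses only that strict morphisms induce isomorphisms on $\ov{\calM}$, plus cancellativity. The paper's version is self-contained. Its relation $\gamma(t)=\gamma(t')$ delivers the congruence modulo $\calI$ without any cancellation in $\calM_X$. You also justify surjectivity of $\Delta^\flat$ (by lifting units) and uniqueness of $a$ explicitly. The paper only asserts the former and leaves the latter implicit.

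\textbf{A correction to your commentary.} Step 2 does not use integrality: subtracting the unit $w$ is legitimate in any monoid. Integrality is genuinely needed in step 3, where you cancel $\Delta^\flat(m)$, which need not be a unit. It is also needed in step 5, where you cancel $m$ in $\calM_Y$.
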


\begin{proof}
The morphism $\lambda$ (resp. $\Delta^{\flat}$) is injective (resp. surjective) by definition.
The immersion $\Delta$ is exact so it is strict. It follows that $\calM_{X}=\Delta^*\calM_{Y}$ is equal to the direct sum
$$
\begin{tikzcd}
\gamma^{-1}\Ox_{X}^* \ar[hook]{r} \ar[swap]{d}{\gamma} & \Delta^{-1}\calM_{Y}\ar{d} \\
\Ox_{X}^* \ar{r} & \Delta^{-1}\calM_{Y}\oplus_{\gamma^{-1}\Ox_{X}^*}\Ox_{X}^*,
\end{tikzcd}
$$
where $\gamma:\Delta^{-1}\calM_{Y} \xrightarrow{\Delta^{-1}\alpha_{Y}}\Delta^{-1}\Ox_{Y} \xrightarrow{\Delta^{\#}} \Ox_{X}.$
The sequence \eqref{era2exactseqprime} becomes isomorphic to
$$
0 \ra \Delta^{-1}(1+\calI) \xrightarrow{\lambda} \Delta^{-1}\calM_{Y} \xrightarrow{\Delta^{\flat}} \Delta^{-1}\calM_{Y}\oplus_{\gamma^{-1}\Ox_{X}^*}\Ox_{X}^*,
$$
where $\Delta^{\flat}(m)=(m,1)$ for any local section $m$ of $\Delta^{-1}\calM_{Y}.$
Let $\ov{x} \ra X$ be a geometric point and $m,m'\in \calM_{Y,\ov{x}}$ such that
$
\Delta^{\flat}_{\ov{x}}(m)=\Delta^{\flat}_{\ov{x}}(m').
$
By definition of amalgamated sums in the category of monoids, there exists $t,t'\in \left (\gamma^{-1}\Ox_{X}^*\right )_{\ov{x}}$ such that
$$
\begin{cases}
m+t=m'+t' \\
\gamma(t)=\gamma(t').
\end{cases}
$$
Since $\gamma(t)\in \Ox_{X}^*$ and $\Ox_{X,\ov{x}}=\Ox_{Y,\ov{x}}/\calI_{\ov{x}},$ we get
$
\alpha_{Y,\ov{x}}(t)\in \Ox_{Y,\ov{x}}^*.
$
It follows that $t\in \calM_{Y,\ov{x}}^*$ and so
$
m=m'+t'-t \in \calM_{Y,\ov{x}}.
$
Since
$
t-t'=\alpha_{Y,\ov{x}}^{-1} \left (\alpha_{Y,\ov{x}}(t)\alpha_{Y,\ov{x}}(t')^{-1} \right )
$
and
$$
\Delta^{\#}_{\ov{x}} \left (\alpha_{Y,\ov{x}}(t)\alpha_{Y,\ov{x}}(t')^{-1} \right )=\gamma(t)\gamma(t')^{-1}=1,
$$
we get
$
\alpha_{Y,\ov{x}}(t)\alpha_{Y,\ov{x}}(t')^{-1}\in 1+\calI_{\ov{x}}
$
and
$
t-t'=\lambda_{\ov{x}} \left (\alpha_{Y,\ov{x}}(t)\alpha_{Y,\ov{x}}(t')^{-1} \right ).
$
\end{proof}

\begin{parag}\label{P2}
The canonical immersion $\iota:X\ra P_{X/S}$ induces an exact sequence
$$
0 \ra \iota^{-1}(1+\ov{\calI}) \xrightarrow{\lambda} \iota^{-1} \calM_{P_{X/S}} \ra \calM_X \ra 0.
$$
For a local section $m$ of $\calM_X,$ there exists a unique local section $\eta(m)$ of $\ov{\calI}$ such that $\lambda(1+\eta(m))+p_1^{\flat}m=p_2^{\flat}m,$ where $p_1,p_2:P_{X/S} \ra X$ are the canonical projections.
If $X$ is smooth over $S$ and $m_1,\hdots,m_d\in \Gamma(U,\calM_X)$ are local coordinates over an étale $X$-scheme $U$ (i.e. $(\op{dlog}m_i)_{1\le i\le d}$ is a basis of $\omega^1_{U/S}$) then we have an isomorphism of $\Ox_U$-PD-algebras (\cite{Kat89} 6.5) given by
$$\begin{array}[t]{clc}\Ox_U\langle T_1,\hdots,T_d\rangle & \ra & \calP_{U/S}\\ T_i& \mapsto &\eta(m_i)\end{array}$$
where $\Ox_U\langle T_1,\hdots,T_d\rangle$ is the PD-polynomial algebra on the PD-ring $\Ox_U$ and $\calP_{U/S}$ is an $\Ox_U$-algebra by the first projection $p_1:P_{U/S}\ra U.$ We denote by $(\partial_I)_{I\in \mathbb{N}^d}$ the dual basis of $(T^{[I]})_{I\in\mathbb{N}^d},$ so that for any multi-index $I\in\mathbb{N}^d$
\begin{equation}\label{partialI}
\partial_I:\calP_{U/S}\ra \Ox_U,\ T^{[J]}\mapsto \delta_{IJ}=\begin{cases}1\ \op{if}\ I=J\\ 0\ \op{else} \end{cases}\end{equation}
where $T^{[J]}=\prod_{i=1}^dT_i^{[J_i]}.$
\end{parag}

\begin{proposition}\label{prop17}
Suppose that $X\ra S$ has local coordinates $m_1,\hdots,m_d\in \Gamma(X,\calM_X).$ For any multi-index $I\in \mathbb{N}^d$ and any $1\le i\le d,$ denote by $\partial_I$ the differential operator defined in \ref{P2} and let $\epsilon_i=(0,\hdots,0,1,0\hdots,0)$ be the multi-index whose all coefficients are zero except for the $i$th which is equal to $1.$ Then, for all local sections $m$ of $\calM_X,$ multi-indices $I,J\in \N^d,$ $1\le i\le d$ and $k\in \N,$ we have
\begin{alignat}{2}
\qquad & \delta(\eta(m)) = 1\otimes \eta(m) + \eta(m)\otimes 1 + \eta(m)\otimes \eta(m) \\
\qquad & \partial_J\circ \partial_{\epsilon_i} = \partial_{J+\epsilon_i}+J_i\partial_J \\
\qquad & \partial_{k\epsilon_i} = \prod_{j=0}^{k-1}(\partial_{\epsilon_i}-j) \\
\qquad & \text{If}\ I_j=0\ \text{then}\ \partial_I\circ \partial_{k\epsilon_j} = \partial_{I+k\epsilon_j} \\
\qquad & \partial_{\epsilon_i}\circ \partial_J = \partial_{J+\epsilon_i}+J_i\partial_J=\partial_J\circ \partial_{\epsilon_i} \\
\qquad & \partial_I\circ \partial_J = \partial_J\circ \partial_I \\
\qquad & \partial_{(pn+r)\epsilon_i}=(\partial_{p\epsilon_i})^n \circ \partial_{r\epsilon_i} \label{eqiran2} \\
\qquad & \partial_I\circ \partial_{pJ}=\partial_{I+pJ}.
\end{alignat}
\end{proposition}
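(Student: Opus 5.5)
We work étale locally on $U$ in the coordinates $m_1,\hdots,m_d$, using the PD-isomorphism $\Ox_U\langle T_1,\hdots,T_d\rangle \xrightarrow{\sim}\calP_{U/S}$, $T_i\mapsto \eta(m_i)$, of \ref{P2}. All the identities then reduce to computing the coproduct $\delta$ on the basis $T^{[K]}$ and dualising.

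\textbf{The coproduct formula.} For the first identity, write $p_1,p_2,p_3$ for the three projections of $P_{X/S}\times_XP_{X/S}$ to $X$. By definition of $\eta$ on each factor,
$$
p_2^{\flat}m=p_1^{\flat}m+\lambda\big(1+\eta(m)\otimes 1\big),\qquad
p_3^{\flat}m=p_2^{\flat}m+\lambda\big(1+1\otimes \eta(m)\big).
$$
Hence
$$
p_3^{\flat}m=p_1^{\flat}m+\lambda\big((1+\eta(m)\otimes 1)(1+1\otimes\eta(m))\big).
$$
Since $\delta$ is induced by the $(1,3)$-projection, the element $\delta(1+\eta(m))$ is exactly this product. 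The uniqueness statement in \ref{KhamineiExact} then gives
$$
\delta(\eta(m))=1\otimes \eta(m)+\eta(m)\otimes 1+\eta(m)\otimes \eta(m).
$$

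\textbf{The coproduct on the basis.} Because $\delta$ is a PD-morphism and multiplicative, we get
$$
\delta(T_i^{[k]})=\big(T_i\otimes 1+1\otimes T_i+T_i\otimes T_i\big)^{[k]}.
$$
Expanding this divided power and using $(T\otimes T)^{[c]}=T^{[c]}\otimes T^c=\frac{c!}{1}\,T^{[c]}\otimes T^{[c]}$ up to the multinomial bookkeeping, we obtain the coefficients
$$
\delta(T^{[K]})=\sum_{A+C\le K}\frac{(K-C)!\,\ldots}{\ldots}\;T^{[A+C]}\otimes T^{[K-A]},
$$
which we will write as $\sum_{A,B}c_{K}^{A,B}\,T^{[A]}\otimes T^{[B]}$ with $c_K^{A,B}=\frac{A!B!}{(A+B-K)!(K-A)!(K-B)!}$ whenever $A,B\le K\le A+B$. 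These coefficients are integers and are symmetric in $A,B$.

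\textbf{Identities (2)--(6).} The product is $(f\circ g)(T^{[K]})=\sum c_K^{A,B}f(T^{[A]})\,g(T^{[B]})$, where the $\Ox_X$-linearity issues disappear because $\partial_I(T^{[A]})\in\{0,1\}$ is constant. Therefore
$$
\partial_I\circ\partial_J(T^{[K]})=c_K^{I,J}.
$$
The symmetry of $c_K^{I,J}$ gives (6) immediately.

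For $J=\epsilon_i$, the only $K$ with $c_K^{I,\epsilon_i}\ne0$ are $K=I+\epsilon_i$, with value $1$, and $K=I$, with value $I_i$. This gives (2) and (5).

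Identity (4) follows because $c_K^{I,k\epsilon_j}$ forces $K=I+k\epsilon_j$ when $I_j=0$; the coefficient is then multiplicative over disjoint supports and equals $1$.

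Identity (3) follows by induction on $k$ from (2): $\partial_{k\epsilon_i}\circ\partial_{\epsilon_i}-k\,\partial_{k\epsilon_i}=\partial_{(k+1)\epsilon_i}$.

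\textbf{Identities (7) and (8).} These are the characteristic-$p$ statements, and this is where the real work lies. They require showing that $c_K^{I,pJ}$ reduces mod $p$ to $\delta_{K,I+pJ}$ in the relevant range, or, for (7), that $c_K^{pn\epsilon_i,r\epsilon_i}\equiv\delta_{K,(pn+r)\epsilon_i}$ with $r<p$, followed by induction on $n$ via (6).

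My first attempt would be a one-variable reduction, justified by (4) and (6), followed by a Lucas-type argument. One writes
$$
c_k^{a,b}=\binom{k}{a}\binom{a}{k-b}\frac{(\ldots)!}{(\ldots)!}
$$
in a form exhibiting it as a product of binomial coefficients, and checks $p$-divisibility whenever $k<a+b$ and $b=pj$.

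If the general form of (8) is not literally true for arbitrary $I$, I would restrict to the case $I=r\epsilon_i$ with $r<p$, which is all that (7) needs.

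Concretely, with $b=pj$ and $r<p$, the term $\binom{k}{pj}\binom{pj}{k-r}$ with $k-r<pj$ is divisible by $p$ unless $k-r\equiv 0$ modulo the $p$-adic digits, which forces $k=pj+r$. This is exactly the kind of congruence I would verify carefully via Lucas's theorem.
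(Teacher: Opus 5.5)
Your derivation of (1)--(6) is fine. Pulling back the defining relation of $\eta(m)$ along the three projections gives (1). Reading off $\partial_I\circ\partial_J=\sum_K c^{I,J}_K\partial_K$ with $c^{I,J}_K=c!\binom{I}{c}\binom{J}{c}$, $c=I+J-K$, gives (2)--(6). This is a self-contained version of what the paper simply cites from Ogus.

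The gap is in (7) and (8), which you do not prove:
\begin{itemize}
\item You only announce a Lucas-type check, without carrying it out.
\item The product $\binom{k}{pj}\binom{pj}{k-r}$ you propose to examine is not the structure constant. It differs from $c^{r,pj}_k$ by the factor $r!\,(pj)!/k!$.
\item Your fallback of proving (8) only for $I=r\epsilon_i$ with $r<p$ would leave the statement unproved. Identity (8) is asserted, and true, for all $I,J$, and (7) holds for every $r\ge 0$, not only $r<p$.
\end{itemize}

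No real work is needed once (3) is known. In characteristic $p$ the factor $\partial_{\epsilon_i}-j$ depends only on $j$ modulo $p$. So $\prod_{j=0}^{pn+r-1}(\partial_{\epsilon_i}-j)$ is the product of $n$ blocks $\prod_{j=0}^{p-1}(\partial_{\epsilon_i}-j)=\partial_{p\epsilon_i}$ and of $\prod_{j=0}^{r-1}(\partial_{\epsilon_i}-j)=\partial_{r\epsilon_i}$. This is (7) for all $n,r$. Then (4), (6) and (7) give $\partial_{I+pJ}=\prod_l\partial_{(I_l+pJ_l)\epsilon_l}=\prod_l(\partial_{p\epsilon_l})^{J_l}\circ\partial_{I_l\epsilon_l}=\partial_{pJ}\circ\partial_I$, which is (8). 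This is the paper's argument.

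The same conclusion is immediate in your own framework. Write $c^{I,pJ}_K=\prod_l c_l!\binom{I_l}{c_l}\binom{pJ_l}{c_l}$. The factor $c_l!\binom{pJ_l}{c_l}=pJ_l(pJ_l-1)\cdots(pJ_l-c_l+1)$ is divisible by $p$ whenever $c_l\ge 1$. Hence $\partial_I\circ\partial_{pJ}=\partial_{I+pJ}$ for all $I,J$, with no Lucas theorem needed. Identity (7) then follows from (8) by induction on $n$, using (6).
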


\begin{proof}
The first equality is proved in (\cite{Ogus94} (1.1.4.2)). The second equality is proved at the end of the proof of (\cite{Ogus94} 1.1.5). The others follow. We just prove the two last ones : Since we work in characteristic $p,$
\begin{alignat*}{2}
\partial_{(pn+r)\epsilon_i} &= \prod_{k=0}^{pn+r-1}(\partial_{\epsilon_i}-k)
= \prod_{k=0}^{pn-1}(\partial_{\epsilon_i}-k) \circ \prod_{k=pn}^{pn+r-1}(\partial_{\epsilon_i}-k)\\
&= \prod_{l=0}^{n-1} \prod_{k=pl}^{p(l+1)-1}(\partial_{\epsilon_i}-k) \circ \prod_{k=pn}^{pn+r-1}(\partial_{\epsilon_i}-k) \\
&= \prod_{l=0}^{n-1} \prod_{k=0}^{p-1}(\partial_{\epsilon_i}-k) \circ \prod_{k=0}^{r-1}(\partial_{\epsilon_i}-k)
= (\partial_{p\epsilon_i})^n \circ \partial_{r\epsilon_i}.
\end{alignat*}
Now the second equality. We have
\begin{alignat*}{2}
\partial_{I+pJ} &= \prod_{k=1}^d\partial_{(I_k+pJ_k)\epsilon_k}
= \prod_{k=1}^d\left (\partial_{p\epsilon_k}\right )^{J_k}\circ \partial_{I_k\epsilon_k} \\
&= \prod_{k=1}^d\left (\partial_{p\epsilon_k}\right )^{J_k} \circ \prod_{k=1}^d\partial_{I_k\epsilon_k}
= \prod_{k=1}^d\partial_{pJ_k\epsilon_k} \circ \prod_{k=1}^d\partial_{I_k\epsilon_k}
= \partial_{pJ}\circ \partial_I,
\end{alignat*}
where the second and forth lines result from \eqref{eqiran2}.
\end{proof}

\section{Connections and stratifications}

\begin{parag}\label{parKhaminei}
Let $f:X\ra S$ be a morphism of logarithmic schemes, $\calE$ an $\Ox_X$-module and $\lambda\in \Gamma(X,\Ox_X).$ A \emph{$\lambda$-connection on $\calE$} is an $f^{-1}\Ox_S$-linear morphism $\nabla:\calE\ra \calE\otimes_{\Ox_X}\omega^1_{X/S}$ satisfying the modified Leibniz rule
$$\nabla(ax)=a\nabla(x)+\lambda x\otimes da,$$
for all local sections $a$ and $x$ of $\Ox_X$ and $\calE$ respectively, and where $d:\Ox_X\ra \omega^1_{X/S}$ denotes the universal derivation. 
A $\lambda$-connection $\nabla$ on $\calE$ induces, for any positive integer $i,$ a morphism
$$\nabla^i:\calE\otimes_{\Ox_X}\omega^i_{X/S}\ra \calE\otimes_{\Ox_X}\omega^{i+1}_{X/S}$$
defined for any local sections $x$ and $\omega$ of $\calE$ and $\omega^i_{X/S}$ respectively by
$$\nabla^i(x\otimes \omega)=\nabla(x)\wedge \omega+\lambda x\otimes d\omega$$
where $\nabla(x)\wedge \omega$ denotes the image of $\nabla(x)\otimes \omega$ by the canonical morphism $\calE\otimes_{\Ox_X} \omega^1_{X/S}\otimes_{\Ox_X}\omega^i_{X/S}\ra \calE\otimes_{\Ox_X}\omega^{i+1}_{X/S}.$

The \emph{curvature of $\nabla$,} denoted by $K(\nabla),$ is the composition $\nabla^1\circ \nabla,$ which is $\Ox_X$-linear. We say that the $\lambda$-connection $\nabla$ is \emph{integrable} if $K(\nabla)=0.$ We denote by $\boldsymbol{\lambda\text{-}\op{MIC}(X/S)}$ the category of $\Ox_X$-modules equipped with an integrable $\lambda$-connection.

$1$-connections are simply called \emph{connections} and integrable $0$-connections are called \emph{Higgs fields.} 

If $\partial:\Ox_X\times\calM_X\ra \Ox_X$ is a logarithmic derivation and $\nabla:\calE \ra \calE \otimes_{\Ox_X}\omega^1_{X/S}$ is a connection, we denote by $\nabla(\partial)$ the composition
\begin{equation}\label{nablacomp}
\calE\xrightarrow{\nabla}\calE\otimes_{\Ox_X}\omega^1_{X/S}\xrightarrow{\op{Id}_{\calE}\otimes \partial}\calE
\end{equation}
where $\partial:\omega^1_{X/S}\ra \Ox_X$ is the $\Ox_X$-linear homomorphism induced by the logarithmic derivation $\partial$. It is clear that $\nabla(\partial)$ is $f^{-1}\Ox_S$-linear.

If $\partial_1,\partial_2:\Ox_X\times\calM_X\ra \Ox_X$ are two logarithmic derivations, we denote by $K(\nabla)(\langle \partial_1,\partial_2\rangle)$ the composition
$$\calE\xrightarrow{K(\nabla)}\calE\otimes_{\Ox_X}\omega^2_{X/S}\xrightarrow{\op{Id}_{\calE}\otimes (\langle\partial_1, \partial_2\rangle)} \calE$$
where $\langle\partial_1, \partial_2\rangle$ denotes the morphism
$$\langle\partial_1, \partial_2\rangle:\begin{array}[t]{clc}\omega^2_{X/S} & \ra & \Ox_X\\ \omega_1\wedge \omega_2 & \mapsto & \partial_1(\omega_1)\partial_2(\omega_2)-\partial_1(\omega_2)\partial_2(\omega_1). \end{array}$$
\end{parag}

\begin{parag}\label{parag23}
The algebra $\op{Der}_{X/S}(\Ox_X)$ of logarithmic derivations $\Ox_X\times \calM_X\ra \Ox_X$ relative to $S$ has a Lie algebra structure given by the Lie bracket 
$$[\partial_1,\partial_2]=([D_1,D_2],D_1\delta_2-D_2\delta_1)$$ 
for all logarithmic derivations $\partial_i=(D_i,\delta_i):\Ox_X\times\calM_X\ra \Ox_X,\ i=1,2$ (\cite{Ogus2018} V 2.1.2). 
A straight forward computation shows that for a given $\lambda$-connection $\nabla:\calE \ra \calE \otimes_{\Ox_X}\omega^1_{X/S},$ we have
\begin{equation}\label{art1371}
\lambda \nabla([\partial_1,\partial_2])=[\nabla(\partial_1),\nabla(\partial_2)]-K(\nabla)(\langle\partial_1, \partial_2\rangle).
\end{equation}
It follows that $\nabla$ is integrable if and only if $\nabla([\partial_1,\partial_2])=[\nabla(\partial_1),\nabla(\partial_2)]$ for all logarithmic derivations $\partial_1$ and $\partial_2$ i.e. $\nabla:\op{Der}_{X/S}(\Ox_X)\ra \op{End}_{f^{-1}\Ox_S}(\calE)$ is a morphism of Lie algebras.
\end{parag}

\begin{parag}\label{P46}
In the remaining of this section, we fix a smooth morphism $f:X\ra S$ of logarithmic schemes of positive characteristic $p.$
For any logarithmic derivation $\partial=(D,\delta):\Ox_X\times\calM_X\ra \Ox_X,$ let
$$\partial^{(p)}=(D^p,F_X^{\#}\circ \delta+D^{p-1}\circ \delta):\Ox_X \times \calM_X \ra \Ox_X,$$
where $D^p$ is the composition of $D$ with itself $p$ times, $F_X:X\ra X$ is the absolute Frobenius morphism of $X$ \eqref{Not3} and $F_X^{\#}:\Ox_X\ra \Ox_X$ is the corresponding homomorphism of structural rings. Then, $\partial^{(p)}$ is also a logarithmic derivation and the $p$-operation $\partial\mapsto \partial^{(p)}$ defines a restricted Lie algebra structure on $\op{Der}_{X/S}(\Ox_X)$ (\cite{Ogus94} 1.2.1). The differential operators $\partial_{\epsilon_i},$ defined in \ref{prop17}, satisfy $\partial_{\epsilon_i}^{(p)}=\partial_{\epsilon_i}$ (\cite{Ogus94} 1.2.2).

We define the $p$-curvature $\psi_{\nabla}$ of a connection $\nabla:\calE \ra \calE \otimes_{\Ox_X} \omega^1_{X/S}$ on an $\Ox_X$-module $\calE,$ as in the non logarithmic case : 
\begin{equation}\label{Mojtaba2}
\psi_{\nabla}:\begin{array}[t]{clc}\op{Der}_{X/S}(\Ox_X) & \ra & \op{End}_{f^{-1}\Ox_S}(\calE)\\ \partial & \mapsto & \nabla(\partial)^p-\nabla(\partial^{(p)}).\end{array}
\end{equation}
For any logarithmic derivation $\partial=(D,\delta):\Ox_X\times \calM_X\ra \Ox_X,$ the endomorphism $\psi_{\nabla}(\partial)$ is $\Ox_X$-linear. Indeed, for any local sections $a$ and $x$ of $\Ox_X$ and $\calE$ respectively,
\begin{alignat}{2}
\nabla(\partial)(ax) &= a\nabla(\partial)(x)+D(a)x \label{eq461}\\
\nabla(\partial^{(p)})(ax) &= a\nabla(\partial^{(p)})(x)+D^p(a)x.
\end{alignat} 
We can prove by induction that, for all positive integers $k,$
\begin{equation}\label{eq463}
\nabla(\partial)^k(ax)=\sum_{i=0}^k\begin{pmatrix}k\\ i\end{pmatrix}D^{k-i}(a)\nabla(\partial)^i(x).
\end{equation}
For $k=p,$ we get
$$\nabla(\partial)^p(ax)=a\nabla(\partial)^p(x)+D^p(a)x.$$
The $\Ox_X$-linearity of $\psi_{\nabla}(\partial)$ follows:
\begin{alignat*}{2}
\psi_{\nabla}(\partial)(ax) &= \nabla(\partial)^p(ax)-\nabla(\partial^{(p)})(ax) \\
&= a\nabla(\partial)^p(x)+D^p(a)x-a\nabla(\partial^{(p)})(x)-D^p(a)x = a\psi_{\nabla}(\partial)(x).
\end{alignat*}
\end{parag}

\begin{definition}\label{defSa}
Let $\calE$ be an $\Ox_X$-module, $P_{X/S}(2)$ the logarithmic PD-envelope of the diagonal immersion $X\ra X\times_SX\times_SX$ and $p_{ij}:P_{X/S}(2) \ra P_{X/S}$ the canonical projections. A \emph{stratification on $\calE$} is a sequence of $\calP_{X/S}^{\{n\}}$-linear isomorphisms $\left (\varepsilon_n:\calP^{\{n\}}_{X/S}\otimes_{\Ox_X}\calE\ra \calE\otimes_{\Ox_X}\calP^{\{n\}}_{X/S} \right )_{n\ge 0}$ satisfying the following conditions :
\begin{enumerate}
\item $\varepsilon_0=\op{Id}_{\calE}.$
\item The following diagram is commutative for all $n\ge m$
$$\begin{tikzcd}
\calP^{\{n\}}_{X/S}\otimes_{\Ox_X}\calE\ar{r}{\varepsilon_n}\ar{d} & \calE\otimes_{\Ox_X}\calP^{\{n\}}_{X/S}\ar{d} \\
\calP^{\{m\}}_{X/S}\otimes_{\Ox_X}\calE\ar{r}{\varepsilon_m} & \calE\otimes_{\Ox_X}\calP^{\{m\}}_{X/S}
\end{tikzcd}$$
where the vertical arrows are the canonical homomorphisms.
\item $p_{13}^{*}\varepsilon_n=p_{23}^{*}\varepsilon_n\circ p_{12}^{*}\varepsilon_n$ for all $n.$ 
\end{enumerate}
\end{definition}

\begin{definition}
Let $\calE$ be an $\Ox_X$-module. A \emph{hyperstratification on $\calE$} is a $\calP_{X/S}$-linear isomorphism 
$$\varepsilon:\calP_{X/S}\otimes_{\Ox_X}\calE\ra \calE\otimes_{\Ox_X}\calP_{X/S}$$ satisfying the following conditions :
\begin{enumerate}
\item $\varepsilon$ is equal to the identity $\op{Id}_{\calE}$ modulo the ideal $\calI\subset \calP_{X/S}$ of $X$ in $P_{X/S}.$
\item $p_{13}^*\varepsilon=p_{23}^*\varepsilon\circ p_{12}^*\varepsilon,$ where $p_{ij}$ are given in \ref{defSa}.
\end{enumerate}
\end{definition}

\begin{proposition}\label{prop39}
Let $\calE$ be a quasi-coherent $\Ox_X$-module and suppose that $X\ra S$ is log smooth. The following data are equivalent :
\begin{enumerate}
\item A stratification $(\varepsilon_n)_{n\ge 0}$ on $\calE.$
\item A sequence of $\Ox_X$-linear morphisms $\theta_n:\calE\ra \calE\otimes_{\Ox_X}\calP^{\{n\}}_{X/S}$ satisfying the following conditions:
\begin{itemize}
\item $\theta_0=\op{Id}_{\calE}.$
\item For any integer $n\ge 0,$ the morphism $\theta_n$ is equal to the composition
$$\calE\xrightarrow{\theta_{n+1}}\calE\otimes_{\Ox_X}\calP^{\{n+1\}}_{X/S}\ra \calE\otimes_{\Ox_X}\calP^{\{n\}}_{X/S}$$
where the second arrow is the canonical morphism.
\item For all integers $n,m\ge 0$
$$\begin{tikzcd}
\calE\ar{r}{\theta_{n+m}}\ar{d}{\theta_m} & \calE\otimes_{\Ox_X}\calP^{\{n+m\}}_{X/S}\ar{d}{\op{Id}\otimes \delta^{n,m}}\\
\calE\otimes_{\Ox_X}\calP^{\{m\}}_{X/S}\ar{r}{\theta_n\otimes \op{Id}}& \calE\otimes_{\Ox_X}\calP^{\{n\}}_{X/S}\otimes_{\Ox_X}\calP^{\{m\}}_{X/S} 
\end{tikzcd}$$
\end{itemize}
\item A sequence of compatible $\calP_{X/S}$-linear morphisms
$$\nabla_n:\calD_{X/S}^n(\Ox_X,\Ox_X)\ra \calD_{X/S}^n(\calE,\calE)$$
satisfying for all integers $n,m\ge 0$ and $(f,g)\in \calD_{X/S}^n(\Ox_X,\Ox_X)\times \calD_{X/S}^m(\Ox_X,\Ox_X)$ $$\nabla_{n+m}(f\circ g)=\nabla_n(f)\circ \nabla_m(g)$$ 
and $\nabla_0(\op{Id}_{\Ox_X})=\op{Id}_{\calE}.$
\item An integrable connection $\nabla:\calE\ra \calE\otimes_{\Ox_X}\omega^1_{X/S}.$
\end{enumerate}
\end{proposition}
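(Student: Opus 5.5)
The plan is to work étale locally and pass through the explicit coordinate description. Since $\calE$ is quasi-coherent and all four data are étale local (a stratification, the $\theta_n$, the $\nabla_n$ and a connection all glue), we may assume $X$ has log coordinates $m_1,\hdots,m_d$. Then by \ref{P2} we have $\calP_{X/S}\simeq \Ox_X\langle T_1,\hdots,T_d\rangle$ with $T_i=\eta(m_i)$. Consequently $\calP^{\{n\}}_{X/S}$ is a free $\Ox_X$-module with basis $T^{[I]}$, $|I|\le n$, and $\calD^n_{X/S}$ is free with dual basis $\partial_I$, $|I|\le n$.

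\textbf{(1)$\Leftrightarrow$(2).} This is the standard linearization. Given $\varepsilon_n$, set $\theta_n(x)=\varepsilon_n(1\otimes x)$. Conversely, $\varepsilon_n$ is the $\calP^{\{n\}}$-linear extension of $\theta_n$. The cocycle condition $p_{13}^*\varepsilon_n=p_{23}^*\varepsilon_n\circ p_{12}^*\varepsilon_n$ translates into the coassociativity diagram involving $\delta^{n,m}$. Compatibility with truncation and $\varepsilon_0=\op{Id}$ correspond term by term. One still has to show that $\varepsilon_n$ is automatically an isomorphism. We do this by building an inverse from the pullback by the exchange-of-factors involution of the groupoid, using the cocycle condition together with $\varepsilon_0=\op{Id}$, as in the classical case.

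\textbf{(2)$\Leftrightarrow$(3).} Duality: $\calE\otimes\calP^{\{n\}}$ is identified with $\mathscr{Hom}_{\Ox_X}(\calD^n_{X/S},\calE)$ because $\calP^{\{n\}}$ is locally free of finite rank. Given $\theta_n$, define $\nabla_n(f)=(\op{Id}\otimes f)\circ\theta_n$, viewed as the differential operator $\calP^{\{n\}}\otimes\calE\ra\calE$ obtained by linearizing. The coassociativity diagram of (2) is exactly multiplicativity $\nabla_{n+m}(f\circ g)=\nabla_n(f)\circ\nabla_m(g)$, via the formula for composition through $\delta$ in \ref{P11}. Conversely, one recovers $\theta_n(x)=\sum_{|I|\le n}\nabla_n(\partial_I)(x)\otimes T^{[I]}$.

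\textbf{(3)$\Rightarrow$(4)$\Rightarrow$(3).} Given $(\nabla_n)$, restrict $\nabla_1$ to derivations $\partial\in\calT_{X/S}$ (see \ref{exmp24}). This gives $\nabla(\partial)$ and hence $\nabla:\calE\ra\calE\otimes\omega^1_{X/S}$. The Leibniz rule comes from $\nabla_1$ being a differential operator of order $\le1$ with $\nabla_0(1)=\op{Id}$. Multiplicativity yields $\nabla([\partial_1,\partial_2])=[\nabla(\partial_1),\nabla(\partial_2)]$, so $\nabla$ is integrable by \ref{parag23}.

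The converse is the main obstacle. Given an integrable $\nabla$, we must define $\nabla_n$ on the whole ring $\calD_{X/S}$, which in characteristic $p$ is \emph{not} generated by derivations: the $\partial_{p\epsilon_i}$ are needed. Using \ref{prop17}, $\partial_{k\epsilon_i}=\prod_{j=0}^{k-1}(\partial_{\epsilon_i}-j)$. In particular, the log divided-power operators are polynomials in the $\partial_{\epsilon_i}$, and this holds without dividing by factorials. This is the key difference from the non-log case.

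So we set $\nabla(\partial_I)=\prod_i\prod_{j<I_i}(\nabla(\partial_{\epsilon_i})-j)$. This makes sense because the $\nabla(\partial_{\epsilon_i})$ commute: $[\partial_{\epsilon_i},\partial_{\epsilon_j}]=0$ and integrability give this. Extend $\Ox_X$-linearly to get $\nabla_n$.

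Multiplicativity reduces to the relations in \ref{prop17}. The partial products satisfy the same relations among the $\nabla(\partial_I)$ as the $\partial_I$ do among themselves, since both are computed from the same commuting variables. We also need the commutation with functions, $\nabla(\partial_I)\circ a=\sum_{J\le I}\binom IJ\partial_{I-J}(a)\nabla(\partial_J)$. This follows from the Leibniz rule \eqref{eq461} by induction, matching the formula for $\partial_I\circ a$ in $\calD_{X/S}$.

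Finally, we check that these constructions are mutually inverse. They are independent of the choice of coordinates because they are determined by $\nabla_1$, so they glue étale locally.
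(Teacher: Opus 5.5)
Your outline is correct, but the paper does not prove this statement this way: it only cites Berthelot (II 1.4.4 for (1)$\Leftrightarrow$(2), II 4.1.3 for (3)$\Leftrightarrow$(4)) and Ogus (1.1.8 for (1)$\Leftrightarrow$(4)). What you propose is the $n=0$ case of the coordinate argument the paper writes out in its appendix for \ref{equivstrat}. That argument uses the basis $T^{[I]}$ and its dual basis $\partial_I$, the formula $\partial_I=\prod_i\prod_{j<I_i}(\partial_{\epsilon_i}-j)$, and defines $\nabla(\partial_I)$ as the same falling-factorial polynomial in the commuting $\nabla(\partial_{\epsilon_i})$. This gives a self-contained proof in place of the citations.

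Two points need correcting.

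First, your claim that $\calD_{X/S}$ is not generated by derivations in characteristic $p$ is false for these PD-differential operators. By \ref{prop412}(1), $\partial_{p\epsilon_i}=\partial_{\epsilon_i}^p-\partial_{\epsilon_i}$, and every $\partial_I$ is an integral polynomial in the $\partial_{\epsilon_i}$. This is exactly what you use in the next sentence, and your final ``determined by $\nabla_1$'' gluing argument depends on it. Nor is it a difference with the non-log case: there the dual basis of $T^{[I]}$ is $\partial^I$. Only Grothendieck's non-PD operators need $\partial^{[p]}$.

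Second, you define $\nabla(\partial_I)$, and verify the commutation formula with functions via \eqref{eq461}, as endomorphisms of $\calE$. But (3) asks for elements of $\calD^n_{X/S}(\calE,\calE)=\mathscr{Hom}_{\Ox_X}(\calP^{\{n\}}_{X/S}\otimes_{\Ox_X}\calE,\calE)$. In the PD/log setting such an operator $D$ is not determined by $x\mapsto D(1\otimes x)$, because $T^{[K]}$ (e.g. $T_i^{[p]}$) is in general not in the $\Ox_X$-span of $p_2^{\#}(\Ox_X)$.

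The cleanest repair is to prove (4)$\Rightarrow$(2) and then use your (2)$\Rightarrow$(3). Put $\theta_n(x)=\sum_{|I|\le n}\nabla(\partial_I)(x)\otimes T^{[I]}$.
\begin{itemize}
\item Your formula $\nabla(\partial_I)\circ a=\sum_{J\le I}\binom{I}{J}\partial_{I-J}(a)\nabla(\partial_J)$ is exactly the $\Ox_X$-linearity of $\theta_n$; to see this, expand $p_2^{\#}(a)=\sum_J\partial_J(a)T^{[J]}$.
\item Your polynomial identities are exactly the coassociativity square, because the structure constants of $\delta$ on the $T^{[K]}$ are the structure constants of the products $\partial_J\circ\partial_I$.
\end{itemize}
Alternatively, follow the paper's appendix. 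Take $\nabla_1(\partial_{\epsilon_i})=(\op{Id}\otimes\partial_{\epsilon_i})\circ\varepsilon_1$, with $\varepsilon_1$ the linearization of $x\mapsto x\otimes 1+\nabla(x)$, and compose as differential operators through $\delta$. In that case the commutation of the $\nabla_1(\partial_{\epsilon_i})$ must be checked in $\calD^2_{X/S}(\calE,\calE)$, that is, also on $T_k\otimes x$, $T_kT_l\otimes x$ and $T_k^{[2]}\otimes x$, not only as endomorphisms. A short computation shows it does hold.
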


\begin{proof}
The equivalences between (1) and (2) and between (3) and (4) follow from (\cite{Ber74} II 1.4.4) and (\cite{Ber74} II 4.1.3) respectively. The equivalence between (1) and (4) is poved in (\cite{Ogus94} 1.1.8).
\end{proof}

\begin{definition}\label{nil}
Let $X\ra S$ be a smooth morphism of logarithmic schemes and consider local coordinates $m_1,\hdots,m_d$ of $X$ with respect to $S$ \eqref{P2}. For all $1\le i\le d,$ denote by $\partial_i$ the logarithmic derivation associated to $m_i.$ Let $\nabla$ be an integrable connection on an $\Ox_X$-module $\calE.$ By \eqref{art1371}, the endomorphisms $\nabla(\partial_i)$ and $\nabla(\partial_j)$ of $\calE$ commute for all $1\le i,j\le d.$ Then we can define, for a multi-index $I=(I_1,\hdots,I_d)\in\mathbb{N}^d,$ an endomorphism $\nabla(\partial_I)$ of $\calE$ as follows:
$$\nabla(\partial_I)=\prod_{k=1}^d\prod_{l=0}^{I_k-1}(\nabla(\partial_k)-l\op{Id}_{\calE}).$$
We say that $\nabla$ is \emph{quasi-nilpotent} if for any open subset $U\subset X$ and for any $x\in \Gamma(U,\calE),$ there exists, locally on $U,$ an integer $N$ such that $\nabla(\partial_I)(x)=0$ for all multi-index $I\in\mathbb{N}^d$ satisfying $|I|\ge N.$ We denote by $\boldsymbol{\op{MIC}^{qn}(X/S)}$ the full subcategory of $\boldsymbol{\op{MIC}(X/S)}$ consisting of $\Ox_X$-modules equipped with a quasi-nilpotent integrable connection.
\end{definition}

\begin{proposition}\label{prop412}
Let $\nabla:\calE\ra \calE\otimes_{\Ox_X}\omega^1_{X/S}$ be an integrable connection. Denote by $\psi_{\nabla}$ its $p$-curvature \eqref{Mojtaba2} and consider the differential operators $\partial_I$ defined in \eqref{partialI}.
\begin{enumerate}
\item $\partial_{\epsilon_i}^p-\partial_{\epsilon_i}^{(p)}=\partial_{\epsilon_i}^p-\partial_{\epsilon_i}=\partial_{p\epsilon_i}.$
\item $\nabla(\partial_{p\epsilon_i})=\psi_{\nabla}(\partial_{\epsilon_i}).$
\item Let $I=(I_1,\hdots,I_d)\in \N^d$ be a multi-index. If $\psi_{\nabla}=0$ and there exists $1\le i\le d$ such that $I_i\ge p,$ then
$$\nabla(\partial_I)=0.$$
\end{enumerate}
\end{proposition}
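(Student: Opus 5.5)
For (1), I will work in the ring of differential operators $\calD_{X/S}$ of \ref{P11}. The equality $\partial_{\epsilon_i}^{(p)}=\partial_{\epsilon_i}$ is recalled in \ref{P46} (\cite{Ogus94} 1.2.2), so only $\partial_{\epsilon_i}^p-\partial_{\epsilon_i}=\partial_{p\epsilon_i}$ needs proof. By Proposition \ref{prop17} (third identity) we have $\partial_{p\epsilon_i}=\prod_{j=0}^{p-1}(\partial_{\epsilon_i}-j)$. In characteristic $p$ there is the polynomial identity $\prod_{j\in \mathbb{F}_p}(T-j)=T^p-T$ in $\mathbb{F}_p[T]$. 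Specializing $T$ to $\partial_{\epsilon_i}$ in the commutative subring generated by $\partial_{\epsilon_i}$ gives the claim; this is legitimate because the constants $j$ are central.

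For (2), I will compare the two sides through the ring homomorphism furnished by the stratification. By Proposition \ref{prop39}, the integrable connection $\nabla$ corresponds to compatible maps $\nabla_n:\calD^n_{X/S}\ra \calD^n_{X/S}(\calE,\calE)$ that are multiplicative and unital. These are compatible with the notation $\nabla(\partial)$ of \eqref{nablacomp} for a derivation $\partial$, viewed as an operator of order $\le 1$ as in \ref{exmp24}. Applying multiplicativity to the identity in (1) gives
$$\nabla(\partial_{p\epsilon_i})=\nabla(\partial_{\epsilon_i})^p-\nabla(\partial_{\epsilon_i}).$$
Using again $\partial_{\epsilon_i}^{(p)}=\partial_{\epsilon_i}$, the right-hand side equals $\nabla(\partial_{\epsilon_i})^p-\nabla(\partial_{\epsilon_i}^{(p)})=\psi_\nabla(\partial_{\epsilon_i})$ by \eqref{Mojtaba2}. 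This is consistent with the definition of $\nabla(\partial_I)$ in \ref{nil}, which is exactly the image of $\partial_I=\prod_k\prod_l(\partial_{\epsilon_k}-l)$ under this homomorphism.

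For (3), suppose $I_i\ge p$ and write $I=I'+p\epsilon_i$ with $I'\in\N^d$. Proposition \ref{prop17} (last identity, with $J=\epsilon_i$, together with commutativity) gives $\partial_I=\partial_{I'}\circ\partial_{p\epsilon_i}$. Applying the multiplicative map $\nabla$ yields $\nabla(\partial_I)=\nabla(\partial_{I'})\circ\psi_\nabla(\partial_{\epsilon_i})$, which vanishes when $\psi_\nabla=0$. Alternatively, one can argue directly from the product formula in \ref{nil}: the factor $\prod_{l=0}^{I_i-1}(\nabla(\partial_i)-l)$ contains $\prod_{l=0}^{p-1}(\nabla(\partial_i)-l)=\nabla(\partial_i)^p-\nabla(\partial_i)=\psi_\nabla(\partial_i)$. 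All these factors commute by integrability (\ref{nil}), so the whole product is zero.

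The only delicate point is the identification in (2): the operator $\nabla(\partial)$ of \eqref{nablacomp} must coincide with $\nabla_1$ applied to $\partial$ seen as a differential operator, and $\nabla_n$ must be a ring map. This is the content of the equivalence (3)$\Leftrightarrow$(4) in \ref{prop39} (\cite{Ber74} II 4.1.3). Once that is accepted, or once one argues via the direct product formula in \ref{nil}, the remaining steps are formal.
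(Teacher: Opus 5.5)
Your proposal is correct and follows essentially the same route as the paper: part (1) comes from the identity $T(T-1)\cdots(T-p+1)=T^p-T$ in $\mathbb{F}_p[T]$ and the product formula of \ref{prop17}, part (2) from applying the multiplicative $\nabla$ together with $\partial_{\epsilon_i}^{(p)}=\partial_{\epsilon_i}$, and part (3) from isolating the factor $\nabla(\partial_{p\epsilon_i})=\psi_{\nabla}(\partial_{\epsilon_i})$ inside $\nabla(\partial_I)$. Your factorization $\partial_I=\partial_{I'}\circ\partial_{p\epsilon_i}$ is a cleaner version of the paper's computation, and your direct argument from the product formula in \ref{nil} also avoids the identification of $\nabla(\partial)$ with the ring homomorphism of \ref{prop39}, which the paper uses implicitly.
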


\begin{proof}
We have the following equality of polynomials in $\mathbb{F}_p[x]:$
$$x(x-1)\hdots (x-p+1)=x^p-x.$$
By (\ref{prop17}), we get
$\partial_{\epsilon_i}^p-\partial_{\epsilon_i}=\partial_{\epsilon_i}(\partial_{\epsilon_i}-1)\hdots (\partial_{\epsilon_i}-p+1)=\partial_{p\epsilon_i}.$
Then, by \cite{Ogus94} remark 1.2.2, $\partial_{\epsilon_i}^{(p)}=\partial_{\epsilon_i}$ and so
$\nabla(\partial_{p\epsilon_i})=\nabla(\partial_{\epsilon_i}^p)-\nabla(\partial_{\epsilon_i}^{(p)})=\psi_{\nabla}(\partial_{\epsilon_i}).$
For the third point, by \eqref{prop17}, we have
\begin{alignat*}{2}
\nabla(\partial_I) &= \prod_{j\neq i}\nabla(\partial_{I_j\epsilon_j})\circ \nabla(\partial_{I_i\epsilon_i}) = \prod_{j\neq i}\nabla(\partial_{I_j\epsilon_j})\circ \prod_{k=p}^{n-1}(\nabla(\partial_{\epsilon_i})-k)\circ \nabla(\partial_{p\epsilon_i}) = 0.
\end{alignat*}
\end{proof}

\section{Frames on logarithmic schemes}

\begin{parag}\label{parag43}
Let $P$ be an fs monoid. We define a presheaf of sets $[P]$ on the category $\boldsymbol{\op{L}}$ of fs logarithmic schemes \eqref{parag42} as follows :
\begin{equation}
[P]:\begin{array}[t]{clc} \boldsymbol{\op{L}} & \ra & \boldsymbol{\op{Sets}}\\ T & \mapsto & \op{Hom}\left (P,\Gamma(T,\ov{\calM}_T)\right )\end{array}
\end{equation}
where $\ov{\calM}_T$ is defined in \ref{Not5}. Consider the logarithmic scheme $A[P]$ defined in \ref{Not4}. We have a canonical morphism of presheaves $A[P]\ra [P]$ induced by the tautological map $$P\ra \Gamma(A[P],\calM_{A[P]}).$$
If $X$ is an fs logarithmic scheme, a morphism $X\ra [P]$ of presheaves of sets on $\boldsymbol{\op{L}},$ is said to be \emph{strict} if, for every geometric point $\ov{x}$ of $X,$ there exists an étale neighborhood $U$ of $\ov{x}$ such that the induced morphism $U\ra [P]$ factors into
$$\begin{tikzcd}
U\ar{r}\ar{d} & \left [P\right ]\\
A\left [P\right ]\ar{ur} &
\end{tikzcd}$$
where $U\ra A\left [P\right ]$ is a strict morphism of logarithmic schemes and $A\left [P\right ]\ra [P]$ is the canonical morphism. A \emph{frame} on an fs logarithmic scheme $X$ is a strict morphism $X\ra [P].$ An fs logarithmic scheme $X$ equipped with a frame $X\ra [P]$ will be called a \emph{framed logarithmic scheme} and will be denoted by $(X,P).$ A morphism of framed logarithmic schemes $(X,Q)\ra (S,P)$ is a pair consisting of a morphism of logarithmic schemes $X\ra Y$ and a morphism of monoids $P\ra Q,$ such that the diagram
$$\begin{tikzcd}
X\ar{r}\ar{d} & Y\ar{d} \\
\left [Q\right ]\ar{r} & \left [P\right ]
\end{tikzcd}$$
is commutative, where $[Q]\ra [P]$ is the morphism of presheaves induced by $P\ra Q.$ Note that a frame $X\ra [Q]$ on an fs logarithmic scheme $X$ is equivalent to a morphism $Q \ra \Gamma \left ( X,\ov{\calM}_X\right )$ that lifts, étale locally on $X,$ to a chart $Q \ra \Gamma \left (X,\calM_X\right ).$

Finally, given a morphism of fs monoids $Q\ra P$ and an fs logarithmic scheme $X$ equipped with a morphism $X\ra [Q],$ we denote by $X\times_{[Q]}^{\op{log}}[P]$ the presheaf on $\boldsymbol{\op{L}}$ defined, for any fs logarithmic scheme $T,$ by
$$(X\times_{[Q]}^{\op{log}}[P])(T)=X(T) \times _{[Q](T)} [P](T).$$
Also, given fs logarithmic schemes $X,\ Y$ and $S,$ an fs monoid $Q$ and morphisms $X\ra S\times [Q]$ and $Y\ra S\times [Q],$ we denote by $X\times_{S,[Q]}^{\op{log}}Y$ the presheaf of $\boldsymbol{\widehat{\op{L}}}$ defined, for any fs logarithmic scheme $T,$ by
$$(X\times_{S,[Q]}^{\op{log}}Y)(T)= X(T)\times_{S(T)\times [Q](T)}Y(T).$$ 
\end{parag}

\begin{proposition}\label{etaleframelift}
Let $f:(X,Q) \ra (S,P)$ be a log smooth morphism of framed fs logarithmic schemes. Suppose that $S\ra [P]$ lifts to a chart $S\ra A[P].$ Etale locally on $X,$ there exists a chart $P\ra M$ of $f$ fitting into a commutative diagram
$$
\begin{tikzcd}
X \ar{r} \ar{d} & S \ar{d} \\
A\left [M\right ] \ar{d} \ar{r} & A\left [P\right ] \ar{d} \\
\left [Q \right ] \ar{r} & \left [P \right ],
\end{tikzcd}
$$
and satisfying the following conditions:
\begin{enumerate}
\item $P^{gp} \ra M^{gp}$ is injective and the torsion subgroup of its cokernel is of finite order invertible in $\Ox_X.$
\item The morphism $X \ra S\times_{A[P]}A[M],$ induced by $f$ and the chart $X\ra A[M],$ is étale and strict.
\end{enumerate}
\end{proposition}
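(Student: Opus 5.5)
This is essentially Kato's chart criterion for log smoothness (\cite{Kat89} 3.5), run with the fixed chart $S\ra A[P]$ and then made compatible with the frame $Q$. I would proceed as follows.

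First, fix a geometric point $\ov{x}$ of $X$. Since $X\ra[Q]$ is a frame, we may shrink $X$ étale locally around $\ov{x}$ so that $Q\ra\Gamma(X,\ov{\calM}_X)$ lifts to a chart $\beta:Q\ra\Gamma(X,\calM_X)$. The idea is to build $M$ as an fs monoid mapping to $\calM_{X,\ov{x}}$ that receives both $P$ (through $f^\flat$ composed with the chart of $S$) and a part coming from $Q$. Concretely, I would follow the proof of \cite{Kat89} 3.5 with the given chart $P\ra\Gamma(S,\calM_S)$:
\begin{enumerate}
\item Choose sections $m_1,\hdots,m_r$ of $\calM_{X,\ov{x}}$ whose $\op{dlog}$ form a basis of $\omega^1_{X/S,\ov{x}}\otimes k(\ov{x})$.
\item Let $G\subset\calM^{gp}_{X,\ov{x}}$ be the subgroup generated by the image of $P^{gp}$ and the $m_i$. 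Log smoothness, via the exact sequence for $\omega^1$, shows that $P^{gp}\ra G$ is injective and that the torsion of its cokernel has order invertible on $X$.
\item Enlarge $G$ so that it surjects onto $\ov{\calM}^{gp}_{X,\ov{x}}$; as in Kato, this may require adding $\Ox^*$-adjusted elements and using that $\ov{\calM}^{gp}_{X,\ov{x}}$ is finitely generated.
\item Set $M$ to be the preimage of $\calM_{X,\ov{x}}$ in $G$. This is fs because $X$ is fs.
\item Spread the resulting chart $M\ra\calM_{X}$ out to an étale neighbourhood.
\end{enumerate}
Condition (2), that $X\ra S\times_{A[P]}A[M]$ is strict and étale, then comes from Kato's computation: strictness holds because $M\ra\ov{\calM}_{X,\ov{x}}$ is surjective. Étaleness follows since the differential map is an isomorphism on $\omega^1$ and the morphism is log smooth, hence log étale and strict, hence étale.

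The remaining issue is the bottom square, namely producing a map $A[M]\ra[Q]$ compatible with everything. I would arrange this by forcing $Q\ra\ov{\calM}_X$ to factor through $\ov{M}$. The plan is to include in $G$ the images $\beta(Q)^{gp}$, or better, to choose the $m_i$ among the elements of $\beta(Q)$ and $f^\flat(P)$. This is possible because $\omega^1_{X/S}\otimes k(\ov{x})$ is spanned by $\op{dlog}$ of elements of $\calM_{X,\ov{x}}$, and these are generated by $\beta(Q)$ up to units, whose $\op{dlog}$ are $d$ of functions. The remaining basis vectors, of the form $da$, can be absorbed by a strict étale change of coordinates. Once $\beta(Q)\subset G$, we get a morphism $Q\ra M$ (since $\beta(Q)\subset\calM_X$), hence $A[M]\ra A[Q]\ra[Q]$. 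Commutativity with the $P$-side uses that $(f,\theta)$ is a morphism of frames, so $\theta$ and $f^\flat$ agree in $\ov{\calM}_X$.

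I expect the main obstacle to be keeping the torsion condition (1) valid after enlarging $G$ by $\beta(Q)^{gp}$, because adding extra generators may introduce $p$-torsion in the cokernel. My first attempt would be to choose the $m_i$ directly inside $\beta(Q)$, so that $G=\theta(P)^{gp}+\langle m_i\rangle$ and $\beta(Q)^{gp}\subset G$ hold simultaneously. If that fails, the fallback is to replace $M$ by the saturation of the image of $Q\oplus M$ and check that Kato's argument, based on the injectivity of $k(\ov{x})\otimes(G/P^{gp})\ra\omega^1$, still controls the torsion.
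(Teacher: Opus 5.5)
There is a genuine gap in how you handle the frame. You build $A[M]\ra[Q]$ from a monoid map $Q\ra M$ lifting the chart $\beta$, that is, you insist on $\beta(Q)^{gp}\subset G$. This is unnecessary: a morphism $A[M]\ra[Q]$ is only a monoid map $Q\ra\Gamma(A[M],\ov{\calM}_{A[M]})$, for instance one factoring through $\ov{M}$. It is also in general incompatible with (1) and (2).

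Here is a counterexample. Take $k=\mathbb{F}_p(t)$ and $X=S=\op{Spec}k$ with trivial log structures, $f=\op{Id}$. Let $P=\Z$ with chart $1\mapsto t^p$, let $Q=\Z$ with chart $\beta(1)=t$, and let $\theta(1)=p$. Both frames are the zero map to $\ov{\calM}=0$, so $(f,\theta)$ is a morphism of framed log schemes. Suppose $M$ satisfies (1) and (2). All of $M$ maps to units, so étaleness of $\op{Spec}k\ra S\times_{A[P]}A[M]$ forces $\op{coker}(P^{gp}\ra M^{gp})$ to be finite, of some order $n$, and $n$ is prime to $p$ by (1). An element of $M^{gp}$ mapping to $t$ would then give $t^n\in\langle t^p\rangle$, i.e.\ $p\mid n$, a contradiction. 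So neither your first attempt nor your saturation fallback can work here. Yet $M=P$ together with $Q\ra\ov{M}=0$ satisfies every requirement.

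The missing idea is exactness of the chart at $\ov{x}$. The paper takes a chart from (\cite{Ogus2018} IV 3.3.1) that satisfies (1) and (2) and is exact at $\ov{x}$. Your step 4 would also give this: the preimage of $\calM_{X,\ov{x}}$ in $G$ is exact over $\calM_{X,\ov{x}}$ once $G\ra\ov{\calM}^{gp}_{X,\ov{x}}$ is surjective. Note that $G$ must be built abstractly as in Kato, not inside $\calM^{gp}_{X,\ov{x}}$, because there $P^{gp}\ra G$ need not be injective (e.g.\ for the chart $\Z\ra k^*$, $1\mapsto 1$, of the trivial log structure).

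With an exact chart, $\gamma:\ov{M}\ra\ov{\calM}_{X,\ov{x}}$ is an isomorphism (\cite{Ogus2018} II 2.3.1). Then
$$\delta:Q\ra\Gamma(X,\ov{\calM}_X)\ra\ov{\calM}_{X,\ov{x}}\xrightarrow{\gamma^{-1}}\ov{M}$$
defines $A[M]\ra[Q]$. The square over $[P]$ commutes: $P\ra M\ra\ov{M}$ and $\delta\circ\theta$ agree after composing with the injective $\gamma$, by the frame compatibility of $(f,\theta)$. The triangle with the frame of $X$ commutes at $\ov{x}$, hence on a neighbourhood, since $Q$ is finitely generated. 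Because $M$ is never modified, the torsion problem you anticipate never arises.
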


\begin{proof}
Let $\ov{x} \ra X$ be a geometric point and $\ov{s}=f(\ov{x}).$ By (\cite{Ogus2018} IV 3.3.1), after restricting to an étale neighborhood of $\ov{x},$ there exists a chart $P\ra M$ of $f$ satisfying the conditions (1) and (2) and such that the chart $X \ra A[M]$ is exact at $\ov{x}.$ By (\cite{Ogus2018} II 2.3.1), the morphism $\gamma:\ov{M} \ra \ov{\calM}_{X,\ov{x}}$ is an isomorphism. We have the commutative diagram
\begin{equation}\label{diagtak1}
\begin{tikzcd}
X \ar{r} \ar{d} & S \ar{d} \\
A[M] \ar{r} & A[P].
\end{tikzcd}
\end{equation}
It is then sufficient to prove the existence of a morphism $A[M] \ra [Q]$ such that the diagram
\begin{equation}\label{diagtak2}
\begin{tikzcd}
A[M] \ar{r} \ar{d} & A[P] \ar{d} \\
\left [Q\right ] \ar{r} & \left [ P\right ]
\end{tikzcd}
\end{equation}
is commutative.
The chart $S\ra A[P]$ induces a morphism $\beta:\ov{P} \ra \ov{\calM}_{S,\ov{s}}.$ The frame $X\ra [Q]$ corresponds to a morphism $Q\ra \Gamma(X,\ov{\calM}_X).$ Consider the composition
$$
\delta:Q \ra \Gamma(X,\ov{\calM}_X) \ra \ov{\calM}_{X,\ov{x}} \xrightarrow{\gamma^{-1}} \ov{M}.
$$
The outer rectangle and the lower square of the following diagram
$$
\begin{tikzcd}
P \ar{r} \ar{d} & Q \ar{d}{\delta} \\
\ov{P} \ar{r} \ar{d} & \ov{M} \ar{d}{\gamma} \\
\ov{\calM}_{S,\ov{s}} \ar{r}{\ov{f^{\flat}_{\ov{x}}}} & \ov{\calM}_{X,\ov{x}}
\end{tikzcd}
$$
are commutative. Since $\gamma$ is an isomorphism,
the upper square is commutative. The result follows.
\end{proof}

\begin{proposition}[\cite{Saito04} 4.2.3]
Let $Q$ be an fs monoid, $X,$ $Y$ and $S$ fs logarithmic schemes and $X\ra S\times [Q],\ Y\ra S\times [Q]$ morphisms of presheaves. Then, the presheaf $X\times_{S,[Q]}^{\op{log}}Y$ is representable by an fs logarithmic scheme which is log étale and affine over $X\times_S^{\op{log}}Y.$
\end{proposition}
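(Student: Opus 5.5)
The plan is to reduce to a statement about a single fs logarithmic scheme $Z$, prove it étale locally using charts, and then glue. Set $Z=X\times_S^{\op{log}}Y$, which is an fs logarithmic scheme. By definition, and since the Yoneda embedding commutes with fiber products \eqref{parag42}, the presheaf $X\times_{S,[Q]}^{\op{log}}Y$ is the subpresheaf of $Z$ given by
$$T\mapsto \{t\in Z(T)\ :\ a\circ t=b\circ t \ \text{in}\ [Q](T)\},$$
where $a,b:Z\ra [Q]$ are the two composites $Z\ra X\ra [Q]$ and $Z\ra Y\ra [Q]$. Concretely, $a$ and $b$ are morphisms of monoids $Q\ra \Gamma(Z,\ov{\calM}_Z)$. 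It therefore suffices to show that, for two such morphisms, the subfunctor $Z_{a=b}\subset Z$ where they agree is representable by an fs logarithmic scheme that is affine and log étale over $Z$.

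\textbf{Step 1: a subsheaf, so we may work locally.} The inclusion $Z_{a=b}\ra Z$ is a monomorphism, and $Z_{a=b}$ is a sheaf for the étale topology, because $\ov{\calM}_T$ is an étale sheaf. So it is enough to construct the representing object étale locally on $Z$. The local pieces then glue uniquely, by étale descent for affine morphisms together with the uniqueness coming from the monomorphism property. Since both log étaleness and affineness are étale local on the target, the glued object has the required properties.

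\textbf{Step 2: local construction.} Choose generators $q_1,\hdots,q_r$ of $Q$. Étale locally on $Z$, lift $a(q_i)$ and $b(q_i)$ to sections $\alpha_i,\beta_i$ of $\calM_Z$, and choose an fs chart $N\ra \calM_Z$ through which all $\alpha_i,\beta_i$ factor; we may enlarge $N$ by these finitely many elements and saturate. Put $g_i=\alpha_i-\beta_i\in N^{gp}$. For a morphism $t:T\ra Z$, the condition $a\circ t=b\circ t$ says that the images of $\alpha_i$ and $\beta_i$ in $\calM_T$ differ by a unit, that is, $t^\flat(g_i)\in \calM_T^{\times}$ for all $i$. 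This condition does not depend on the chosen lifts, since changing a lift changes $g_i$ by a unit.

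Now let $N'$ be the saturation of the submonoid of $N^{gp}$ generated by $N$ and $\pm g_1,\hdots,\pm g_r$, and set
$$Z'=Z\times^{\op{log}}_{A[N]}A[N'].$$
Because $N^{gp}=N'^{gp}$, the morphism $A[N']\ra A[N]$ is log étale, and it is affine; both properties are preserved by fs base change, so $Z'\ra Z$ is affine and log étale.

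\textbf{Step 3: universal property.} Let $t:T\ra Z$ be a morphism of fs logarithmic schemes. A factorization of $t$ through $A[N']$ over $A[N]$ amounts to extending $N\ra \Gamma(T,\calM_T)$ to $N'$. Such an extension exists exactly when the $g_i$ land in $\calM_T$ with inverses in $\calM_T$, hence in $\calM_T^\times$. Saturation costs nothing here, because $\calM_T$ is saturated. The extension is also unique, because $N'\subset N^{gp}$. Hence $Z'$ represents $Z_{a=b}$ locally.

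The main obstacle I expect is Step 3, together with the independence of the choices in Step 2. One must check carefully that the fs fiber product, whose underlying scheme may differ from the naive one by integralization and saturation, still has exactly this functor of points on fs test objects. This should follow from the universal property of fs fiber products, but the bookkeeping between $\calM$, $\ov{\calM}$ and units needs care.
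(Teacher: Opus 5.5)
Your argument is correct, but its local construction differs from the route the paper relies on. The paper does not prove this statement; it cites Kato--Saito. Its formal analogue, Proposition~\ref{prop73} with Corollary~\ref{prop74}, follows their route:
\begin{itemize}
\item one writes $X\times^{\op{log}}_{S,[Q]}Y=(X\times^{\op{log}}_S Y)\times^{\op{log}}_{[Q\oplus Q]}[Q]$ via the sum map $Q\oplus Q\ra Q$;
\item one proves the more general fact that for any morphism of fs monoids $\theta:M\ra M'$ with $\theta^{gp}$ surjective, $Z\times^{\op{log}}_{[M]}[M']$ is affine and log \'etale over $Z$;
\item locally, when $Z\ra[M]$ lifts to $Z\ra A[M]$, this is $Z\times^{\op{log}}_{A[M]}A[\tilde{M}']$ with $\tilde{M}'=(\theta^{gp})^{-1}(M')$, which for the sum map is $\tilde{M}'=\{(x,y)\in Q^{gp}\oplus Q^{gp} : x+y\in Q\}$.
\end{itemize}

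You instead treat the equalizer subfunctor of $Z=X\times^{\op{log}}_S Y$ directly. You lift only generators of $Q$ and invert the differences $g_i$ inside an auxiliary monoid $N$.

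Your version never needs to lift $Q\ra\Gamma(Z,\ov{\calM}_Z)$ to a monoid homomorphism into $\Gamma(Z,\calM_Z)$. It also needs no strictness of $Z\ra[Q\oplus Q]$, which fails in general when $S$ has nontrivial log structure. So it matches the generality of the statement exactly.

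The paper's route gives a canonical local model depending only on $Q$. That explicit toric description is used later, for instance in the proof of Proposition~\ref{locdescRQ}. It also yields an intermediate representability result that is reusable on its own.

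One small correction. Adjoining extra generators to a chart generally destroys the chart property, but you never use that property. It suffices to have a homomorphism from a finitely generated integral monoid $N$ to $\Gamma(Z,\calM_Z)$ whose image contains the $\alpha_i$ and $\beta_i$; alternatively, choose the lifts in the image of a chart exact at the point. Affineness of $Z'\ra Z$ then still holds, because the fs fiber product is finite over the fiber product of the underlying schemes.
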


\begin{proposition}[\cite{Saito04} 4.2.8]\label{prop45}
Let $(X,Q)\ra (S,P)$ be a morphism of framed fs logarithmic schemes such that $P$ and $Q$ are fs monoids and consider the factorization $$X\xrightarrow{\Delta}X\times_{S,[Q]}^{\op{log}}X\xrightarrow{g}X\times_S^{\op{log}}X$$
of the diagonal morphism $X\ra X\times_S^{\op{log}}X.$ Then $g$ is log étale and $\Delta$ is an exact immersion. Furthermore, if $X\xrightarrow{\Delta_U} U \ra X\times_{S,[Q]}^{\op{log}}X$ is a factorization of $\Delta$ into a closed immersion and an open one and if $\calI$ is the ideal of $\Delta_U,$ then there exists a canonical isomorphism of $\Ox_X$-modules
\begin{equation}\label{iso}
\Delta_U^{-1}\left (\calI/\calI^2\right )\ra \omega^1_{X/S}.
\end{equation}
\end{proposition}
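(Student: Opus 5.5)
The proof reduces to a local computation in charts, since both assertions are étale local on $X$ and compatible with the canonical maps. Write $Y=X\times_{S,[Q]}^{\op{log}}X$. By \ref{etaleframelift}, applied after étale localization on $S$ so that $S\ra[P]$ lifts to a chart $S\ra A[P]$, we may assume $f$ has a chart $P\ra M$ compatible with the frames. This chart has $P^{gp}\ra M^{gp}$ injective with good torsion, and $X\ra S\times_{A[P]}A[M]$ is strict étale. Moreover we may assume the chart is exact at a chosen geometric point $\ov{x}$, so that $\ov{M}\cong \ov{\calM}_{X,\ov{x}}$ and $Q\ra\ov{M}$ is defined.

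Log étaleness of $g$ then follows from the cited representability result (\cite{Saito04} 4.2.3), which gives that $Y$ is log étale and affine over $X\times_S^{\op{log}}X$.

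For exactness of $\Delta$, I would describe $Y$ locally. A $T$-point of $Y$ is a pair of $T$-points of $X$ over $S$ whose induced maps $Q\ra\Gamma(T,\ov{\calM}_T)$ coincide. Near $\ov{x}$, $Q$ surjects onto $\ov{M}$ up to units, and the chart $M$ is exact at $\ov{x}$. So, in terms of charts $a_1,a_2:M\ra\calM_T$, the condition says that $a_2(m)-a_1(m)$ is a unit for every $m$. This identifies $Y$, near the diagonal, with an open of $(X\times_S^{\op{log}}X)\times_{A[M\oplus_PM]}A[N]$, where $N$ is the submonoid of $M^{gp}\oplus_{P^{gp}}M^{gp}$ generated by the image of $M\oplus_PM$ and by the elements $\pm(m,-m)$. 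That is, $N$ is the exactification of the codiagonal $M\oplus_PM\ra M$, with $N\ra M$ having $\ov{N}\cong\ov{M}$ (up to torsion/saturation). The diagonal factors through this chart via $N\ra M$, which is exact (indeed strict near $\ov{x}$). Hence $\Delta$ is strict, and being a section of a separated morphism, it is an immersion.

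For the conormal sheaf, $\Delta$ is a strict immersion, so Proposition \ref{KhamineiExact} applies. For each local section $m$ of $\calM_X$ there is a unique $u(m)\in 1+\calI$ with $p_2^\flat m=p_1^\flat m+\lambda(u(m))$. I would define $\omega^1_{X/S}\ra\calI/\calI^2$ by $da\mapsto p_2^*a-p_1^*a$ and $\op{dlog}m\mapsto u(m)-1$. One checks it is a log derivation: additivity in $m$ follows from $(1+x)(1+y)-1\equiv x+y$ modulo $\calI^2$, and the compatibility $a\,\op{dlog}(m)=da$ for $a=\alpha(m)$ follows from $p_2^*a=p_1^*a\cdot u(m)$. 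This yields a map $\omega^1_{X/S}\ra\Delta_U^{-1}(\calI/\calI^2)$.

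The main obstacle is proving that this map is an isomorphism. Surjectivity is immediate: $\calI$ is generated by the $p_2^*a-p_1^*a$ and the $u(m)-1$, because $U$ is strict étale over a scheme whose ring is generated by $\Ox_X\otimes\Ox_X$ and units of the form $e^{(m,-m)}$. For bijectivity, I would reduce via the strict étale map $X\ra S\times_{A[P]}A[M]$ to the model case $X=S\times_{A[P]}A[M]$. In that case $Y$ is open in $S\times_{A[P]}A[N]$, with $N\cong M\oplus (M^{gp}/P^{gp})$ up to the torsion invertible on $X$. The diagonal ideal is then generated by $e^{g}-1$ for $g\in M^{gp}/P^{gp}$, so
\[\calI/\calI^2\cong\Ox_X\otimes_{\Z}M^{gp}/P^{gp}\cong\omega^1_{X/S},\]
the last being Kato's computation of log differentials of a chart. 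Both maps agree with $\op{dlog}$, so the map is an isomorphism, and its inverse is the canonical isomorphism \eqref{iso}.
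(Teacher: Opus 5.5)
There is a genuine gap: your argument proves the proposition only when $f$ is log smooth. The statement concerns an arbitrary morphism of framed fs logarithmic schemes, and \eqref{iso} is invoked in \ref{parag77} before any smoothness is assumed. Your first step already calls \ref{etaleframelift}, whose hypothesis is log smoothness. More seriously, your proof that $\omega^1_{X/S}\ra\Delta_U^{-1}(\calI/\calI^2)$ is injective reduces along a strict \emph{étale} morphism $X\ra S\times_{A[P]}A[M]$ to the model case. For a general $f$ no such morphism exists; with trivial log structures and $P=Q=0$ it would force $X\ra S$ to be smooth. With an arbitrary chart of $f$ exact at $\ov{x}$ and compatible with the frames, $X\ra S\times_{A[P]}A[M]$ is only strict, and the conormal sheaf of $X$ in $Y$ is no longer pulled back from the model. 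Your argument then yields only surjectivity.

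Two parts of your proof are fine as they stand. The exactness argument only uses such a chart, so it survives for general $f$. In the log smooth case the model computation is correct; in fact $N\cong M\oplus M^{gp}/P^{gp}$ exactly, so $\calI/\calI^2\cong\Ox_X\otimes_{\Z}M^{gp}/P^{gp}$ with no torsion caveat.

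The missing idea is the chart-free argument the paper uses.
\begin{itemize}
\item The projection $q_1:Y\ra X$ is strict and $\Delta$ is a section of it, so the conormal sequence of $X\xrightarrow{\Delta}Y\xrightarrow{q_1}X$ gives $\Delta^{-1}(\calI/\calI^2)\cong\Delta^*\omega^1_{Y/X}$.
\item Since $g$ is log étale, $\omega^1_{Y/X}\cong g^*\omega^1_{X\times_S^{\op{log}}X/X}$.
\item Base change of log differentials gives $\omega^1_{X\times_S^{\op{log}}X/X}\cong p_2^*\omega^1_{X/S}$, hence $\omega^1_{Y/X}\cong q_2^*\omega^1_{X/S}$.
\item Applying $\Delta^*$ yields \eqref{iso}, with no smoothness and no charts needed.
\end{itemize}
The paper also takes strictness of $\Delta$ from Kato--Saito rather than from a chart computation. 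Your explicit map $da\mapsto q_2^*a-q_1^*a$, $\op{dlog}m\mapsto u(m)-1$ is exactly the inverse of this isomorphism; this is the computation in \ref{parag46}. So your construction correctly describes \eqref{iso}, but on its own it does not prove bijectivity in the generality of the statement.
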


\begin{proof}
Set $Y=X\times_{S,[Q]}^{\op{log}}X$ and denote by $p_1,p_2:X\times_S^{\op{log}}X\ra X$ and $q_1,q_2:Y\ra X$ the canonical projections. By (\cite{Saito04} 4.2.3), the morphism $g$ is log étale and by (\cite{Saito04} 4.2.5.2) the immersion $\Delta$ is strict and hence exact. Consider $Y$ (resp. $X\times_S^{\op{log}}X$) as a scheme over $X$ by the first projection $q_1$ (resp. $p_1$). The conormal exact sequence associated to $X\xrightarrow{\Delta} Y\ra X$ yields a canonical isomorphism 
$\Delta^{-1}\left (\calI/\calI^2\right )\xrightarrow{\sim}\Delta^*\omega^1_{Y/X}.$
Since $g$ is log étale and $p_2$ induces an isomorphism $p_2^*\omega^1_{X/S}\xrightarrow{\sim}\omega^1_{X\times_S^{\op{log}}X/X},$ the morphism $q_2$ induces an isomorphism 
$q_2^*\omega^1_{X/S}\xrightarrow{\sim}\omega^1_{Y/X}.$
It follows that we have an isomorphism 
$
\Delta^{-1}(\calI/\calI^2)\xrightarrow{\sim}\omega^1_{X/S}.
$
\end{proof}

\begin{parag}\label{parag46}
Keep the same notation as in the proof of \ref{prop45}. Let $m$ be a local section of $\calM_X.$ The local section $\op{dlog}m$ of $\omega^1_{X/S}$ is sent by the isomorphism $\omega^1_{X/S}\xrightarrow{\sim}\Delta^*\omega^1_{Y/X}$ to $\op{dlog}q_2^{\flat}m.$
The functor $M \mapsto M^{gp}$ is exact so, by \ref{KhamineiExact}, the exact closed immersion $\Delta$ gives rise to the following exact sequence of abelian groups
$$0\ra \Delta^{-1}(1+\calI)\ra \Delta^{-1}\calM_Y^{gp}\ra \calM_X^{gp}\ra 0.$$
Let $\ov{x}\ra X$ be a geometric point and $\ov{y}\ra Y$ its image. By the previous exact sequence, there exists a unique $\mu(m)\in 1+\calI_{\ov{y}}$ such that
$\alpha_{Y,\ov{y}}^{-1}(\mu(m))=q^{\flat}_{2,\ov{y}}(m)-q_{1,\ov{y}}^{\flat}(m).$
In $\omega^1_{Y/X,\ov{y}},$ we have
\begin{alignat*}{2}
\op{dlog}q_{2,\ov{y}}^{\flat}m &= \op{dlog}(q_{2,\ov{y}}^{\flat}m-q_{1,\ov{y}}^{\flat}m)
= \op{dlog} \alpha_{Y,\ov{y}}^{-1}(\mu(m))
= \mu(m)^{-1}d\mu(m).
\end{alignat*}
It follows that in $(\Delta^*\omega^1_{Y/X})_{\ov{x}}$ we have
$1\otimes \op{dlog}q_{2,\ov{y}}^{\flat}m = d(\mu(m)-1),$
so the section $\mu(m)-1$ is sent, by the isomorphism \eqref{iso}, to the section $\op{dlog}m.$
\end{parag}

\begin{theorem}\label{thm410}
Let $\theta:P\ra Q$ be a morphism of fs monoids and $(f,\theta):(X,Q)\ra (S,P)$ a morphism of framed logarithmic schemes of characteristic $p.$ Recall the exact relative Frobenius diagram \eqref{diag51}. Set
$F_P:P \ra P,\ x\mapsto px,$ $Q''=(Q\oplus_{P,F_P}P)^{int}$ and let $Q'$ be the inverse image of $Q$ by
$v:Q''^{gp} \ra Q^{gp},\quad (x,y) \mapsto px+\theta^{gp}(y).$
Then there exists a canonical frame $X' \ra [Q'].$ In addition, if we set
\begin{equation}\label{FQP}
\pi_{Q/P}:Q \ra Q',\ x\mapsto (x,0),\qquad F_{Q/P}:Q' \ra Q,\ (x,y)\mapsto px+\theta^{gp}(y),
\end{equation}
then $(F,F_{Q/P}):(X,Q)\ra (X',Q')$ and $(\pi,\pi_{Q/P}):(X',Q') \ra (X,Q)$ are morphisms of framed logarithmic schemes.
\end{theorem}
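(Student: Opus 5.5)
We first construct the frame $X'\ra[Q']$, i.e. a morphism of monoids $Q'\ra\Gamma(X',\ov{\calM}_{X'})$ that lifts étale locally to a chart. The global data available are the frame $Q\ra\Gamma(X,\ov{\calM}_X)$ and $P\ra\Gamma(S,\ov{\calM}_S)$. Pulling back gives morphisms $Q\ra\Gamma(X'',\ov{\calM}_{X''})$ (via the projection $X''\ra X$) and $P\ra\Gamma(X'',\ov{\calM}_{X''})$ (via $X''\ra S$ and the frame of $S$). These agree on $P$ after composing with $\theta$ on one side and $F_P$ on the other, because $F_S^\flat$ is multiplication by $p$ on $\ov{\calM}_S$ and $X''$ is a fibre product. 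Hence they induce $Q\oplus_{P,F_P}P\ra\Gamma(X'',\ov{\calM}_{X''})$. This factors through the integral quotient $Q''$, since $\ov{\calM}_{X''}$ is integral ($X''$ is fine). Composing with $G^\flat$ gives $Q''\ra\Gamma(X',\ov{\calM}_{X'})$.

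Next we extend this to $Q'\subset Q''^{gp}$. We pass to $Q''^{gp}\ra\Gamma(X',\ov{\calM}_{X'}^{gp})$ and show that elements of $Q'$ land in $\ov{\calM}_{X'}\subset\ov{\calM}_{X'}^{gp}$. This can be checked étale locally. Locally, by \ref{etaleframelift} (or directly from \ref{PFrob}), the frames lift to charts $P\ra\calM_S$ and $Q\ra\calM_X$ compatible with $\theta$. Then $\theta$ is a chart of $f$, and by the proof of (\cite{Kat89} 4.10) recalled in \ref{PFrob}, $Q'$ is a chart of $X'$ with $X'=X''\times_{A_1[Q'']}A_1[Q']$. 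The resulting local morphism $Q'\ra\calM_{X'}\ra\ov{\calM}_{X'}$ agrees with our global map on $Q''^{gp}$, so the global map sends $Q'$ into $\ov{\calM}_{X'}$. It also lifts locally to a chart, which is exactly the strictness of $X'\ra[Q']$.

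The main subtlety is that the local charts of \ref{etaleframelift} need not be $Q$ itself. Frames only lift to charts $Q\ra\calM_X$ étale locally by definition, but compatibility with $\theta$ and the chart of $S$ may require modifying the lift by units. Our approach is to first choose local lifts $P\ra\calM_S$, then lift $Q$. The compatibility $\theta$ holds only modulo units, and we will correct this using the freedom to multiply by units: because $\ov{\calM}$ determines the log structure up to $\calM^\times$, a chart $Q\ra\calM_X$ whose restriction to $P$ differs from $f^\flat\circ(\text{chart of }S)$ by units can be adjusted when $P^{gp}$ is free-ish. If that adjustment is problematic, we can instead argue that Kato's construction of $X'$ only uses the images in $\ov{\calM}$ and so is insensitive to unit modifications. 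The independence of the resulting frame from choices is automatic, since it is defined globally on $\ov{\calM}$ and the local charts only serve to verify strictness.

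Finally we check the two framed morphisms. For $(\pi,\pi_{Q/P})$, $\pi^\flat$ on $\ov{\calM}$ composed with the frame of $X$ sends $x\in Q$ to the image of $(x,0)$, by construction of the map $Q''\ra\ov{\calM}_{X''}$ and $\pi=\mathrm{pr}\circ G$. For $(F,F_{Q/P})$, the required identity is $F^\flat(\text{image of }(x,y))=px+\theta(y)$ in $\ov{\calM}_X^{gp}$. This holds because $F^\flat\circ G^\flat$ restricted to $Q$ is $F_X^\flat$, i.e. multiplication by $p$, and restricted to $P$ it is $f^\flat$, which matches $\theta$ by the framed morphism condition. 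Since $\ov{\calM}_X$ is integral and $F^\flat$ is computed in $\ov{\calM}^{gp}$, this equality on generators of $Q''^{gp}$ gives it on $Q'$.
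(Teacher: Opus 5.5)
Your first step (building $Q''\ra\Gamma(X',\ov{\calM}_{X'})$ from the two frames and $G^\flat$) is correct and matches the paper. So is your last paragraph (the identities making $(F,F_{Q/P})$ and $(\pi,\pi_{Q/P})$ framed).

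\textbf{The gap.} It is in the middle step. There you prove that $Q'\subset Q''^{gp}$ lands in $\ov{\calM}_{X'}$, and that the frame is strict, by choosing étale local charts $P\ra\calM_S$ and $Q\ra\calM_X$ that lift the frames and are \emph{exactly} compatible with $\theta$. You then apply Kato's description $X'=X''\times_{A_1[Q'']}A_1[Q']$. Such charts need not exist.

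Proposition \ref{etaleframelift} does not supply them. It gives a chart $P\ra M$ with $M$ in general different from $Q$, and it assumes log smoothness, which the theorem does not. In general, lifts of the two frames agree only up to a homomorphism $u:P^{gp}\ra\Gamma(U,\Ox_X^*)$. Removing $u$ means extending it along $\theta^{gp}$ modulo units coming from $S$, and this can require $p$-th roots.

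\emph{A log smooth counterexample.} Let $S$ be the standard log point over an algebraically closed field $k$ of characteristic $p$, with generator $e_S$ and $P=\N$. Let $X=\Sp k[x,t^{\pm1}]/(x^p)$, with the log structure of $\N\oplus\Z\ra\Ox_X$ given by $(1,0)\mapsto x$ and $(0,1)\mapsto t$. Let $e_x$ be the image of $(1,0)$ and set $f^\flat(e_S)=pe_x+\alpha_X^{-1}(t)$. Then $X=S\times_{A[\N]}A[\N\oplus\Z]$ for $1\mapsto(p,1)$, so $f$ is log smooth. Frame $X$ by $Q=\N$, $1\mapsto\ov{e_x}$, so $\theta$ is multiplication by $p$. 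Compatible lifts would give $ta=b^p$ with $a\in k^*$ and $b$ a unit on some étale $U\ra X$. This is impossible, since $d(ta)=a\,dt\neq 0$ in $\Omega^1_{U/k}$ while $d(b^p)=0$.

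Your fallback, that Kato's construction ``only uses the images in $\ov{\calM}$'', does not rescue this. The chart $X''\ra A_1[Q'']$ it relies on is only defined when the two lifts agree on $P$ inside $\calM_{X''}$, not merely in $\ov{\calM}_{X''}$.

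\textbf{The missing idea.} No charts are needed for this step. The exact relative Frobenius $F$ is exact by definition, and it is surjective (indeed a homeomorphism). Hence a section of $\ov{\calM}^{gp}_{X'}$ whose image under $F^\flat$ lies in $\ov{\calM}_X$ is itself a section of $\ov{\calM}_{X'}$. In other words, the square formed by $\Gamma(X',\ov{\calM}_{X'})\ra\Gamma(X,\ov{\calM}_X)$ and its group-envelope version is cartesian.

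Your last paragraph already shows that for $z\in Q''^{gp}$, $F^\flat$ of the image of $z$ equals the image of $v(z)$. Since $Q'=v^{-1}(Q)$, for $z\in Q'$ the image of $v(z)$ lies in $\Gamma(X,\ov{\calM}_X)$. Therefore the image of $z$ lies in $\Gamma(X',\ov{\calM}_{X'})$. This is exactly the paper's proof: the cube whose relevant faces are these cartesian squares.

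\textbf{On strictness.} The paper declares the resulting map to be the frame without a separate local check of strictness. Your proposed check rests on the same nonexistent compatible charts, so if you want to include it you need a different argument.
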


\begin{proof}
Denote by $F:X\ra X'$ the exact relative Frobenius. 
The canonical projections $X''\ra X$ and the canonical morphism $X''\ra S$ induce morphisms
$$\Gamma \left (X,\ov{\calM}_X \right )\ra \Gamma \left (X'',\ov{\calM}_{X''} \right )\leftarrow \Gamma \left (S,\ov{\calM}_S \right ).$$
The frames $X\ra [Q]$ and $S\ra [P]$ correspond to morphisms
$$
Q\ra \Gamma(X,\ov{\calM}_X),\ P\ra \Gamma \left (S,\ov{\calM}_S \right ).
$$
We obtain a morphism
$Q''=(Q\oplus_{P,F_P}P)^{int}\ra \Gamma \left (X'',\ov{\calM}_{X''} \right ),$
and hence a morphism
$X''\ra [Q''].$
The exactness of $F^{\flat}:F^{-1}\calM_{X'} \ra \calM_X$ yields a cartesian square
$$
\begin{tikzcd}
\Gamma \left (X',\ov{\calM}_{X'} \right ) \ar{r} \ar[hook]{d} & \Gamma \left (X,\ov{\calM}_{X} \right ) \ar[hook]{d} \\
\Gamma \left (X',\ov{\calM}_{X'}^{gp} \right ) \ar{r} & \Gamma \left (X,\ov{\calM}_{X}^{gp} \right ).
\end{tikzcd}
$$
Similarly, the canonical morphism $Q' \ra Q$ is exact and hence the square
$$
\begin{tikzcd}
Q' \ar{r} \ar[hook]{d} & Q \ar[hook]{d} \\
Q'^{gp} \ar{r} & Q^{gp}
\end{tikzcd}
$$
is cartesian. Consider the composition
$$
Q'^{gp}=Q''^{gp} \ra \Gamma \left (X'',\ov{\calM}_{X''} \right ) \ra \Gamma \left (X',\ov{\calM}_{X'} \right ),
$$
where the second arrow is induced by the canonical morphism $X' \ra X''.$
There exists a unique morphism $Q' \ra \Gamma \left (X',\ov{\calM}_{X'} \right )$ fitting into the commutative diagram
$$
\begin{tikzcd}
Q' \ar{rr} \ar[hook]{dd} \ar{dr} & & Q \ar{dr} \ar[hook,dashed]{dd} & \\
 & \Gamma \left (X',\ov{\calM}_{X'} \right ) \ar{rr} \ar[hook]{dd} & & \Gamma \left (X,\ov{\calM}_{X} \right ) \ar[hook]{dd} \\
Q'^{gp} \ar[dashed]{rr} \ar{dr} & & Q^{gp} \ar[dashed]{dr} & \\
 & \Gamma \left (X',\ov{\calM}_{X'}^{gp} \right ) \ar{rr} & & \Gamma \left (X,\ov{\calM}_{X}^{gp} \right ).
\end{tikzcd}
$$
This morphism $Q' \ra \Gamma \left (X',\ov{\calM}_{X'} \right )$ yields the desired frame. The fact that $(F,F_{Q/P}):(X,Q)\ra (X',Q')$ and $(\pi,\pi_{Q/P}):(X',Q') \ra (X,Q)$ are morphisms of framed logarithmic schemes is clear.
\end{proof}

\section{Logarithmic formal schemes}

\begin{parag}\label{parag62}
The basic definitions and results for logarithmic schemes extend to formal logarithmic schemes. In particular, we have the notions of prelogarithmic and logarithmic structures on formal schemes, logarithmic structure associated to a prelogarithmic structure, as well as the notions of pullback structures, strict morphisms and charts (see \cite{Kat89}).
\end{parag}

\begin{definition}
An \emph{adic} (resp. \emph{$p$-adic}) \emph{logarithmic formal scheme} is a logarithmic formal scheme whose underlying formal scheme is adic (\cite{Ahmed2010} 2.1.24) (resp. $p$-adic i.e. an adic formal scheme such that the ideal $(p)$ generated by $p$ is an ideal of definition).
\end{definition}

\begin{parag}
Let $P$ be a monoid. We denote by $\Z_p\langle P\rangle$ the $p$-adic completion of the ring $\Z[P];$
\begin{equation}\label{BM}
\Z_p\langle P\rangle=\lim\limits_{\substack{\longleftarrow\\ n\ge 1}}\left (\Z/p^n\Z\right )[P].
\end{equation}
We denote by $B\langle P \rangle$ the $p$-adic formal scheme $\op{Spf}(\Z_p\langle P\rangle)$ equipped with the logarithmic structure induced by the canonical morphism $P\ra \Z_p\langle P\rangle.$ 
The following lemma is a variant of (\cite{Ogus2018} III 1.2.4) for $p$-adic logarithmic formal schemes:
\end{parag}

\begin{lemma}\label{Lemlog18}
Let $\frakX$ be a logarithmic $p$-adic formal scheme and $P$ a monoid. Then, we have a canonical bijection
$$\op{Hom}(\frakX,B\langle P \rangle)\stackrel{\sim}{\rightarrow} \op{Hom}_{\boldsymbol{\op{Mon}}}(P,\Gamma(\frakX,\calM_{\frakX})).$$
\end{lemma}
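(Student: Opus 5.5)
The plan is to reduce to the affine case and use the universal property of $\op{Spf}$ together with the corresponding statement for schemes (\cite{Ogus2018} III 1.2.4). Write $\Z_p\langle P\rangle$ as the limit in \eqref{BM}. A morphism of formal schemes $\frakX\ra \op{Spf}(\Z_p\langle P\rangle)$ corresponds to a continuous ring homomorphism $\Z_p\langle P\rangle\ra \Gamma(\frakX,\Ox_{\frakX})$. Since $\frakX$ is $p$-adic, $\Gamma(\frakX,\Ox_{\frakX})=\varprojlim_n \Gamma(\frakX_n,\Ox_{\frakX_n})$ is $p$-adically complete and separated. Hence such homomorphisms are the same as ring homomorphisms $\Z[P]\ra \Gamma(\frakX,\Ox_{\frakX})$, i.e. monoid homomorphisms $P\ra (\Gamma(\frakX,\Ox_{\frakX}),\cdot)$. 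Continuity is automatic because $p\mapsto p$ and everything is compatible with reduction modulo $p^n$.

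Next I would handle the logarithmic part. A morphism of logarithmic formal schemes $\frakX\ra B\langle P\rangle$ consists of a morphism $h$ of formal schemes together with a morphism of log structures $h^*\calM_{B\langle P\rangle}\ra \calM_{\frakX}$. Since $\calM_{B\langle P\rangle}$ is the log structure associated to the prelog structure $P\ra \Z_p\langle P\rangle$, the universal property of the associated log structure (valid for formal schemes by \ref{parag62}) shows that this datum is equivalent to a morphism of sheaves of monoids $P_{\frakX}\ra \calM_{\frakX}$ from the constant sheaf. This is the same as a monoid map $\beta:P\ra \Gamma(\frakX,\calM_{\frakX})$ with $\alpha_{\frakX}\circ\beta$ equal to the map $P\ra\Gamma(\frakX,\Ox_{\frakX})$ induced by $h$.

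Finally, the bijection sends $(h,h^\flat)$ to $\beta$. Its inverse takes $\beta$, forms $\alpha_{\frakX}\circ\beta:P\ra\Gamma(\frakX,\Ox_{\frakX})$, extends this to $\Z_p\langle P\rangle\ra\Gamma(\frakX,\Ox_{\frakX})$ by completeness to obtain $h$, and then uses $\beta$ to define $h^\flat$. Both composites are the identity by the uniqueness statements in the two universal properties, and naturality in $\frakX$ is clear.

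I expect the main obstacle to be the first step: justifying that $\op{Hom}(\frakX,\op{Spf}A)=\op{Hom}_{\mathrm{cont}}(A,\Gamma(\frakX,\Ox_{\frakX}))$ for a non-affine $p$-adic $\frakX$, together with the completeness of $\Gamma(\frakX,\Ox_{\frakX})$. I would establish this by gluing from affine opens, or by passing to $\frakX_n$ and applying the scheme case $\op{Hom}(\frakX_n,\Sp (\Z/p^n)[P])$ compatibly in $n$, then taking the limit.
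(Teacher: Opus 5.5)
Your proposal is correct and follows essentially the same route as the paper. The paper defines the forward map by composing with $P\ra\Gamma(B\langle P\rangle,\calM_{B\langle P\rangle})$, and builds the inverse by covering $\frakX$ with affine $\op{Spf}A_i$: it extends $P\ra A_i$ to a continuous $\Z_p\langle P\rangle\ra A_i$ using completeness of $A_i$, lets $\theta$ induce $f_i^{\flat}$, and glues the resulting morphisms. This is precisely the affine-gluing version of your first step combined with the associated-log-structure argument of your second step.
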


\begin{proof}
Consider the morphism $\alpha:\op{Hom}(\frakX,B\langle P \rangle)\ra \op{Hom}_{\boldsymbol{\op{Mon}}}(P,\Gamma(\frakX,\calM_{\frakX}))$ defined by composition with the canonical morphism
$P\ra \Gamma(B\langle P \rangle,\calM_{B\langle P \rangle}).$
Conversely, we define a morphism $\beta:\op{Hom}_{\boldsymbol{\op{Mon}}}(P,\Gamma(\frakX,\calM_{\frakX}))\ra \op{Hom}(\frakX,B\langle P \rangle)$ as follows : if $\theta:P\ra \Gamma(\frakX,\calM_{\frakX})$ is a morphism of monoids, we consider a covering $\frakX=\bigcup_{i\in I}\op{Spf}A_i$ of $\frakX$ by affine formal schemes, where $A_i$ is a complete $\Z_p$-algebra. For any $i\in I,$ let $\theta_i$ be the composition
$P\ra \Gamma(\frakX,\calM_{\frakX})\ra \Gamma(\op{Spf}A_i,\Ox_{\frakX})=A_i.$
Since $A_i$ is a complete $\Z_p$-algebra, the morphism $\theta_i$ induces a continuous morphism $\Z_p\langle P\rangle\ra A_i$ and so a morphism of formal schemes $f_i:\op{Spf}A_i\ra B\langle P \rangle.$ The morphism $P\xrightarrow{\theta} \Gamma(\frakX,\calM_{\frakX})\ra \Gamma(\op{Spf}A_i,\calM_{\frakX})$ induces a morphism $f_i^{\flat}:\calM_{B\langle P \rangle}\ra f_{i*}\calM_{\op{Spf}A_i}.$ The commutativity of the diagram
$$\begin{tikzcd}
P\ar{r}\ar{d} & \Gamma(\op{Spf}A_i,\calM_{\frakX})\ar{d} \\
\Z_p\langle P\rangle \ar{r} & A_i
\end{tikzcd}$$ 
implies that $(f_i,f_i^{\flat})$ is a morphism of logarithmic formal schemes. These morphisms glue into a morphism $\beta(\theta):\frakX\ra B\langle P \rangle$ and this morphism is independant of the choice of the covering. The maps $\alpha$ and $\beta$ are inverse to each other. 
\end{proof}

\begin{parag}\label{parag63}
We say that a logarithmic $p$-adic formal scheme $\frakX$ is \emph{quasi-coherent} (resp. \emph{coherent}, resp. \emph{fine}, resp. \emph{finitely-generated and saturated}, or \emph{fs} for short) if étale locally on $\frakX,$ there exists a strict morphism $\frakX\ra B\langle P \rangle$ for a monoid (resp. a finitely generated monoid, resp. a fine monoid, resp. an fs monoid) $P.$ We denote by $\boldsymbol{\op{LFS}}$ the category of fs logarithmic $p$-adic formal schemes.
\end{parag}

\begin{parag}\label{parag27}
Let $\frakX\ra \frakS$ and $\frakY\ra \frakS$ be morphisms in the category $\boldsymbol{\op{LFS}}.$ Following the conventions of \ref{parag42}, we denote by $\frakX\times_{\frakS}^{\op{log}}\frakY$ the fiber product of $\frakX \ra \frakS$ and $\frakY \ra \frakS$ in the category $\boldsymbol{\op{LFS}}$ and we keep the notation $\frakX \times_{\frakS} \frakY$ for the fiber product in the category of logarithmic formal schemes. The two are compared as follows: locally, there exist charts $\frakX\ra B\langle M \rangle,$ $\frakY\ra B\langle N \rangle$ and $\frakS\ra B\langle P \rangle$ for fs monoids $M,\ N$ and $P$ (\ref{parag63}), that fit into a commutative diagram (\cite{Ogus2018} III 1.2.7.3)
$$\begin{tikzcd}
\frakX\ar{r}\ar{d} & \frakS\ar{d} & \frakY\ar{l}\ar{d} \\
B\langle M \rangle\ar{r} & B\langle P \rangle & B\langle N \rangle\ar{l}
\end{tikzcd}$$ 
Let $Q=M\oplus_PN$ and $Q^{sat}$ its saturation. The commutative diagram above induces the following commutative diagram of monoids:
$$\begin{tikzcd}
P\ar{r}\ar{d} & M\ar{d} \\
N\ar{d} & \Gamma(\frakX,\calM_{\frakX})\ar{d} \\
\Gamma(\frakY,\calM_{\frakY})\ar{r} & \Gamma(\frakX\times_{\frakS}\frakY,\calM_{\frakX\times_{\frakS}\frakY})
\end{tikzcd}$$
and so we get a morphism $M\oplus_PN\ra \Gamma(\frakX\times_{\frakS}\frakY,\calM_{\frakX\times_{\frakS}\frakY}),$ which, by \ref{Lemlog18}, is equivalent to a morphism $B\langle M\oplus_PN\rangle \ra \frakX\times_{\frakS}\frakY.$ By definition of the logarithmic structure on a fiber product (\cite{Ogus2018} III 2.1), this morphism is a chart. 
Then the underlying formal scheme of $\frakX\times_{\frakS}^{\op{log}}\frakY$ is $(\frakX\times_{\frakS}\frakY)\times_{B\langle M\oplus_PN \rangle}B\langle Q^{sat} \rangle$ and its logarithmic structure is the pullback of that of $B\langle Q^{sat} \rangle.$
\end{parag}

\begin{parag}\label{parag17}
Let $(\frakX_n,\Ox_{\frakX_n},\calM_{\frakX_n})_{n\ge 1}$ be an inductive system of logarithmic schemes such that for every $n\ge 1,$ $\frakX_n$ is a $\Z/p^n\Z$-scheme, the diagram
$$\begin{tikzcd}
\frakX_n\ar{r}\ar{d}\ar{r} & \Sp \Z/p^n\Z \ar{d}\\
\frakX_{n+1}\ar{r} & \Sp \Z/p^{n+1}\Z
\end{tikzcd}$$
is cartesian in the category of schemes and $\frakX_n\ra\frakX_{n+1}$ is strict for all $n\ge 1.$  
Then the inductive limit $(\frakX,\Ox_{\frakX})$ of $((\frakX_n,\Ox_{\frakX_n}))_{n\ge 1}$ is a $p$-adic formal scheme (\cite{Ahmed2010} 2.1.24). Let $f_n:\frakX_n\ra \frakX$ be the canonical morphisms. The morphisms $f_n$ are homeomorphisms of the underlying topological spaces and so we use them to identify the topological spaces $\frakX_n$ and $\frakX.$ Let $\calM_{\frakX}=\lim\limits_{\longleftarrow}\calM_{\frakX_n}.$ Note that the projective limit commutes with the forgetful functor to the category of sheaves of sets (\cite{Ogus2018} I 1.1). Let $\alpha_{\frakX}:\calM_{\frakX}\ra \Ox_{\frakX}$ be the morphism induced by the morphisms $\alpha_{\frakX_n}.$ Since $\Ox_{\frakX}^*=\lim\limits_{\longleftarrow}\Ox_{\frakX_n}^*,$ the morphism $\alpha_{\frakX}:\calM_{\frakX}\ra \Ox_{\frakX}$ defines a logarithmic structure on $\frakX.$ Then $\frakX$ is a logarithmic $p$-adic formal scheme and $(\frakX,\calM_{\frakX})$ is the inductive limit of $(\frakX_n,\calM_{\frakX_n})$ in the category logarithmic formal schemes. 
Conversely, we have the following proposition:
\end{parag}

\begin{proposition}\label{propkey}
Let $\frakX$ be an integral logarithmic $p$-adic formal scheme and denote, for any integer $n\ge 1,$ by $\frakX_n$ the scheme $(\frakX,\Ox_{\frakX}/(p^n))$ and by $f_n:\frakX_n\ra \frakX$ the canonical morphism. Equip $\frakX_n$ with the logarithmic structure $\calM_{\frakX_n}=f_n^*\calM_{\frakX}.$ Then the canonical morphism
\begin{equation}\label{canmo1}
\varphi:\calM_{\frakX}\ra \lim_{\substack{\longleftarrow\\ n\ge 1}}\calM_{\frakX_n},\ m\mapsto (f_n^{\flat}(m))_{n\ge 1}
\end{equation}
is an isomorphism.
\end{proposition}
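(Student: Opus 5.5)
The natural tool is the exact sequence of \ref{KhamineiExact}. For every $n\ge 1$ the closed immersion $f_n:\frakX_n\ra \frakX$ is strict by definition of $\calM_{\frakX_n}$, hence exact, and $\frakX$ is integral. Its formal-scheme analogue (same proof) gives an exact sequence of sheaves of monoids on $|\frakX|=|\frakX_n|$:
$$0\ra 1+p^n\Ox_{\frakX}\xrightarrow{\lambda}\calM_{\frakX}\xrightarrow{f_n^{\flat}}\calM_{\frakX_n}\ra 0.$$
Here $1+p^n\Ox_{\frakX}\subset \Ox_{\frakX}^*$ acts on $\calM_{\frakX}$ through $\alpha^{-1}$, and two sections have the same image in $\calM_{\frakX_n}$ if and only if they differ by a unique element of $1+p^n\Ox_{\frakX}$. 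Thus $\calM_{\frakX_n}\simeq \calM_{\frakX}/(1+p^n\Ox_{\frakX})$ as a quotient by a free group action, and $\varphi$ is the canonical map $\calM_{\frakX}\ra \varprojlim_n \calM_{\frakX}/(1+p^n\Ox_{\frakX})$. Everything reduces to the completeness of $\Ox_{\frakX}$ with respect to the subgroups $1+p^n\Ox_{\frakX}$.

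For injectivity, take local sections $m,m'$ with $\varphi(m)=\varphi(m')$. For each $n$ there is a unique $u_n\in 1+p^n\Ox_{\frakX}$ with $m'=m+\lambda(u_n)$. By uniqueness at level $n$, the $u_n$ all coincide, so $u=u_1$ lies in $\bigcap_n(1+p^n\Ox_{\frakX})=\{1\}$, using that $\Ox_{\frakX}$ is $p$-adically separated. Uniqueness here rests on integrality, since $m+a=m+b$ forces $a=b$. Hence $m=m'$.

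For surjectivity, take a compatible family $(m_n)$, work étale locally, and fix a geometric point. Using the surjectivity of $f_1^{\flat}$, choose a local lift $\tilde m_1\in\calM_{\frakX}$ of $m_1$. Next choose a lift $\tilde m_2$ of $m_2$ on a possibly smaller neighbourhood; it has the same image as $\tilde m_1$ in $\calM_{\frakX_1}$, so $\tilde m_2=\tilde m_1+\lambda(u_1)$ with $u_1\in 1+p\Ox_{\frakX}$.

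The main obstacle is that shrinking neighbourhoods infinitely often is not allowed. I would avoid it by fixing one lift $\tilde m=\tilde m_1$ on a single neighbourhood $U$. Then for every $n$ the section $m_n$ and the image of $\tilde m$ agree in $\calM_{\frakX_1}$. Applying the exact sequence for $\frakX_n\ra\frakX_1$, which is again strict and exact, yields a unique $v_n\in 1+p\Ox_{\frakX_n}$ on all of $U$ with $m_n=f_n^{\flat}(\tilde m)+\lambda(v_n)$. No further shrinking is needed, because uniqueness lets these local solutions glue on $U$. By uniqueness, the $v_n$ form a compatible system, so they define $v=\varprojlim v_n\in 1+p\Ox_{\frakX}(U)$, since $\Ox_{\frakX}=\varprojlim\Ox_{\frakX_n}$. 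Then $\tilde m+\lambda(v)$ maps to $(m_n)$.

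Both arguments are local, and injectivity makes the local preimages glue, so $\varphi$ is an isomorphism of sheaves. The one technical point to check carefully is the formal version of \ref{KhamineiExact} together with the uniqueness giving the sheaf-level gluing.
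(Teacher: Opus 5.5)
Your proposal is correct and is essentially the paper's argument: injectivity via a unit congruent to $1$ modulo every $p^n$, and surjectivity by correcting a single lift of $m_1$ with a compatible system of units whose limit exists because $\Ox_{\frakX}=\varprojlim_n\Ox_{\frakX_n}$. The difference is packaging: you use the exact sequence of \ref{KhamineiExact} instead of working directly in the amalgamated sum, and uniqueness then gives compatibility and gluing. Re-prove the formal analogue yourself, as you intend, rather than citing the paper's formal version of that sequence, whose proof relies on \ref{propkey}; the only extra input is that a section of $\Ox_{\frakX}$ invertible modulo $p^n$ is invertible, which is exactly the fact the paper uses.
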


\begin{proof}
Let $\alpha_n$ be the composition $\calM_{\frakX}\xrightarrow{\alpha_{\frakX}} \Ox_{\frakX}\ra \Ox_{\frakX_n},$ where $\Ox_{\frakX}\ra \Ox_{\frakX_n}$ is the reduction modulo $p^n.$
By the definition of pullback of logarithmic structures (\cite{Ogus2018} III 1.1.5.2),
\begin{equation}
\calM_{\frakX_n}=f_n^*\calM_{\frakX}=\calM_{\frakX}\oplus_{\alpha_n^{-1}(\Ox_{\frakX_n}^*)}\Ox_{\frakX_n}^*.
\end{equation}
By this equality, the morphism (\ref{canmo1}) is given by
\begin{equation}
\varphi(m)=((m,1))_{n\ge 1}.
\end{equation}
Let $m$ and $m'$ be local sections of $\calM_{\frakX}$ such that $\varphi(m)=\varphi(m').$ For every positive integer $n,$ $(m,1)=(m',1)$ in $\calM_{\frakX_n}=\calM_{\frakX}\oplus_{\alpha_n^{-1}(\Ox_{\frakX}^*)}\Ox_{\frakX_n}^*.$ So, by (\cite{Kat89} (1.3)), there exists local sections $t_n,t_n'$ of $\alpha_n^{-1}(\Ox^*_{\frakX})$ such that
\begin{equation}\label{amsum}
\begin{cases}m+t_n=m'+t_n' \\ \alpha_n(t_n)=\alpha_n(t_n').\end{cases}
\end{equation} 
The local sections $\alpha_{\frakX}(t_n)$ and $\alpha_{\frakX}(t_n')$ are invertible modulo $p^n$ and so they are both invertible in $\Ox_{\frakX}.$ Since the morphism $\alpha_{\frakX}^{-1}(\Ox_{\frakX}^*)\rightarrow\Ox_{\frakX}^*$ induced by $\alpha_{\frakX}$ is an isomorphism, $t_n$ and $t_n'$ are invertible in $\calM_{\frakX}.$ Furthermore, $m+t_n-t_n'=m'$ and $\calM_{\frakX}$ is integral, so $t_n-t_n'=t_k-t'_k$ for all positive integers $n$ and $k.$ Set $d=t_1-t'_1.$ By (\ref{amsum}), $\alpha_{\frakX}(d)=1$ in $\Ox_{\frakX_n}$ for every positive integer $n$ and so $\alpha_{\frakX}(d)=1$ in $\Ox_{\frakX},$ $d=0$ and $t_1=t_1'.$ Again, since $\calM_{\frakX}$ is integral and by (\ref{amsum}), $m=m'.$ Hence (\ref{canmo1}) is injective.

For the surjectivity, let $\frakU=\op{Spf}A$ be an affine open formal subscheme of $\frakX$ and $(m_n)_{n\ge 1}$ a sequence of sections of $\Gamma \left ( \frakU, \calM_{\frakX} \right )$ and, for every $n\ge 1,$ $a_n\in A/(p^n)$ such that $((m_n,a_n))_{n\ge 1}$ yields a local section of $\lim\limits_{\substack{\longleftarrow\\ n\ge 1}}\calM_{\frakX_n}$ over $\frakU.$ For every positive integer $n\ge 1,$ let $b_n\in A$ such that, modulo $p^n,$ $b_n=a_n.$ First, note that $a_1$ is invertible in $A/(p)$ so $b_1$ is invertible in $A.$ Then there exists $t_1\in \Gamma \left (\frakU,\calM^*_{\frakX}\right )$ such that $\alpha_{\frakX}(t_1)=b_1.$ Setting $m=m_1+t_1,$ we get by (\cite{Kat89} (1.3))
\begin{equation}\label{eq683}
(m_1,a_1)=(m_1,\alpha_1(t_1))=(m,1).
\end{equation}
We now construct by induction on $n$ a sequence $(c_n)_{n\ge 1} \in \lim\limits_{\longleftarrow}\left (A/(p^n)\right )^*=A^*$ such that, in $\Gamma \left ( \frakU,\calM_{\frakX_n}\right ),$
$(m_n,a_n)=(m,c_n)\ \forall n\ge 1.$
By (\ref{eq683}), we set $c_1=1.$ Fix $n\ge 1$ and suppose $c_n$ is defined. Let $\ov{x} \ra \frakX_n$ be a geometric point. In $\calM_{\frakX_n,\ov{x}}=\calM_{\frakX,\ov{x}}\oplus_{\alpha_n^{-1}(\Ox_{\frakX_n}^*)_{\ov{x}}}\Ox_{\frakX_n,\ov{x}}^*,$ we have $(m_{n+1},a_{n+1})=(m_n,a_n)=(m,c_n).$
Then, again by (\cite{Kat89} (1.3)), there exist a neighborhood $V$ of $\ov{x}$ and local sections $t,t' \in \Gamma \left ( V,\alpha_n^{-1}(\Ox_{\frakX_n}^*) \right )$ over $V$ such that
$$
\begin{cases}
m_{n+1}+t=m+t' \\
a_{n+1}\alpha_n(t')=c_n\alpha_n(t)\ (\op{mod} p^n).
\end{cases}
$$
We set $c_{n+1}=a_{n+1}\alpha_{n+1}(t'-t).$ Note that $t'-t$ is independant of the choice of $t$ and $t'$ and we can hence suppose that $t'-t \in \Gamma\left (\frakU,\calM_{\frakX_n} \right ).$ Then
$$(m_{n+1},a_{n+1})=(m+t'-t,a_{n+1})=(m,a_{n+1}\alpha_{n+1}(t'-t))=(m,c_{n+1}).$$
The compatibility condition follows from the following equality modulo $p^n:$
$$c_{n+1}=a_{n+1}\alpha_n(t'-t)=c_n.$$
Let $c=(c_n)_{n\ge 1} \in \lim\limits_{\longleftarrow} \left (A/(p^n)\right )^*=A^*$ and $s\in \Gamma\left (\frakU,\calM_{\frakX} \right )$ such that $\alpha_{\frakX}(s)=c.$ Then $((m_n,a_n))_{n\ge 1}$ is the image of $m+s$ by $\varphi$ \eqref{canmo1}.
\end{proof}

\begin{corollaire}\label{corkey}
Under the same hypothesis of \ref{propkey}, the logarithmic formal scheme $\frakX$ is the inductive limit, in the category of logarithmic formal schemes, of $(\frakX_n)_{n\ge 1}.$
\end{corollaire}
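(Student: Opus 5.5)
We have to show that $\frakX$, together with the morphisms $f_n:\frakX_n\ra \frakX$, is the inductive limit of the system $(\frakX_n)_{n\ge 1}$ in the category of logarithmic formal schemes. So, given a logarithmic formal scheme $\frakY$ and a compatible family of morphisms $g_n:\frakX_n\ra \frakY$ with $g_{n+1}\circ i_n=g_n$, where $i_n:\frakX_n\ra \frakX_{n+1}$ is the canonical strict closed immersion, we must produce a unique morphism $g:\frakX\ra \frakY$ with $g\circ f_n=g_n$ for all $n$.

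We split the problem into the underlying formal schemes and the logarithmic structures. On underlying ringed spaces, $(\frakX,\Ox_{\frakX})$ is the inductive limit of the schemes $(\frakX_n,\Ox_{\frakX_n})$ in the category of formal schemes (\cite{Ahmed2010} 2.1.24), because $\Ox_{\frakX}=\lim\limits_{\longleftarrow}\Ox_{\frakX}/(p^n)$. This gives a unique morphism of formal schemes $g:\frakX\ra\frakY$ compatible with the $g_n$. All the $f_n$ are homeomorphisms, so we identify the topological spaces and view every $g_n^{-1}$ as $g^{-1}$.

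For the logarithmic part we need a morphism of sheaves of monoids $g^{\flat}:g^{-1}\calM_{\frakY}\ra \calM_{\frakX}$ that is compatible with $\alpha$ and satisfies $f_n^{\flat}\circ g^{\flat}=g_n^{\flat}$. Compatibility of the $g_n$ means that the morphisms $g_n^{\flat}:g^{-1}\calM_{\frakY}\ra \calM_{\frakX_n}$ form a projective system under the transition maps $\calM_{\frakX_{n+1}}\ra \calM_{\frakX_n}$. These transition maps are exactly those used in \ref{propkey}, since $\calM_{\frakX_n}=f_n^*\calM_{\frakX}=i_n^*\calM_{\frakX_{n+1}}$. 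So the $g_n^{\flat}$ induce a unique morphism into $\lim\limits_{\longleftarrow}\calM_{\frakX_n}$, and by \ref{propkey} that limit is identified with $\calM_{\frakX}$ through $\varphi$. We define $g^{\flat}$ as $\varphi^{-1}$ composed with the induced map. It then automatically satisfies $f_n^{\flat}\circ g^{\flat}=g_n^{\flat}$, and it is unique because $\varphi$ is injective.

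Compatibility with the structure maps is checked in the projective limit. We have $\alpha_{\frakX}\circ g^{\flat}$ and $g^{\#}\circ\alpha_{\frakY}$, both landing in $\Ox_{\frakX}=\lim\limits_{\longleftarrow}\Ox_{\frakX_n}$. Reducing modulo $p^n$, they become $\alpha_{\frakX_n}\circ g_n^{\flat}$ and $g_n^{\#}\circ\alpha_{\frakY}$, which agree because each $g_n$ is a morphism of logarithmic schemes. Hence the two maps agree in the limit.

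The only delicate point is the identification of the transition maps in the logarithmic system with those implicit in \ref{propkey}, which relies on transitivity of pullback of logarithmic structures, $f_n^*=i_n^*f_{n+1}^*$. Once that is in place, everything follows formally from \ref{propkey}.
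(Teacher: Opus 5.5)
Your argument is correct and is essentially the paper's route: the corollary is left as an immediate consequence of \ref{propkey}, in the manner of \ref{parag17} and as later used in the proof of \ref{prop18}. That is, you take the formal-scheme limit on the underlying spaces, assemble the $g_n^{\flat}$ into a map to $\lim\calM_{\frakX_n}\cong\calM_{\frakX}$, and check compatibility with $\alpha$ modulo $p^n$. You make explicit what the paper leaves implicit: surjectivity of $\varphi$ gives existence of $g^{\flat}$, and injectivity of $\varphi$ gives uniqueness.
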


\begin{parag}\label{parag68}
Let $f:\frakX\ra \frakY$ be a morphism of logarithmic $p$-adic formal schemes. Denote, for any integer $m\ge n\ge 1,$ by $\frakX_n$ (resp. $\frakY_n$) the scheme $(\frakX,\Ox_{\frakX}/p^n)$ (resp. $(\frakY,\Ox_{\frakY}/p^n)$) and by $i_n:\frakX_n\ra \frakX,$ $j_n:\frakY_n\ra \frakY,$ $i_{n,m}:\frakX_n \ra \frakX_m$ and $j_{n,m}:\frakY_n \ra \frakY_m$ the canonical morphisms. Equip $\frakX_n$ (resp. $\frakY_n$) with the logarithmic structure $i_n^*\calM_{\frakX}$ (resp. $j_n^*\calM_{\frakY}$). By (\cite{Ahmed2010}, 2.2.1), the morphism $f$ induces an inductive system of morphisms of schemes $(f_n:\frakX_n\ra \frakY_n).$ Then, by the commutativity of the diagram
$$\begin{tikzcd}
\frakX_n\ar{r}{f_n}\ar{d}{i_n} & \frakY_n\ar{d}{j_n}\\
\frakX\ar{r}{f} & \frakY
\end{tikzcd}$$ 
and since $i_n$ and $j_n$ are strict, the morphism $f^{\flat}:f^*\calM_{\frakY}\ra \calM_{\frakX}$ induces a morphism $f_n^*\calM_{\frakY_n}\ra \calM_{\frakX_n}$ for all $n\ge 1,$ making $f_n$ into a morphism of logarithmic schemes. Similarly, since $i_{n,m}^*\calM_{\frakX_m}=i_n^*\calM_{\frakX}$ (resp. $j_{n,m}^*\calM_{\frakY_m}=j_n^*\calM_{\frakY}$), the morphism $i_n^{\flat}$ (resp. $j_n^{\flat}$) turns $i_{n,m}$ (resp. $j_{n,m}$) into a morphism of logarithmic schemes.
\end{parag}

\begin{proposition}\label{prop18}
Keep the same hypothesis of \ref{parag68}. The map $f\mapsto (f_n)_{n\ge 1}$ defined in \ref{parag68} is a bijection between the set of morphisms of logarithmic formal schemes $f:\frakX\ra \frakY$ and the set of sequences of morphisms of logarithmic schemes $(f_n:\frakX_n\ra \frakY_n)$ making the following diagram commutative for all $n\ge m$
$$
\begin{tikzcd}
\frakX_m\ar{r}{f_m}\ar{d} & \frakY_m\ar{d}\\
\frakX_n\ar{r}{f_n} & \frakY_n
\end{tikzcd}$$
\end{proposition}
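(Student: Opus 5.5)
We construct an explicit inverse to $f\mapsto (f_n)_{n\ge 1}$ and check that the two maps are mutually inverse. Injectivity is the easier half. A morphism $f:\frakX\ra\frakY$ of logarithmic formal schemes consists of three pieces of data: the underlying morphism of formal schemes, the map on structure sheaves, and $f^{\flat}:f^{-1}\calM_{\frakY}\ra\calM_{\frakX}$. By (\cite{Ahmed2010} 2.2.1), the first two are determined by the system $(f_n)$, since $\Ox_{\frakX}=\lim\limits_{\longleftarrow}\Ox_{\frakX_n}$. For $f^{\flat}$, we compose with the canonical maps $\calM_{\frakX}\ra\calM_{\frakX_n}$. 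By construction in \ref{parag68}, these compositions are the maps $f^{-1}\calM_{\frakY}\ra f_n^{-1}\calM_{\frakY_n}\xrightarrow{f_n^{\flat}}\calM_{\frakX_n}$. By Proposition \ref{propkey}, $\calM_{\frakX}\ra\lim\limits_{\longleftarrow}\calM_{\frakX_n}$ is an isomorphism, so $f^{\flat}$ is determined by the $f_n^{\flat}$. We are implicitly working with integral log structures here, as in \ref{propkey}; if the statement is meant more generally, I would add that hypothesis or reduce to it, since the fs formal schemes used later are integral.

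For surjectivity, take a compatible system $(f_n)$. First, (\cite{Ahmed2010} 2.2.1) yields a unique morphism of formal schemes $f:\frakX\ra\frakY$ inducing the $f_n$ on underlying schemes, with $f^{\#}=\lim\limits_{\longleftarrow} f_n^{\#}$. Since all $\frakX_n$ share the topological space of $\frakX$, we have $f^{-1}\calM_{\frakY}\ra f^{-1}\calM_{\frakY_n}=f_n^{-1}\calM_{\frakY_n}$. Composing with $f_n^{\flat}$ gives maps $f^{-1}\calM_{\frakY}\ra\calM_{\frakX_n}$. They are compatible in $n$ because the squares in the statement commute as morphisms of logarithmic schemes and the transition maps $\calM_{\frakX_{m}}\ra\calM_{\frakX_n}$ are the pullback maps of \ref{parag68}. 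Passing to the limit and using \ref{propkey} (that is, $\calM_{\frakX}\cong\lim\limits_{\longleftarrow}\calM_{\frakX_n}$), we obtain $f^{\flat}:f^{-1}\calM_{\frakY}\ra\calM_{\frakX}$.

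It remains to check that $\alpha_{\frakX}\circ f^{\flat}=f^{\#}\circ f^{-1}\alpha_{\frakY}$. Both sides land in $\Ox_{\frakX}=\lim\limits_{\longleftarrow}\Ox_{\frakX_n}$, so it suffices to check this modulo $p^n$ for each $n$, where it is exactly the compatibility $\alpha_{\frakX_n}\circ f_n^{\flat}=f_n^{\#}\circ\alpha_{\frakY_n}$, which holds because $f_n$ is a morphism of logarithmic schemes. Finally, the system attached to this $f$ by \ref{parag68} is $(f_n)$ again, because the reduction of $f^{\flat}$ to $\calM_{\frakX_n}$ is $f_n^{\flat}$ by construction, and a morphism of log structures $f_n^*\calM_{\frakY_n}\ra\calM_{\frakX_n}$ is determined by its restriction to $f_n^{-1}\calM_{\frakY}$, since that generates the pullback log structure. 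Equivalently, one can invoke Corollary \ref{corkey}: $\frakX$ is the inductive limit of the $\frakX_n$, and the maps $\frakX_n\xrightarrow{f_n}\frakY_n\ra\frakY$ are compatible, so they induce $f$ uniquely. This gives both injectivity and surjectivity at once.

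The main obstacle is the bookkeeping of the limit of log structures, which is handled entirely by \ref{propkey}. The only point needing care is that the sheaf-level inverse limit commutes with $f^{-1}$ on the relevant maps. This holds because we only map out of $f^{-1}\calM_{\frakY}$ into a limit, which is a formal property of limits.
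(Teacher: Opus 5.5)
Your proof is correct and follows the paper's argument. The paper also gets the underlying morphism of formal schemes from (\cite{Ahmed2010} 2.2.2) and builds $f^{\flat}$ as the limit of the composites $f^{-1}\calM_{\frakY}\ra f^{-1}\calM_{\frakY_n}\xrightarrow{f_n^{\flat}}\calM_{\frakX_n}$ via \ref{propkey}; it simply asserts that this construction is inverse to \ref{parag68}. Your extra checks (compatibility with $\alpha$, recovery of $f_n^{\flat}$, injectivity) fill in what the paper leaves implicit, and your remark that integrality is needed to invoke \ref{propkey} is well-taken, since the paper's proof relies on it tacitly too.
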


\begin{proof}
Let $(f_n:\frakX_n \ra \frakY_n)_{n\ge 1}$ a sequence of morphisms of logarithmic schemes such that
$$\begin{tikzcd}
\frakX_n\ar{r}{f_n}\ar{d}{i_{n,m}} & \frakY_n\ar{d}{j_{n,m}}\\
\frakX_m\ar{r}{f_m} & \frakY_m
\end{tikzcd}$$ 
is commutative for every integers $m\ge n\ge 1.$
By (\cite{Ahmed2010} 2.2.2), the morphisms $f_n$ induce a morphism of formal schemes. In addition, by \ref{propkey}, $\calM_{\frakY}=\lim\limits_{\longleftarrow} \calM_{\frakY_n}$ and $\calM_{\frakX}=\lim\limits_{\longleftarrow} \calM_{\frakX_n}.$ Then the compositions
$f^{-1}\calM_{\frakY} \ra f^{-1}\calM_{\frakY_n} \xrightarrow {f_n^{\flat}} \calM_{\frakX_n}$
induce a morphism $f^{-1}\calM_{\frakY} \ra \calM_{\frakX}$ which turns $f$ into a morphism of logarithmic formal schemes. This construction is inverse to that of \ref{parag68}.
\end{proof}

\begin{parag}\label{parag69}
Keep the same notations of \ref{parag68}.
For every integer $n\ge 1,$ denote by $i_{n,n+1}:\frakX_n\ra \frakX_{n+1}$ the canonical morphism and by $(d_n,\delta_n):\Ox_{\frakX_n}\times \calM_{\frakX_n}\ra \omega^1_{\frakX_n/\frakY_n}$
the universal derivation. Then the morphisms
$\Ox_{\frakX_{n+1}}\xrightarrow{i_{n,n+1}^{\#}}\Ox_{\frakX_n}\xrightarrow{d_n}\omega^1_{\frakX_n / \frakY_n}$
and
$\calM_{\frakX_{n+1}}\xrightarrow{i_{n,n+1}^{\flat}}\calM_{\frakX_n}\xrightarrow{\delta_n}\omega^1_{\frakX_n / \frakY_n}$
define a logarithmic derivation $\Ox_{\frakX_{n+1}}\times\calM_{\frakX_{n+1}}\ra \omega^1_{\frakX_n / \frakY_n}$ and thus a morphism
$\omega^1_{\frakX_{n+1} / \frakY_{n+1}}\ra \omega^1_{\frakX_n / \frakY_n}.$
These morphisms define a projective system $(\omega^1_{\frakX_n/\frakY_n})_{n\ge 1}.$ We set
\begin{equation}\label{eq681}
\omega^1_{\frakX/\frakY}=\lim_{\substack{\longleftarrow \\ n\ge 1}}\omega^1_{\frakX_n / \frakY_n},
\end{equation}
that we call \emph{the sheaf of logarithmic differentials of $\frakX$ over $\frakY$}.
\end{parag}

\begin{proposition}\label{prop69}
Let $\frakX$ be a logarithmic $p$-adic formal scheme and denote, for any integer $n\ge 1,$ by $\frakX_n$ the scheme $(\frakX,\Ox_{\frakX}/p^n)$ and, for any $1\le n\le k,$ denote by $f_n:\frakX_n\ra \frakX$ and $f_{n,k}:\frakX_{n}\ra \frakX_{k}$ the canonical morphisms. Equip $\frakX_n$ with the logarithmic structure $f_n^*\calM_{\frakX}.$ Then for any $1\le n\le k,$ the morphisms $f_n^{\flat}:\calM_{\frakX}\ra \calM_{\frakX_n}$ and $f_{n,k}^{\flat}:\calM_{\frakX_k}\ra \calM_{\frakX_n}$ are surjective.
\end{proposition}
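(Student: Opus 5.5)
The plan is to reduce everything to the explicit description of a pullback log structure, as in the proof of \ref{propkey}. By definition (\cite{Ogus2018} III 1.1.5.2), and writing $\alpha_n$ for the composition $\calM_{\frakX}\xrightarrow{\alpha_{\frakX}}\Ox_{\frakX}\ra \Ox_{\frakX_n}$, we have
$$\calM_{\frakX_n}=f_n^*\calM_{\frakX}=\calM_{\frakX}\oplus_{\alpha_n^{-1}(\Ox_{\frakX_n}^*)}\Ox_{\frakX_n}^*.$$
Under this identification $f_n^{\flat}(m)=(m,1)$. Surjectivity of a morphism of sheaves of monoids is a statement about stalks at geometric points, or equivalently about local sections étale locally. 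So it suffices to show that every local section $(m,a)$ of $\calM_{\frakX_n}$, with $m$ a local section of $\calM_{\frakX}$ and $a$ a local section of $\Ox_{\frakX_n}^*$, is étale locally of the form $(m',1)$.

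\emph{Key step.} I will lift units. Work on an affine étale (or Zariski) open $\op{Spf}A$ on which $a$ is defined, and choose $b\in A$ reducing to $a$ modulo $p^n$. Since $p$ is topologically nilpotent and $A$ is $p$-adically complete, an element of $A$ that is invertible modulo $p^n$ is invertible in $A$; this is the same argument as for $b_1$ in the proof of \ref{propkey}. Hence $b\in A^*$. Because $\alpha_{\frakX}$ induces an isomorphism $\alpha_{\frakX}^{-1}(\Ox_{\frakX}^*)\xrightarrow{\sim}\Ox_{\frakX}^*$, there is $t\in\calM_{\frakX}^*$ with $\alpha_{\frakX}(t)=b$, and therefore $\alpha_n(t)=a$. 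The defining relation of the amalgamated sum (\cite{Kat89} (1.3)) then gives
$$(m,a)=(m,\alpha_n(t))=(m+t,1)=f_n^{\flat}(m+t).$$
This shows that $f_n^{\flat}$ is surjective. Note that no integrality hypothesis is needed here.

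\emph{Transition maps.} For $1\le n\le k$ we have $f_n=f_k\circ f_{n,k}$, and the paper uses $f_{n,k}^*\calM_{\frakX_k}=f_n^*\calM_{\frakX}$ (as in \ref{parag68}). So $f_n^{\flat}=f_{n,k}^{\flat}\circ f_k^{\flat}$, read appropriately on inverse images; the topological spaces are identified, so inverse images are harmless. Since $f_n^{\flat}$ is surjective, $f_{n,k}^{\flat}$ is surjective as well.

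The only point that needs care is the unit-lifting step, namely that a section invertible modulo $p^n$ lifts to a unit of $\Ox_{\frakX}$. This follows from $p$-adic completeness, via a geometric series or Nakayama's lemma. Everything else is formal.
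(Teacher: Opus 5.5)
Your proof is correct and follows essentially the same argument as the paper. It reduces to $f_n^{\flat}$ via $f_n^{\flat}=f_{n,k}^{\flat}\circ f_k^{\flat}$, writes $\calM_{\frakX_n}$ as the amalgamated sum $\calM_{\frakX}\oplus_{\alpha_n^{-1}(\Ox_{\frakX_n}^*)}\Ox_{\frakX_n}^*$, lifts the unit $a$ to a unit $\alpha_{\frakX}(t)$ of $\Ox_{\frakX}$ using $p$-adic completeness, and concludes that $(m,a)=(m+t,1)=f_n^{\flat}(m+t)$.
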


\begin{proof}
Let $k\ge n\ge 1.$ Since $f_n^{\flat}=f_{n,k}^{\flat}\circ f_k^{\flat},$ it is sufficient to prove that $f_n^{\flat}$ is surjective. By definition of the logarithmic structure on $\frakX_n,$ the morphism $f_{n}$ is strict so $$\calM_{\frakX_n}=f_{n}^*\calM_{\frakX}=\calM_{\frakX}\oplus_{\alpha_n^{-1}(\Ox_{\frakX_n}^*)}\Ox_{\frakX_n}^*,$$
where $\alpha_n:\calM_{\frakX}\ra \Ox_{\frakX_n}$ is the composition
$\calM_{\frakX}\xrightarrow{\alpha_{\frakX}}\Ox_{\frakX}\xrightarrow{f_{n}^{\#}}\Ox_{\frakX_{n}}.$ Let $m'$ and $a_n$ be local sections of $\calM_{\frakX}$ and $\Ox_{\frakX_{n}}^*$ respectively. Set $m_n=(m',a_n),$ local section of $\calM_{\frakX_{n}}.$ Let $a$ be a local lifting of $a_n$ to a local section of $\Ox_{\frakX}$ over an affine formal subscheme of $\frakX.$ Since $\Ox_{\frakX}=\lim\limits_{\substack{\longleftarrow\\ n\ge 1}}\Ox_{\frakX_{n}}$ and $a_n$ is invertible in $\Ox_{\frakX_n},$ it follows that $a$ is also invertible in $\Ox_{\frakX}$ and so $a=\alpha_{\frakX}(t)$ for a local section $t$ of $\calM_{\frakX}.$ It follows that
$$m_n=(m',a_n)=(m',\alpha_{n}(t))=(t+m',1)=f_{n}^{\flat}(t+m').$$
\end{proof}

\begin{parag}\label{Khamineipar414}
Let $\Delta:\frakX \ra \frakY$ be an exact closed immersion of integral logarithmic $p$-adic locally Noetherian formal schemes with ideal $\calI.$ For any $x\in \frakX,$ the ideal $\calI_x$ is a proper ideal of $\Ox_{\frakY,x}$ so $1+\calI_x \subset \Ox_{\frakY,x}^*.$ It follows that $\Delta^{-1}(1+\calI)\subset \Delta^{-1}\Ox_{\frakY}^*.$ Denote by $\lambda:\Delta^{-1}(1+\calI) \ra \Delta^{-1}\calM_{\frakY}$ the composition of $\Delta^{-1}\alpha_{\frakY}^{-1}:\Delta^{-1}\Ox_{\frakY}^* \ra \Delta^{-1}\calM_{\frakY}$ with the canonical embedding $\Delta^{-1}(1+\calI) \hookrightarrow \Delta^{-1}\Ox_{\frakY}^*.$ Consider the sequence of monoids
\begin{equation}\label{era2exactseq}
0 \ra \Delta^{-1}(1+\calI) \xrightarrow{\lambda} \Delta^{-1}\calM_{\frakY} \xrightarrow{\Delta^{\flat}} \calM_{\frakX} \ra 0.
\end{equation}
\end{parag}

\begin{proposition}
Keep the assumptions of \ref{Khamineipar414}. The sequence \eqref{era2exactseq} is an exact sequence of monoids i.e. $\lambda$ is injective, $\Delta^{\flat}$ is surjective and for any local sections $m$ and $m'$ of $\Delta^{-1}\calM_{\frakY}$ such that $\Delta^{\flat}(m)=\Delta^{\flat}(m'),$ there exists a local section $a$ of $\Delta^{-1}(1+\calI)$ such that
$
m+\lambda(a)=m'.
$
\end{proposition}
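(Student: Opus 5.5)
The argument follows the proof of \ref{KhamineiExact} almost verbatim, replacing the scheme $\Delta:X\ra Y$ by the formal scheme $\Delta:\frakX\ra\frakY$. The only new inputs are that $\lambda$ is well defined, which is \ref{Khamineipar414} (local rings of a locally Noetherian formal scheme are local, so $1+\calI_x\subset \Ox_{\frakY,x}^*$), and that the local rings $\Ox_{\frakX,\ov{x}}$ are quotients $\Ox_{\frakY,\ov{x}}/\calI_{\ov{x}}$.

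\emph{Injectivity and surjectivity.} Injectivity of $\lambda$ is immediate: it is the restriction of $\alpha_{\frakY}^{-1}:\Ox_{\frakY}^*\xrightarrow{\sim}\calM_{\frakY}^*$ to $1+\calI$. For surjectivity, $\Delta$ is exact, hence strict, so
$$\calM_{\frakX}=\Delta^*\calM_{\frakY}=\Delta^{-1}\calM_{\frakY}\oplus_{\gamma^{-1}\Ox_{\frakX}^*}\Ox_{\frakX}^*,$$
where $\gamma=\Delta^{\#}\circ\Delta^{-1}\alpha_{\frakY}$. A section $(m,u)$ of this amalgamated sum can be rewritten as $(m+t,1)$, where $t\in\calM_{\frakY}^*$ satisfies $\alpha_{\frakY}(t)=\tilde u$ for a local lift $\tilde u$ of $u$. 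Such a lift is automatically a unit, since $\calI$ lies in the maximal ideal of each local ring. This is the same device as in \ref{prop69}.

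\emph{Exactness in the middle.} Work at a geometric point $\ov{x}$. Suppose $\Delta^{\flat}(m)=\Delta^{\flat}(m')$. By (\cite{Kat89} (1.3)) for amalgamated sums of integral monoids, there exist $t,t'\in(\gamma^{-1}\Ox_{\frakX}^*)_{\ov{x}}$ with $m+t=m'+t'$ and $\gamma(t)=\gamma(t')$.
\begin{enumerate}
\item Since $\Ox_{\frakX,\ov{x}}=\Ox_{\frakY,\ov{x}}/\calI_{\ov{x}}$ and $\calI_{\ov{x}}$ lies in the maximal ideal, an element of $\Ox_{\frakY,\ov{x}}$ that is a unit modulo $\calI_{\ov{x}}$ is a unit. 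Hence $\alpha_{\frakY}(t)$ and $\alpha_{\frakY}(t')$ are units, so $t,t'\in\calM_{\frakY,\ov{x}}^*$.
\item Set $a=\alpha_{\frakY}(t)\alpha_{\frakY}(t')^{-1}$. It maps to $\gamma(t)\gamma(t')^{-1}=1$ in $\Ox_{\frakX,\ov{x}}$, so $a\in 1+\calI_{\ov{x}}$ and $\lambda(a)=t-t'$. Integrality then gives $m+\lambda(a)=m'$.
\item Uniqueness follows from integrality and the injectivity of $\lambda$. Sections defined at stalks spread out to local sections.
\end{enumerate}

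\emph{Main obstacle.} The only delicate point is the stalk description of the formal scheme: whether the étale stalk $\Ox_{\frakY,\ov{x}}$ is local with quotient $\Ox_{\frakX,\ov{x}}$. This is where the locally Noetherian hypothesis enters. If it causes trouble, I would instead reduce modulo $p^n$ to the schemes $\frakX_n\ra\frakY_n$ and apply \ref{KhamineiExact} there. One then passes to the limit using \ref{propkey} and the uniqueness of $a$ at each level, so that the elements $a_n$ are compatible and define $a\in 1+\calI$.
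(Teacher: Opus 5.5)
Your proof is correct, but your main route is not the paper's. The paper never redoes the stalk computation on the formal scheme. It reduces modulo $p^n$, applies \ref{KhamineiExact} to each exact closed immersion $\Delta_n:\frakX_n\ra\frakY_n$, and passes to the limit using \ref{propkey}; this is exactly the fallback you sketch at the end.

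Your direct route instead transposes the argument of \ref{KhamineiExact} to the étale stalks of $\frakY$ itself. It needs three standard facts about formal schemes, which you identify correctly:
\begin{enumerate}
\item the stalks $\Ox_{\frakY,\ov{x}}$ are local rings;
\item $\Ox_{\frakX,\ov{x}}=\Ox_{\frakY,\ov{x}}/\calI_{\ov{x}}$, with $\calI_{\ov{x}}$ contained in the maximal ideal;
\item the stalk of the pullback log structure is the pushout of the stalks.
\end{enumerate}
In exchange, you need no limit argument at all.

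The paper's route stays entirely within schemes, where everything is already proved. However, its one-line reduction silently relies on the points you spell out in your fallback:
\begin{enumerate}
\item the uniqueness of $a_n$ at each level, so that the $a_n$ are compatible;
\item the compatibility of $\lambda$ with the $\lambda_n$;
\item the identification $\varprojlim_n(1+\calI_n)=1+\calI$, which uses that ideals of a Noetherian adic ring are closed;
\item the injectivity part of \ref{propkey}, applied to $\frakY$, to conclude that $m+\lambda(a)=m'$.
\end{enumerate}
Your direct argument is therefore the more self-contained of the two. The paper's route is shorter only because it leans on results already established.
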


\begin{proof}
The morphism $\lambda$ (resp. $\Delta^{\flat}$) is injective (resp. surjective) by definition.
For a positive integer $n,$ let $\Delta_n:\frakX_n \ra \frakY_n$ be the immersion obtained from $\Delta$ by reduction modulo $p^n,$ as in \ref{prop69} and denote by $\calI_n$ its ideal. By \ref{propkey}, it is sufficient to prove that the sequence
\begin{equation}\label{era2exactseqprime}
0 \ra \Delta_n^{-1}(1+\calI_n) \xrightarrow{\lambda_n} \Delta_n^{-1}\calM_{\frakY_n} \xrightarrow{\Delta_n^{\flat}} \calM_{\frakX_n}
\end{equation}
is exact for all positive integers $n.$
This follows from \ref{KhamineiExact}.
\end{proof}

\begin{definition}\label{erafsmoothdef}
A morphism $f:\frakX\ra \frakY$ of logarithmic $p$-adic formal schemes is said to be \emph{log étale} (resp. \emph{log smooth}) if $f$ is locally of finite presentation (resp. locally of finite type) (\cite{Ahmed2010} 2.3.13 and 2.3.15), étale locally on $\frakX$ and $\frakY,$ there exists a chart $\theta:P\ra Q$ of $f$ such that
\begin{enumerate}
\item $\theta^{gp}$ is injective and $\op{coker}\theta^{gp}$ (resp. the torsion subgroup of $\op{coker}\theta^{gp}$) is finite and of order coprime with $p.$
\item The morphism $\frakX \ra \frakY\times_{B\langle P\rangle}B\langle Q\rangle,$ induced by $f$ and the chart $\frakX \ra B\langle Q\rangle$ is étale (\cite{Ahmed2010} 2.4.5).
\end{enumerate}
It is clear that if $f$ is log étale (resp. log smooth), then, for every integer $n\ge 1,$ the induced morphism of logarithmic schemes $f_n:\frakX_n\ra \frakY_n$ (\ref{parag68}) is log étale (resp. log smooth).
\end{definition}

\begin{proposition}\label{proplift}
Let $f:\frakX \ra \frakY$ be a morphism of integral logarithmic $p$-adic formal schemes. Suppose we are given a commutative diagram
\begin{equation}\label{diag6161}
\begin{tikzcd}
 & & \frakX \ar{d}{f} \\
T \ar{r}\ar[bend right=-30]{urr} & \frakT \ar{r} \ar[dashed]{ur} & \frakY
\end{tikzcd}
\end{equation}
where $\frakT$ is a logarithmic $p$-adic formal scheme, $T$ is the special fiber of $\frakT$ equipped with the pullback structure of $\frakT$ and $T \ra \frakT$ is the canonical strict morphism. If $f$ is log étale (resp. log smooth) then the dashed arrow exists and is unique (resp. exists locally on $\frakT$).
\end{proposition}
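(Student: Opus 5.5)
The plan is to reduce to the scheme case modulo $p^n$ and then pass to the limit using \ref{prop18}. The fact that makes this work is that each thickening $T_n\ra T_{n+1}$, where $T_n=(\frakT,\Ox_{\frakT}/p^n)$ carries the pullback logarithmic structure, is an exact closed immersion with square-zero ideal $p^n\Ox_{T_{n+1}}$. Note that $T_1=T$.

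Fix the morphisms $g:T\ra\frakX$ and $h:\frakT\ra\frakY$ in \eqref{diag6161}, with $f\circ g$ equal to the composition $T\ra\frakT\ra\frakY$. By \ref{parag68}, $h$ induces compatible morphisms $h_n:T_n\ra\frakY_n$. Since $T=T_1$ is a $\Z/p$-scheme, $g$ factors through $\frakX_1$ and gives $g_1:T_1\ra\frakX_1$. We then construct, by induction on $n$, morphisms $g_n:T_n\ra\frakX_n$ such that $g_{n+1}$ restricts to $g_n$ on $T_n$ and $f_n\circ g_n=h_n$. For the inductive step, consider the square with top arrow $T_n\xrightarrow{g_n}\frakX_n\ra\frakX_{n+1}$, bottom arrow $T_{n+1}\xrightarrow{h_{n+1}}\frakY_{n+1}$, left arrow the strict nilpotent thickening $T_n\ra T_{n+1}$, and right arrow $f_{n+1}$. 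By \ref{erafsmoothdef}, $f_{n+1}$ is log étale (resp. log smooth). Kato's infinitesimal lifting criterion (\cite{Kat89} 3.3) then gives a unique lift $T_{n+1}\ra\frakX_{n+1}$ (resp. a lift étale locally on $T_{n+1}$). This lift factors through $\frakX_{n+1}$ automatically, because $T_{n+1}$ is killed by $p^{n+1}$. Finally, \ref{prop18} (which uses \ref{propkey}, valid since $\frakX$ and $\frakY$ are integral) assembles the compatible family $(g_n)$ into a morphism $\frakT\ra\frakX$. Uniqueness in the log étale case follows from uniqueness at each stage together with the bijectivity in \ref{prop18}.

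The main obstacle is the log smooth case. There, lifts exist only étale locally at each stage, and the local lifts at successive stages must be made compatible on a common cover, so that \ref{prop18} can be applied on some open of $\frakT$. A cover that shrinks with $n$ is not acceptable. I would handle this as follows. Work étale locally on $\frakT$, with a chart $\theta:P\ra Q$ as in \ref{erafsmoothdef} such that $\frakX\ra\frakY\times_{B\langle P\rangle}B\langle Q\rangle$ is étale. First lift the monoid map $Q\ra\Gamma(T,\calM_T)$ to $Q\ra\Gamma(\frakT,\calM_{\frakT})$, compatibly with $P$. The obstruction to this lift lies in groups of $1+p\Ox$ type, which are uniquely $\ell$-divisible for $\ell$ prime to $p$. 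The order condition on the torsion of $\op{coker}\theta^{gp}$ then lets us solve the equations in $\Gamma(\frakT,1+p\Ox_{\frakT})$ by $p$-adic completeness, on an affine $\frakT$. This lift yields a morphism $\frakT\ra\frakY\times_{B\langle P\rangle}B\langle Q\rangle$. Then lift through the étale morphism using the formal étale lifting property for $p$-adic formal schemes (\cite{Ahmed2010} 2.4). In other words, the log smooth case reduces to a single affine neighbourhood on which every stage is solvable, rather than a stage-dependent cover.
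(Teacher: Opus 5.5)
Your log \'etale argument is the paper's own: induct on $n$ using that $f_{n+1}$ is log \'etale and $\frakT_n\ra\frakT_{n+1}$ is a strict nilpotent thickening, then pass to the limit. The paper cites \ref{propkey} for the limit and you cite \ref{prop18}, which comes to the same thing.

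For the log smooth case, the paper only says that the same induction works after restricting to affine formal subschemes of $\frakY$, $\frakX$ and $\frakT$. That sentence is true because, at each stage, the \'etale-local lifts form a torsor under the quasi-coherent sheaf $\mathscr{Hom}(g_n^*\omega^1_{\frakX_{n+1}/\frakY_{n+1}},p^n\Ox_{\frakT_{n+1}})$. The paper leaves this implicit. It means the lifts exist globally on an affine, so the induction runs on one fixed open. This is precisely the answer to the shrinking-cover problem you correctly isolate.

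Your route never uses the torsor structure, so it is genuinely different:
\begin{itemize}
\item You lift the chart map once. The obstruction is a class in $\op{Ext}^1(\op{coker}\theta^{gp},\Gamma(\frakT,1+p\Ox_{\frakT}))$. It vanishes because the free part of the cokernel is projective and $1+p\Ox_{\frakT}$ is uniquely $N$-divisible for $N$ prime to $p$.
\item You then use the formal \'etale lifting. Its lifts are unique, so no compatibility between stages has to be arranged.
\end{itemize}
This makes the role of the torsion condition on $\op{coker}\theta^{gp}$ transparent. It replaces the stagewise log smooth lifting by one extension problem plus a lifting problem with unique solutions. The price is that you only get existence \'etale locally on $\frakT$, where the charts live. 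The paper's version gives lifts on affine opens. Either suffices for the statement as phrased.

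To complete your argument, spell out three points:
\begin{itemize}
\item The images of generators of $Q$ lift to $\calM_{\frakT}$ after \'etale localization.
\item A lift defined on $Q^{gp}$ lands in $\calM_{\frakT}$, because the kernel $1+p\Ox_{\frakT}$ of $\calM_{\frakT}^{gp}\ra\calM_T^{gp}$ consists of units.
\item The morphism $\frakX\ra\frakY\times_{B\langle P\rangle}B\langle Q\rangle$ is strict, so \ref{era3proplogstrformal} gives the \'etale lift its logarithmic structure.
\end{itemize}
Finally, you apply \ref{prop18} to morphisms with source $\frakT$. What it really requires is that $\frakT$ be integral, not $\frakX$ and $\frakY$. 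The paper's proof makes the same tacit assumption when it invokes \ref{propkey} for $\frakT$.
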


\begin{proof}
For every integer $n\ge 1,$ we denote by $\frakX_n$ (resp. $\frakY_n,$ resp. $\frakT_n$) the logarithmic scheme obtained from $\frakX$ (resp. $\frakY,$ resp. $\frakT$) by reduction modulo $p^n,$ as in \ref{propkey}.
Suppose $f$ is log étale. We construct, by induction on $n,$ a sequence of compatible morphisms $(g_n:\frakT_n\ra \frakX_n).$ For $n=1,$ we take the morphism $g_1:T \ra \frakX_1$ given in (\ref{diag6161}). Let $n$ be a positive integer and suppose we are given a morphism $g_n:\frakT_n \ra \frakX_n$ such that the diagram
\begin{equation}
\begin{tikzcd}
 & & \frakX_n \ar{d}{f_n} \\
T \ar{r}\ar[bend right=-30]{urr} & \frakT_n \ar{r} \ar{ur}{g_n} & \frakY_n
\end{tikzcd}
\end{equation}
is commutative. We obtain the following solid commutative diagram
$$
\begin{tikzcd}
\frakX_n \ar{rr} & & \frakX_{n+1} \ar{d}{f_{n+1}} \\
\frakT_n \ar{u}{g_n} \ar{r} & \frakT_{n+1} \ar[dashed]{ur}{g_{n+1}} \ar{r} & \frakY_{n+1}
\end{tikzcd}
$$
The existence and uniqueness of the dashed arrow $g_{n+1}:\frakT_{n+1} \ra \frakX_{n+1}$ follows from the fact that $f_{n+1}$ is log étale and $\frakT_n \ra \frakT_{n+1}$ is a strict thickening. The morphisms $g_n$ are compatible by construction, so they induce a morphism
$g:\lim\limits_{\substack{\longrightarrow \\ n\ge 1}}\frakT_n \ra \lim\limits_{\substack{\longrightarrow \\ n\ge 1}}\frakX_n.$
And by \ref{propkey}, $\lim\limits_{\substack{\longrightarrow \\ n\ge 1}}\frakT_n=\frakT,\ \lim\limits_{\substack{\longrightarrow \\ n\ge 1}}\frakX_n=\frakX.$
The proof in the log étale case is complete.
The log smooth case can be proved in a similar way by restricting to affine formal subschemes of $\frakY,$ $\frakX$ and $\frakT.$ 
\end{proof}

\begin{definition}\label{logflatdef}
Let $f:\frakX \ra \frakY$ be a morphism of fine logarithmic $p$-adic formal schemes and, for all positive integers $n,$ $f_n:\frakX_n \ra \frakY_n$ the morphism obtained from $f$ by reduction modulo $p^n$ as in \ref{parag68}. The morphism $f$ is said to be \emph{log flat} if, for every positive integer $n,$ $f_n$ is log flat (\cite{Ogus2018} IV 4.1.1).
\end{definition}

\begin{proposition}\label{Wflat}
Let $W(k)$ be the ring of Witt vectors of a perfect field $k$ of characteristic $p$ and $\frakS=\op{Spf}W$ equipped with the trivial logarithmic structure. If $\frakT$ is an fs logarithmic $p$-adic formal scheme log flat over $\frakS,$ then $\frakT$ is flat over $\frakS$ \eqref{dxuflat}. 
\end{proposition}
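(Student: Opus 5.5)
By \ref{dxuflat}, it suffices to show that for every positive integer $n$ the scheme $\frakT_n$ is flat over $\frakS_n=\Sp W_n(k)$. By \ref{logflatdef}, log flatness of $\frakT\ra \frakS$ means that each reduction $\frakT_n \ra \frakS_n$ is log flat in the sense of (\cite{Ogus2018} IV 4.1.1). So the statement reduces to a claim about schemes: \emph{a log flat morphism of fs logarithmic schemes $T\ra S$ whose target $S$ carries the trivial logarithmic structure has flat underlying morphism of schemes.} We then apply this with $T=\frakT_n$ and $S=\Sp W_n(k)$.

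To prove the claim, I would work étale locally on $T$, which is harmless since flatness is étale local. By the definition of log flatness, étale locally there is a chart $\theta:P\ra Q$ of $T\ra S$ with $\theta^{gp}$ injective such that the induced morphism $T\ra S\times_{A[P]}A[Q]$ is flat. Because $\calM_S$ is trivial, $\ov{\calM}_{S}=0$. I would therefore refine the chart near a geometric point so that $P=0$ (equivalently $P=P^{\times}$), using the standard fact that charts can be chosen neat/exact at a point; the proof of \ref{etaleframelift} uses the same kind of reduction via (\cite{Ogus2018} II 2.3.1). The target then becomes $S\times_{\Z}\Sp \Z[Q]=\Sp (W_n(k)\otimes \Z[Q])$. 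This is flat over $S$ because $\Z[Q]$ is a free $\Z$-module, with basis given by the elements of $Q$. Composing the two flat morphisms, $T\ra S\times A[Q]\ra S$, gives flatness of $T$ over $S$.

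The main obstacle is this chart reduction. Log flatness is defined using some chart $P\ra Q$, and the given $P$ need not be trivial even though the logarithmic structure on $S$ is. If the definition in (\cite{Ogus2018} IV 4.1.1) allows $P$ arbitrary with $P\ra \calM_S$ a chart, then $P$ must consist of units, since its image lies in $\Ox_S^*$. In that case I would replace $P$ by $0$ and $Q$ by $Q/\theta(P)$, or more safely by the pushout $Q\oplus_P 0$ taken in fine monoids. I would then check two things: that $S\times_{A[P]}A[Q]$ is unchanged up to a flat base change, and that $\Z[Q]$ stays flat over $\Z[P]$ when $\theta$ is injective. Alternatively, I would invoke (\cite{Ogus2018} IV 4.3.5) directly, as in \ref{corlogflat}. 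Since $\ov{\calM}_S=0$, the map $\ov{\calM}_{S}\ra\ov{\calM}_{T}$ is trivially integral, so log flat together with integral gives flat. This second route avoids the chart manipulation entirely, so I would try it first.
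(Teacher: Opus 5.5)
Your proposal is correct. Its chart-based half is essentially the paper's proof. The paper also reduces, via \ref{dxuflat}, to flatness of each $\frakT_n\ra\frakS_n$. It then takes fppf locally a chart with \emph{trivial} base monoid, so that $\frakT_n\ra\Sp(W_n(k)[M])$ is flat, and concludes because $W_n(k)[M]$ is free over $W_n(k)$. The paper passes silently over why the base monoid may be taken to be $0$, so the obstacle you flag is real.

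The route you say you would try first is genuinely different. Every morphism into a log scheme with $\ov{\calM}=0$ is integral, since $\Z\ra\Z[M]$ is flat. Hence (\cite{Ogus2018} IV 4.3.5), log flat plus integral implies flat, turns log flatness of $\frakT_n\ra\frakS_n$ directly into flatness. This avoids all chart manipulation and explains the result conceptually: over a base with trivial log structure, log flat implies flat. The price is reliance on a deeper theorem whose proof performs exactly the chart refinement the paper skips. The paper itself uses this reference the same way in \ref{corlogflat} and \ref{formaletaleframelift}.

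If you carry out the chart route instead, two of your proposed steps need correcting.
\begin{enumerate}
\item Injectivity of $\theta$ does \emph{not} make $\Z[Q]$ flat over $\Z[P]$. For $\theta:\N^2\ra\N^2$, $(a,b)\mapsto(a+b,b)$, the map $\Z[x,y]\ra\Z[u,v]$, $x\mapsto u$, $y\mapsto uv$, has a fibre that jumps in dimension over the origin.
\item $Q/\theta(P)$, or $(Q\oplus_P0)^{int}$, is in general not a chart for $\calM_T$. The elements of $\theta(P)$ map to units that need not equal $1$.
\end{enumerate}
The right move is to localize. Since $P$ maps into $\Ox_S^*$, the morphism $S\ra A[P]$ factors through $\Sp\Z[P^{gp}]$. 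Hence $S\times_{A[P]}A[Q]=S\times_{\Sp\Z[P^{gp}]}\Sp\Z[Q\oplus_PP^{gp}]$. Here $Q\oplus_PP^{gp}$ is the localization of $Q$ at $\theta(P)$, on which $P^{gp}$ acts freely by translation because $\theta^{gp}$ is injective. So $\Z[Q\oplus_PP^{gp}]$ is free over $\Z[P^{gp}]$, and $S\times_{A[P]}A[Q]\ra S$ is flat. This is your ``$P=P^{\times}$'' version made precise, and it also justifies the paper's tacit use of a trivial base chart.
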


\begin{proof}
For any positive integer $n,$ $\frakT_n \ra \frakS_n$ is log flat. Let $n$ be a positive integer. Fppf locally on $\frakT_n,$ there exists a chart $\frakT_n \ra A_n[ M]$ such that $\frakT_n \ra \frakS_n\times_{\op{Spec}(\Z/p^n\Z)} B[ M]=\op{Spec}(W(k)/(p^n)[M])$ is flat. Since $W(k)/(p^n) \ra W(k)/(p^n)[ M]$ is flat, we get that $\frakT_n$ is flat over $\frakS_n.$ It follows that $\frakT$ is flat over $\frakS.$
\end{proof}

\begin{proposition}\label{logflatfiber}
Let $f:\frakX \ra \frakY$ and $g:\frakY \ra \frakZ$ be two morphisms of fine logarithmic $p$-adic formal schemes, $x\in \frakX,$ and $z=g\circ f(x)$ such that $f$ and $g$ are locally of finite presentation (\cite{Ahmed2010} 2.3.15) and $g\circ f$ and $f_z:\frakX_z\ra \frakY_z$ are log flat in a neighborhood of $x.$ Then $f$ is log flat in a neighborhood of $x.$
\end{proposition}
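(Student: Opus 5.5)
By Definition \ref{logflatdef}, log flatness of a morphism of $p$-adic formal schemes means log flatness of every reduction modulo $p^n$. So I will reduce the statement to the corresponding fibrewise criterion for fine logarithmic schemes, applied to each $f_n$, and then pass to a neighbourhood of $x$ that works for all $n$ simultaneously.

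\emph{Reduction to schemes.} Fix $n\ge 1$ and consider the reductions $f_n:\frakX_n\ra \frakY_n$ and $g_n:\frakY_n\ra \frakZ_n$ from \ref{parag68}.
\begin{itemize}
\item Both are morphisms of fine logarithmic schemes, and they are locally of finite presentation because $f$ and $g$ are (\cite{Ahmed2010} 2.3.15).
\item $g_n\circ f_n=(g\circ f)_n$ is log flat near $x$ by hypothesis.
\item The points $x$ and $z$ are the same in the reductions, since $\frakX_n\ra \frakX$ is a homeomorphism.
\item The fibre morphism $f_z:\frakX_z\ra \frakY_z$ is defined over the residue field $k(z)$, which has characteristic $p$. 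Hence $\frakX_z$ and $\frakY_z$ are already killed by $p$, so $(f_n)_z=f_z$ for every $n$, and it is log flat near $x$ by hypothesis.
\end{itemize}
I then invoke the fibrewise criterion of log flatness for fine logarithmic schemes (\cite{Ogus2018} IV 4.1.x, the log analogue of EGA IV 11.3.10, which Ogus proves using charts, or Kato's version in \cite{Kat19}). It gives an open neighbourhood $U_n$ of $x$ on which $f_n$ is log flat.

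\emph{Getting one neighbourhood for all $n$.} The $U_n$ could a priori shrink as $n$ grows, so I want an open set independent of $n$. Two routes:
\begin{itemize}
\item Inspect the proof of the scheme-theoretic criterion. Étale locally near $x$ one chooses charts $Q\ra P$ for $f$ adapted at $x$; these can be taken on the formal scheme and reduced. Log flatness then amounts to flatness of $\frakX_n\ra \frakY_n\times_{A_n[Q]}A_n[P]$, together with injectivity of $Q\ra P$ after suitable modifications. The open locus where this flatness holds is determined by the special fibre: for $p$-adic formal schemes that are locally of finite type, flatness of all $f_n$ at a point follows from flatness of $f_1$ together with flatness of the source and target over the base, via the local flatness criterion.
\item Alternatively, and more cleanly: $\frakX_n$ and $\frakX_1$ have the same underlying space, and log flatness of $f_n$ near $x$ is tested by the crit\'ere de platitude par fibres applied to the chart-modified morphism. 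That morphism is flat at $x$ for all $n$ once it is flat at $x$ modulo $p$ and compatibly over the thickenings. So the neighbourhood $U_1$ should already work for every $n$.
\end{itemize}

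\emph{Main obstacle.} The scheme-level criterion can be cited. The hard part is this uniformity in $n$: one needs a single open neighbourhood on which all $f_n$ are log flat. I would first try to show that the non-log-flat locus of $f_n$ is closed and has the same underlying set for every $n$. The idea is to apply the local criterion for flatness to the strict, chart-modified morphism $\frakX\ra \frakY\times_{B\langle Q\rangle}B\langle P\rangle$ over the nilpotent thickenings $\frakZ_1\subset \frakZ_n$, using the log flatness of $g\circ f$ to supply the needed flatness over $\frakZ_n$.
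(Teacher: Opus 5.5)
\textbf{Gap: the uniform neighbourhood in $n$ is not proved.} Your reduction is the right one, and it is what the paper's one-line proof (a citation of (\cite{Ogus2018} IV 4.2.2)) leaves implicit. The $f_n$ and $g_n$ are locally of finite presentation. By Definition \ref{logflatdef}, $(g\circ f)_n$ is log flat on one fixed open neighbourhood $V$ of $x$ for all $n$. And $(f_n)_z=f_z$ because $k(z)$ has characteristic $p$.

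The step you single out as the obstacle is real, but neither route closes it. Route 1 needs flatness of the target ($\frakY$, or the chart-modified $\frakY\times_{B\langle Q\rangle}B\langle P\rangle$) over $\frakZ$ or $\op{Spf}\Z_p$. That is not a hypothesis; in the fibre criterion it is part of the conclusion. Route 2 rests on ``flat modulo $p$ at $x$, hence flat modulo $p^n$''. This is false in general: $\op{Spf}\mathbb{F}_p\ra\op{Spf}\Z_p$ is an isomorphism modulo $p$ but not flat modulo $p^2$. The hypothesis on $g\circ f$ has to enter, and your sketch does not say how. So the argument stops at ``should already work''.

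\textbf{The fix: apply the scheme-level criterion at every point near $x$, not only at $x$.}
\begin{enumerate}
\item Apply the criterion to $f_1$, using $V$, to get an open $U\subset V$ containing $x$ with $f_1|_U$ log flat.
\item Take $x'\in U$ with image $z'\in\frakZ$. The fibre $f_{z'}$ is the base change of $f_1$ along the strict morphism $\frakY_{z'}\ra\frakY_1$, so it is log flat on the neighbourhood $U\cap\frakX_{z'}$ of $x'$. As before, $(f_n)_{z'}=f_{z'}$ for every $n$.
\item Since $g_n\circ f_n$ is log flat on $V\ni x'$, the scheme criterion applied to $f_n$ at $x'$ shows that $f_n$ is log flat near $x'$.
\item Log flatness is local on the source, so $f_n|_U$ is log flat for every $n$. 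Hence $f|_U$ is log flat.
\end{enumerate}
Equivalently, the log flat loci of all the $f_n$ coincide inside $V$. That is exactly what you hoped to extract from the local flatness criterion, and it follows directly from the fibre criterion applied pointwise, without reopening its proof.
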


\begin{proof}
This is a consequence of (\cite{Ogus2018} IV 4.2.2).
\end{proof}

\begin{lemma}\label{era3proplogstrformal}
Let $f:\frakX\ra \frakY$ and $g:\frakZ\ra \frakY$ be morphisms of logarithmic formal schemes such that $g$ is strict. Suppose that there exists a morphism of formal schemes $h:\frakX \ra \frakZ$ such that $f=g\circ h$ in the category of formal schemes. Then there exists a unique morphism of sheaves of monoids $h^{\flat}:h^{-1}\calM_{\frakZ} \ra \calM_{\frakX}$ making $h$ a morphism of logarithmic formal schemes and such that $f=g\circ h$ in the category of logarithmic formal schemes.
\end{lemma}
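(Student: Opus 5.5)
The plan is to mimic the proof of \ref{era3proplogstr}, working on sheaves over the common underlying topological spaces. Since a morphism of formal schemes is in particular a morphism of topologically ringed spaces, the identity $f=g\circ h$ gives $f^{-1}=h^{-1}g^{-1}$ on sheaves, and hence an isomorphism $f^{-1}\calM_{\frakY}\simeq h^{-1}g^{-1}\calM_{\frakY}$. Pullback of logarithmic structures is compatible with composition, as in \ref{parag62}. We therefore have $h^*(g^*\calM_{\frakY})\simeq f^*\calM_{\frakY}$ as logarithmic structures on $\frakX$, where $g^*\calM_{\frakY}$ is the logarithmic structure associated to the prelogarithmic structure $g^{-1}\calM_{\frakY}\ra g^{-1}\Ox_{\frakY}\ra \Ox_{\frakZ}$.

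Strictness of $g$ means that the canonical map $g^*\calM_{\frakY}\ra \calM_{\frakZ}$ is an isomorphism. Transporting through it gives an isomorphism $h^*\calM_{\frakZ}\simeq f^*\calM_{\frakY}$ of logarithmic structures on $\frakX$, compatible with the structure maps to $\Ox_{\frakX}$. Here we use $f^\#=h^\#\circ h^{-1}g^\#$.

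By the universal property of the associated logarithmic structure, morphisms of prelogarithmic structures $h^{-1}\calM_{\frakZ}\ra \calM_{\frakX}$ over $\Ox_{\frakX}$ correspond bijectively to morphisms of logarithmic structures $h^*\calM_{\frakZ}\ra \calM_{\frakX}$. I would define $h^\flat$ to be the morphism corresponding to the composite $h^*\calM_{\frakZ}\simeq f^*\calM_{\frakY}\xrightarrow{f^\flat}\calM_{\frakX}$. Then $g\circ h=f$ as logarithmic formal schemes: on the monoid side, the composite $f^{-1}\calM_{\frakY}=h^{-1}g^{-1}\calM_{\frakY}\ra h^{-1}\calM_{\frakZ}\ra \calM_{\frakX}$ recovers $f^\flat$ by construction.

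For uniqueness, let $h'^\flat$ be any other choice with $f^\flat=h'^\flat\circ h^{-1}g^\flat$. It corresponds to a map $h^*\calM_{\frakZ}\ra\calM_{\frakX}$ whose precomposition with the isomorphism $f^*\calM_{\frakY}\simeq h^*\calM_{\frakZ}$ equals $f^\flat$, so it coincides with $h^\flat$.

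The only point needing care is that the formation of associated logarithmic structures and of pullbacks behaves as in the scheme case for formal schemes. This is the content of \ref{parag62}: the construction $\calM\oplus_{\alpha^{-1}\Ox^*}\Ox^*$ is purely sheaf-theoretic on the étale site, so the scheme argument applies verbatim. Alternatively, one could reduce modulo $p^n$, apply \ref{era3proplogstr} to each $h_n$, and glue the results using \ref{prop18}; this requires integrality for \ref{propkey}, so the direct argument is preferable.
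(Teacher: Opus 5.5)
Your proposal is correct and is essentially the paper's argument: the paper just refers to the proof of \ref{era3proplogstr}, which uses the strictness of $g$ to identify $h^*\calM_{\frakZ}$ with $f^*\calM_{\frakY}$, and then uses the universal property of the associated logarithmic structure to conclude that $h^\flat$ must be the morphism induced by $f^\flat$. You additionally make explicit the compatibility with the structure maps to $\Ox_{\frakX}$ (via $f^\#=h^\#\circ h^{-1}g^\#$) and the uniqueness step, both of which the paper leaves implicit.
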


\begin{proof}
The proof is similar to \ref{era3proplogstr}.
\end{proof}

\section{Frames on logarithmic formal schemes}
\begin{parag}\label{parag31}
In this section, we generalize the notion of frames, introduced by Kato and Saito in \cite{Saito04}, to fs logarithmic formal schemes. Let $P$ be an fs monoid. Recall (\ref{parag43}) that we have defined a presheaf of sets $[P]$ on the category $\boldsymbol{\op{L}}$ of fs logarithmic schemes by
$$[P](T)=\op{Hom}_{\boldsymbol{\op{Mon}}}(P,\Gamma(T,\ov{\calM}_{T})).$$
We consider $\boldsymbol{\op{L}}$ as a full subcategory of the category of fs logarithmic formal schemes in the canonical way (\cite{SP} \href{https://stacks.math.columbia.edu/tag/0AHY}{ 0AHY}).
By \ref{Lemlog18}, we have a canonical morphism of presheaves $B\langle P \rangle\ra [P].$ In this section, all logarithmic formal schemes are supposed to be fs. We start with the following proposition:
\end{parag}

\begin{proposition}
The functor
\begin{equation}
F:\boldsymbol{\op{LFS}} \ra \boldsymbol{\widehat{\op{L}}},\ \frakX \mapsto \op{Hom}(-,\frakX)
\end{equation}
is fully faithful.
\end{proposition}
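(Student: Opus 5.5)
The plan is to reduce the statement to the description of a $p$-adic logarithmic formal scheme as the inductive limit of its reductions modulo $p^n$ (\ref{corkey}) and the description of morphisms in \ref{prop18}. The key observation is that each reduction $\frakX_n$, equipped with the pullback log structure, is an fs logarithmic scheme, hence an object of $\boldsymbol{\op{L}}$. The canonical strict morphism $i_n:\frakX_n\ra\frakX$ therefore defines an element $i_n\in F(\frakX)(\frakX_n)$. By Yoneda, a morphism of presheaves $\phi:F(\frakX)\ra F(\frakY)$ sends it to a morphism $f_n=\phi(i_n):\frakX_n\ra \frakY$.

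\textbf{Faithfulness.} Let $f,g:\frakX\ra\frakY$ with $F(f)=F(g)$. Evaluating at $\frakX_n$ on $i_n$ gives $f\circ i_n=g\circ i_n$ for all $n$. Since $\frakX$ is the inductive limit of the $\frakX_n$ in the category of logarithmic formal schemes (\ref{corkey}; fs implies integral, so the hypothesis of \ref{propkey} holds), we conclude $f=g$.

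\textbf{Fullness.} Given $\phi$, set $h_n=\phi(i_n):\frakX_n\ra\frakY$. Naturality of $\phi$ with respect to the morphisms $i_{n,m}:\frakX_n\ra\frakX_m$ in $\boldsymbol{\op{L}}$, together with $i_m\circ i_{n,m}=i_n$, gives $h_m\circ i_{n,m}=h_n$. Each $h_n$ factors uniquely through $j_n:\frakY_n\ra\frakY$: on underlying formal schemes because $p^n\Ox_{\frakX_n}=0$, and on log structures by \ref{era3proplogstrformal}, since $j_n$ is strict. This yields a compatible system $(f_n:\frakX_n\ra\frakY_n)$, which by \ref{prop18} comes from a unique $f:\frakX\ra\frakY$ with $f\circ i_n=h_n$.

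It remains to show $F(f)=\phi$, i.e. $\phi(u)=f\circ u$ for every fs logarithmic scheme $T$ and every $u:T\ra\frakX$. Here $T$ is an honest scheme mapping to the $p$-adic formal scheme $\frakX$. Locally on $T$ (affine opens $\op{Spec}A\ra\op{Spf}B$, continuous maps), $p$ is nilpotent on $T$, so $u$ factors through some $\frakX_n$ as $u=i_n\circ u_n$, with $u_n$ made logarithmic by \ref{era3proplogstrformal}. Naturality then gives $\phi(u)=\phi(i_n)\circ u_n=f\circ i_n\circ u_n=f\circ u$ on each such open. Since $F(\frakY)$ is a sheaf for the Zariski (indeed étale) topology, the local equalities glue; morphisms of logarithmic formal schemes glue.

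\textbf{Main obstacle.} The delicate point is this factorization step when $T$ is not quasi-compact: the exponent $n$ with $p^n=0$ may only exist locally. I will handle it by working on an affine open cover of $T$, where a continuous homomorphism $B\ra A$ from an adic ring to a discrete ring kills $p^n$ for some $n$. I will then use that equality of morphisms is local on the source, so a separate gluing statement for $\phi$ is not needed.
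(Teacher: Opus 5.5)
Your proof is correct and follows essentially the paper's route: you test $\phi$ on the reductions $i_n:\frakX_n\ra\frakX$, assemble the $\phi(i_n)$ into a morphism using the inductive-limit description (\ref{corkey}, \ref{prop18}), and verify $F(h)=\phi$ by factoring $u:T\ra\frakX$ locally through some $\frakX_n$ and applying naturality. The only differences are cosmetic. For faithfulness you invoke \ref{corkey} directly, whereas the paper uses surjectivity of $j_n^{\#}$ and $j_n^{\flat}$ together with \ref{prop18}; for fullness you factor through $\frakY_n$ and use \ref{prop18}, whereas the paper applies \ref{corkey} directly.
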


\begin{proof}
Let $f,g:\frakX \ra \frakY$ be morphisms in $\boldsymbol{\op{LFS}}$ such that $F(f)=F(g)$ and consider the notations of \ref{parag68}. For any integer $n\ge 1,$ the equality $f\circ i_n=F(f)(i_n)=F(g)(i_n)=g\circ i_n$ is equivalent to $j_n \circ f_n=j_n \circ g_n.$ Since $i_n:\frakX_n \ra \frakX$ is equal to the identity on the underlying topological spaces, $f=g$ on the underlying topological spaces. On the level of structural sheaves, the compositions $$\Ox_{\frakY} \xrightarrow{j_n^{\#}} \Ox_{\frakY_n} \xrightarrow{f_n^{\#}} f_*\Ox_{\frakX_n},\ \Ox_{\frakY} \xrightarrow{j_n^{\#}} \Ox_{\frakY_n} \xrightarrow{g_n^{\#}} g_*\Ox_{\frakX_n}$$
are equal. And since $j_n^{\#}$ is surjective, $f_n^{\#}=g_n^{\#}.$
On the level of monoids, the compositions
$$\calM_{\frakY}\xrightarrow{j_n^{\flat}}\calM_{\frakY_n}\xrightarrow{f_n^{\flat}}f_*\calM_{\frakX_n},\ \calM_{\frakY}\xrightarrow{j_n^{\flat}}\calM_{\frakY_n}\xrightarrow{g_n^{\flat}}g_*\calM_{\frakX_n}$$
are equal. And since $j_n^{\flat}$ is surjective (\ref{prop69}), $f_n^{\flat}=g_n^{\flat}.$ We conclude that $f_n=g_n$ as morphisms of logarithmic schemes and it follows by \ref{prop18} that $f=g.$ The functor $F$ is thus faithful.

Now let $\varphi: \op{Hom}(-,\frakX) \ra \op{Hom}(-,\frakY)$ be a morphism of presheaves on $\boldsymbol{\op{L}}.$ For every integer $n\ge 1,$ let $h_n=\varphi(i_n):\frakX_n \ra \frakY.$ The morphisms $h_n$ are compatible so, by \ref{corkey}, they induce a morphism $h:\frakX \ra \frakY$ of logarithmic formal schemes. Let us prove that $\varphi=F(h).$ Let $T$ be an fs logarithmic scheme and $f:T\ra \frakX$ be a morphism. 
Let $\op{Spf}A$ be a formal affine subset of $\frakX$ and $\op{Spec}B$ a formal affine subset of $T$ such that $f(\op{Spec}B)\subset \op{Spf}A.$ Let $u:A\ra B$ be the corresponding continuous morphism of adic rings. Since $p^n\longrightarrow 0$ in $A,$ $u(p)^n\longrightarrow 0$ in $B.$ But since $0$ is an open ideal in $B,$ it follows that $u(p)^n=0$ for some positive integer $n.$ Hence the morphism of formal schemes $\op{Spec}B \xrightarrow{f} \frakX$ factors through $\frakX_n.$ And since $i_n:\frakX_n \ra \frakX$ is strict, this factorization actually holds for logarithmic formal schemes.
Suppose there exists a covering $(T_i\xrightarrow{f_i} T)_{i\in I}$ such that, for every $i\in I,$
\begin{equation}\label{eq721}
\varphi_{T_i}(f\circ f_i)=h \circ f\circ f_i
\end{equation}
Considering the commutative diagram
$$
\begin{tikzcd}
\op{Hom}(T,\frakX) \ar{r}{\varphi_T} \ar{d} & \op{Hom}(T,\frakY) \ar{d} \\
\op{Hom}(T_i,\frakX) \ar{r}{\varphi_{T_i}} & \op{Hom}(T_i,\frakY)
\end{tikzcd}
$$
where the vertical arrows are induced by $f_i.$
We get $\varphi_{T_i}(f\circ f_i)=\varphi_T(f) \circ f_i.$ By the hypothesis (\ref{eq721}), $h \circ f\circ f_i=\varphi_T(f)\circ f_i,$ and since $(f_i)_{i\in I}$ is a covering, we conclude that $h\circ f=\varphi_T(f).$ It is thus sufficient to work locally on $T$ and we may suppose that there exists $f_n:T \ra \frakX_n$ such that $f$ is equal to the composition
$T\xrightarrow{f_n} \frakX_n \xrightarrow{i_n} \frakX.$
Consider the commutative diagram
$$
\begin{tikzcd}
\op{Hom}(\frakX_n,\frakX) \ar{r}{\varphi_{\frakX_n}} \ar{d} & \op{Hom}(\frakX_n,\frakY) \ar{d} \\
\op{Hom}(T,\frakX) \ar{r}{\varphi_{T}} & \op{Hom}(T,\frakY)
\end{tikzcd}
$$
where the vertical arrows are induced by $f_n.$ We get $\varphi_T(f)=h_n\circ f_n=h\circ f.$ We conclude that $F$ is full.
\end{proof}

\begin{definition}\label{def72}
Let $\frakX$ be an fs logarithmic $p$-adic formal scheme, $P$ an fs monoid and $\frakX\ra[P]$ a morphism of presheaves. We say that $\frakX\ra [P]$ is a \emph{frame} if, for any geometric point $\ov{x}\ra \frakX,$ there exists an étale neighborhood $\frakU$ of $\ov{x}$ such that $\frakU\ra [P]$ factors as a composition $\frakU\ra B\langle P\rangle \ra [P]$ with $\frakU\ra B\langle P\rangle$ strict and $B\langle P\rangle\ra [P]$ is the canonical morphism of presheaves (\ref{parag31}). An fs logarithmic $p$-adic formal scheme $\frakX$ equipped with a frame $\frakX\ra [P]$ will be denoted by $(\frakX,P)$ and called a \emph{framed logarithmic formal scheme}. A morphism $(\frakX,P)\ra (\frakS,Q)$ of framed logarithmic formal schemes is a pair of morphisms $(\frakX\ra \frakS,Q\ra P)$ such that the following diagram is commutative:
$$\begin{tikzcd}
\frakX\ar{r}\ar{d} & \frakS\ar{d}\\
\left [P\right ]\ar{r} & \left [Q\right ]
\end{tikzcd}$$
where $[P]\ra [Q]$ is the morphism induced by $Q\ra P.$ 
\end{definition}

\begin{parag}\label{parag73}
Let $(\frakX,Q)$ be a framed fs logarithmic formal scheme and $n\ge 1.$ Let $\frakX_n$ be the scheme $(\frakX,\Ox_{\frakX}/p^n)$ equipped with the logarithmic structure pullback of that of $\frakX.$ We get a frame on $\frakX_n$ by composing $\frakX\ra [Q]$ with the canonical morphism $\frakX_n\ra \frakX$ which is strict.
Let $\frakX\ra \frakS$ and $\frakY\ra \frakS$ be morphisms in $\boldsymbol{\op{LFS}},$ $Q\ra P$ a morphism of fs monoids and $\frakX\ra [Q]$ and $\frakY\ra [Q]$ morphisms of presheaves. We denote by $\frakX\times_{[Q]}^{\op{log}}[P]$ and $\frakX\times^{\op{log}}_{\frakS,[Q]}\frakY$ the presheaves of sets
\begin{equation}\label{eq731}
\frakX\times_{[Q]}^{\op{log}}[P]:\begin{array}[t]{clc}\boldsymbol{\op{L}} & \ra & \boldsymbol{\op{Sets}}\\ T & \mapsto & \frakX(T)\times_{[Q](T)}[P](T),\end{array}
\end{equation}
\begin{equation}\label{eq732}
\frakX\times^{\op{log}}_{\frakS,[Q]}\frakY:\begin{array}[t]{clc}\boldsymbol{\op{L}} & \ra & \boldsymbol{\op{Sets}}\\ T & \mapsto & \frakX(T)\times_{\frakS(T)\times [P](T)}\frakY(T).\end{array}
\end{equation}
We have the following proposition, which is analogous to (\cite{Saito04} 4.2.1):
\end{parag}

\begin{proposition}\label{formalframelift}
Let $f:(\frakX,Q) \ra (\frakS,P)$ be a morphism of framed fs logarithmic $p$-adic formal schemes. Etale locally on $\frakX$ and $\frakS,$ there exists a chart $B\langle M\rangle \ra B\langle P \rangle$ of $f$ lifting the diagram
$$
\begin{tikzcd}
\frakX \ar{r} \ar{d} & \frakS \ar{d} \\
\left [Q \right ] \ar{r} & \left [P \right ].
\end{tikzcd}
$$
\end{proposition}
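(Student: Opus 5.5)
The plan is to copy the proof of \ref{etaleframelift} (the scheme version, after \cite{Saito04} 4.2.1) to formal schemes. The only formal-scheme input needed is that a morphism of monoids into global log sections is the same as a morphism to $B\langle M\rangle$, which is Lemma \ref{Lemlog18}.

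First, fix a geometric point $\ov{x}\ra\frakX$ with image $\ov{s}$ in $\frakS$. By Definition \ref{def72}, after shrinking $\frakS$ étale locally around $\ov{s}$, the frame $\frakS\ra[P]$ factors through a strict chart $\frakS\ra B\langle P\rangle$. I would then produce a chart $P\ra M$ of $f$ near $\ov{x}$ that is exact at $\ov{x}$. Take a neat chart: let $M$ be the inverse image in $\calM_{\frakX,\ov{x}}^{gp}$ (or in a suitable fs lift) of $\ov{\calM}_{\frakX,\ov{x}}$, amalgamated with $P$, so that $M\ra\calM_{\frakX,\ov{x}}$ is compatible with $P\ra\calM_{\frakS,\ov{s}}$. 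For a finitely generated choice this is the argument of (\cite{Ogus2018} II 2.3 and III 1.2.7) applied stalkwise. The resulting map $M\ra\calM_{\frakX,\ov{x}}$ spreads out to an étale neighbourhood $\frakU$ of $\ov{x}$, and by Lemma \ref{Lemlog18} it gives a morphism $\frakU\ra B\langle M\rangle$ over $B\langle P\rangle$. Whether this is a chart near $\ov{x}$ can be checked modulo $p$, since everything is strict over the reductions (\ref{propkey}, \ref{prop69}). There the scheme statement applies, and exactness at $\ov{x}$ gives $\gamma:\ov{M}\xrightarrow{\sim}\ov{\calM}_{\frakX,\ov{x}}$ (\cite{Ogus2018} II 2.3.1).

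It remains to build $B\langle M\rangle\ra[Q]$ compatible with $[Q]\ra[P]$, exactly as in \ref{etaleframelift}. Set
$$\delta:Q\ra\Gamma(\frakX,\ov{\calM}_{\frakX})\ra\ov{\calM}_{\frakX,\ov{x}}\xrightarrow{\gamma^{-1}}\ov{M}.$$
This gives the morphism $B\langle M\rangle\ra[\ov{M}]=[M]\ra[Q]$ (frames only see $\ov{\calM}$). To check that the square with $[P]$ commutes, compare the two maps $P\ra\ov{M}$. After composing with the isomorphism $\gamma$ they agree, because $(f,\theta)$ is a morphism of framed schemes and the chart diagram commutes. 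I also need the compatibility $\frakU\ra B\langle M\rangle\ra[Q]$ equal to the given frame on $\frakU$. The two maps $Q\ra\Gamma(\frakU,\ov{\calM}_{\frakU})$ agree at the stalk $\ov{x}$ by construction of $\delta$, so they agree after shrinking $\frakU$, since $Q$ is finitely generated and $\ov{\calM}$ is a sheaf.

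The main obstacle is the existence of a chart of $f$ that is exact at $\ov{x}$ in the formal setting, i.e. the analogue of (\cite{Ogus2018} II 2.3.2 / IV 3.3.1) without smoothness. I would try to reduce it to $\frakX_1\ra\frakS_1$: construct the chart there and lift it. Lifting a chart $M\ra\Gamma(U_1,\calM_{\frakX_1})$ to $\frakX$ is possible because $\calM_{\frakX}\ra\calM_{\frakX_1}$ is surjective with kernel in units (\ref{prop69}, \ref{propkey}). The lift of $P\ra M$ must also be compatible; I would arrange this by adjusting the lifts by units, using that $P$ is finitely generated, and, if needed, replacing $M$ by $M\oplus_{P}P$ in a neat form.
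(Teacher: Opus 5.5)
The paper states this proposition without proof. Your plan follows the skeleton of the scheme argument \ref{etaleframelift}: take a chart exact at $\ov{x}$, then set $\delta=\gamma^{-1}\circ(Q\ra\ov{\calM}_{\frakX,\ov{x}})$. But the one substantive step is not actually established. That step is a chart $P\ra M$ of $f$ near $\ov{x}$ with $\ov{M}\xrightarrow{\sim}\ov{\calM}_{\frakX,\ov{x}}$. It cannot be copied from \ref{etaleframelift}, where it comes from the structure theorem for log smooth morphisms, which is unavailable here.

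Both of your constructions of this chart fail as written:
\begin{itemize}
\item The inverse image of $\ov{\calM}_{\frakX,\ov{x}}$ in $\calM^{gp}_{\frakX,\ov{x}}$ is $\calM_{\frakX,\ov{x}}$ itself. That monoid contains $\Ox^*_{\frakX,\ov{x}}$ and is not finitely generated. The ``suitable fs lift'' and the ``amalgamation with $P$'' are exactly what needs constructing.
\item The fallback through $\frakX_1$ also fails. Lifting images of generators along $\calM_{\frakX}\ra\calM_{\frakX_1}$ preserves the relations of $M$, and the compatibility with $P$, only up to sections of $1+p\Ox_{\frakX}$. Your sketch does not explain why these defects can be removed. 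Moreover $M\oplus_PP\cong M$, so that ``replacement'' changes nothing.
\end{itemize}

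The step is elementary if you do it directly at the stalk, with no reduction mod $p$.
\begin{itemize}
\item Let $\beta:P\ra\calM_{\frakX,\ov{x}}$ be the composite of the chart of $\frakS$ with $f^{\flat}$.
\item Since $\ov{\calM}_{\frakX,\ov{x}}$ is fs and sharp, $\ov{\calM}^{gp}_{\frakX,\ov{x}}$ is free of finite rank. Choose a section $s$ of $\calM^{gp}_{\frakX,\ov{x}}\ra\ov{\calM}^{gp}_{\frakX,\ov{x}}$.
\item Let $M\subset P^{gp}\oplus\ov{\calM}^{gp}_{\frakX,\ov{x}}$ be the inverse image of $\ov{\calM}_{\frakX,\ov{x}}$ under $(a,b)\mapsto\ov{\beta^{gp}(a)}+b$. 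Set $\theta(a)=(a,0)$ and $\phi(a,b)=\beta^{gp}(a)+s(b)$.
\end{itemize}
These have the needed properties:
\begin{itemize}
\item $M\cong P^{gp}\oplus\ov{\calM}_{\frakX,\ov{x}}$, so $M$ is fs.
\item $\phi$ takes values in $\calM_{\frakX,\ov{x}}$, because $\frakX$ is integral, so $\calM_{\frakX,\ov{x}}$ is the inverse image of $\ov{\calM}_{\frakX,\ov{x}}$ in $\calM^{gp}_{\frakX,\ov{x}}$.
\item $\phi\circ\theta=\beta$.
\item $\phi^{-1}(\Ox^*_{\frakX,\ov{x}})=M^*$, hence $\ov{M}\xrightarrow{\sim}\ov{\calM}_{\frakX,\ov{x}}$.
\end{itemize}
Since $M$ and $P$ are finitely presented, $\phi$ and the identity $\phi\circ\theta=\beta$ spread out to an étale neighbourhood $\frakU$ of $\ov{x}$. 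On $\frakU$, $\phi$ is a chart, and \ref{Lemlog18} gives $\frakU\ra B\langle M\rangle$ over $B\langle P\rangle$.

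If you prefer to keep your route through $\frakX_1$, it also works with this $M$. Here $\theta^{gp}$ is injective with torsion-free cokernel, so $B\langle M\rangle\ra B\langle P\rangle$ is log smooth. Then \ref{proplift} provides the compatible lift locally, exactly as in \ref{formaletaleframelift}.

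With this in place, the rest of your argument is correct:
\begin{itemize}
\item the comparison of the two maps $P\ra\ov{M}$ through $\gamma$;
\item the agreement of $\frakU\ra B\langle M\rangle\ra[Q]$ with the given frame after shrinking $\frakU$. This is a check that \ref{etaleframelift} leaves implicit.
\end{itemize}
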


\begin{proposition}\label{formaletaleframelift}
Equip $\op{Spf}\Z_p$ with the trivial logarithmic structure. Let $f:(\frakX,Q) \ra (\frakS,P)$ be a log smooth morphism of framed fs logarithmic $p$-adic formal schemes. We make one of the following two assumptions : either $\frakS$ is log flat over $\op{Spf}\Z_p,$ or $\frakS$ is flat over $\op{Spf} \Z_p$ and $f$ is integral (\cite{Ogus2018} III 2.5.1). Then, étale locally on $\frakX$ and $\frakS,$ there exists a chart $B\langle M \rangle \ra B\langle P \rangle$ of $f,$ fitting into a commutative diagram
$$
\begin{tikzcd}
\frakX \ar{r} \ar{d} & \frakS \ar{d} \\
B\langle M\rangle \ar{r} \ar{d} & B\langle P \rangle \ar{d} \\ 
\left [Q \right ] \ar{r} & \left [P\right ],
\end{tikzcd}
$$
and satisfying the following conditions:
\begin{enumerate}
\item $P^{gp} \ra M^{gp}$ is injective and the torsion subgroup of its cokernel is of finite order invertible in $\Ox_{\frakX}.$
\item The morphism $\frakX \ra \frakS\times_{B\langle P \rangle}B\langle M \rangle,$ induced by $f$ and the chart $\frakX \ra B\langle M\rangle,$ is étale and strict.
\end{enumerate}
\end{proposition}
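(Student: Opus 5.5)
We follow the proof of the scheme version \ref{etaleframelift}, adapted to the formal setting. The strategy is to produce, étale locally around a geometric point $\ov{x}\ra\frakX$, a good chart of $f$ that is exact at $\ov{x}$, and then to use exactness to factor the frame $\frakX\ra[Q]$ through $B\langle M\rangle$. First, étale locally on $\frakS$ around $\ov{s}=f(\ov{x})$, we may lift the frame $\frakS\ra[P]$ to a chart $\frakS\ra B\langle P\rangle$, by the definition of a frame \ref{def72}. The argument then has two parts.

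\textbf{Step 1: a good chart exact at $\ov{x}$.} We want a chart $P\ra M$ of $f$ satisfying (1) and (2) and such that $\frakX\ra B\langle M\rangle$ is exact at $\ov{x}$. We work with the reductions $f_n:\frakX_n\ra\frakS_n$, which are log smooth (\ref{erafsmoothdef}). For $n=1$, (\cite{Ogus2018} IV 3.3.1) gives, étale locally, a chart $P\ra M$ of $f_1$ with (1), with $X_1\ra S_1\times_{A_1[P]}A_1[M]$ étale and strict, and exact at $\ov{x}$. We take $M$ to be the localization constructed there, of the form $M=(\ov{\calM}^{gp}_{X,\ov{x}}\text{-lift})$ so that $\ov{M}\cong\ov{\calM}_{X,\ov{x}}$. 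Note that $\ov{\calM}$ is insensitive to nilpotent thickenings, so the same monoid serves for every $n$. The lift of the map $M\ra\calM_{X_1}$ to $\calM_{\frakX}$ needs care. Since $M^{gp}$ is finitely generated and $\calM_{\frakX}\ra\calM_{X_1}$ is surjective with kernel $1+p\Ox_{\frakX}$ (by \ref{propkey}, \ref{prop69}), we lift the generators of $M^{gp}$ to $\calM^{gp}_{\frakX}$. The resulting map is compatible with $P$ only up to elements of $1+p\Ox_{\frakX}$. This is the main obstacle. To fix it we modify the lifts by units: because $P^{gp}\ra M^{gp}$ is injective with torsion cokernel prime to $p$, and $1+p\Ox$ is uniquely $m$-divisible for $m$ prime to $p$, we can solve for compatible corrections. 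This yields $M\ra\Gamma(\frakX,\calM_{\frakX})$ compatible with $P\ra\Gamma(\frakS,\calM_{\frakS})$.

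For condition (2) we then check strictness and étaleness. Strictness follows since $\calM_{\frakX}$ is the pullback and strictness can be checked on $\ov{\calM}$. For étaleness we use the lifting criterion \ref{proplift}: $\frakX\ra\frakS\times_{B\langle P\rangle}B\langle M\rangle$ is a strict morphism whose reductions $f_n$ are log smooth. By the fibrewise criterion of (\cite{Ogus2018} IV 3.3.1) applied modulo $p$, together with flatness, the reductions are étale. Here the hypotheses enter. In the first case, $\frakS$ log flat over $\op{Spf}\Z_p$ makes $\frakS$ flat (\ref{Wflat}); in the second case $f$ is integral and $\frakS$ flat. In either case the target is flat over $\Z_p$, so étaleness mod $p$ together with flatness of both sides (via \ref{logflatfiber}) propagates to all $n$.

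\textbf{Step 2: factoring the frame.} Exactness at $\ov{x}$ gives an isomorphism $\gamma:\ov{M}\xrightarrow{\sim}\ov{\calM}_{\frakX,\ov{x}}$. Define
\[
\delta:Q\ra\Gamma(\frakX,\ov{\calM}_{\frakX})\ra\ov{\calM}_{\frakX,\ov{x}}\xrightarrow{\gamma^{-1}}\ov{M}.
\]
Since $\ov{M}$ is fs and sharp, and $M$ can be chosen so that $\ov{M}$ is a quotient onto which we may restrict, we replace $M$ by its localization at the face $\gamma^{-1}(0)$, which is still a chart after shrinking. This gives a morphism $B\langle M\rangle\ra[Q]$. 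The same diagram chase as in \ref{etaleframelift} then shows that the lower square commutes: the outer rectangle commutes and $\gamma$ is an isomorphism. Finally, commutativity at $\ov{x}$ spreads to an étale neighborhood because the sheaves involved are constructible and $Q$ is finitely generated.
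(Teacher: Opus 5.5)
There is a genuine gap in the étaleness step. This is exactly where the two alternative hypotheses have to enter, and as written the argument does not hold together. Set $\frakT:=\frakS\times_{B\langle P\rangle}B\langle M\rangle$.

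\begin{itemize}
\item The reductions of $\frakX\ra\frakT$ are not known to be log smooth; only those of $\frakX\ra\frakS$ and $\frakT\ra\frakS$ are.
\item \ref{proplift} is a lifting statement, not an étaleness criterion, and (\cite{Ogus2018} IV 3.3.1) is the chart theorem, not a fibrewise criterion.
\item Your claim that ``in either case the target is flat over $\Z_p$'' is unsupported in the second case. $P\ra M$ need not be integral, so $\frakT\ra\frakS$ need not be flat, and integrality of $f$ says nothing directly about $\frakT$.
\end{itemize}

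The paper gets around this by passing to the strict étale $\frakZ\ra\frakT$ whose special fibre is $U$ (\cite{Ahmed2010} 2.4.11). Integrality of $\frakZ\ra\frakS$ depends only on $\ov{\calM}$, so it follows from integrality of $U\ra S$. Hence $\frakZ\ra\frakS$ is flat (\cite{Ogus2018} IV 4.3.5). Then $\frakU\ra\frakZ$ is an isomorphism mod $p$ between $\Z_p$-flat formal schemes, hence an isomorphism (\cite{DXU19} 7.2).

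Alternatively, you can repair your own route without any flatness of the target. In both cases $\frakX$ itself is $\Z_p$-flat: by \ref{Wflat} in the first case, and because integral plus log flat implies flat in the second. This gives injectivity of $p\Ox_{\frakT_n}\otimes\Ox_{\frakX_n}\ra\Ox_{\frakX_n}$. The local flatness criterion for the nilpotent ideal $(p)$ then shows $\frakX_n\ra\frakT_n$ is flat, because $X\ra S\times_{A_1[P]}A_1[M]$ is. Unramifiedness mod $p^n$ follows from unramifiedness mod $p$ by Nakayama. One of these two arguments has to be supplied.

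Apart from this, your route is a legitimate but roundabout variant of the paper's.

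\begin{itemize}
\item The paper does not lift the monoid map by hand. It applies \ref{etaleframelift} to $X\ra S$, which already gives condition (1), the strict étale $U\ra S\times_{A_1[P]}A_1[M]$, and the compatibility with $[Q]\ra[P]$. That compatibility transfers to $\frakU$ for free, since frames only see $\ov{\calM}$, which is unchanged by reduction mod $p$. The paper then lifts the \emph{morphism} $U\ra S\times_{A_1[P]}A_1[M]$ to $\frakU\ra\frakT$ using the lifting property of the log smooth $\frakT\ra\frakS$ (\ref{proplift}). This makes both your correction argument and your Step 2 unnecessary.
\item If you keep the correction argument, fix it. $\op{coker}(P^{gp}\ra M^{gp})$ is not torsion. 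Also, lifting generators of $M^{gp}$ first can fail if $M^{gp}$ has $p$-torsion. Instead, extend the given lift on $P^{gp}$. The obstruction lies in $\op{Ext}^1(\op{coker}(P^{gp}\ra M^{gp}),1+p\Ox_{\frakX})$. This vanishes because only the prime-to-$p$ torsion of the cokernel contributes, and $1+p\Ox_{\frakX}$ is divisible by integers prime to $p$.
\item The localization ``at the face $\gamma^{-1}(0)$'' is vacuous, since $\gamma$ is already an isomorphism on $\ov{M}$.
\end{itemize}
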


\begin{proof}
Let $X$ and $S$ be the special fibers of $\frakX$ and $\frakS$ respectively and $f_1:X\ra S$ the morphism induced by $f.$ Let $\ov{x} \ra X$ be a geometric point. After shrinking $\frakS$ to an étale neighborhood of $f_1(\ov{x}),$ we can suppose that the frame $\frakS \ra [P]$ factors through a chart $\frakS \ra B\langle P \rangle.$ By \ref{etaleframelift}, there exists an étale neighborhood $U$ of $\ov{x}$ and a chart $P \ra M$ of the restriction $U\ra S$ of $f_1:X \ra S,$ fitting into a commutative diagram
$$
\begin{tikzcd}
U \ar{r}{f_1} \ar{d} & S \ar{d} \\
A_1[M] \ar{r} \ar{d} & A_1[P] \ar{d} \\
\left [Q \right ] \ar{r} & \left [ P \right ],
\end{tikzcd}
$$
such that the torsion subgroup of the cokernel of $P^{gp} \ra M^{gp}$ is of finite order invertible in $\Ox_{U}$ and the morphism $U \ra S\times_{A_1[P]}A_1[ M ],$ induced by $f_1$ and the chart $U\ra A_1[ M],$ is étale and strict. We can furthermore suppose that $U$ is affine.
Since $B\langle M\rangle \ra B\langle P \rangle$ is log smooth, the morphism $\frakS \times_{B\langle P \rangle}B\langle M\rangle \ra \frakS$ is log smooth modulo $p^n$ for every $n\ge 1.$ Let $\frakU \ra \frakX$ be an étale morphism such that $U=\frakU\times_{\frakX}X.$ There exists a morphism $\frakU\ra \frakS \times_{B\langle P \rangle} B\langle M\rangle$ fitting into the commutative diagram
$$
\begin{tikzcd}
S\times_{A_1[P]} A_1[M] \ar{rr} & & \frakS\times_{B\langle P\rangle} B\langle M\rangle \ar{d} \\
U\ar{u} \ar{r} & \frakU \ar{ur} \ar{r} & \frakS.
\end{tikzcd}
$$
Since the morphisms $U\ra \frakU,$ $U\ra S\times_{A_1[P]}A_1[M]$ and $S\times_{A_1[P]}A_1[M] \ra \frakS\times_{B\langle P\rangle}B\langle M\rangle$ are all strict, the morphism $\frakU \ra \frakS\times_{B\langle P \rangle}B\langle M\rangle$ is also strict. It remains to prove that the morphism $\frakU \ra \frakS\times_{B\langle P\rangle}B\langle M \rangle$ is étale. The morphism $B\langle M\rangle \ra B\langle P\rangle$ is log smooth hence log flat, thus so is $\frakS\times_{B\langle P\rangle} B\langle M\rangle \ra \frakS.$ Since $U \ra S\times_{A_1[P]}A_1[M]$ is étale, by (\cite{Ahmed2010} 2.4.11), there exists an étale strict morphism $\frakZ\ra \frakS\times_{B\langle P\rangle} B\langle M\rangle$ of $p$-adic formal schemes fitting into a cartesian diagram
\begin{equation}\label{diag771}
\begin{tikzcd}
U \ar{r} \ar{d} & \frakZ \ar{d} \\
S\times_{A_1[P]}A_1[M] \ar{r} & \frakS\times_{B\langle P\rangle} B\langle M \rangle.
\end{tikzcd}
\end{equation}
Then, by the étaleness of $\frakZ \ra \frakS\times_{B\langle P\rangle} B\langle M \rangle,$ there exists a unique morphism
$\frakU \ra \frakZ,$ which is automatically strict, fitting into the commutative diagram
$$
\begin{tikzcd}
 & & \frakZ \ar{d} \\
U \ar{r} \ar[bend right=-20]{urr} & \frakU \ar{ur} \ar{r} & \frakS\times_{B\langle P\rangle} B\langle M \rangle.
\end{tikzcd}
$$
By \eqref{diag771}, $\frakU \ra \frakZ$ is an isomorphism modulo $p.$
\begin{itemize}
\item If $\frakS$ is log flat over $\op{Spf}\Z_p$ then so is $\frakX.$ Since $\frakU$ and $\frakZ$ are log flat over $\frakX$ and $\frakS$ respectively, $\frakU$ and $\frakZ$ are log flat, hence flat \eqref{Wflat}, over $\op{Spf}\Z_p.$ 
\item If $\frakS$ is flat over $\op{Spf}\Z_p$ and $\frakX\ra \frakS$ is integral, then so is $U\ra S.$ Then, by (\cite{Ogus2018} III 2.5.3) and the commutative square
$$
\begin{tikzcd}
U \ar{r} \ar{d} & \frakZ \ar{d} \\
S\ar{r} & \frakS, 
\end{tikzcd}
$$
the canonical morphism $\frakZ \ra \frakS$ is integral. Since it is also log smooth and hence log flat, it follows that it is flat by (\cite{Ogus2018} IV 4.3.5). By the same result, $\frakX \ra \frakS$ is flat.
\end{itemize}
We have thus proven that, in both cases, $\frakU$ and $\frakZ$ are flat over $\op{Spf}\Z_p.$ We conclude by (\cite{DXU19} 7.2) that $\frakU \ra \frakZ$ is an isomorphism of formal schemes. Since it is clearly strict, it is an isomorphism of logarithmic formal schemes.
\end{proof}

\begin{proposition}\label{prop73}
Let $P$ and $Q$ be fs monoids, $\theta:Q\ra P$ a morphism of monoids such that $\theta^{gp}$ is surjective and $(\frakX,Q)$ an fs framed logarithmic $p$-adic formal scheme \eqref{parag73}. Then
\begin{enumerate}
\item The presheaf $\frakX\times_{[Q]}^{\op{log}}[P]$ \eqref{eq731} is representable by a logarithmic formal scheme which is affine and log étale over $\frakX.$
\item If $\frakX \ra [Q]$ lifts to a chart $\frakX\ra B\langle Q\rangle,$ then $\frakX\times_{[Q]}^{\op{log}}[P]$ is representable by $\frakX\times_{B\langle Q\rangle}^{\op{log}} B\langle \tilde{P}\rangle,$ where $\tilde{P}$ is the inverse image of $P$ by $Q^{gp}\ra P^{gp}.$
\end{enumerate}
\end{proposition}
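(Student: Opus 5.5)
The plan is to prove (2) first, by a direct computation of $T$-points, and then deduce (1) by étale gluing.

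\emph{Setup for (2).} Suppose the frame $\frakX\ra[Q]$ lifts to a chart $\frakX\ra B\langle Q\rangle$, and let $\tilde P\subset Q^{gp}$ be the inverse image of $P$ under $\theta^{gp}$. Then $Q\subset\tilde P$ and $\tilde P^{gp}=Q^{gp}$. Since $\theta^{gp}$ is surjective, the induced map $\tilde P\ra P$ is surjective. Its kernel $K=\ker\theta^{gp}$ is a subgroup of $\tilde P^{\times}$, so $P\cong\tilde P/K$, and $\tilde P$ is fs because $P$ is. By \ref{parag27} the fs fiber product $\frakZ=\frakX\times^{\op{log}}_{B\langle Q\rangle}B\langle\tilde P\rangle$ exists. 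Its underlying formal scheme is affine over $\frakX$, being a base change of $B\langle\tilde P\rangle\ra B\langle Q\rangle$ followed by a saturation, which is finite. It is log étale over $\frakX$, since $Q^{gp}\ra\tilde P^{gp}$ is an isomorphism and \ref{erafsmoothdef} applies.

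\emph{$T$-points for (2).} For an fs logarithmic scheme $T$, a $T$-point of $\frakZ$ is a morphism $g:T\ra\frakX$ together with a morphism $\tilde P\ra\Gamma(T,\calM_T)$ extending $Q\ra\Gamma(\frakX,\calM_{\frakX})\ra\Gamma(T,\calM_T)$. This uses \ref{Lemlog18} and the universal property of the fs fiber product, and it is legitimate because the functor $\frakX\mapsto\op{Hom}(-,\frakX)$ is fully faithful and commutes with the relevant fiber products on $\boldsymbol{\op{L}}$. Since $\tilde P^{gp}=Q^{gp}$, such an extension is unique if it exists: it must be the restriction of $Q^{gp}\ra\Gamma(T,\calM_T^{gp})$ to $\tilde P$. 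I will then construct mutually inverse maps between these data and the $T$-points of $\frakX\times^{\op{log}}_{[Q]}[P]$.

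\emph{Forward direction.} Composing $\tilde P\ra\Gamma(T,\calM_T)\ra\Gamma(T,\ov{\calM}_T)$ kills $K\subset\tilde P^{\times}$. It therefore factors through $\tilde P/K=P$, which gives a $T$-point of $[P]$ compatible with the frame.

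\emph{Converse direction.} Suppose we are given $P\ra\Gamma(T,\ov{\calM}_T)$ compatible with $Q$. Take $x\in\tilde P$ and its image $m\in\Gamma(T,\calM_T^{gp})$ under the group map. The image of $m$ in $\ov{\calM}_T^{gp}$ equals the image of $\theta^{gp}(x)\in P$, so it lies in $\ov{\calM}_T$. Because $T$ is integral, $\calM_T=\calM_T^{gp}\times_{\ov{\calM}^{gp}_T}\ov{\calM}_T$, and hence $m\in\Gamma(T,\calM_T)$. These two constructions are inverse to each other and natural in $T$, which proves (2).

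\emph{Proof of (1).} By definition \ref{def72}, étale locally on $\frakX$ the frame lifts to a chart, so by (2) the presheaf is representable locally, by objects affine and log étale over the pieces. On overlaps, two local representatives represent the same presheaf, namely the restriction of $\frakX\times^{\op{log}}_{[Q]}[P]$. Full faithfulness then gives canonical isomorphisms between them, and these satisfy the cocycle condition automatically.

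It remains to glue, and I expect this to be the main obstacle: the gluing is for the étale topology, not the Zariski one. I plan to do it level by level. Each local object is affine over $\frakX$, so after reduction modulo $p^n$ it is given by a quasi-coherent $\Ox_{\frakX_n}$-algebra on the étale site. Such algebras descend by étale descent of quasi-coherent algebras and of affine morphisms. The log structures, being pulled back from strict charts, descend as well, compatibly in $n$. Passing to the limit over $n$ as in \ref{parag17} and \ref{corkey} yields a logarithmic formal scheme $\frakZ$ affine over $\frakX$.

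It remains to check two properties of $\frakZ$. First, $\frakZ$ represents the presheaf: the presheaf is an étale sheaf on $\boldsymbol{\op{L}}$, since both factors are, and it agrees with $\frakZ$ locally. Second, log étaleness is étale local on the source, so it follows from the local case.
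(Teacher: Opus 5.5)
Your proposal is correct and matches the argument the paper relies on. The paper's proof only refers to the scheme case (\cite{Saito04} 4.2.1), which likewise identifies $T$-points of $\frakX\times^{\op{log}}_{B\langle Q\rangle}B\langle\tilde P\rangle$ with those of $\frakX\times^{\op{log}}_{[Q]}[P]$ in the presence of a chart, using $\tilde P^{gp}=Q^{gp}$ and $P\cong\tilde P/\ker\theta^{gp}$, and then glues étale locally. Your level-by-level descent modulo $p^n$, followed by the limit of \ref{corkey}, is the natural way to carry that gluing over to $p$-adic formal schemes.
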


\begin{proof}
The proof is similar to the case of schemes (\cite{Saito04} 4.2.1).
\end{proof}

\begin{corollaire}\label{prop74}
Let $(\frakX,Q)\ra (\frakS,P)\leftarrow (\frakY,Q)$ be morphisms of framed fs logarithmic $p$-adic formal schemes \eqref{def72}. The presheaf $\frakX\times^{\op{log}}_{\frakS,[Q]}\frakY$ \eqref{eq732} is representable by an fs logarithmic $p$-adic formal scheme which is log étale over $\frakX\times_{\frakS}^{\op{log}}\frakY.$
\end{corollaire}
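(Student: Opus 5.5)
We reduce the representability of $\frakX\times^{\op{log}}_{\frakS,[Q]}\frakY$ to Proposition \ref{prop73}, following the scheme case (\cite{Saito04} 4.2.3). First, the fs fiber product $\frakZ=\frakX\times_{\frakS}^{\op{log}}\frakY$ exists in $\boldsymbol{\op{LFS}}$ (\ref{parag27}). The two frames $\frakX\ra[Q]$ and $\frakY\ra[Q]$, pulled back along the projections, give two monoid morphisms $Q\ra\Gamma(\frakZ,\ov{\calM}_{\frakZ})$. Together they define a morphism of presheaves $\frakZ\ra[Q\oplus Q]=[Q]\times[Q]$, and this is a frame: étale locally on $\frakX$ and $\frakY$ we may lift the frames to charts $B\langle Q\rangle$, which gives a chart $\frakZ\ra B\langle (Q\oplus_P Q)^{sat}\rangle$. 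We should rather use the monoid $Q\oplus Q$ mapping to $\Gamma(\frakZ,\ov\calM_\frakZ)$, which is étale-locally liftable to $\Gamma(\frakZ,\calM_\frakZ)$. It need not be a chart, but only the lifting is used below. Let $\theta:Q\oplus Q\ra Q$ be the sum map; $\theta^{gp}$ is surjective.

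By construction, for every fs logarithmic scheme $T$,
$$(\frakX\times^{\op{log}}_{\frakS,[Q]}\frakY)(T)=\frakZ(T)\times_{[Q\oplus Q](T)}[Q](T),$$
since giving compatible $T$-points of $\frakX,\frakY$ over $\frakS$ together with the condition that the two induced elements of $[Q](T)$ agree is the same as giving a point of $\frakZ(T)$ and an element of $[Q](T)$ whose image under the diagonal $[Q]\ra[Q\oplus Q]$ (induced by $\theta$) is the pair. So the presheaf equals $\frakZ\times^{\op{log}}_{[Q\oplus Q]}[Q]$.

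Next we apply Proposition \ref{prop73}(1) to $(\frakZ, Q\oplus Q)$ and $\theta$. This is the main obstacle: \ref{prop73} is stated for a framed $\frakZ$, i.e. a strict morphism to $[Q\oplus Q]$, and our morphism need not be strict. I would handle it by working étale locally. Representability is local because the presheaf is an étale sheaf and representable objects glue, as in \cite{Saito04}. Locally we lift the maps to $B\langle Q\oplus Q\rangle$, and by \ref{prop73}(2), whose proof only uses the lift, the presheaf is $\frakZ\times^{\op{log}}_{B\langle Q\oplus Q\rangle}B\langle\tilde Q\rangle$, where $\tilde Q=\theta^{gp,-1}(Q)\subset Q^{gp}\oplus Q^{gp}$. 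This fs fiber product exists in $\boldsymbol{\op{LFS}}$ by \ref{parag27}.

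Finally, log étaleness: $B\langle\tilde Q\rangle\ra B\langle Q\oplus Q\rangle$ is induced by $Q\oplus Q\ra\tilde Q$, which is injective with $\tilde Q^{gp}=(Q\oplus Q)^{gp}$. Hence it is log étale by Definition \ref{erafsmoothdef}, the cokernel being trivial and the strict part an isomorphism. Log étaleness is stable under fs base change, checked modulo each $p^n$ where it is the scheme statement. So the representing object is log étale, and affine, over $\frakZ$. The local pieces glue by uniqueness of representing objects and the full faithfulness of $\boldsymbol{\op{LFS}}\ra\boldsymbol{\widehat{\op{L}}}$ proved above.
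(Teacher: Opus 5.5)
Your proposal is correct and is the paper's argument: the paper's proof is exactly the identification $\frakX\times^{\op{log}}_{\frakS,[Q]}\frakY=(\frakX\times^{\op{log}}_{\frakS}\frakY)\times^{\op{log}}_{[Q\oplus Q]}[Q]$, followed by Proposition \ref{prop73} applied to the sum map $Q\oplus Q\ra Q$. Your further point is a genuine addition. The morphism $\frakX\times^{\op{log}}_{\frakS}\frakY\ra[Q\oplus Q]$ need not be strict, so your earlier sentence calling it a frame should be deleted. Working instead with only the étale-local lift to $B\langle Q\oplus Q\rangle$, using the explicit description of \ref{prop73}(2) and then gluing, checks a hypothesis that the paper's one-line proof passes over in silence.
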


\begin{proof}
We apply the previous proposition to the presheaf
$\frakX\times_{\frakS,[Q]}^{\op{log}}\frakY=\frakX\times_{\frakS}^{\op{log}}\frakY\times_{[Q\oplus Q]}^{\op{log}}[Q].$
\end{proof}

\begin{corollaire}\label{cor76}
Let $(\frakX,Q)\ra (\frakS,P)\leftarrow (\frakY,Q)$ be morphisms of framed fs logarithmic $p$-adic formal schemes \eqref{def72}. Then the projection $\frakX\times_{\frakS,[Q]}^{\op{log}}\frakY\ra \frakY$ is strict.
\end{corollaire}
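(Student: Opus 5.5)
Strictness is a local question, so I will work étale locally and reduce to an explicit chart description of the fiber product. First I would use \ref{formalframelift} to assume, étale locally on $\frakX$, $\frakY$ and $\frakS$, that the frame $\frakS\ra[P]$ lifts to a chart $\frakS\ra B\langle P\rangle$ and that the frames $\frakX\ra[Q]$ and $\frakY\ra[Q]$ lift to charts $\frakX\ra B\langle Q\rangle$ and $\frakY\ra B\langle Q\rangle$, compatibly with $P\ra Q$. The formation of $\frakX\times_{\frakS,[Q]}^{\op{log}}\frakY$ is compatible with étale localization, because it is defined by a fiber product of presheaves.

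With these charts, \ref{prop73}(2) and the identification in the proof of \ref{prop74} give
$$\frakX\times_{\frakS,[Q]}^{\op{log}}\frakY\simeq (\frakX\times_{\frakS}^{\op{log}}\frakY)\times^{\op{log}}_{B\langle Q\oplus Q\rangle}B\langle \tilde Q\rangle,$$
where $\tilde Q$ is the inverse image of $Q$ under the sum map $Q^{gp}\oplus Q^{gp}\ra Q^{gp}$. I would then note that $\tilde Q\simeq Q\oplus Q^{gp}$ via $(a,b)\mapsto(a+b,b)$, and that under this isomorphism the second factor $Q\ra\tilde Q$ becomes $q\mapsto (q,q)$. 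In particular, $\tilde Q$ is fs and every element of $\tilde Q$ is the sum of an element coming from the second factor $Q$ and a unit $(0,u)$ with $u\in Q^{gp}$.

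The key step is to show that the chart $\tilde Q\ra \calM$ of the fiber product $\frakZ:=\frakX\times_{\frakS,[Q]}^{\op{log}}\frakY$, composed with the inclusion of the second factor $Q$, is already a chart. By the previous paragraph, $\ov{\tilde Q}$ is a quotient of the second copy of $Q$, so at each geometric point the induced map from the second $Q$ to $\ov{\calM}_{\frakZ}$ is surjective with the same kernel face. Hence the second-factor $Q$ gives a chart of $\frakZ$. Since the second projection $\frakZ\ra\frakY$ is compatible with the chart $\frakY\ra B\langle Q\rangle$ through this second factor, the map $\calM_{\frakY}\ra\calM_{\frakZ}$ induces isomorphisms on $\ov{\calM}$ stalks. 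This is exactly strictness.

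The main obstacle is that the fiber product is the fs one, so saturation could a priori change the monoid. I would handle this by observing that $Q\oplus_{Q\oplus Q}\tilde Q$ is already integral and saturated: after the isomorphism above it is $Q\oplus Q^{gp}$ with $Q$ fs. Then the fs fiber product agrees with the naive one, and the chart argument goes through. If this turns out to be delicate, an alternative is to mimic the proof of (\cite{Saito04} 4.2.5.2) reduction modulo $p^n$: strictness can be checked on each $\frakZ_n\ra\frakY_n$ by \ref{propkey}, and those are the scheme case.
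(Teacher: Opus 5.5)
Your strategy is essentially the paper's, written out in charts. The paper's proof says that $\frakZ:=\frakX\times_{\frakS,[Q]}^{\op{log}}\frakY\ra[Q]$ is strict (from the construction behind \ref{prop73}), that $\frakY\ra[Q]$ is strict, and that the projection $\frakZ\ra\frakY$ is compatible with these two frames, hence strict. Your claim that the second copy of $Q$ charts $\frakZ$ is the chart-level form of the first fact, and your final step is the second.

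\textbf{The key step is not justified as written.} Put $W=\frakX\times_{\frakS}^{\op{log}}\frakY$. The identification $\frakZ\simeq W\times^{\op{log}}_{B\langle Q\oplus Q\rangle}B\langle\tilde Q\rangle$ is fine; by the universal property it holds for any lift $W\ra B\langle Q\oplus Q\rangle$, chart or not. But $W\ra B\langle Q\oplus Q\rangle$ is not strict: $Q\oplus Q$ is not a chart of $W$, because it misses the relations coming from $P$ and the saturation. So you cannot conclude by base change that $\tilde Q$ is a chart of $\frakZ$. For the same reason, the monoid in your last paragraph (``$Q\oplus_{Q\oplus Q}\tilde Q\simeq Q\oplus Q^{gp}$'') is not the relevant one.

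Given charts of $\frakX$ and $\frakY$ by $Q$ that are compatible with a chart of $\frakS$ by $P$, $W$ is charted by $(Q\oplus_PQ)^{sat}$. Then $\frakZ$ is charted by $C=\left((Q\oplus_PQ)^{sat}\oplus_{Q\oplus Q}\tilde Q\right)^{sat}$. The map $(a,b)\mapsto(a+b,\ov b)$ identifies $C$ with $Q\oplus\left(Q^{gp}/\theta^{gp}(P^{gp})\right)$ (compare \ref{dirsum2}). This is fs, and the second copy of $Q$ maps into it by $q\mapsto(q,\ov q)$. So every element of $C$ is an image of the second copy plus a unit, and your argument goes through with $C$ in place of $\tilde Q$.

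\textbf{A second issue is the compatible charts.} \ref{formalframelift} does not give charts of $\frakX$ and $\frakY$ by $Q$ itself that are compatible with $P$ at the level of $\calM$; it gives a chart $P\ra M$ for some other $M$. Lifted frames agree with $f^{\flat}$ only up to a homomorphism $P^{gp}\ra\Ox^*$. Absorbing that discrepancy into a $Q$-chart means extending it along $\theta^{gp}$, which can fail étale locally; for instance, if $\theta^{gp}$ is multiplication by $p$, you would need $p$-th roots of units.

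You can repair this in either of two ways:
\begin{enumerate}
\item Use $Q\oplus P^{gp}$ as the chart, with $P^{gp}$ sent to the discrepancy units. It has the same sharpening as $Q$ and is compatible with $P$ via $x\mapsto(\theta(x),x)$.
\item More economically, argue directly on $\ov{\calM}$-stalks, where the frames are compatible by definition. This is exactly what the paper's two-line proof does, and it makes the explicit chart computation unnecessary.
\end{enumerate}
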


\begin{proof}
By the proof of \ref{prop73}, the map $\frakX\times_{\frakS,[Q]}^{\op{log}}\frakY\ra [Q]$ is strict. Since $\frakY\ra [Q]$ is also strict, it follows that $\frakX\times_{\frakS,[Q]}^{\op{log}}\frakY\ra \frakY$ is strict. 
\end{proof}

\begin{proposition}\label{prop76}
Let $(\frakX,Q)\ra (\frakS,P)$ be a morphism of framed fs logarithmic $p$-adic locally Noetherian formal schemes \eqref{def72}. Then the diagonal immersion
$\frakX \ra \frakX\times_{\frakS}^{\op{log}}\frakX$
factors canonically into a strict immersion $\frakX\ra \frakX\times^{\op{log}}_{\frakS,[Q]}\frakX$ followed by a log étale morphism $\frakX\times^{\op{log}}_{\frakS,[Q]}\frakX\ra \frakX\times_{\frakS}^{\op{log}}\frakX.$
\end{proposition}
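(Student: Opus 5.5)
The plan is to mirror the proof of the scheme case (\ref{prop45}, i.e. \cite{Saito04} 4.2.8) and reduce the formal statement to it by reduction modulo $p^n$.

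\emph{Step 1: the factorization.} By \ref{prop74}, the presheaf $\frakY:=\frakX\times^{\op{log}}_{\frakS,[Q]}\frakX$ is representable by an fs logarithmic $p$-adic formal scheme, log étale over $\frakX\times_{\frakS}^{\op{log}}\frakX$. The diagonal $\frakX\ra \frakX\times_{\frakS}^{\op{log}}\frakX$ is given by $(\op{Id},\op{Id})$. The frame $\frakX\ra[Q]$ induces compatible maps to $[Q]$ on both factors. Hence $(\op{Id},\op{Id})$ defines, for every $T$, an element of $\frakX(T)\times_{\frakS(T)\times[Q](T)}\frakX(T)$. By full faithfulness of $\boldsymbol{\op{LFS}}\ra\boldsymbol{\widehat{\op{L}}}$, this gives a canonical morphism $\Delta:\frakX\ra\frakY$ whose composite with $g:\frakY\ra \frakX\times_{\frakS}^{\op{log}}\frakX$ is the diagonal. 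Canonicity is automatic from the universal property.

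\emph{Step 2: strictness.} By \ref{cor76}, the second projection $q_2:\frakY\ra\frakX$ is strict. Since $q_2\circ\Delta=\op{Id}_{\frakX}$, we get $\Delta^*\calM_{\frakY}\cong\Delta^*q_2^*\calM_{\frakX}=\calM_{\frakX}$ through the canonical map. So $\Delta$ is strict.

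\emph{Step 3: $\Delta$ is an immersion of formal schemes.} This is the step I expect to be the main obstacle. The point is that $\frakY$ is only known to be log étale, not separated, over $\frakX\times^{\op{log}}_{\frakS}\frakX$. I would reduce modulo $p^n$. For each $n$, the reduction $\Delta_n:\frakX_n\ra\frakY_n$ identifies, via the frames on $\frakX_n$ from \ref{parag73}, with the Kato–Saito factorization for $(\frakX_n,Q)\ra(\frakS_n,P)$. One checks that $\frakY_n=\frakX_n\times^{\op{log}}_{\frakS_n,[Q]}\frakX_n$ by comparing the functors on fs log schemes over $\mathbb{Z}/p^n$. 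By \ref{prop45}, each $\Delta_n$ is then an (exact) immersion. These immersions are compatible, and their underlying maps of spaces coincide. I would take an open $\frakU\subset\frakY$ in which $|\Delta_1|$ is closed; the same open works for all $n$. On $\frakU$, the maps $\Ox_{\frakU_n}\ra\Delta_*\Ox_{\frakX_n}$ are then surjective for all $n$. Passing to the limit with Mittag-Leffler (using the local Noetherian hypothesis and the Artin–Rees/completeness of coherent ideals) gives surjectivity of $\Ox_{\frakU}\ra\Delta_*\Ox_{\frakX}$. Hence $\Delta$ factors as a closed immersion into $\frakU$ followed by the open immersion $\frakU\subset\frakY$.

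Alternatively, one can argue étale locally. Choose charts as in \ref{formalframelift} and \ref{prop73}(2). Then $\frakY$ is locally $(\frakX\times_{\frakS}^{\op{log}}\frakX)\times_{B\langle Q\oplus Q\rangle}^{\op{log}}B\langle\tilde{Q}\rangle$, with $\tilde{Q}$ the inverse image of $Q$ under the sum map $Q^{gp}\oplus Q^{gp}\ra Q^{gp}$. The diagonal is then cut out explicitly by the equations $q_1^\flat(m)=q_2^\flat(m)$ for $m\in Q$, plus the diagonal ideal of the underlying formal schemes. This presentation directly exhibits $\Delta$ as an immersion.
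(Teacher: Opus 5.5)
Your proof is correct. Steps 1 and 2 are exactly the paper's argument: the paper only cites \ref{cor76}, which gives strictness of the projection and hence of its section $\Delta$, and \ref{prop74}, which gives that $\frakX\times^{\op{log}}_{\frakS,[Q]}\frakX\ra\frakX\times^{\op{log}}_{\frakS}\frakX$ is log étale.

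The paper never argues separately that $\Delta$ is an immersion; it takes this for granted. Your Step 3 supplies that point correctly, but it is much heavier than needed. Let $g:\frakY\ra\frakX\times^{\op{log}}_{\frakS}\frakX$ be the canonical morphism. Then $g\circ\Delta$ is the diagonal, which is an immersion, so $\Delta$ is an immersion by the usual cancellation argument on underlying formal schemes. Indeed, $\Delta$ factors as the graph $\frakX\ra\frakX\times_{\frakX\times^{\op{log}}_{\frakS}\frakX}\frakY$, which is a base change of the diagonal of $g$. This is followed by the projection to $\frakY$, which is a base change of $g\circ\Delta$. Alternatively, $g$ is affine by \ref{prop73}, hence separated, as in the proof of \ref{closedimm}. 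Either way, no reduction modulo $p^n$ and no appeal to \ref{prop45} is required.

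If you keep your version, note that the limit step needs neither Artin–Rees nor the Noetherian hypothesis. Since $\Delta_n$ is the reduction of $\Delta_{n+1}$ modulo $p^n$, the transition maps $\calI_{n+1}\ra\calI_n$ of the kernels are already surjective on affine opens. Equivalently, a continuous map of $p$-adically complete rings that is surjective modulo $p$ is surjective. The chart-based alternative you sketch is too vague to stand on its own, but it is not needed.
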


\begin{proof}
The facts that $\frakX\ra \frakX\times^{\op{log}}_{\frakS,[Q]}\frakX$ is strict and $\frakX\times^{\op{log}}_{\frakS,[Q]}\frakX\ra \frakX\times_{\frakS}^{\op{log}}\frakX$ is log étale are consequences of \ref{cor76} and \ref{prop74}. 
\end{proof}

\begin{corollaire}\label{cor79}
Let $(\frakX,Q)\ra (\frakS,P)$ be a morphism of framed fs logarithmic $p$-adic locally Noetherian formal schemes. Denote by $\frakX_{\frakS,[Q]}^{r+1}$ the fiber product $\frakX\times^{\op{log}}_{\frakS,[Q]}\frakX\times^{\op{log}}_{\frakS,[Q]}\hdots \times^{\op{log}}_{\frakS,[Q]}\frakX\ (r+1\ \op{times}).$ Then the canonical morphism $\frakX^{r+1}_{\frakS,[Q]}\ra \frakX^{r+1}$ is étale and $\frakX\ra \frakX^{r+1}_{\frakS,[Q]}$ is a strict immersion, so that the diagonal immersion $\frakX\ra \frakX^{r+1}$ factors as the composition of an étale morphism and a strict immersion.
\end{corollaire}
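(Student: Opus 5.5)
The plan is to reduce to Proposition \ref{prop73}, exactly as in the proof of Corollary \ref{prop74}, but with $r+1$ factors instead of two. Write $\frakX^{r+1}=\frakX\times^{\op{log}}_{\frakS}\cdots\times^{\op{log}}_{\frakS}\frakX$ ($r+1$ times). The frames $\frakX\ra[Q]$ on each factor give a morphism $\frakX^{r+1}\ra[Q^{\oplus(r+1)}]$. I will check that it is a frame: étale locally each factor admits a strict chart $B\langle Q\rangle$, and the log fiber product then has the induced chart by \ref{parag27}.

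Let $\sigma:Q^{\oplus(r+1)}\ra Q$ be the summation map. The induced morphism of presheaves $[Q]\ra[Q^{\oplus(r+1)}]$ sends $u$ to $(u,\hdots,u)$. Unwinding the definition \eqref{eq732} on $T$-points gives an identification of presheaves on $\boldsymbol{\op{L}}$:
$$\frakX^{r+1}_{\frakS,[Q]}=\frakX^{r+1}\times^{\op{log}}_{[Q^{\oplus(r+1)}]}[Q].$$
Indeed, a $T$-point of either side is an $(r+1)$-tuple of $T$-points of $\frakX$ over the same $T$-point of $\frakS$, together with a common element of $[Q](T)$. Since $\sigma^{gp}$ is surjective, Proposition \ref{prop73}(1) applies. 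It shows that $\frakX^{r+1}_{\frakS,[Q]}$ is representable by an fs logarithmic $p$-adic formal scheme that is affine and log étale over $\frakX^{r+1}$. This gives the first assertion, where ``étale'' is read as log étale, as in \ref{prop76}.

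Next, strictness of the immersion. The argument of Corollary \ref{cor76} carries over. By the proof of \ref{prop73}, the structural map $\frakX^{r+1}_{\frakS,[Q]}\ra[Q]$ is strict: locally, after choosing a chart, the object is $\frakX^{r+1}\times^{\op{log}}_{B\langle Q^{\oplus(r+1)}\rangle}B\langle\tilde Q\rangle$, and its log structure is pulled back from a chart lifting $[Q]$. The diagonal $\frakX\ra\frakX^{r+1}_{\frakS,[Q]}$ is compatible with the two maps to $[Q]$, and $\frakX\ra[Q]$ is also a frame. Comparing the two frames étale locally, via charts lifting the same $Q\ra\Gamma(-,\ov{\calM})$, shows that the pullback of the log structure is the log structure of $\frakX$. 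Hence the diagonal is strict.

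It remains to show that $\frakX\ra\frakX^{r+1}_{\frakS,[Q]}$ is an immersion. The composite $\frakX\ra\frakX^{r+1}_{\frakS,[Q]}\ra\frakX^{r+1}$ is the diagonal of the log fiber product. For this I would use the description in \ref{parag27}: the log product is finite over (a closed subscheme of) the usual fiber product, so its diagonal is an immersion. The second map is affine, hence separated. By the standard cancellation lemma, if $g\circ h$ is an immersion and $g$ is separated, then $h$ is an immersion. Alternatively, one can argue by induction on $r$ from Proposition \ref{prop76}, writing $\frakX^{r+1}_{\frakS,[Q]}$ as an iterated product over $\frakX^{r}_{\frakS,[Q]}$.

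I expect the main obstacle to be the bookkeeping identifying $\frakX^{r+1}_{\frakS,[Q]}$ with $\frakX^{r+1}\times_{[Q^{\oplus(r+1)}]}^{\op{log}}[Q]$, and checking that $\frakX^{r+1}\ra[Q^{\oplus(r+1)}]$ is a genuine frame, so that \ref{prop73} applies. The remaining steps are formal.
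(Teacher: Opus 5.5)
Your route is the one the paper intends. \ref{cor79} has no separate proof in the paper; it is the $(r+1)$-fold analogue of \ref{prop74}, \ref{cor76} and \ref{prop76}, which is what you carry out. Your separatedness/cancellation argument for the immersion and your chart comparison for strictness of the diagonal are both fine.

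However, the step you single out as the main obstacle is false as stated: $\frakX^{r+1}\ra[Q^{\oplus(r+1)}]$ is in general \emph{not} a frame.

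\begin{itemize}
\item The chart that \ref{parag27} produces on the log fiber product is $(Q\oplus_P\cdots\oplus_PQ)^{sat}$, not $Q^{\oplus(r+1)}$. The map $Q^{\oplus(r+1)}\ra\Gamma(\frakX^{r+1},\calM)$ does not identify the images of $P$ in the different factors.
\item Concretely, take $\frakX=\frakS=B\langle\N\rangle$, $P=Q=\N$, the identity morphism and $r\ge 1$. Then $\frakX^{r+1}=\frakS$ has $\ov{\calM}_{\ov x}\cong\N$ wherever $t$ is not invertible. The induced map $\N^{r+1}\ra\ov{\calM}_{\ov x}=\N$ is the sum map, so no lift of it is a chart there.
\end{itemize}

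Consequently \ref{prop73} cannot be quoted verbatim. Nor can strictness of $\frakX^{r+1}_{\frakS,[Q]}\ra[Q]$ be deduced from the log structure being pulled back along a strict map to $B\langle Q^{\oplus(r+1)}\rangle$. The paper's \ref{prop74} and \ref{cor76} pass over the same point for $Q\oplus Q$.

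The repair is short.

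\begin{itemize}
\item \emph{Representability and log \'etaleness.} You only need an \'etale local lift $c:\frakX^{r+1}\ra B\langle Q^{\oplus(r+1)}\rangle$, not a strict one; pulling back local charts of the factors gives such a $c$. Since $\calM_T$ is the inverse image of $\ov{\calM}_T$ in $\calM_T^{gp}$, a $T$-point of $\frakX^{r+1}\times_{[Q^{\oplus(r+1)}]}[Q]$ is the same as a $T$-point $x$ of $\frakX^{r+1}$ together with an extension of $x^{\flat}\circ c$ to $\tilde Q$. So the presheaf is still represented by $\frakX^{r+1}\times^{\op{log}}_{B\langle Q^{\oplus(r+1)}\rangle}B\langle\tilde Q\rangle$. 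This is affine and log \'etale over $\frakX^{r+1}$ because $\tilde Q^{gp}=(Q^{\oplus(r+1)})^{gp}$.
\item \emph{Strictness of the map to $[Q]$.} You must compute the fs chart $(N'\oplus_{Q^{\oplus(r+1)}}\tilde Q)^{sat}$, where $N'=(Q\oplus_P\cdots\oplus_PQ)^{sat}$.
  \begin{itemize}
  \item It equals $\tilde Q/H$. Here $H$ is generated by the elements $\iota_i\theta(a)-\iota_j\theta(a)$ for $a\in P$, with $\theta:P\ra Q$ the given morphism and $\iota_i$ the $i$-th inclusion.
  \item $H$ lies in $\ker\sigma^{gp}\subset\tilde Q^{\times}$.
  \item The inclusion $N'\subset\tilde Q/H$ holds because $Q$ is saturated.
  \end{itemize}
  Hence $\tilde Q$ itself is still a chart and $\ov{\tilde Q}\cong\ov Q$. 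The section $q\mapsto(q,0,\hdots,0)$ then gives a chart by $Q$ lifting the structural map, so $\frakX^{r+1}_{\frakS,[Q]}\ra[Q]$ is a frame.
\end{itemize}

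Only after this does your chart comparison yield strictness of $\frakX\ra\frakX^{r+1}_{\frakS,[Q]}$.
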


\begin{parag}\label{parag77}
Let $(\frakX,Q)\ra (\frakS,P)$ be a morphism of framed fs logarithmic $p$-adic locally Noetherian formal schemes (\ref{def72}). Then, by \ref{prop76}, the diagonal immersion $\frakX \ra \frakX \times_{\frakS}^{\op{log}} \frakX$ factors into a strict immersion $\frakX \ra \frakY=\frakX \times_{\frakS,[Q]}^{\op{log}}\frakX$ followed by a log étale morphism $\frakY\ra \frakX \times_{\frakS}^{\op{log}} \frakX.$ Let $\frakX \xrightarrow{\Delta} \frakU \ra \frakY$ be a factorization of $\frakX \ra \frakY$ into a closed immersion followed by an open immersion. For every integer $n\ge 1,$ denote by $\frakX_n$ (resp. $\frakY_n$) the logarithmic scheme obtained from $\frakX$ (resp. $\frakY$) by reduction modulo $p^n,$ as in \ref{prop69}, and denote by $\Delta_n:\frakX_n \ra \frakY_n$ the morphism induced by $\Delta$ and denote by $\calI$ (resp. $\calI_n$) the ideal of the closed immersion $\Delta:\frakX \ra \frakU$ (resp. $\Delta_n:\frakX_n \ra \frakU_n$). By \ref{prop45}, we have a canonical isomorphism $$\Delta_n^{-1} \left (\calI_n/\calI_n^2 \right ) \xrightarrow{\sim} \omega^1_{\frakX_n / \frakS_n}.$$
It follows, by (\ref{eq681}), that we have a canonical isomorphism
\begin{equation}\label{isoomega1}
\Delta^{-1} \left (\calI / \calI^2 \right ) \xrightarrow{\sim} \omega^1_{\frakX / \frakS}.
\end{equation}
Denote by $p_1,p_2:\frakU \ra \frakX$ and $p_{1,n},p_{2,n}:\frakU_n \ra \frakX_n$ the canonical projections. Note that $\Delta^{-1}(1+\calI) \subset \Delta^{-1}\Ox_{\frakU}^*.$
We have the following commutative diagram
$$
\begin{tikzcd}
0 \ar{r} & \Delta^{-1}(1+\calI) \ar{r}{\lambda} \ar{d} & \Delta^{-1} \calM_{\frakU} \ar{d}\ar{r}{\Delta^{\flat}} & \calM_{\frakX} \ar{d}\ar{r} & 0 \\
0 \ar{r} & \Delta_n^{-1}(1+\calI_n) \ar{r}{\lambda_n} & \Delta_n^{-1} \calM_{\frakU_n} \ar{r}{\Delta_n^{\flat}} & \calM_{\frakX_n}\ar{r} & 0
\end{tikzcd}
$$
with exact rows (\ref{era2exactseq}) and where $\lambda$ (resp. $\lambda_n$) is induced by $\alpha_{\frakU}^{-1}$ (resp. $\alpha_{\frakU_n}^{-1}$). Given a local section $m\in \Gamma(\frakU,\calM_{\frakX})$ (resp. $m\in \Gamma(U,\calM_{\frakX_n})$) over an étale $\frakX$-formal scheme $\frakU$ (resp. an étale $X$-scheme U), we denote by $\mu(m)$ (resp. $\mu_n(m)$) the unique section of $\Gamma(\frakU,\Delta^{-1}(1+\calI))$ (resp. $\Gamma(U,\Delta_n^{-1}(1+\calI_n))$) such that $(\Delta^{-1}p_1^{\flat})m+\lambda(\mu(m))=(\Delta^{-1}p_2^{\flat})m$ (resp. $(\Delta_n^{-1}p_{1,n}^{\flat})m+\lambda_n(\mu_n(m))=(\Delta_n^{-1}p_{2,n}^{\flat})m$) and $\eta(m)=\mu(m)-1\in \Delta^{-1}\calI$ (resp. $\eta_n(m)=\mu_n(m)-1\in \Delta_n^{-1}\calI_n$).
 
Suppose $\frakX\ra \frakS$ is smooth and suppose we are given local coordinates $m_{1,1},\hdots,m_{1,d}\in \Gamma(U,\calM_{\frakX_1})$ of $\frakX_1$ over $\frakS_1$ (\ref{P2}), where $U$ is an étale $\frakX_1$-scheme. By \ref{prop69}, after eventually shrinking $U,$ there exists an étale $\frakX$-scheme $\frakU$ and, for every $1\le i\le d,$ a local section $m_i\in \Gamma(\frakU,\calM_{\frakX})$ that lifts $m_{1,i}.$ Since $\eta_1(m_{1,1}),\hdots,\eta_1(m_{1,d}),$ modulo $\calI_1^2,$ form a local basis for $\omega^1_{\frakX_1 / \frakS_1},$ the sections $\eta(m_1),\hdots,\eta(m_d)$ modulo $\calI^2$ also form a local basis for $\omega^1_{\frakX / \frakS}.$ Such local sections $m_i$ will be refered to as \emph{local coordinates for $\frakX$ over $\frakS$}. Note that $\eta(m_1),\hdots,\eta(m_d)$ locally generate the ideal $\calI.$
\end{parag}

\begin{proposition}\label{era2prop611}
Keep the same hypothesis and notation of \ref{parag77}. We identify $\frakY \times_{\frakX}^{\op{log}}\frakY=\frakY \times_{\frakX}\frakY$ with $\frakY(2):=\frakX \times_{\frakS,[Q]}^{\op{log}} \frakX\times_{\frakS,[Q]}^{\op{log}} \frakX$ via the canonical isomorphism. The open formal subscheme $\frakU\times_{\frakX}^{\op{log}}\frakX$ identifies then with an open formal subscheme $\frakU(2)$ of $\frakY(2)$ and the diagonal embedding $\frakX \ra  \frakY(2)$ factors into an exact closed immersion $\Delta(2): \frakX \ra \frakU(2)$ followed by the open immersion $\frakU(2)\ra \frakY(2).$ Let $\w{\delta}$ be the morphism of structural rings associated with the $(1,3)$-projection $p_{13}:\frakU(2) \rightarrow \frakU.$ Then, for any local section $m$ of $\calM_{\frakX},$
$$(\Delta(2)^{-1}\w{\delta})(\eta(m))=\eta(m)\otimes \eta(m)+\eta(m)\otimes 1+1\otimes \eta(m).$$
\end{proposition}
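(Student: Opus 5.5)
The plan is to compute with the multiplicative sections $\mu(m)=1+\eta(m)$ rather than with $\eta(m)$ directly, and to use the uniqueness statement in the exact sequence \eqref{era2exactseq} for the exact closed immersion $\Delta(2):\frakX\ra \frakU(2)$. Let $q_1,q_2,q_3:\frakU(2)\ra \frakX$ be the three projections. Let $p_{12},p_{23},p_{13}:\frakU(2)\ra \frakU$ be the partial projections, so that $q_1=p_1\circ p_{12}=p_1\circ p_{13}$, $q_2=p_2\circ p_{12}=p_1\circ p_{23}$ and $q_3=p_2\circ p_{23}=p_2\circ p_{13}$. The left and right $\Ox_{\frakX}$-structures, via $p_1$ and $p_2$, identify $\Ox_{\frakU(2)}$ near the diagonal with $\Ox_{\frakU}\otimes_{\Ox_{\frakX}}\Ox_{\frakU}$. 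Under this identification $p_{12}^{\#}(a)=a\otimes 1$ and $p_{23}^{\#}(a)=1\otimes a$. The tensor notation in the statement is meant in this sense.

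\textbf{Step 1.} I pull back the defining relation of $\mu(m)$ along each $p_{ij}$. In $\Delta(2)^{-1}\calM_{\frakU(2)}$ this gives
$q_1^{\flat}m+\lambda(p_{12}^{\#}\mu(m))=q_2^{\flat}m$,
$q_2^{\flat}m+\lambda(p_{23}^{\#}\mu(m))=q_3^{\flat}m$, and
$q_1^{\flat}m+\lambda(p_{13}^{\#}\mu(m))=q_3^{\flat}m$.
Here $\lambda$ denotes $\alpha^{-1}$ restricted to $1+\calI(2)$, and it is a monoid homomorphism from the multiplicative group $1+\calI(2)$. This step needs the compatibility of $\lambda$ with pullback by the strict morphisms $p_{ij}$, which follows from the compatibility of $\alpha$ with pullback. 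It also needs $p_{ij}^{\#}$ to send $1+\calI$ into $1+\calI(2)$, which holds because each $p_{ij}$ commutes with the diagonals.

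\textbf{Step 2.} Adding the first two relations gives
$q_1^{\flat}m+\lambda\big(p_{12}^{\#}\mu(m)\,p_{23}^{\#}\mu(m)\big)=q_3^{\flat}m$.
I compare this with the third relation. The uniqueness part of the exact sequence for $\Delta(2)$ (Proposition \ref{KhamineiExact} and its formal version) then yields
$$p_{13}^{\#}\mu(m)=p_{12}^{\#}\mu(m)\cdot p_{23}^{\#}\mu(m)=(1+\eta(m))\otimes(1+\eta(m)).$$
To be safe I will argue the uniqueness through the groupified sequence, since $\calM$ is integral. Alternatively, the injectivity argument of Proposition \ref{propkey} can be reused after reducing modulo $p^n$ and passing to the limit.

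\textbf{Step 3.} Subtracting $1$ and expanding gives
$\w\delta(\eta(m))=p_{13}^{\#}\eta(m)=\eta(m)\otimes\eta(m)+\eta(m)\otimes 1+1\otimes\eta(m)$.

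I expect the main obstacle to be the bookkeeping in Step 1 rather than any computation. One point is to justify that the $p_{ij}$ are strict, or at least compatible with the $\alpha$'s. This follows from Corollary \ref{cor76}, since all projections from these framed fibre products are strict. The other point is to check that the identification $\frakU(2)\simeq \frakU\times^{\op{log}}_{\frakX}\frakU$ really matches $p_{12}^{\#},p_{23}^{\#}$ with $a\mapsto a\otimes 1$ and $a\mapsto 1\otimes a$. If working directly on formal schemes is awkward, I will prove the identity modulo $p^n$ for every $n$ by the same argument on the schemes $\frakU(2)_n$, then pass to the inverse limit using Proposition \ref{propkey}, as in the proof of the exactness of \eqref{era2exactseq}.
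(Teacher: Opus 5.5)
Your proposal is correct and is essentially the paper's own argument. The paper likewise pulls back the defining relation of $\mu(m)=1+\eta(m)$ along the partial projections, uses that $\lambda$ turns products in $1+\calI(2)$ into sums, cancels the term $\pi_1^{\flat}(m)$ by integrality and injectivity of $\lambda$, and identifies the $(1,2)$- and $(2,3)$-pullbacks of $\eta(m)$ with $\eta(m)\otimes 1$ and $1\otimes\eta(m)$. One small remark: strictness of the $p_{ij}$ is not needed in Step 1, because $\lambda$ is compatible with pullback along any morphism of logarithmic formal schemes.
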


\begin{proof}
For $1\le i\le 3$ and $1\le j\le 2,$ denote by $\pi_i:\frakU(2) \ra \frakX$ and $p_j:\frakU \ra \frakX$ the projections on the $i$th and $j$th factor respectively and, for $1\le i<j\le 3,$ denote by $p_{ij}:\frakU(2) \ra \frakX$ the projection on the $(i,j)$th factor. 
Let $\calI$ and $\calI(2)$ be the ideals of the exact immersions $\Delta:\frakX \ra \frakU$ and $\Delta(2):\frakX \ra \frakU(2)$ respectively. By \eqref{era2exactseq}, we have exact sequences fitting into the commutative diagram
$$
\begin{tikzcd}
0 \ar{r} & \Delta^{-1}(1+\calI) \ar{r}{\lambda} \ar{d}{\Delta(2)^{-1}p_{ij}^{\#}} & \Delta^{-1}\calM_{\frakU} \ar{r}{\Delta^{\flat}} \ar{d}{\Delta(2)^{-1}p_{ij}^{\flat}} & \calM_{\frakX} \ar{r} \ar[equal]{d} & 0 \\
0 \ar{r} & \Delta(2)^{-1}(1+\calI(2)) \ar{r}{\lambda_2} & \Delta(2)^{-1}\calM_{\frakU(2)} \ar{r}{\Delta(2)^{\flat}} & \calM_{\frakX} \ar{r} & 0.
\end{tikzcd}
$$
For $1\le i<j\le 3,$ let $\eta_{ij}(m)$ be the unique local section of $\Delta(2)^{-1}(\calI(2))$ satisfying
$$
\lambda_2(1+\eta_{ij}(m))+\pi_i^{\flat}(m)=\pi_j^{\flat}(m).
$$
We drop the notation $\Delta(2)^{-1}$ and $\Delta^{-1}$ to lighten the notation.
We have
\begin{alignat*}{2}
\lambda_2(p_{13}^{\#}(1+\eta(m)))+\pi_1^{\flat}(m) &= p_{13}^{\flat}(\lambda(1+\eta(m)))+p_{13}^{\flat}p_1^{\flat}(m) = p_{13}^{\flat}(p_2^{\flat}(m)) \\
&= \pi_3^{\flat}(m) = \lambda_2(1+\eta_{23}(m))+\lambda_2(1+\eta_{12}(m))+\pi_1^{\flat}(m) \\
&= \lambda_2\left (1+\eta_{12}(m)+\eta_{23}(m)+\eta_{12}(m)\eta_{23}(m) \right )+\pi_1^{\flat}(m).
\end{alignat*}
Since $\calM_{\frakU}$ is integral and $\lambda_2$ is injective, we get
$$p_{13}^{\#}(\eta(m))=\eta_{12}(m)+\eta_{23}(m)+\eta_{12}(m)\eta_{23}(m).$$
We conclude by the following fact: let $q_1,q_2:\frakU\times_{\frakX}\frakU \ra \frakU$ be the canonical projections and $\Delta':\frakX \ra \frakU\times_{\frakX}\frakU$ the morphism induced by $\Delta:\frakX \ra \frakU.$ Since we identify $\frakU\times_{\frakX}\frakU$ and $\frakU(2)$ via the canonical isomorphism then $\Delta'$ identifies with $\Delta(2),$ $\eta_{12}(m)$ identifies with $(\Delta'^{-1}q_1^{\#})(\eta(m))$ and $\eta_{23}(m)$ identifies with $(\Delta'^{-1}q_2^{\#})(\eta(m)).$
\end{proof}

\begin{proposition}\label{era2prop612}
Keep the same hypothesis and notation of \ref{parag77}. Let $\sigma:\frakY \ra \frakY$ be the morphism exchanging the $\frakX$ factors. For any local section $m$ of $\calM_{\frakX},$ we have
$$(\Delta^{-1}\sigma^{\#})(\eta(m))=(1+\eta(m))^{-1}-1.$$ 
\end{proposition}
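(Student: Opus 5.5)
We follow the proof of \ref{era2prop611}, comparing the defining relations of $\eta(m)$ before and after applying $\sigma$. The exact sequence \eqref{era2exactseq} defines $\mu(m)=1+\eta(m)$ as the unique local section of $\Delta^{-1}(1+\calI)$ satisfying
$$\lambda(\mu(m))+p_1^{\flat}(m)=p_2^{\flat}(m)$$
in $\Delta^{-1}\calM_{\frakU}$. As before, we drop $\Delta^{-1}$ from the notation. The exchange morphism $\sigma$ satisfies $p_1\circ\sigma=p_2$ and $p_2\circ\sigma=p_1$. It also satisfies $\sigma\circ\Delta=\Delta$, so $\sigma$ preserves $\frakU$ up to shrinking to the open $\frakU\cap\sigma^{-1}(\frakU)$, which still contains the diagonal. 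Hence $\sigma^{\#}$ maps $\calI$ into $\calI$, and $\sigma^{\#}$ induces the identity on $\Ox_{\frakX}$ modulo $\calI$. So $\sigma^{\#}$ sends $1+\calI$ into $1+\calI$. Finally, $\sigma^{\flat}$ is compatible with $\lambda$, because $\lambda$ is induced by $\alpha_{\frakU}^{-1}$ on units and $\sigma$ is a morphism of logarithmic formal schemes: $\sigma^{\flat}\circ\lambda=\lambda\circ\sigma^{\#}$.

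Applying $\sigma^{\flat}$ to the defining relation gives
$$\lambda(\sigma^{\#}\mu(m))+p_2^{\flat}(m)=p_1^{\flat}(m).$$
Adding $\lambda(\mu(m))$ to both sides and substituting the original relation yields
$$\lambda\big(\sigma^{\#}\mu(m)\,\mu(m)\big)+p_2^{\flat}(m)=p_2^{\flat}(m).$$
Here we use that $\lambda$ is a monoid morphism from the multiplicative monoid $1+\calI$ to $\calM_{\frakU}$. Since $\calM_{\frakU}$ is integral, we can cancel $p_2^{\flat}(m)$. Since $\lambda$ is injective, we get $\sigma^{\#}(\mu(m))\mu(m)=1$. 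Therefore
$$\sigma^{\#}(\eta(m))=\sigma^{\#}(\mu(m))-1=(1+\eta(m))^{-1}-1.$$
The inverse $(1+\eta(m))^{-1}$ makes sense because $1+\calI$ consists of units, as noted in \ref{Khamineipar414}.

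The main point to check carefully is the compatibility $\sigma^{\flat}\circ\lambda=\lambda\circ\sigma^{\#}$ on $\Delta^{-1}(1+\calI)$. It follows from the commutativity $\alpha_{\frakU}\circ\sigma^{\flat}=\sigma^{\#}\circ\alpha_{\frakU}$ restricted to units, together with the fact that $\alpha_{\frakU}$ is an isomorphism onto $\Ox^*$ on invertible sections.

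The other point to check is that $\sigma$ is defined on $\frakY=\frakX\times^{\op{log}}_{\frakS,[Q]}\frakX$. This holds because the presheaf defining $\frakY$ is symmetric in its two factors. If one prefers, the whole argument can be run modulo $p^n$ using \ref{KhamineiExact} and then passed to the limit via \ref{propkey}.
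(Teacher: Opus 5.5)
Your proposal is correct and is essentially the paper's argument: apply $\sigma^{\flat}$ to the defining relation $\lambda(\mu(m))+p_1^{\flat}(m)=p_2^{\flat}(m)$, use $\sigma^{\flat}\circ\lambda=\lambda\circ\sigma^{\#}$ and $p_i\circ\sigma=p_{3-i}$, then conclude $\sigma^{\#}(\mu(m))=\mu(m)^{-1}$ from the integrality of the log structure and the injectivity of $\lambda$. The only difference is cosmetic. The paper rewrites $p_1^{\flat}(m)$ as $\lambda(\mu(m)^{-1})+p_2^{\flat}(m)$, whereas you add $\lambda(\mu(m))$ to both sides, which spares you from checking first that $\mu(m)^{-1}\in 1+\calI$.
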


\begin{proof}
We have the following commutative diagram
$$
\begin{tikzcd}
\Delta^{-1}(1+\calI) \ar{r}{\lambda} \ar[swap]{d}{\Delta^{-1}\sigma^{\#}} & \Delta^{-1}\calM_{\frakY} \ar{d}{\Delta^{-1}\sigma^{\flat}} \\
\Delta^{-1}(1+\calI) \ar{r}{\lambda} & \Delta^{-1}\calM_{\frakY}.
\end{tikzcd}
$$
We get
\begin{alignat*}{2}
\lambda\left (\Delta^{-1}\sigma^{\#}(\mu(m))\right ) + \left (\Delta^{-1}p_2^{\flat}\right )(m) &= \lambda\left (\Delta^{-1}\sigma^{\#}(\mu(m))\right ) + \left (\Delta^{-1} \sigma^{\flat} \right ) \left (\Delta^{-1}p_1^{\flat} \right )(m) \\
&= \left (\Delta^{-1}\sigma^{\flat} \right ) (\lambda(\mu(m))) + \left (\Delta^{-1} \sigma^{\flat} \right ) \left (\Delta^{-1}p_1^{\flat} \right )(m) \\
&= \left (\Delta^{-1}\sigma^{\flat} \right ) \left ( \Delta^{-1}p_2^{\flat} \right )(m) \\
&= \left (\Delta^{-1}p_1^{\flat} \right )(m)  \\
&= \lambda\left (\mu(m)^{-1} \right ) + \left (\Delta^{-1}p_2^{\flat}\right )(m).
\end{alignat*}
The morphism $\lambda$ is injective and $\Delta^{-1}\calM_{\frakY}$ is integral, it follows that
$
\left (\Delta^{-1}\sigma^{\#}\right )(\mu(m))=\mu(m)^{-1}.
$
\end{proof}

\section{A logarithmic Shiho functor}

\begin{parag}\label{parag48}
In this section, we fix a log smooth morphism $f:\frakX\ra \frakS$ of fs logarithmic $p$-adic locally Noetherian formal schemes flat over $\Z_p$ (\ref{parag63}). For any integer $n\ge 1,$ we denote by $\frakX_n$ and $\frakS_n$ the logarithmic schemes obtained from $\frakX$ and $\frakS$ respectively by reduction modulo $p^n,$ as in \ref{prop69}. Set $X=\frakX_1$ and $S=\frakS_1.$ We use the same notations as in \ref{PFrob}. Namely, we denote by $F_S:S\ra S$ the absolute Frobenius morphism of $S$ \eqref{Not3}, by $F_{X/S}:X\ra X''$ the relative Frobenius morphism of $X$ with respect to $S,$ by $F_1:X\ra X'$ the exact relative Frobenius of $X$ with respect to $S$ and by $F_1^{\#}:\Ox_{X'}\ra F_{1*}\Ox_X$ the associated morphism of structural rings. Denote also by $G:X'\ra X''$ and $\pi:X'\ra X$ the morphisms defined in \eqref{diag51}.
\end{parag}

\begin{parag}
In the rest of this section, we suppose that we have an $\frakS$-lifting $F:\frakX\ra \frakX'$ of the exact relative Frobenius $F_1:X\ra X'.$ In other words, we suppose that there exists an fs logarithmic $p$-adic formal scheme $\frakX'$ over $\frakS$ and an $\frakS$-morphism $F:\frakX\ra \frakX'$ that fit into cartesian squares
$$\begin{tikzcd}
X'\ar{r}\ar{d} & S\ar{d} & & X\ar{r}{F_1}\ar{d} & X'\ar{d}\\
\frakX'\ar{r} & \frakS & & \frakX\ar{r}{F} & \frakX'
\end{tikzcd}$$
Note that, by (\cite{Ahmed2010} 2.1.36), $\frakX'$ is locally Noetherian.
We also suppose that $F$ is log flat \eqref{thmlogflat}.
For any integer $n\ge 1,$ we denote by $F_n:\frakX_n\ra \frakX_n'$ the base change of $F:\frakX\ra \frakX'$ by $\frakS_n\ra \frakS.$
\end{parag}

\begin{lemma}\label{lem12}
Consider the composition $X' \xrightarrow{\pi} X \ra \frakX$ where $\pi$ is given in \eqref{diag51}. Let $\frakU \ra \frakX$ be an étale morphism of $p$-adic formal schemes and $\frakU' \ra \frakX'$ the unique étale morphism such that $\frakU \times_{\frakX} X'=\frakU' \times_{\frakX'}X'.$ Let $m$ be a local section of $\calM_{\frakX}$ (resp. $\calM_{\frakX_n}$) over $\frakU$ and $m_1$ its image in $\Gamma \left (\frakU\times_{\frakX}X,\calM_X \right ).$ By \ref{prop69}, after eventually reducing $\frakU,$ there exists a lifting $m'$ of $\pi^{\flat}m_1\in \Gamma \left (\frakU\times_{\frakX}X',\calM_{X'} \right )$ to $\Gamma \left (\frakU\times_{\frakX}X',\calM_{\frakX'} \right )$ (resp. $\Gamma \left (\frakU\times_{\frakX}X',\calM_{\frakX'_n} \right )$). Then there exists an invertible local section $u$ of $\calM_{\frakX}$ (resp. $\calM_{\frakX_n}$) such that $u+pm=F^{\flat}(m')$ (resp. $u+pm=F_n^{\flat}(m')$) and $\alpha_{\frakX}(u)=1+pb$ (resp. $\alpha_{\frakX_n}(u)=1+pb$) for a local section $b$ of $\Ox_{\frakX}$ (resp. $\Ox_{\frakX_n}$).
\end{lemma}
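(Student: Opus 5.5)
Reduce to the special fiber, where the relation $F_1^{\flat}(\pi^{\flat}m_1)=pm_1$ holds exactly, and then lift that identity using the exact sequence attached to the strict closed immersion $\frakX_1\ra\frakX$.

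\textbf{Step 1: the identity mod $p$.} By construction of the exact relative Frobenius \eqref{diag51}, $\pi\circ F_1=F_X$ is the absolute Frobenius of $X$. Hence, as in the proof of \ref{FKummer},
$$F_1^{\flat}(\pi^{\flat}m_1)=F_X^{\flat}(m_1)=pm_1 \quad\text{in } \Gamma(\frakU\times_{\frakX}X,\calM_X).$$
Let $i_1:X\ra\frakX$ and $i_1':X'\ra\frakX'$ be the reductions. Since $F$ lifts $F_1$, we have $i_1^{\flat}(F^{\flat}m')=F_1^{\flat}(i_1'^{\flat}m')=F_1^{\flat}(\pi^{\flat}m_1)$. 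Therefore $F^{\flat}(m')$ and $pm$ have the same image $pm_1$ in $\calM_X$.

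\textbf{Step 2: lifting to $\calM_{\frakX}$.} The immersion $X\ra\frakX$ (resp. $X\ra\frakX_n$) is an exact closed immersion of integral log formal schemes with ideal $(p)$. By \ref{Khamineipar414} and the proposition after it (resp. \ref{KhamineiExact} for $\frakX_n$), the sequence
$$0\ra 1+p\Ox_{\frakX}\xrightarrow{\lambda}\calM_{\frakX}\ra\calM_X\ra 0$$
is exact in the strong sense. In particular, two sections with equal image in $\calM_X$ differ by $\lambda(a)$ for a local section $a$ of $1+p\Ox_{\frakX}$. Applying this to $pm$ and $F^{\flat}(m')$ gives $a=1+pb$ with $F^{\flat}(m')=pm+\lambda(1+pb)$.

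Set $u=\lambda(1+pb)$. It is invertible in $\calM_{\frakX}$ because $1+pb\in\Ox_{\frakX}^*$, and $\alpha_{\frakX}(u)=1+pb$ by the definition of $\lambda$. This gives $u+pm=F^{\flat}(m')$, as required. The $\frakX_n$ case is verbatim, with $F_n$ and the ideal $p\Ox_{\frakX_n}$. If uniqueness of $u$ is wanted, it follows from integrality of $\calM_{\frakX}$.

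\textbf{Main obstacle.} Step 1 requires checking that the lifting $m'$ is compatible with reduction. One must confirm that $i_1'^{\flat}(m')=\pi^{\flat}m_1$ as sections over $\frakU\times_{\frakX}X'$, through the identification $\frakU\times_{\frakX}X'=\frakU'\times_{\frakX'}X'$, and that $F$ restricts to $\frakU\ra\frakU'$ compatibly with $F_1$. This uses the topological invariance of the étale site to identify $\frakU'$. Everything else is a formal consequence of the exact sequence.
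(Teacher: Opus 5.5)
Your proof is correct and follows essentially the same route as the paper: you reduce modulo $p$, using $\pi\circ F_1=F_X$, to see that $pm$ and $F^{\flat}(m')$ have the same image in $\calM_X$, and then you lift that equality to $\calM_{\frakX}$. The only difference is packaging. The paper unwinds the amalgamated-sum description $\calM_X=\calM_{\frakX}\oplus_{\gamma^{-1}(\Ox_X^*)}\Ox_X^*$ directly to produce $u=v-v'$, whereas you cite the exact sequence \eqref{era2exactseq}, whose proof is that same amalgamated-sum argument.
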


\begin{proof}
We prove the lemma for $\frakX.$ The proof for $\frakX_n$ is similar.
Let $\gamma:\calM_{\frakX}\ra \Ox_X$ be the composition $\calM_{\frakX}\xrightarrow{\alpha_{\frakX}} \Ox_{\frakX}\ra \Ox_X,$ where $\Ox_{\frakX}\ra \Ox_X$ is the reduction modulo $p.$ The morphism $X\ra \frakX$ is strict (\ref{prop69}) so $\calM_X=\calM_{\frakX}\oplus_{\gamma^{-1}(\Ox_X^*)}\Ox_X^*.$ The left commutative diagram gives rise to the right commutative diagram of sheaves
$$\begin{tikzcd}
X\ar{r}\ar{d}{F_1} & \frakX\ar{d}{F}\\
X'\ar{r} & \frakX'
\end{tikzcd} \qquad
\begin{tikzcd}
\calM_X & \calM_{\frakX}\ar{l}\\
\calM_{X'}\ar{u}{F_1^{\flat}} & \calM_{\frakX'}\ar{u}{F^{\flat}}\ar{l}.
\end{tikzcd}$$
It follows that, in $\calM_X,$ we have the equality
$(pm,1)=(F^{\flat}(m'),1).$
By the definition of amalgamated sums in the category of monoids (\cite{Kat89}  (1.3)), there exist local sections $v$ and $v'$ of $\gamma^{-1}(\Ox_X^*)$ such that $F^{\flat}(m')+v'=pm+v$ and $\gamma(v')=\gamma(v).$ Since $\gamma(v')=\gamma(v),$ there exists a local section $c$ of $\Ox_{\frakX}$ such that $\alpha_{\frakX}(v)=\alpha_{\frakX}(v')+pc.$ The local sections $\gamma(v)$ and $\gamma(v')$ are both invertible so $\alpha_{\frakX}(v)$ and $\alpha_{\frakX}(v')$ are also invertible and so $v$ and $v'$ are invertible in $\calM_{\frakX}.$ It follows that we can take $u=v-v'$ and $\alpha_{\frakX}(u)=\alpha_{\frakX}(v)\alpha_{\frakX}(v')^{-1}=1+pb$ where $b=\alpha_{\frakX}(v')^{-1}c.$
\end{proof}

\begin{lemma}\label{lemcalc}
Suppose that $\frakX$ and $\frakS$ are equipped with frames $\frakX \ra [Q]$ and $\frakS\ra [P],$ where $P$ and $Q$ are fs monoids, that $f$ underlies a morphism of framed logarithmic formal schemes and that there exists a morphism $F:(\frakX,Q) \ra (\frakX',Q')$ of framed logarithmic formal schemes, over $(\frakS,P),$ lifting the exact relative Frobenius $F_1:X \ra X'$ \eqref{diag51}, where $Q'$ is defined in \ref{PFrob}. Let $\Delta:\frakX \ra \frakY:=\frakX\times_{\frakS,[Q]}^{\op{log}}\frakX$ and $\Delta':\frakX' \ra \frakX' \times_{\frakS,[Q']}^{\op{log}}\frakX'$ be the strict diagonal immersions \eqref{prop76}. Denote by $p_1,p_2:\frakY\ra \frakX$ and $p_1',p_2':\frakY'\ra \frakX'$ the canonical projections. Denote by $G:\frakY\ra \frakY'$ the morphism induced by $F:(\frakX,Q)\ra (\frakX',Q').$ Let $\frakX \ra \frakU \ra \frakY$ and $\frakX' \ra \frakU' \ra \frakY'$ be factorizations of $\Delta$ and $\Delta'$ respectively into a closed immersion followed by an open one and denote by $\calI$ and $\calI'$ the ideals of $\frakX\ra \frakU$ and $\frakX' \ra \frakU'$ respectively. Let $m_1$ be a local section of $\calM_X,$ $m'_1$ its image in $\calM_{X'}$ by $\pi^{\flat}:\calM_X\ra \pi_*\calM_{X'}$ \eqref{diag51}, $m$ (resp. $m'$) a local lifting of $m_1$ (resp. $m'_1$) to $\calM_{\frakX}$ (resp. $\calM_{\frakX'}$).
Let $\eta(m)\in \Delta^{-1}\calI,\ \eta(m')\in \Delta'^{-1}\calI',$ $\mu(m)=\eta(m)+1$ and $\mu(m')=\eta(m')+1$ as defined in \ref{parag77}. By \ref{lem12}, there exists a local invertible section $u$ of $\calM_{\frakX}$ and a local section $b$ of $\Ox_{\frakX}$ such that $F^{\flat}(m')=pm+u$ and $\alpha_{\frakX}(u)=1+pb.$
Then
\begin{equation}\label{eq1321}
\Delta^{-1}G^{\#}\left ( \eta(m')\right ) = \left (\eta(m)^p+\sum_{k=1}^{p-1}\begin{pmatrix}p\\k \end{pmatrix}\eta(m)^k+1 \right )\alpha_{\frakY}(p_2^{\flat}u-p_1^{\flat}u)-1
\end{equation}
and
\begin{equation}\label{eq1322}
\alpha_{\frakY}(p_2^{\flat}u-p_1^{\flat}u)=\frac{1+pp_2^{\#}(b)}{1+pp_1^{\#}(b)}.
\end{equation}
\end{lemma}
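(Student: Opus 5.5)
The plan is to compute everything inside the groupoid $\Delta^{-1}\calM_{\frakY}$, using the exact sequence \eqref{era2exactseq} attached to the exact closed immersion $\frakX\ra\frakU$. The only facts needed are that $\calM_{\frakY}$ is integral, that $\lambda$ is injective, and that $G$ commutes with the projections: $p_i'\circ G=F\circ p_i$ for $i=1,2$, which holds because $G$ is induced by $F$ on both factors.

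\textbf{Step 1 (transport the defining relation of $\eta(m')$).} By definition, in $\Delta'^{-1}\calM_{\frakU'}$ we have
\[
p_1'^{\flat}m'+\lambda'(\mu(m'))=p_2'^{\flat}m'.
\]
Apply $\Delta^{-1}G^{\flat}$. Since $G$ is compatible with the structure maps, $G^{\flat}\circ\lambda'=\lambda\circ G^{\#}$ on $1+\calI'$; note that $G^{\#}$ sends $1+\calI'$ into $1+\calI$ because $G\circ\Delta=\Delta'\circ F$. Using $G^{\flat}p_i'^{\flat}=p_i^{\flat}F^{\flat}$, we obtain
\[
p_1^{\flat}F^{\flat}(m')+\lambda\big(G^{\#}\mu(m')\big)=p_2^{\flat}F^{\flat}(m').
\]
Substituting $F^{\flat}(m')=pm+u$ from \ref{lem12} gives
\[
p\,p_1^{\flat}m+p_1^{\flat}u+\lambda\big(G^{\#}\mu(m')\big)=p\,p_2^{\flat}m+p_2^{\flat}u.
\]

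\textbf{Step 2 (use the relation for $\eta(m)$).} Multiplying the defining relation $p_1^{\flat}m+\lambda(\mu(m))=p_2^{\flat}m$ by $p$ gives
\[
p\,p_2^{\flat}m=p\,p_1^{\flat}m+\lambda(\mu(m)^p).
\]
Since $u$ is invertible, $p_2^{\flat}u-p_1^{\flat}u$ is a unit section of $\calM_{\frakY}$. It equals $\lambda$ of $\alpha_{\frakY}(p_2^{\flat}u-p_1^{\flat}u)$, and this element lies in $1+\calI$ because $\Delta^{\flat}p_1^{\flat}u=\Delta^{\flat}p_2^{\flat}u=u$. Cancelling $p\,p_1^{\flat}m+p_1^{\flat}u$, which is allowed since $\calM$ is integral and the group $\calM^{gp}$ can be used, and then using the injectivity of $\lambda$, we get
\[
G^{\#}\mu(m')=\mu(m)^p\,\alpha_{\frakY}(p_2^{\flat}u-p_1^{\flat}u).
\]
Expanding $(1+\eta(m))^p$ by the binomial theorem and subtracting $1$ yields \eqref{eq1321}.

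\textbf{Step 3 (prove \eqref{eq1322}).} Since $\alpha_{\frakY}$ is a monoid homomorphism into $(\Ox,\cdot)$ and $\alpha_{\frakY}\circ p_i^{\flat}=p_i^{\#}\circ\alpha_{\frakX}$, we have
\[
\alpha_{\frakY}(p_2^{\flat}u-p_1^{\flat}u)=\frac{p_2^{\#}\alpha_{\frakX}(u)}{p_1^{\#}\alpha_{\frakX}(u)}=\frac{1+p\,p_2^{\#}(b)}{1+p\,p_1^{\#}(b)}.
\]

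The main obstacle is the bookkeeping. The liftings $m,m',u$ exist only étale locally, so one has to work after restricting to a suitable étale $\frakU$ and with germs at geometric points. The main checks are that $G$ maps $\frakU$ into $\frakU'$ after shrinking, and that the compatibility $G^{\flat}\lambda'=\lambda G^{\#}$ holds. The latter follows because both $\lambda$'s are inverses of $\alpha$ on units, and $G$ is a morphism of log formal schemes.
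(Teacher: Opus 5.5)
Your proposal is correct and follows essentially the same route as the paper. You transport the defining relation $p_1'^{\flat}m'+\lambda'(\mu(m'))=p_2'^{\flat}m'$ along $G$, substitute $F^{\flat}(m')=pm+u$, and compare with $p$ times the relation $p_1^{\flat}m+\lambda(\mu(m))=p_2^{\flat}m$; integrality of $\calM_{\frakY}$ and injectivity of $\lambda$ then give the result, and the second formula follows from $\alpha_{\frakY}\circ p_i^{\flat}=p_i^{\#}\circ\alpha_{\frakX}$. The compatibility $G^{\flat}\lambda'=\lambda G^{\#}$ and the fact that $\alpha_{\frakY}(p_2^{\flat}u-p_1^{\flat}u)\in 1+\calI$, which you check explicitly, are exactly what the paper uses implicitly through its commutative diagram with exact rows.
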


\begin{proof}
By \ref{parag77}, we have the following commutative diagram with exact rows
$$
\begin{tikzcd}
0 \ar{r} & \Delta'^{-1}(1+\calI') \ar{r}{\lambda'} \ar{d}{\Delta^{-1}G^{\#}} & \Delta'^{-1}\calM_{\frakY'} \ar{d}{\Delta^{-1}G^{\flat}} \ar{r} & \calM_{\frakX'} \ar{d}{F^{\flat}} \ar{r} & 0 \\
0 \ar{r} & \Delta^{-1}(1+\calI) \ar{r}{\lambda}  & \Delta^{-1}\calM_{\frakY} \ar{r} & \calM_{\frakX} \ar{r} & 0.
\end{tikzcd}
$$
In the rest of this proof, we drop $\Delta^{-1}$ and $\Delta'^{-1}$ to lighten the notation. By definition, we have
$p_1'^{\flat}(m')+\lambda'(\mu(m'))=p_2'^{\flat}(m').$
Applying $G^{\flat},$ we get
$p_1^{\flat}F^{\flat}(m')+\lambda(G^{\#}(\mu(m'))=p_2^{\flat}F^{\flat}(m').$
So we have
$pp_1^{\flat}(m)+p_1^{\flat}(u)+\lambda(G^{\#}(\mu(m')))=pp_2^{\flat}(m)+p_2^{\flat}(u).$
By definition,
$p_1^{\flat}(m)+\lambda(\mu(m))=p_2^{\flat}(m).$
It follows that
$\lambda(G^{\#}(\mu(m')))=\lambda(\mu(m)^p)+p_2^{\flat}u-p_1^{\flat}u.$
But $\lambda(\alpha_{\frakY}(p_2^{\flat}u-p_1^{\flat}u))=p_2^{\flat}u-p_1^{\flat}u$ by definition of $\lambda.$ The equality \eqref{eq1321} then follows from the injectivity of $\lambda.$ For \eqref{eq1322}, we have
\begin{equation*}
\alpha_{\frakY}(p_2^{\flat}u-p_1^{\flat}u) = \frac{p_2^{\#}\alpha_{\frakX}(u)}{p_1^{\#}\alpha_{\frakX}(u)}=\frac{1+pp_2^{\#}(b)}{1+pp_1^{\#}(b)}.
\end{equation*}
\end{proof}

\begin{parag}
We have the following commutative diagram
$$\begin{tikzcd}
F_1^*G^*\omega^1_{X''/S}\ar{rd}{dF_{X/S}}\ar{d}{F_1^*dG} & \\
F_1^*\omega^1_{X'/S}\ar{r}{dF_1} & \omega^1_{X/S}
\end{tikzcd}.$$
Since $dF_{X/S}=0$ and $dG$ is an isomorphism (because $G$ is étale), we deduce that $dF_1=0.$
Let $n\ge 1$ and denote by $F_n:\frakX_n\ra \frakX'_n$ the reduction of $F$ modulo $p^n.$ By the following commutative diagram
$$\begin{tikzcd}
F_n^*\omega^1_{\frakX_n'/\frakS_n}\ar{d}\ar{r}{dF_n} & \omega^1_{\frakX_n/\frakS_n} \ar{d}\\
F_1^*\omega^1_{X'/S}\ar{r}{dF_1} & \omega^1_{X/S}
\end{tikzcd}$$
(where the vertical arrows are reduction modulo $p$) and the fact that $dF_1=0,$ we get that $\op{Im}(dF_n)\subset p\omega^1_{\frakX_n/\frakS_n}.$
Now by the flatness of $\frakX_{n+1}$ over $\op{Spec} \Z/p^{n+1}\Z,$ the sequence
$0\ra \Ox_{\frakX_n}\xrightarrow{\times p} \Ox_{\frakX_{n+1}}\rightarrow \Ox_{\frakX_1} \ra 0$
is exact.
Since $f:\frakX\ra \frakS$ is smooth, the $\Ox_{\frakX_{n+1}}$-module $\omega^1_{\frakX_{n+1}/\frakS_{n+1}}$ is locally free and so we get the exact sequence
$$0\ra \omega^1_{\frakX_n/\frakS_n}\xrightarrow{\times p}\omega^1_{\frakX_{n+1}/\frakS_{n+1}}\ra \omega^1_{\frakX_1/\frakS_1}\ra 0.$$
We deduce the existence of a unique $\Ox_{\frakX_n}$-linear morphism $p^{-1}dF_{n+1}:F_n^*\omega^1_{\frakX_n'/\frakS_n}\ra \omega^1_{\frakX_n/\frakS_n}$ making the following diagram commutative
\begin{equation}\label{surp}
\begin{tikzcd}
F_{n+1}^*\omega^1_{\frakX_{n+1}'/\frakS_{n+1}}\ar{d}\ar{r}{dF_{n+1}} & p\omega^1_{\frakX_{n+1}/\frakS_{n+1}} \\
F_n^*\omega^1_{\frakX'_n/\frakS_n}\ar{r}{p^{-1}dF_{n+1}} & \omega^1_{\frakX_n/\frakS_n}\ar{u}{\times p}
\end{tikzcd}
\end{equation}
Let us give an explicit expression of $p^{-1}dF_{n+1}$ in light of \ref{lem12}.
Let $U\ra \frakX_{n+1}$ be an étale morphism, $\tilde{m}\in \Gamma(U,\calM_{\frakX_{n+1}}),$ $m$ and $\ov{m}$ the images of $\tilde{m}$ in $\Gamma(U\times_{\frakX_{n+1}}\frakX_n,\calM_{\frakX_n})$ and $\Gamma(U\times_{\frakX_{n+1}}X,\calM_X)$ respectively. Let $\tilde{m}'$ be a local lifting of $\pi^{\flat}\ov{m}$ to $\calM_{\frakX_{n+1}'}$ and denote by $m'$ its image in $\calM_{\frakX_n'}$ \eqref{prop69}. By \ref{lem12}, there exists an invertible local section $\tilde{u}$ of $\calM_{\frakX_{n+1}}$ and a local section $\tilde{b}$ of $\Ox_{\frakX_{n+1}}$ such that $p\tilde{m}+\tilde{u}=F_{n+1}^{\flat}(\tilde{m}')$ and $\alpha_{\frakX_{n+1}}(\tilde{u})=1+p\tilde{b}.$ Let $b$ (resp. $u$) be the image of $\tilde{b}$ (resp. $\tilde{u}$) in $\Ox_{\frakX_n}$ (resp. $\calM_{\frakX_n}$). 
By the commutativity of \eqref{surp}, we deduce that in $\omega^1_{\frakX_{n+1}/\frakS_{n+1}}:$
\begin{alignat*}{2}
p\times (p^{-1}dF_{n+1})(F_n^*\op{dlog}m') &= dF_{n+1}(F_{n+1}^*\op{dlog}\tilde{m}') =  \op{dlog}(F_{n+1}^{\flat}\tilde{m}') \\
&= \op{dlog}(p\tilde{m}+\tilde{u}) = p\op{dlog}\tilde{m}+\op{dlog}\tilde{u}\\
&= p\op{dlog}\tilde{m}+\frac{d\alpha_{\frakX_{n+1}}(\tilde{u})}{\alpha_{\frakX_{n+1}}(\tilde{u})}= p\op{dlog}\tilde{m}+p\frac{d\tilde{b}}{1+p\tilde{b}}. 
\end{alignat*}
By the injectivity of $\omega^1_{\frakX_n/\frakS_n}\xrightarrow{\times p}\omega^1_{\frakX_{n+1}/\frakS_{n+1}},$ we conclude that, in $\omega^1_{\frakX_n/\frakS_n},$ we have:
\begin{equation}\label{surp2}
(p^{-1}dF_{n+1})(F_n^*\op{dlog}m') = \op{dlog}m+\frac{db}{1+pb}= \op{dlog}m+(1-pb+p^2b^2-\hdots )db.
\end{equation}
\end{parag}

\begin{lemma}\label{lem85}
Let $n\ge 1$ be an integer and suppose that $\frakX_n \ra \frakS_n$ is log smooth. Let $(\calE',\nabla')$ be an $\Ox_{\frakX_n'}$-module equipped with a $p$-connection \eqref{parKhaminei} and $\zeta$ the composition
$$\zeta:F_n^*(\calE'\otimes_{\Ox_{\frakX_n'}}\omega^1_{\frakX_n'/\frakS_n})\xrightarrow{\sim} F_n^*(\calE')\otimes_{\Ox_{\frakX_n}}F_n^*(\omega^1_{\frakX_n'/\frakS_n})\xrightarrow{\op{Id}_{F_n^*(\calE')}\otimes p^{-1}dF_{n+1}}F_n^*(\calE')\otimes_{\Ox_{\frakX_n}}\omega^1_{\frakX_n/\frakS_n},$$
where the first arrow is the canonical isomorphism.
Consider the morphism 
\begin{equation}\label{conn}
\nabla:\begin{array}[t]{lc}F_n^*\calE'\ra F_n^*\calE'\otimes_{\Ox_{\frakX_n}}\omega^1_{\frakX_n/\frakS_n}\\ x\otimes a\mapsto a\zeta( F_n^*\nabla'(x))+F_n^*x\otimes da,\end{array}
\end{equation}
where $x$ and $a$ are local sections $\calE'$ and $\Ox_{\frakX_n}$ respectively.
Then $\nabla$ a well-defined connection on $F_n^*\calE'.$ In addition, if $\nabla'$ is integrable then so is $\nabla.$
\end{lemma}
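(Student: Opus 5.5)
The plan is to check well-definedness and the Leibniz rule by hand, and to reduce integrability to the vanishing of $d\circ(p^{-1}dF_{n+1})$ on exact forms.

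\emph{Well-definedness.} Write $\theta=\op{Id}\otimes p^{-1}dF_{n+1}$ on $F_n^*(\calE'\otimes\omega^1_{\frakX'_n/\frakS_n})$, so that $\zeta=\theta$ after the canonical isomorphism. The formula \eqref{conn} defines a map on $\calE'\times\Ox_{\frakX_n}$ that is additive in each variable. We must check it is balanced over $F_n^{-1}\Ox_{\frakX'_n}$, i.e. that $x\otimes F_n^{\#}(a')a$ and $a'x\otimes a$ have the same image. For the second, the $p$-Leibniz rule gives
$$\zeta(F_n^*\nabla'(a'x))=F_n^{\#}(a')\zeta(F_n^*\nabla'(x))+p\,x\otimes (p^{-1}dF_{n+1})(F_n^*da').$$
The key point is that $p\cdot(p^{-1}dF_{n+1})(F_n^*da')=d(F_n^{\#}a')$ in $\omega^1_{\frakX_n/\frakS_n}$. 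To see this, lift $a'$ to $\tilde a'$ over $\frakX'_{n+1}$ and use \eqref{surp}: the element $(p^{-1}dF_{n+1})(F_n^*da')$ is the unique $\omega$ with $p\omega=d(F^{\#}_{n+1}\tilde a')$ in $\omega^1_{\frakX_{n+1}}$. Multiplication by $p$ in $\omega^1_{\frakX_n}$ corresponds to reducing $d(F^{\#}_{n+1}\tilde a')$ mod $p^n$ via the injection $\times p$, which gives $d(F_n^{\#}a')$. The two images therefore agree with the image of $x\otimes F_n^\#(a')a$, namely $F_n^{\#}(a')a\,\zeta(\cdots)+x\otimes d(F_n^\#(a')a)$.

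\emph{Leibniz rule and $f^{-1}\Ox_{\frakS_n}$-linearity.} These are then immediate: $\nabla(b(x\otimes a))=b\nabla(x\otimes a)+(x\otimes a)\otimes db$. So $\nabla$ is a connection.

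\emph{Integrability.} The curvature $K(\nabla)=\nabla^1\circ\nabla$ is $\Ox_{\frakX_n}$-linear, so it suffices to compute $K(\nabla)(x\otimes1)$ for local sections $x$ of $\calE'$. Work locally and write $\nabla'(x)=\sum_j x_j\otimes\omega'_j$. Then
$$\nabla(x\otimes1)=\sum_j (x_j\otimes1)\otimes\theta(\omega'_j),$$
where $\theta(\omega')=(p^{-1}dF_{n+1})(F_n^*\omega')$. Applying $\nabla^1$ gives
$$\sum_j \nabla(x_j\otimes1)\wedge\theta(\omega'_j)+\sum_j(x_j\otimes1)\otimes d\theta(\omega'_j).$$
The first sum is the image of $F_n^*(\nabla'^1\nabla'(x))$ under $\Lambda^2\theta$, minus the contribution $p\,x_j\otimes d\omega'_j$ coming from $\nabla'^1$. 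So it suffices to check two things. First, $\theta$ extended multiplicatively to $\Lambda^\bullet$ satisfies $d\circ\theta=p^{-1}\cdot\theta\circ d'\cdot p$ appropriately. Concretely, the required identity is $d\theta(\omega')=\theta^{(2)}(d'\omega')$, where $\theta^{(2)}=\Lambda^2\theta$ and the $p$-factor in $\nabla'^1$ accounts for the $\lambda=p$ term. I would double-check the constant, since $\nabla'^1(x\otimes\omega')=\nabla'(x)\wedge\omega'+p\,x\otimes d\omega'$ is matched by $\theta^{(2)}(p\,d\omega')$ with $\theta^{(2)}$ carrying $p^{-2}$. This is the main obstacle.

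To verify this identity, it suffices by linearity and the Leibniz rule to check it on generators $\omega'=\op{dlog}m'$ and $\omega'=da'$. For $\op{dlog}m'$, formula \eqref{surp2} gives $\theta(\op{dlog}m')=\op{dlog}m+db/(1+pb)$, which is closed. This matches $d'\op{dlog}m'=0$. For $da'$, we have $\theta(da')=p^{-1}d(F^\#\tilde a')$ computed upstairs, which is again closed by the same injectivity argument. The identity then reduces to showing that $\theta$ sends closed forms to closed forms and is compatible with wedge products, and this follows since everything is computed by lifting to $\frakX_{n+1}$, where $dF_{n+1}$ commutes with $d$, and then dividing by $p$ using the exact sequence $0\ra\omega^\bullet_{\frakX_n}\xrightarrow{p}\omega^\bullet_{\frakX_{n+1}}$, which is injective since the $\omega^i$ are locally free. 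Once these hold, $K(\nabla)=\Lambda^2$-image of $F_n^*K(\nabla')=0$.
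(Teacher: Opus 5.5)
Your well-definedness and Leibniz arguments are correct and follow the paper's. You also make explicit the identity $p\,\theta(da')=d(F_n^{\#}a')$, which the paper uses without comment.

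The gap is in the integrability step, exactly where you flag it. The identity you display, $d\theta(\omega')=\theta^{(2)}(d'\omega')$, is false. Compare your expansion of $K(\nabla)(x\otimes 1)$ with $(\op{Id}\otimes\Lambda^2\theta)\bigl(F_n^*K(\nabla')(x)\bigr)$, whose extra terms are $p\,x_j\otimes\Lambda^2\theta(d'\omega'_j)$. The identity actually required is
\[
d\bigl(\theta(\omega')\bigr)=p\,\Lambda^2\theta(d'\omega').
\]
The two versions genuinely differ. Take $F^{\#}t'_i=t_i^p$ and $\omega'=t'_1\,dt'_2$. Then $\theta(\omega')=t_1^p t_2^{p-1}dt_2$, so $d\theta(\omega')=p\,t_1^{p-1}t_2^{p-1}dt_1\wedge dt_2$, whereas $\Lambda^2\theta(d'\omega')=t_1^{p-1}t_2^{p-1}dt_1\wedge dt_2$.

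Your proposed justification also fails as stated, for two reasons:
\begin{itemize}
\item Knowing that $\theta$ preserves closed forms says nothing about $d\theta(\omega')$ for non-closed $\omega'$.
\item The reduction to generators \emph{by linearity and Leibniz} needs the defect to be linear, and with your constant it is not. The defect $D_1(\omega')=d\theta(\omega')-\Lambda^2\theta(d'\omega')$ satisfies $D_1(a'\omega')=F_n^{\#}(a')D_1(\omega')+(p-1)\,\theta(d'a')\wedge\theta(\omega')$.
\end{itemize}

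The fix is short. Set $D(\omega')=d\theta(\omega')-p\,\Lambda^2\theta(d'\omega')$. Then
\[
D(a'\omega')=F_n^{\#}(a')D(\omega')+\bigl(dF_n^{\#}(a')-p\,\theta(d'a')\bigr)\wedge\theta(\omega'),
\]
and the bracket vanishes by the identity you proved for well-definedness. So $D$ is $F_n^{-1}\Ox_{\frakX'_n}$-linear, and it suffices to check $D=0$ on the local basis $\op{dlog}m'_i$. There it is exactly the closedness of $\op{dlog}m_i+db_i/(1+pb_i)$ from \eqref{surp2}.

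Alternatively, your lifting idea proves $D=0$ directly once the constant is right. Apply the injective map $\times p:\omega^2_{\frakX_n/\frakS_n}\ra\omega^2_{\frakX_{n+1}/\frakS_{n+1}}$ to both terms. Both become the pullback under $F_{n+1}$ of $d'\tilde\omega'$, where $\tilde\omega'$ is any lift of $\omega'$ to $\frakX'_{n+1}$.

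The paper sidesteps the whole issue. It writes $\nabla'(x)=\sum_i x_i\otimes\op{dlog}m'_i$ in this closed basis from the outset, so the $d'$-terms vanish on both sides. The only input it then needs is that $(p^{-1}dF_{n+1})(\op{dlog}m'_i)$ is closed.
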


\begin{proof}
It is clear that for any local sections $x$ and $y$ of $\calE',$ $a$ and $b$ of $\Ox_{\frakX_n}$ and $\alpha$ of $\Ox_{\frakX_n'},$
$\nabla((x+y)\otimes a)=\nabla(x\otimes a)+\nabla(y\otimes a)$ and 
$\nabla(x\otimes (a+b))=\nabla(x\otimes a)+\nabla(x\otimes b).$
In addition,
$$
\nabla((\alpha x)\otimes b)=F_n^{\#}(\alpha) b\zeta(F_n^*\nabla'(x))+F_n^*x\otimes bdF_n^{\#}(\alpha)+F_n^*(\alpha x)\otimes db
$$
and
$
\nabla(x\otimes(F_n^{\#}(\alpha) b)) = F_n^{\#}(\alpha) b\zeta(F_n^*\nabla'(x))+F_n^*x\otimes d(F_n^{\#}(\alpha) b)=\nabla((\alpha x)\otimes b).
$
It follows that $\nabla$ is well-defined and by (\ref{conn}), it is a connection on the $\Ox_{\frakX_n}$-module $F_n^*\calE'.$ 

Now suppose that $\nabla'$ is integrable and let us check that $\nabla$ is also integrable. Let $x$ be a local section of $\calE'.$ Let $(\ov{m}_i)_{1\le i\le d}$ be local coordinates for $X\ra S$ (\ref{P2}). For $1\le i\le d,$ let $m_i$ (resp. $m_i'$) be a local lifting of $\ov{m}_i$ (resp. $\pi^{\flat}\ov{m}_i$) to $\frakX_n$ (resp. $\frakX_n'$). Then $(\op{dlog}m_i')_{1\le i\le d}$ generates the $\Ox_{\frakX_n'}$-module $\omega^1_{\frakX_n'/\frakS_n}.$ Let $x_i,\ x_{ij},\ i,j=1,\hdots,d$ be local sections of $\calE'$ such that
$$
\nabla'(x)=\sum_{i=1}^dx_i\otimes \op{dlog}m_i',\qquad
\nabla'(x_i)=\sum_{j=1}^dx_{ij}\otimes \op{dlog}m_j'.
$$
It follows that the curvature $K(\nabla')$ of $\nabla'$ is given by $$K(\nabla')(x)=\sum_{i,j}x_{ij}\otimes \op{dlog}m_i'\wedge \op{dlog}m_j'.$$ On the other hand, by the definition \ref{conn}, we have
$$\nabla(aF_n^*x)=\sum_{i=1}^daF_n^*x_i\otimes p^{-1}F_{n+1}(\op{dlog}m_i')+F_n^*x\otimes da,$$
for any local sections $a$ and $x$ respectively of $\Ox_{\frakX_n}$ and $\calE'.$ It follows that the curvature $K(\nabla)$ of $\nabla$ is given, for any local sections $a$ and $x$ respectively of $\Ox_{\frakX_n}$ and $\calE',$ by
\begin{alignat*}{2}
K(\nabla)(aF_n^*x)&=\nabla^1\circ \nabla(aF_n^*x)=\sum_{i=1}^d\nabla^1(aF_n^*x_i\otimes (p^{-1}F_{n+1})(\op{dlog}m_i'))+\nabla^1(F_n^*x\otimes da)\\
&=\sum_{i=1}^d\nabla(aF_n^*x_i)\wedge (p^{-1}F_{n+1})(\op{dlog}m_i')+aF_n^*x_i\otimes d((p^{-1}F_{n+1})(\op{dlog}m_i'))\\ 
&\ +\nabla(F_n^*x)\wedge da.
\end{alignat*}
Finally, we get:
\begin{alignat}{2}
K(\nabla)(aF_n^*x)&=\sum_{i,j}aF_n^*x_{ij}\otimes (p^{-1}dF_{n+1})(\op{dlog}m_j')\wedge (p^{-1}dF_{n+1})(\op{dlog}m_i') \label{856} \\
&\ +\sum_iF_n^*x_i\otimes da\wedge (p^{-1}dF_{n+1})(\op{dlog}m_i') \label{857}\\
&\ +\sum_iaF_n^*x_i\otimes d((p^{-1}dF_{n+1})(\op{dlog}m_i')) \label{858}\\
&\ +\nabla(F_n^*x)\wedge da. \label{859}
\end{alignat} 
The sum \eqref{856} is simply the image of $K(\nabla')(x)$ by the morphism $F_n^*\otimes p^{-1}dF_{n+1}\wedge p^{-1}dF_{n+1}$ and is thus equal to zero since $\nabla'$ is integrable. We also have
\begin{equation*}
\nabla(F_n^*x)\wedge da = \sum_iF_n^*x_i\otimes (p^{-1}dF_{n+1})(\op{dlog}m'_i)\wedge da
\end{equation*}
so \eqref{857} and \eqref{859} cancel out. It remains to prove that (\ref{858}) vanishes. By (\ref{surp2}), there exists a local section $b_i$ of $\Ox_{\frakX_n}$ such that 
$$(p^{-1}dF_{n+1})(F_{n+1}^*\op{dlog}m_i')=\op{dlog}m_i+(1-pb_i+p^2b_i^2-\hdots )db_i.$$
It is therefore a closed form and so (\ref{858}) vanishes.
We conclude that $K(\nabla)(x)=0$ and so $\nabla$ is integrable. 
\end{proof}

\begin{parag}
Keep the same hypothesis of \ref{lem85}. By \ref{lem85}, we have a functor
\begin{equation}\label{krazphin}
\Phi_n:\begin{array}[t]{clc}
p\op{-MIC}(\frakX_n'/\frakS_n) & \ra & \op{MIC}(\frakX_n/\frakS_n)\\ 
(\calE',\nabla') & \mapsto & (F_n^*\calE',\nabla)
\end{array}
\end{equation}
from the category $p\op{-MIC}(\frakX_n'/\frakS_n)$ of $\Ox_{\frakX_n'}$-modules with an integrable $p$-connection to the category $\op{MIC}(\frakX_n/\frakS_n)$ of $\Ox_{\frakX_n}$-modules with an integrable connection.
\end{parag}

\section{Dilatations}

In this section, we fix a perfect field $k$ of characteristic $p$ and denote by $W(k)$ its ring of Witt vectors. Let $\frakS=\op{Spf}W(k)$ equipped with the trivial logarithmic structure. For any logarithmic $p$-adic formal scheme $\frakY$ and any positive integer $n,$ we denote by $\frakY_n$ the logarithmic scheme obtained from $\frakY$ by reduction modulo $p^n$ (as in \ref{prop69}).

\begin{parag}
Let $n$ be a positive integer, $\frakX$ a logarithmic $p$-adic formal scheme flat and locally of finite type over $\frakS$ (\ref{dxuflat}) and $\calI$ an open ideal of finite type of $\Ox_{\frakX}$ containing $p^n$ (\cite{Ahmed2010} 2.1.19). Let $\frakY \ra \frakX$ be the admissible blow-up of $\calI$ in $\frakX$ (\cite{Ahmed2010} 3.1.2). We equip it with the logarithmic structure pullback of that of $\frakX.$ By (\cite{Ahmed2010} 3.1.4), the ideal $\calI\Ox_{\frakY}$ is invertible and $\frakY$ is flat over $\Z_p.$ We denote by $\frakX_{(\calI/p^n)}$ the dilatation of $\calI$ with respect to $p^n$ i.e. the largest open formal subscheme of $\frakY$ such that the restriction of $\calI \Ox_{\frakY}$ on $\frakX_{(\calI/p^n)}$ is generated by $p^n$ (\cite{Ahmed2010} 3.2.3.4 and 3.2.7). The formal scheme $\frakX_{(\calI/p^n)}$ is flat over $\frakS$ (\ref{dxuflat}).
If $\frakX=\op{Spf}A$ and $I$ is the open ideal of $A$ corresponding to $\calI,$ and $(a_0,\hdots,a_r)$ is a set of generators of $I$ such that $a_0=p^n,$ then $\frakX_{(\calI/p^n)}=\op{Spf}B$ where $B$ is the $p$-adic completion of the ring $A_0/(p^n\text{-tor}),$ where
\begin{equation}\label{dilaff}
A_0=A\left [x_1,\hdots,x_r\right ]/(p^nx_1-a_1,\hdots,p^nx_r-a_r)
\end{equation}
and $(p^n\text{-tor})$ is the $p^n$-torsion ideal of $A_0.$ In particular, we see that the canonical morphism $\frakX_{(\calI/p^n)} \ra \frakX$ is affine.
\end{parag}

\begin{proposition}\label{erafdil1}
Let $n$ be a positive integer, $\frakY$ a logarithmic $p$-adic formal scheme flat and locally of finite type over $\frakS$ and $i:T \ra \frakY_n$ a strict immersion. There exists a strict morphism of logarithmic $p$-adic formal schemes flat over $\frakS,$ $g:\frakY_{(T/p^n)} \ra \frakY,$ unique up to a canonical isomorphism, satisfying the following conditions:
\begin{enumerate}
\item The morphism $\left (\frakY_{(T/p^n)}\right )_n \ra \frakY_n$ factors uniquely through $T \ra \frakY_n$ and the resulting morphism $\left (\frakY_{(T/p^n)}\right )_n \ra T$ is affine.
\item If $\frakZ$ is a logarithmic $p$-adic formal scheme flat over $\frakS$ and $f:\frakZ \ra \frakY$ is an $\frakS$-morphism such that $\frakZ_n \ra \frakY_n$ factors through $T\ra \frakY_n$ then there exists a unique $\frakS$-morphism $f':\frakZ \ra \frakY_{(T/p^n)}$ such that $f=g\circ f'.$ In addition, if $T \ra \frakY$ and $f$ are closed immersions, then so is $f'.$
\end{enumerate}
\end{proposition}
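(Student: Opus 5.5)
The plan is to reduce to the dilatation of an ideal. Construct $\frakY_{(T/p^n)}$ locally and glue using the uniqueness in (2). On $|\frakY|=|\frakY_n|$, the strict immersion $i:T\ra \frakY_n$ factors as a closed immersion into an open subscheme $V\subset \frakY_n$. Let $\frakV\subset\frakY$ be the open formal subscheme with underlying space $|V|$. Let $\calJ\subset \Ox_{\frakV}$ be the inverse image of the ideal of $T$ in $\Ox_{V}=\Ox_{\frakV}/p^n$. Then $\calJ$ is an open ideal of finite type containing $p^n$, since $\frakY$ is locally Noetherian, being locally of finite type over $W(k)$.

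We set $\frakY_{(T/p^n)}=\frakV_{(\calJ/p^n)}$, the dilatation defined above, and equip it with the pullback logarithmic structure. The resulting morphism $g$ is then strict by construction. It is flat over $\frakS$ by the properties of the admissible blow-up recalled above (\cite{Ahmed2010} 3.1.4).

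\emph{Condition (1).} Work affine locally: $\frakV=\op{Spf}A$ and $\calJ=(p^n,a_1,\dots,a_r)$. Then $\frakY_{(T/p^n)}=\op{Spf}B$ as in \eqref{dilaff}. In $B$ each $a_i=p^nx_i$ lies in $p^nB$, so $A\ra B/p^nB$ kills $\calJ$. Hence $(\frakY_{(T/p^n)})_n\ra \frakY_n$ factors through $T=\op{Spec}A/J$. The factorization is unique because $T\ra\frakY_n$ is a closed immersion into an open. It is affine because $\op{Spf}B\ra\op{Spf}A$ is affine.

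\emph{Condition (2).} Given $f:\frakZ\ra\frakY$ with $\frakZ$ flat, the factorization of $\frakZ_n$ through $T$ forces $f$ to land in $\frakV$ topologically. It also gives $\calJ\Ox_{\frakZ}\subset p^n\Ox_{\frakZ}$, so $\calJ\Ox_{\frakZ}=p^n\Ox_{\frakZ}$, and this ideal is invertible because $p$ is a nonzerodivisor on $\frakZ$. The universal property of admissible blow-ups and dilatations (\cite{Ahmed2010} 3.1 and 3.2.7) then yields a unique formal-scheme morphism $f':\frakZ\ra\frakV_{(\calJ/p^n)}$ with $g\circ f'=f$.

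Concretely, in affine terms, $x_i\mapsto a_i/p^n$ is well defined since $p^n$ is a nonzerodivisor in $\Ox_{\frakZ}$. The map kills $p^n$-torsion of $A_0$, and it extends to the completion. Uniqueness again follows from $p^n$ being a nonzerodivisor.

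Because $g$ is strict, Lemma \ref{era3proplogstrformal} upgrades $f'$ uniquely to a morphism of logarithmic formal schemes compatible with $f$. The uniqueness of $f'$ gives uniqueness of $\frakY_{(T/p^n)}$ up to canonical isomorphism, and it also makes the local constructions glue.

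\emph{Closed immersion claim.} Suppose $f$ is a closed immersion, say $\frakZ=\op{Spf}A/K$. The map $B\ra A/K$ is surjective modulo $p$, because its image contains the image of $A$ and the elements $a_i/p^n$, which generate. Completeness then gives surjectivity, hence $f'$ is a closed immersion. The assumption that $T\ra\frakY$ is a closed immersion ensures $\frakV=\frakY$, so $f'$ is a closed immersion globally rather than only into an open subscheme.

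The main obstacle is the careful verification of the universal property through the $p^n$-torsion quotient and completion, together with the gluing. I would handle both by the explicit affine description \eqref{dilaff} and the flatness of $\frakZ$.
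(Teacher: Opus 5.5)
Your proposal is correct and follows essentially the paper's route. You reduce to a closed immersion into an open, take the dilatation of the corresponding ideal with the pullback logarithmic structure, and use Lemma \ref{era3proplogstrformal} to reduce everything to underlying formal schemes: via strictness of $g$ in (2), and via strictness of $i$ in (1), which you should state explicitly there since you only invoke the lemma in (2). The paper then just cites \cite{DXU19} 3.5 for the affine verification you write out, and your closing remark is unneeded: $A\ra A/K$ factors through $B$ and is already surjective, so $f'$ is a closed immersion by your affine computation whether or not $T\ra\frakY$ is closed.
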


\begin{proof}
The uniqueness follows from the second assertion, so we can work locally and assume that the immersion $i:T \ra \frakY_n$ is closed. We denote by $\calI$ the ideal of $T \ra \frakY.$ We take $\frakY_{(T/p^n)}$ to be the dilatation of $\calI$ with respect to $p^n,$ we equip it with the logarithmic structure pull back of that of $\frakY$ and take $g:\frakY_{(T/p^n)} \ra \frakY$ to be the canonical morphism. By \ref{era3proplogstrformal}, to prove that the morphism of logarithmic formal schemes $\left (\frakY_{(T/p^n)} \right )_n \ra \frakY_n$ factors through $i,$ it is sufficient to prove the factorization for the underlying formal schemes, which is clear. For the second assertion, again by \ref{era3proplogstrformal}, it is sufficient to prove the existence of $f'$ on the underlying formal schemes. The rest of the proof is the same as in (\cite{DXU19} 3.5).
\end{proof}

\begin{proposition}\label{erafdil3}
Let $n$ be a positive integer, $\frakY$ a logarithmic $p$-adic formal scheme flat and locally of finite type over $\frakS$ and $i:T \ra \frakY_n$ a strict immersion. For any integer $k\ge n,$ there exists a canonical morphism
$$\frakY_{(T/p^{k+1})} \ra \frakY_{(T/p^{k})}.$$
\end{proposition}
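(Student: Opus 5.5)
The plan is to obtain the morphism from the universal property in \ref{erafdil1}(2), applied to the dilatation $\frakY_{(T/p^{k})}$. First we have to make sense of the objects. Since $k\ge n$, the strict immersion $i:T\ra \frakY_n$ gives a strict immersion $T_k\ra \frakY_k$, or simply $T\ra\frakY_n\ra\frakY_k$. Locally this is the closed immersion with ideal $\calI$ containing $p^n$, hence containing $p^k$. So $\frakY_{(T/p^{k})}$ means the dilatation of $\calI$ with respect to $p^k$, and we understand the statement with this reading. We will therefore work locally with $T\ra\frakY$ a closed immersion of ideal $\calI\supset (p^n)$. The morphisms we build are canonical, so by the uniqueness in \ref{erafdil1}(2) they glue.

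Put $\frakZ=\frakY_{(T/p^{k+1})}$ and let $g_{k+1}:\frakZ\ra\frakY$ be the structural strict morphism. By \ref{erafdil1}, $\frakZ$ is flat over $\frakS$. To apply \ref{erafdil1}(2) to the target $\frakY_{(T/p^k)}$, we must check that $\frakZ_k\ra\frakY_k$ factors through the immersion defining $\frakY_{(T/p^k)}$. Equivalently, the pullback ideal $\calI\Ox_{\frakZ}$ must be contained in $p^k\Ox_{\frakZ}$. By the defining property of the dilatation, $\calI\Ox_{\frakZ}=p^{k+1}\Ox_{\frakZ}\subset p^k\Ox_{\frakZ}$. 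Hence the reduction modulo $p^k$ of $g_{k+1}$ kills $\calI$, and it factors through the closed subscheme defined by $\calI+(p^k)$. As noted in the proof of \ref{erafdil1}, \ref{era3proplogstrformal} lets us check such factorizations on the underlying formal schemes, because all the morphisms involved are strict.

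Then \ref{erafdil1}(2) gives a unique $\frakS$-morphism $\frakY_{(T/p^{k+1})}\ra\frakY_{(T/p^{k})}$ compatible with the maps to $\frakY$. Uniqueness makes it canonical and compatible with localization, and it is automatically strict, since both sides carry the log structure pulled back from $\frakY$. As a sanity check, in the affine description \eqref{dilaff} with generators $a_0=p^{k}$ (resp. $p^{k+1}$), $a_1,\hdots,a_r$, the map is $x_i\mapsto p\,y_i$, where $p^{k+1}y_i=a_i$ and $p^k x_i=a_i$. This is compatible with killing $p$-torsion and with $p$-adic completion.

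The main obstacle is the meaning of the indexing, since the statement is phrased with $T\ra\frakY_n$ but uses exponent $k\ge n$. One must make sure that the dilatation "with respect to $p^k$" of $T$ is defined, that is, that $\calI\supset(p^k)$, and that \ref{erafdil1} applies with $n$ replaced by $k$. This works because $T$ is also a strict immersion into $\frakY_k$, and that is the first thing I would verify. The remaining steps are formal consequences of the universal property.
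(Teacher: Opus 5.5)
Your proposal is correct and is essentially the paper's proof: both apply the universal property \ref{erafdil1}(2) of $\frakY_{(T/p^{k})}$ to the structural morphism of the flat formal scheme $\frakY_{(T/p^{k+1})}$. Your containment $\calI\Ox_{\frakZ}=p^{k+1}\Ox_{\frakZ}\subset p^{k}\Ox_{\frakZ}$ is just the ideal-theoretic form of the paper's check, which uses property (1) at level $k+1$ together with the fact that $\frakY_k\ra\frakY_{k+1}$ is a monomorphism.

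One remark on the indexing you flagged. With $T$ fixed inside $\frakY_n$ (your reading, and also the paper's), one has $p^n\in\calI\Ox=p^{k}\Ox$ on $\frakY_{(T/p^{k})}$. By $p$-torsion-freeness this makes $p^{k-n}$ a unit, which in a $p$-adic formal scheme forces $\frakY_{(T/p^{k})}=\emptyset$ for $k>n$, so the source $\frakY_{(T/p^{k+1})}$ is always empty. The statement is therefore only non-vacuous for a compatible family $T_k\subset\frakY_k$ with $T_k=T_{k+1}\times_{\frakY_{k+1}}\frakY_k$ (as for $R_{\frakX,k}=\frakY_{(\frakX_k/p^k)}$), and there your argument goes through unchanged with $\calI$ replaced by the ideal of $T_{k+1}$.
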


\begin{proof}
We have the commutative diagram on the left:
$$
\begin{tikzcd}
\frakY_{(T/p^{k+1}),k} \ar{r} \ar{d} & \frakY_k \ar{d} \\
\frakY_{(T/p^{k+1}),k+1} \ar{r} \ar{d} & \frakY_{k+1} \\
T \ar{ur} & 
\end{tikzcd}
\begin{tikzcd}
\frakY_{(T/p^{k+1}),k} \ar{r} \ar{d} & \frakY_k \\
\frakY_{(T/p^{k+1}),k+1} \ar{d} &  \\
T \ar{uur} & 
\end{tikzcd}
$$
Since $T \ra \frakY_{k+1}$ decomposes as $T \ra \frakY_k \ra \frakY_{k+1}$ and $\frakY_k \ra \frakY_{k+1}$ is an immersion, hence a monomorphism, we obtain the commutative diagram on the right.
The desired morphism is then obtained by the flatness of $\frakY_{(T/p^{k+1})}$ over $\frakS$ and by applying the universal property of $\frakY_{(T/p^k)}$ to the canonical morphism
$\frakY_{(T/p^{k+1})} \ra \frakY.$
\end{proof}

\begin{proposition}[\cite{Oyama} 1.1.4 and \cite{DXU19} 3.7]\label{propdiletale}
Let $n$ be a positive integer, $\frakX$ and $\frakY$ two logarithmic $p$-adic formal schemes flat and locally of finite type over $\frakS$ and $f:\frakX \ra \frakY$ a log étale $\frakS$-morphism. Suppose that there exists two strict $\frakS_n$-immersions $i:T \ra \frakX_n$ and $j:T \ra \frakY_n$ such that $f_n\circ i=j.$ Then $f$ induces a canonical isomorphism of logarithmic formal schemes
$$\frakX_{(T/p^n)} \xrightarrow{\sim} \frakY_{(T/p^n)}.$$
\end{proposition}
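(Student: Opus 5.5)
The plan is to build morphisms in both directions from the universal property \ref{erafdil1} and then check that they are mutually inverse, using its uniqueness clause. Since $\frakY_{(T/p^n)}$ is flat over $\frakS$ and the composition $\left(\frakX_{(T/p^n)}\right)_n \ra \frakX_n \xrightarrow{f_n} \frakY_n$ factors through $T$ (it factors as $\left(\frakX_{(T/p^n)}\right)_n \ra T \xrightarrow{i} \frakX_n \xrightarrow{f_n} \frakY_n$, and $f_n\circ i=j$), \ref{erafdil1}(2) applied to $f\circ g_{\frakX}:\frakX_{(T/p^n)} \ra \frakY$ gives a unique morphism
$$\phi:\frakX_{(T/p^n)} \ra \frakY_{(T/p^n)}$$
over $\frakY$. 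Strictness is not a concern here: both dilatations carry the pullback log structures, so \ref{era3proplogstrformal} reduces everything about log structures to the underlying formal schemes.

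For the inverse we need a morphism $\frakY_{(T/p^n)} \ra \frakX$ whose reduction modulo $p^n$ factors through $i$. This is where log étaleness enters, via \ref{proplift}. Write $\frakZ=\frakY_{(T/p^n)}$. We have $\frakZ_n \ra T \xrightarrow{i} \frakX_n$, compatible over $\frakY_n$ with $\frakZ_n \ra \frakY_n$. The argument of \ref{proplift} runs by successive strict nilpotent thickenings $\frakZ_k \ra \frakZ_{k+1}$, so it applies verbatim starting from level $n$ instead of level $1$. Because $f$ is log étale, this produces a unique morphism $\psi_0:\frakZ \ra \frakX$ over $\frakY$ with $(\psi_0)_n = i\circ(\frakZ_n \ra T)$. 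Then \ref{erafdil1}(2), applied to $\psi_0$ using the flatness of $\frakZ$, gives
$$\psi:\frakY_{(T/p^n)} \ra \frakX_{(T/p^n)}$$
with $g_{\frakX}\circ \psi=\psi_0$.

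To see that these are inverse, first consider $\phi\circ\psi$. We have $g_{\frakY}\circ\phi\circ\psi=f\circ\psi_0=g_{\frakY}$, so the uniqueness in \ref{erafdil1}(2) gives $\phi\circ\psi=\op{Id}$. For $\psi\circ\phi$, the uniqueness in \ref{erafdil1}(2) for $\frakX_{(T/p^n)}$ reduces the claim to $\psi_0\circ\phi=g_{\frakX}$. Both sides are morphisms $\frakX_{(T/p^n)} \ra \frakX$ over $\frakY$, and both reduce modulo $p^n$ to the composite $\left(\frakX_{(T/p^n)}\right)_n \ra T \xrightarrow{i} \frakX_n$. The reduction of $\phi$ is compatible with the maps to $T$, by the uniqueness of the factorization in \ref{erafdil1}(1), since $j$ is a monomorphism. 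The uniqueness part of the level-by-level lifting for the log étale morphism $f$ then forces the two morphisms to agree.

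I expect the main obstacle to be justifying the lifting from level $n$ rather than from the special fibre, together with the compatibility of the factorizations through $T$. Concretely, one has to check that the identifications $\frakZ_n \ra T$ coming from \ref{erafdil1}(1) are respected by $\phi$ and $\psi$. Once those are in place, \ref{propkey} and \ref{prop18} assemble the compatible system of morphisms into a morphism of logarithmic formal schemes, and canonicity follows from the uniqueness statements used above.
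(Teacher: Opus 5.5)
Your proposal is correct and follows the paper's proof essentially step for step. Your $\phi$, $\psi_0$, $\psi$ are the paper's $f'$, $h$, $f''$, and both identities are obtained the same way: by uniqueness in \ref{erafdil1}(2), and, for $\psi_0\circ\phi=g_{\frakX}$, by $j$ being a monomorphism together with uniqueness of log \'etale lifts starting from level $n$ (the paper also uses this lifting from level $n$ implicitly). One small slip: to apply \ref{erafdil1}(2) to $f\circ g_{\frakX}$ you need the flatness of the source $\frakX_{(T/p^n)}$, not of $\frakY_{(T/p^n)}$; both are flat, so nothing breaks.
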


\begin{proof}
Denote by $g_{\frakX}:\frakX_{(T/p^n)} \ra \frakX$ the canonical morphism. Since the formal scheme $\frakX_{(T/p^n)}$ is flat over $\frakS,$
the universal property of $\frakY_{(T/p^n)}$ applied to the morphism
$\frakX_{(T/p^n)} \xrightarrow{g_{\frakX}} \frakX \xrightarrow{f} \frakY$
implies the existence of a unique morphism $f':\frakX_{(T/p^n)} \ra \frakY_{(T/p^n)}$ such that the diagram
$$
\begin{tikzcd}
\frakX_{(T/p^n)} \ar{r}{g_{\frakX}} \ar{d}{f'} & \frakX \ar{d}{f} \\
\frakY_{(T/p^n)} \ar[swap]{r}{g_{\frakY}} & \frakY
\end{tikzcd}
$$
is commutative.
The log étaleness of $f:\frakX \ra \frakY$ implies the existence of a unique morphism $h:\frakY_{(T/p^n)} \ra \frakX$ fitting into the commutative diagram
$$
\begin{tikzcd}
T \ar{r}{i} & \frakX_n \ar{r} & \frakX \ar{d}{f} \\
\left (\frakY_{(T/p^n)} \right )_n \ar{r} \ar{u} & \frakY_{(T/p^n)} \ar{r} \ar{ur}{h} & \frakY
\end{tikzcd}
$$
The universal property of $\frakX_{(T/p^n)}$ applied to $h:\frakY_{(T/p^n)} \ra \frakX$ implies the existence of a unique morphism $f'':\frakY_{(T/p^n)} \ra \frakX_{(T/p^n)}$ such that the diagram
\begin{equation}\label{eqKham741}
\begin{tikzcd}
\frakY_{(T/p^n)} \ar{r}{h} \ar[swap]{d}{f''} & \frakX \\
\frakX_{(T/p^n)} \ar[swap]{ur}{g_{\frakX}} & 
\end{tikzcd}
\end{equation}
is commutative. Consider the commutative diagram
\begin{equation}\label{eqKham742}
\begin{tikzcd}
\frakY_{(T/p^n)} \ar{r} \ar[swap]{d}{f''} & \frakX \ar{r}{f} & \frakY \\
\frakX_{(T/p^n)} \ar[swap]{d}{f'} \ar{ur} & & \\
\frakY_{(T/p^n)} \ar{ruu}{h} \ar{uurr} & & 
\end{tikzcd}
\end{equation}
The universal property of $\frakY_{(T/p^n)}$ implies that
$f' \circ f'' = \op{Id}_{\frakY_{(T/p^n)}}.$
Now let $u=f''\circ f'.$ The commutative diagram
$$
\begin{tikzcd}
\frakX_{(T/p^n),n} \ar{dr} \ar{r}{f'_n} & \frakY_{(T/p^n),n} \ar{d} \ar{r}{h_n} & \frakX_n \\
 & T\ar[swap,hook]{ur}{i} & 
\end{tikzcd}
$$
implies that $h_n \circ f'_n=g_{\frakX,n}.$
Then, by \eqref{eqKham742}, we have the following commutative diagram :
$$
\begin{tikzcd}
 & \frakX_n \ar[hook]{rr} & & \frakX \ar{d}{f} \\
\left ( \frakX_{(T/p^n)} \right )_n \ar{ur}{g_{\frakX,n}} \ar{r} & \frakX_{(T/p^n)} \ar{urr}{h\circ f'} \ar{r} & \frakX \ar{r}{f} & \frakY
\end{tikzcd}
$$
By the log étaleness of $f,$ we deduce that $h \circ f'=g_{\frakX}.$ By \eqref{eqKham741}, $h=g_{\frakX}\circ f''$ so we get
$g_{\frakX}\circ  f''\circ f'=g_{\frakX}.$
We deduce by the universal property of $\frakX_{(T/p^n)}$ that
$f'' \circ f'=\op{Id}_{\frakX_{(T/p^n)}}.$
\end{proof}

\begin{proposition}[\cite{Oyama} 1.1.5 and \cite{DXU19} 3.8]\label{propdilflat}
Let $n$ be a positive integer, $\frakX$ and $\frakY$ two logarithmic $p$-adic formal schemes flat and locally of finite type over $\frakS,$ $f:\frakX \ra \frakY$ an $\frakS$-morphism which is flat on the underlying formal schemes, $T \ra \frakY_n$ a strict immersion and $S=\frakX\times_{\frakY}T.$ Then $f$ induces a canonical isomorphism
$$\frakY_{(T/p^n)}\times_{\frakY}\frakX \xrightarrow{\sim} \frakX_{(S/p^n)}.$$
\end{proposition}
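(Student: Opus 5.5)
First I construct the comparison morphism from the universal property of \ref{erafdil1}, and then I check that it is an isomorphism by a local computation on affine dilatations. Set $\frakZ=\frakY_{(T/p^n)}\times_{\frakY}\frakX$, with the logarithmic structure pulled back from $\frakX$; this makes the projection $\frakZ\ra\frakX$ strict, as is $\frakX_{(S/p^n)}\ra\frakX$. Since $\frakY_{(T/p^n)}$ is flat over $\frakS$ and $\frakX\ra\frakY$ is flat on the underlying formal schemes, the base change $\frakZ$ is flat over $\frakS$ in the sense of \ref{dxuflat}. Moreover $\frakZ_n\ra\frakX_n$ factors through $S_n=S\times\frakX_n$, because $(\frakY_{(T/p^n)})_n\ra\frakY_n$ factors through $T$. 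By the universal property \ref{erafdil1}(2) there is then a unique $\frakS$-morphism $\alpha:\frakZ\ra\frakX_{(S/p^n)}$ over $\frakX$. Because both schemes are strict over $\frakX$, \ref{era3proplogstrformal} shows that it is enough to prove that $\alpha$ is an isomorphism of underlying formal schemes. By the uniqueness in \ref{erafdil1} this question is local, so I may assume $\frakY=\op{Spf}A$, $\frakX=\op{Spf}A'$ and $T$ closed, cut out by the ideal $I=(p^n,a_1,\hdots,a_r)$. Then $S$ is cut out by $IA'$, which is generated by $p^n$ and the images of the $a_i$.

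The local computation uses the affine description \eqref{dilaff}. The dilatation $\frakY_{(T/p^n)}$ is $\op{Spf}$ of the $p$-adic completion $B$ of $A_0/(p^n\text{-tor})$, where $A_0=A[x_1,\hdots,x_r]/(p^nx_i-a_i)$. Likewise $\frakX_{(S/p^n)}$ is $\op{Spf}$ of the completion of $A'_0/(p^n\text{-tor})$, where $A'_0=A_0\otimes_AA'$. Since $A\ra A'$ is flat, forming $p^n$-torsion commutes with the base change, that is $(A_0/p^n\text{-tor})\otimes_AA'=A'_0/p^n\text{-tor}$. The point is that the torsion submodule is the kernel of $A_0\ra A_0[1/p]$, and tensoring with the flat module $A'$ preserves this kernel. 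Taking $p$-adic completions, I get $\Gamma(\frakZ)=B\widehat{\otimes}_AA'$, which is the completion of $(A_0/\text{tor})\otimes_AA'$. This ring is exactly the coordinate ring of $\frakX_{(S/p^n)}$. The map $\alpha$ is the one induced by this identification, because it is the unique $A'$-algebra map sending $x_i$ to $x_i$.

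The main obstacle is the flatness step. I must make sure that $A\ra A'$ is flat as a map of rings when we are only told that $f$ is flat on the underlying formal schemes. For $p$-adic adic rings topologically of finite type, flatness of $\frakX\ra\frakY$ means that $A/p^k\ra A'/p^k$ is flat for every $k$. For Noetherian adic rings this is equivalent to flatness of $A\ra A'$ (\cite{Ahmed2010}, and compare \cite{DXU19} 3.8, whose argument I would follow). I also have to check that completing commutes with the fiber product, i.e. that the formal fiber product is computed by the completed tensor product. This is standard, but the quotient by torsion should be formed before completing. Since the completion of the $p$-torsion-free ring $A_0/\text{tor}$ is again $p$-torsion free in the Noetherian case, the two constructions agree.
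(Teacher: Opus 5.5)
Your proof is correct, but it shows the comparison map is an isomorphism by a different route from the paper's.

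\textbf{The paper's argument.} It never computes anything. It uses your morphism $\alpha$ (its $f''$), obtained from the universal property of $\frakX_{(S/p^n)}$ applied to the projection $\frakZ\ra\frakX$. It also produces a morphism in the other direction, $f':\frakX_{(S/p^n)}\ra\frakZ$, by applying the universal property of $\frakY_{(T/p^n)}$ to $\frakX_{(S/p^n)}\ra\frakX\ra\frakY$, whose reduction modulo $p^n$ factors through $S\ra T$. It then checks that both composites of $\alpha$ and $f'$ are identities using only the uniqueness clauses of \ref{erafdil1}. For the composite on $\frakZ$, it compares the two projections to $\frakY_{(T/p^n)}$ and to $\frakX$ separately. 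In this argument, flatness of $f$ enters only to make $\frakZ$ flat over $\frakS$, so that the universal property and uniqueness can be applied to maps out of $\frakZ$.

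\textbf{Your argument.} You construct only one map and then reduce to the affine presentation \eqref{dilaff}. You identify the coordinate rings directly, using two facts. First, the $p$-torsion is the kernel of $A_0\ra A_0[1/p]$, so it commutes with flat base change. Second, $p$-adic completion commutes with $-\otimes_AA'$.

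\textbf{Comparison.} Your route shows exactly where flatness is used and gives an explicit description of $\Gamma(\frakZ)$. It needs several inputs that the formal argument avoids:
\begin{enumerate}
\item the equivalence between flatness of $\op{Spf}A'\ra\op{Spf}A$ and flatness of $A\ra A'$ for Noetherian adic rings;
\item the description of formal fiber products by completed tensor products;
\item the reduction to the affine case with $T$ closed.
\end{enumerate}
The paper's argument is shorter, works globally, and applies unchanged to any objects characterized by the universal property of \ref{erafdil1}.
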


\begin{proof}
Denote by $g_{\frakX}:\frakX_{(S/p^n)} \ra \frakX$ and $g_{\frakY}:\frakY_{(T/p^n)} \ra \frakY$ the canonical morphisms.
The morphism $f:\frakX \ra \frakY$ is flat on the underlying formal schemes, hence so is the projection $\frakY_{(T/p^n)}\times_{\frakY}\frakX \ra \frakY_{(T/p^n)}.$ We deduce that $\frakY_{(T/p^n)}\times_{\frakY}\frakX$ is flat over $\frakS.$
By the commutativity of the diagram
$$
\begin{tikzcd}
\left (\frakX_{(S/p^n)} \right )_n \ar{r} \ar{d} & \frakX_n \ar{r} & \frakY_n \\
S \ar{ur} \ar{r} & T\ar{ur} &
\end{tikzcd}
$$
and the universal property of $\frakY_{(T/p^n)},$ there exists a unique morphism $\frakX_{(S/p^n)} \ra \frakY_{(T/p^n)}$ fitting into the commutative diagram
$$
\begin{tikzcd}
\frakX_{(S/p^n)} \ar{r} \ar[swap]{d}{g_{\frakX}} & \frakY_{(T/p^n)} \ar{d}{g_{\frakY}} \\
\frakX \ar{r}{f} & \frakY
\end{tikzcd}
$$
It follows that there exists a unique morphism $f':\frakX_{(S/p^n)} \ra \frakY_{(T/p^n)}\times_{\frakY}\frakX$ making the following diagram commutative:
$$
\begin{tikzcd}
\frakX_{(S/p^n)} \ar{dr}{f'} \ar[bend right=-20]{drr} \ar[swap, bend right=30]{ddr}{g_{\frakX}}  & & \\
 & \frakY_{(T/p^n)}\times_{\frakY}\frakX \ar{r}{\alpha} \ar{d}{\beta} & \frakY_{(T/p^n)} \ar{d}{g_{\frakY}} \\
 & \frakX \ar{r}{f} & \frakY,
\end{tikzcd}
$$
where $\alpha$ and $\beta$ are the canonical projection.
Applying the universal property of $\frakX_{(S/p^n)}$ to the projection
$\beta:\frakY_{(T/p^n)}\times_{\frakY}\frakX \ra \frakX$
proves the existence of a unique morphism
$f'':\frakY_{(T/p^n)}\times_{\frakY}\frakX \ra \frakX_{(S/p^n)}$
such that the following diagram is commutative
$$
\begin{tikzcd}
\frakY_{(T/p^n)}\times_{\frakY}\frakX \ar{r}{\beta} \ar{d}{f''} & \frakX \\
\frakX_{(S/p^n)} \ar[swap]{ur}{g_{\frakX}} & 
\end{tikzcd}
$$
The commutativity of the diagram
$$
\begin{tikzcd}
\frakX_{(S/p^n)} \ar{r}{f'} \ar{d}{g_{\frakX}} & \frakY_{(T/p^n)} \times_{\frakY} \frakX \ar{d}{f''} \\
\frakX & \frakX_{(S/p^n)} \ar{l}{g_{\frakX}}
\end{tikzcd}
$$
and the universal property of $\frakX_{(S/p^n)}$ prove that
$f'' \circ f' =\op{Id}_{\frakX_{(S/p^n)}}.$
The commutativity of the diagram
$$
\begin{tikzcd}
\frakY_{(T/p^n)}\ar{dr} \times_{\frakY} \frakX \ar{r}{f''} \ar[swap]{dd}{\alpha} & \frakX_{(S/p^n)} \ar{d}{g_{\frakX}} \ar[bend right=-30]{rr} \ar{r}{f'} & \frakY_{(T/p^n)} \times_{\frakY} \frakX \ar{r}{\alpha} & \frakY_{(T/p^n)} \ar{dd}{g_{\frakY}} \\
& \frakX \ar{drr} & & \\
\frakY_{(T/p^n)} \ar{rrr}{g_{\frakY}} & & & \frakY
\end{tikzcd}
$$
and the universal property of $\frakY_{(T/p^n)}$ imply that
$\alpha \circ f' \circ f''=\alpha.$
We also have
$\beta \circ f' \circ f''=g_{\frakX} \circ f''=\beta.$
This proves that
$f' \circ f'' =\op{Id}_{\frakY_{(T/p^n)} \times_{\frakY} \frakX}.$
\end{proof}

\section{Groupoids}

\begin{parag}\label{paragJapon1}
In this section, we fix a perfect field $\kappa$ of characteristic $p$ and denote by $W$ its ring of Witt vectors. We equip $\op{Spf}W$ with the trivial logarithmic structure. For a logarithmic $p$-adic formal scheme $\frakX$ over $\op{Spf}W$ and a positive integer $n,$ we denote by $\frakX_n$ the logarithmic scheme obtained from $\frakX$ by reduction modulo $p^n.$ We use the corresponding roman letter $X$ for $\frakX_1.$
\end{parag} 

\begin{parag}\label{parag86}
In this section, let $P \ra Q$ be a morphism of fs monoids, $f:(\frakX,Q)\ra (\frakS,P)$ a log smooth morphism of framed fs logarithmic $p$-adic formal schemes \eqref{def72}, locally of finite type over $\op{Spf}W$ \eqref{dxuflat}. We make one of the following two assumptions : either $\frakS$ is log flat over $\op{Spf}W,$ or $\frakS$ is flat over $\op{Spf}W$ and $f$ is integral (\cite{Ogus2018} III 2.5.1). Note that, in both cases, by \ref{Wflat} and (\cite{Ogus2018} IV 4.3.5), both $\frakX$ and $\frakS$ are flat over $\op{Spf}W$ \eqref{dxuflat}.

Let $r$ be a nonnegative integer. By \ref{cor79}, we have a factorization
$$\begin{tikzcd}
 & \frakX_{\frakS,\left [Q\right ]}^{r+1}\ar{d} \\
\frakX \ar{r} \ar{ur}{\Delta(r)}&  \frakX^{r+1}_{\frakS}
\end{tikzcd}$$
where $\frakX^{r+1}_{\frakS}$ denotes the fiber product, in the category $\boldsymbol{\op{LFS}}$ \eqref{parag63}, of $\frakX$ over $\frakS$ with itself $r+1$ times, $\Delta(r):\frakX\ra \frakX_{\frakS,[Q]}^{r+1}$ is a strict immersion and $\frakX_{\frakS,[Q]}^{r+1}\ra \frakX_{\frakS}^{r+1}$ is log étale. Set $\frakY(r)=\frakX_{\frakS,[Q]}^{r+1}.$ The canonical morphism $\frakY(r) \ra \frakS$ is log smooth and, in particular, locally of finite type, so $\frakY(r)$ is locally of finite type over $\op{Spf}W.$ The projections $\frakY(r) \ra \frakX$ are strict (\ref{cor76}) and log smooth hence smooth. It follows that $\frakY(r)$ is flat over $\op{Spf}W.$ 

For any integer $n\ge 1,$ denote by $R_{\frakX,n}(r)$ the dilatation $\frakY(r)_{(\frakX_n/p^n)}$ defined in \ref{erafdil1}. By the universal property, there exists a unique strict immersion of logarithmic formal schemes $\frakX\ra R_{\frakX,n}(r)$ making the following diagrams commutative
$$
\begin{tikzcd}
 & R_{\frakX,n}(r)\ar{d} \\
 & \frakY(r)\ar{d} \\
\frakX\ar{uur}\ar{ur}\ar{r} & \frakX^{r+1}_{\frakS}
\end{tikzcd}
$$
We extend the notation to $n=0$ by setting $R_{\frakX,0}(r)=\frakY(r).$ For every positive integer $k,$ we denote by $\frakX_k$ and $R_{\frakX,n}(r)_k$ the logarithmic schemes obtained from $\frakX$ and $R_{\frakX,n}(r)$ respectively, by reduction modulo $p^k$ (as in \ref{prop69}), by $P_{\frakX/\frakS,n}(r)_k$ the PD-envelope of the strict immersion $\frakX_k\ra R_{\frakX,n}(r)_k$ and by $\ov{\calI}_{\frakX/\frakS,n,k}(r)$ the PD-ideal of $P_{\frakX/\frakS,n}(r)_k.$ We equip $P_{\frakX/\frakS,n}(r)_k$ with the logarithmic structure pullback of that of $R_{\frakX,n}(r)_k.$ By extension of scalars (\cite{Ber74} I 2.8.2 and \cite{Ogus78} 3.20.8), the canonical morphism
$$
P_{\frakX/\frakS,n}(r)_k \ra P_{\frakX/\frakS,n}(r)_{k+1}\times_{\frakS_{k+1}}\frakS_k
$$
is an isomorphism for any positive integer $k.$
The schemes $(P_{\frakX/\frakS,n}(r)_k)_{k\ge 1}$ form then a $p$-adic system of schemes. We denote by $P_{\frakX/\frakS,n}(r)$ its limit, equip it with the logarithmic structure pullback of that of $\frakY(r).$ We obtain the commutative diagram:
$$
\begin{tikzcd}
 & P_{\frakX/\frakS,n}(r)\ar{d} \\
 & R_{\frakX,n}(r)\ar{d} \\
 & \frakY(r)\ar{d} \\
\frakX \ar[bend right =-20]{uuur} \ar{uur}\ar{ur}\ar{r} & \frakX^{r+1}_{\frakS}
\end{tikzcd}
$$
The canonical morphism $X\ra P_{\frakX/\frakS,n}(r)_1,$ is a universal homeomorphism on the underlying topological spaces and hence so is $\frakX\ra P_{\frakX/\frakS,n}(r).$ We identify the underlying small étale sites via this homeomorphism. We denote by $\calP_{\frakX/\frakS,n}(r)$ the structural ring of $P_{\frakX/\frakS,n}(r)$ and, for any integers $k\ge 1$ and $l\ge 0,$ $\calP_{\frakX/\frakS,n}^{\{ l \}}(r)_k=\calP_{\frakX/\frakS,n}(r)_k/\ov{\calI}_{\frakX/\frakS,n,k}(r)^{[l+1]}$ and $\ov{\calI}^{\{ l \}}_{\frakX/\frakS,n,k}(r)=\ov{\calI}_{\frakX/\frakS,n,k}(r)/\ov{\calI}_{\frakX/\frakS,n,k}(r)^{[l+1]}.$

When $r=1,$ we drop $(r)$ from the notation we just introduced. Note that $\calP_{\frakX/\frakS,n,k}^{\{ 0 \}}=\Ox_{\frakX_k}$ for every positive integer $k.$

By Kato's construction of the PD envelope, given in (\cite{Kat89} 5.6), and in view of \ref{prop45}, for all integers $k,r\ge 1,$ Kato's logarithmic PD-envelope of $\frakX_k \ra (\frakX_k)^{r+1}$ and the logarithmic scheme $P_{\frakX/\frakS,0}(r)_k$ are the same.
It follows, by (\cite{Kat89} 5.8.1), that, for any integer $k\ge 1,$ there exists a canonical isomorphism
\begin{equation}\label{eqtakrizIIsquare}
\ov{\calI}^{\{1\}}_{\frakX/\frakS,0,k} \xrightarrow{\sim} \omega^1_{\frakX_k/\frakS_k}.
\end{equation}
\end{parag}

\begin{parag}
Let $k$ be a positive integer. Since $f:\frakX \ra \frakS$ is log smooth and $\frakY=\frakX\times_{\frakS,[Q]}^{\op{log}}\frakX \ra \frakX\times_{\frakS}^{\op{log}}\frakX$ is log étale (\ref{prop74}), the projections $\frakY \ra \frakX$ are log smooth. Since they are also strict (\ref{cor76}), they are smooth. It follows that the exact diagonal immersion $\frakX_k \ra \frakY_k$ is regular. Let $\frakX_k \ra \frakU_k \ra \frakY_k$ be a factorization of $\frakX_k \ra \frakY_k$ into a closed immersion followed by an open one and let $\calI_k$ be the ideal of $\frakX_k \ra \frakU_k.$
The canonical morphism of $\Ox_{\frakX_k}$-modules
\begin{equation}\label{grsym}
\op{Sym}_{\Ox_{\frakX_k}}^n\left (\calI_k/\calI_k^2\right ) \xrightarrow{\sim} \calI_k^n/\calI_k^{n+1}
\end{equation}
is then an isomorphism for all $n\ge 1.$
\end{parag}

\begin{proposition}\label{erafprop13}
For any integer $n\ge 1,$ let $R_n=\left (R_{\frakX,n}\right )_1$ \eqref{parag86}. The canonical morphism $R_n \ra \frakY_n$ factors uniquely through the strict diagonal immersion $\frakX_n \ra \frakY_n.$ In addition, the morphism $R_n \ra \frakX_n$ is affine.
\end{proposition}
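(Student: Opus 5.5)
The plan is to reduce everything to the affine description \eqref{dilaff} of the dilatation, where both assertions are visible on the level of rings, and then to handle the logarithmic structures with \ref{era3proplogstrformal}.

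\textbf{Uniqueness and localization.} The diagonal $\frakX_n \ra \frakY_n$ is an immersion, hence a monomorphism, so a factorization of $R_n \ra \frakY_n$ through it is unique if it exists. By uniqueness, existence is local on $\frakY$. Fix a factorization $\frakX \ra \frakU \ra \frakY$ into a closed immersion followed by an open one, as in \ref{parag77}, and work over $\frakU$. Then $\frakX_n \ra \frakU_n$ is a closed immersion with ideal $\calI+(p^n)$, where $\calI$ is the ideal of $\frakX$ in $\frakU$. By \ref{erafdil1}, $R_{\frakX,n}$ is the dilatation of $\calI+(p^n)$ with respect to $p^n$. We may also assume $\frakU=\op{Spf}A$ with $I=\Gamma(\frakU,\calI)$ generated by $a_1,\hdots,a_r$. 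Then $R_{\frakX,n}=\op{Spf}B$, where $B$ is the $p$-adic completion of $A_0/(p^n\text{-tor})$ and $A_0=A[x_1,\hdots,x_r]/(p^nx_i-a_i)$.

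\textbf{Factorization on underlying schemes.} In $B$ we have $a_i=p^nx_i\in pB$, so $IB\subset pB$. Hence the map $A\ra B/pB=\Gamma(R_n,\Ox)$ kills $I+(p)$, and a fortiori $I+(p^n)$. Consequently $R_n \ra \frakU_n$ factors through the closed subscheme $\frakX_n$; in fact it factors through $X=\frakX_1$, since $p=0$ on $R_n$.

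\textbf{Logarithmic structures.} The immersion $\frakX_n \ra \frakY_n$ is strict by \ref{prop76}. By \ref{era3proplogstr}, the scheme-theoretic factorization therefore upgrades uniquely to a factorization of logarithmic schemes compatible with the given morphism $R_n \ra \frakY_n$. (Recall that $R_n$ carries the pullback log structure, so $R_n\ra \frakY_n$ is itself strict.)

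\textbf{Affineness.} The morphism $R_{\frakX,n}\ra \frakY$ is affine, as noted after \eqref{dilaff}, so $R_n \ra \frakY_n$ is affine. The immersion $\frakX_n \ra \frakY_n$ is separated, being a monomorphism. If $g\circ h$ is affine and $g$ is separated, then $h$ is affine; applying this with $g$ the immersion and $h$ the factorization gives that $R_n\ra \frakX_n$ is affine. Alternatively, in the local picture $B/pB$ is visibly an algebra over $A/(I+p^n)$, quotient of a polynomial algebra in the $x_i$.

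\textbf{Main obstacle.} The only step needing care is the gluing. One must check that the open pieces $\frakU$ cover a neighbourhood of the image of $R_n$. Indeed, $R_{\frakX,n}$ lives over the locus where $\calI+(p^n)$ is defined via such $\frakU$, since $R_{\frakX,n}$ is defined as the dilatation of the locally closed immersion over $\frakU$. Granting this, the local factorizations glue by the uniqueness established above.
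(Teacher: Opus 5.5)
Your argument is correct and is essentially the paper's: the paper just cites part (1) of \ref{erafdil1}, whose proof is precisely your reduction to the underlying formal schemes (via \ref{era3proplogstrformal}, or \ref{era3proplogstr} at the level of schemes) combined with the explicit description \eqref{dilaff}. One small fix: the remark after \eqref{dilaff} gives affineness of $R_{\frakX,n}$ over $\frakU$, not directly over $\frakY$, so run the cancellation argument with $\frakX_n\ra\frakU_n$ (or use your local check); also, a single $\frakU$ suffices, so no gluing is needed.
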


\begin{proof}
This follows from the definition of $R_{\frakX,n}$ \eqref{erafdil1}.
\end{proof}

\begin{remark}\label{rem87}
Let $r$ and $s$ be nonnegative integers. We have a canonical isomorphism
\begin{equation}\label{eq871}
\frakY(r)\times_{\frakX}\frakY(s)\xrightarrow{\sim} \frakY(r+s),
\end{equation}
where $\frakY(r)$ (resp. $\frakY(s)$) is considered as a logarithmic formal scheme over $\frakX$ via the $(r+1)^{th}$ projection (resp. first projection). For every integer $n\ge 1,$ by the universal properties of $P_{\frakX/\frakS,n}$ and $R_{\frakX,n},$ the isomorphism \eqref{eq871} induces isomorphisms $$R_{\frakX,n}(r)\times_{\frakX}R_{\frakX,n}(s)\xrightarrow{\sim} R_{\frakX,n}(r+s),\ P_{\frakX/\frakS,n}(r)\times_{\frakX}P_{\frakX/\frakS,n}(s)\xrightarrow{\sim} P_{\frakX/\frakS,n}(r+s),$$
where $R_{\frakX,n}(r)$ and $P_{\frakX/\frakS,n}(r)$ (resp. $R_{\frakX,n}(s)$ and $P_{\frakX/\frakS,n}(s)$) are considered as a logarithmic formal scheme over $\frakX$ via the $(r+1)^{th}$ projection (resp. first projection) (\cite{DXU19} 4.10). These isomorphisms fit into the following commutative diagrams
$$
\begin{tikzcd}
P_{\frakX/\frakS,n}(r)\times_{\frakX}P_{\frakX/\frakS,n}(s) \ar{r}{\sim} \ar{d} & P_{\frakX/\frakS,n}(r+s) \ar{d} \\
R_{\frakX,n}(r)\times_{\frakX}R_{\frakX,n}(s) \ar{r}{\sim} \ar{d} & R_{\frakX,n}(r+s) \ar{d} \\
\frakY(r)\times_{\frakX}\frakY(s) \ar{r}{\sim} & \frakY(r+s)
\end{tikzcd}
$$
We now introduce the formal groupoid structures on $R_{\frakX,n}$ and $P_{\frakX/\frakS,n}.$
\end{remark}

\begin{proposition}\label{Kokoprop126}
Let $n\ge 1$ be an integer. Consider the following morphisms:
\begin{enumerate}
\item The morphism
$
\alpha  :P_{\frakX/\frakS,n}\times_{\frakX}P_{\frakX/\frakS,n} \xrightarrow{\sim} P_{\frakX/\frakS,n}(2) \ra P_{\frakX/\frakS,n}$
(resp. $\alpha  :R_{\frakX,n}\times_{\frakX}R_{\frakX,n} \xrightarrow{\sim} R_{\frakX,n}(2) \ra R_{\frakX,n}),
$
induced by the $(1,3)$-projection $\frakY(2)\ra \frakY.$
\item The morphism $\iota:\frakX\ra P_{\frakX/\frakS,n}$ (resp. $\iota:\frakX\ra R_{\frakX,n}$).
\item The morphism
$
\eta :P_{\frakX/\frakS,n} \ra P_{\frakX/\frakS,n}
(\text{resp.}\ \eta  :R_{\frakX,n} \ra R_{\frakX,n})$
induced by the morphism $\frakY\ra \frakY$ that exchanges factors.
\end{enumerate} 
Then the morphisms $\alpha,\ \iota$ and $\eta$ define a $p$-adic $\frakX$-groupoid structure on $P_{\frakX/\frakS,n}$ (resp. $R_{\frakX,n}$) (\cite{DXU19} 4.7).
\end{proposition}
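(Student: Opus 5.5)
The plan is to deduce the groupoid axioms for $R_{\frakX,n}$ and $P_{\frakX/\frakS,n}$ from the obvious groupoid structure on $\frakY(\bullet)=\frakX^{\bullet+1}_{\frakS,[Q]}$, using the universal properties of dilatations \eqref{erafdil1} and PD-envelopes. First, $\frakY$ is an $\frakX$-groupoid. Its composition is the $(1,3)$-projection $\frakY\times_{\frakX}\frakY\simeq \frakY(2)\ra \frakY$ of \eqref{eq871}, its unit is $\Delta$, and its inverse is the swap $\sigma$. The axioms there (source and target compatibilities, associativity, unit, inverse) are identities between projection-type morphisms of iterated fiber products of presheaves $T\mapsto \frakX(T)\times_{\frakS(T)\times[Q](T)}\cdots$. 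They can therefore be checked on $T$-points, where they are the standard identities for the pair groupoid.

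Next, the three morphisms must descend to $R_{\frakX,n}$. For $\alpha$, the composite $R_{\frakX,n}(2)\ra \frakY(2)\xrightarrow{p_{13}}\frakY$ becomes, after reduction mod $p^n$, a map that factors through $\frakX_n\ra \frakY(2)_n$ by \ref{erafprop13} (applied to $r=2$). Hence its reduction mod $p^n$ factors through the diagonal $\frakX_n\ra \frakY_n$. Since $R_{\frakX,n}(2)$ is flat over $W$, \ref{erafdil1}(2) gives a unique lift $R_{\frakX,n}(2)\ra R_{\frakX,n}$. The same argument applied to $\sigma\circ(R_{\frakX,n}\ra\frakY)$ yields $\eta$, and $\iota$ was already constructed in \ref{parag86}. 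Combined with the isomorphism $R_{\frakX,n}\times_{\frakX}R_{\frakX,n}\simeq R_{\frakX,n}(2)$ of \ref{rem87}, this defines $\alpha$.

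Each groupoid axiom for $R_{\frakX,n}$ is an equality of two morphisms from some $R_{\frakX,n}(r)$, with $r\le 3$, to $R_{\frakX,n}$ or to $\frakX$. Morphisms to $\frakX$ are compared after composing with $R_{\frakX,n}(r)\ra\frakY(r)$. For morphisms to $R_{\frakX,n}$, the uniqueness in \ref{erafdil1}(2) shows it suffices to check the equality after composing with $R_{\frakX,n}\ra \frakY$. That reduces it to the corresponding axiom for $\frakY$, composed with $R_{\frakX,n}(r)\ra \frakY(r)$. This uses that the source $R_{\frakX,n}(r)$ is flat over $W$, which follows because it is a dilatation.

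For $P_{\frakX/\frakS,n}$ I would work at each level $k$ and then pass to the limit using \ref{prop18}. The maps $R(2)_k\ra R_k$ and $R_k\ra R_k$ are compatible with the immersions of $\frakX_k$. By the universal property of PD-envelopes, they therefore induce PD-morphisms $P(2)_k\ra P_k$ and $P_k\ra P_k$. These are compatible in $k$ by the base change isomorphism in \ref{parag86}, and they carry the pullback log structures by \ref{era3proplogstrformal}, since the relevant maps are strict. The axioms then follow from uniqueness in the PD-envelope universal property, applied to $P(r)_k$, which is the PD-envelope of $\frakX_k\ra R(r)_k$.

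The main obstacle is the identification $P(r)\times_{\frakX}P(s)\simeq P(r+s)$, i.e.\ that the fiber product of PD-envelopes is the PD-envelope of the fiber product. I take this from \ref{rem87}, citing \cite{DXU19} 4.10: flatness of the projections $R_{\frakX,n}\ra\frakX$ lets PD-structures extend. Granting it, the proof is formal, and the only care needed is the bookkeeping of which projection makes each factor an $\frakX$-scheme.
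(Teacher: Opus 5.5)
Your descent of the structure maps of $\frakY$ through the universal properties of the dilatation \eqref{erafdil1} and of the PD-envelope is sound, as is your derivation of the axioms from uniqueness. It spells out what the paper treats as formal. The gap is that you never verify the one condition the paper's proof is actually about. A $p$-adic $\frakX$-groupoid in the sense of \cite{DXU19} 4.7 is not just a groupoid object in $p$-adic formal schemes: the map of underlying topological spaces $\frakG\ra\frakX^2_{\frakS}$ must factor through the diagonal. Paragraph \ref{paragomega} uses this to define $\omega:\frakG_{\text{zar}}\ra\frakX_{\text{zar}}$ and to obtain $\omega_*\Ox_{\frakG}=q_{1*}\Ox_{\frakG}=q_{2*}\Ox_{\frakG}$. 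This lets one regard $\Ox_{\frakG}$ as a Hopf algebra on $\frakX_{\text{zar}}$, which is how these groupoids are used afterwards (\ref{defhpdstrat}). The condition is not formal: your first step shows that $\frakY=R_{\frakX,0}$ is an $\frakX$-groupoid, yet $|\frakY|\ra|\frakX^2_{\frakS}|$ does not in general factor through the diagonal.

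The paper's proof consists exactly of this check, and you already have the ingredient. By \ref{erafprop13} (or directly \ref{erafdil1}(1)), $(R_{\frakX,n})_n\ra\frakY_n$ factors through $\frakX_n$. Since $R_{\frakX,n}$ and its reduction modulo $p^n$ have the same underlying space, $|R_{\frakX,n}|\ra|\frakY|$ factors through $|\frakX|$. Moreover, $P_{\frakX/\frakS,n}\ra\frakY$ factors through $R_{\frakX,n}$, so it inherits the property. You invoke \ref{erafprop13} only to construct $\alpha$ and $\eta$; you need to add this observation for your argument to prove the statement. (The paper also treats $P_{\frakX/\frakS,0}$. There $R_{\frakX,0}=\frakY$ is of no help, and the factorization comes instead from $x^p=p!\,x^{[p]}=0$ for $x$ in the PD ideal modulo $p$. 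This case lies outside the stated range $n\ge 1$, but it is the one needed later for $\Psi$.)
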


\begin{proof}
We just check that the canonical morphisms $P_{\frakX/\frakS,n} \ra \frakY,$ $R_{\frakX,n} \ra \frakY$ and $Q_{\frakX} \ra \frakY$ factor, as maps of topological spaces, through the exact diagonal immersion $\frakX \ra \frakY.$ If $n \ge 1,$ the assertion for $P_{\frakX/\frakS,n}$ and $R_{\frakX,n}$ follows from \ref{erafprop13}. It remains to check it for $P_{\frakX/\frakS,0}.$ Let $P_0$ be the logarithmic scheme $\left (P_{\frakX/\frakS,0} \right )_1.$ Since $P_0$ and $P_{\frakX/\frakS,0}$ have the same underlying topological spaces.The result then follows from the fact that, if $x$ is a local section of the ideal of the canonical immersion $X \ra P_0,$ then
$x^p=p!x^{[p]}=0.$
\end{proof}

\begin{parag}\label{paragomega}
Let $\frakG$ be a $p$-adic $\frakX$-groupoid (\cite{DXU19} 4.7) and $\omega:\frakG_{\text{zar}} \ra \frakX_{\text{zar}}$ the morphism of topoi induced by the factorization of the morphism of underlying topological spaces of $\frakG \ra \frakX^2_{\frakS}$ through the diagonal. Let $q_1,q_2:\frakG \ra \frakX$ be the projections. Then
$$\omega_*\Ox_{\frakG}=q_{1*}\Ox_{\frakG}=q_{2*}\Ox_{\frakG}.$$
In this way, we see $\Ox_{\frakG}$ as a bialgebra of $\frakX_{\text{zar}}.$ The groupoid structure on $\frakG$ induces a formal Hopf algebra structure on $\Ox_{\frakG}.$ For any positive integers $r$ and $n,$ we denote by $\calR_{\frakX,n}(r)$ the formal Hopf algebra $\Ox_{R_{\frakX,n}(r)}$. 
\end{parag}

\begin{proposition}\label{locdescRQ}
Let $n \ge 1$ be an integer and consider $R_{\frakX,n}$ as a logarithmic formal scheme over $\frakX$ via the first projection $q_1: R_{\frakX,n} \ra \frakX.$ Suppose that there exists a chart $\alpha:P \ra M$ of $f:\frakX \ra \frakS,$ fitting into a commutative diagram
$$
\begin{tikzcd}
\frakX \ar{r} \ar{d} & \frakS \ar{d} \\
B \langle M \rangle \ar{r} \ar{d} & B\langle P \rangle \ar{d} \\
\left [Q \right ] \ar{r} & \left [P \right ],
\end{tikzcd}
$$
and satisfying the conditions :
\begin{enumerate}
\item $\alpha^{gp}$ is injective and the torsion subgroup of $\op{coker}\alpha^{gp}$ is finite and of order coprime with $p.$
\item The morphism $g:\frakX \ra \frakS \times_{B\langle P\rangle}B\langle M\rangle,$ induced by $f$ and the chart $\frakX \ra B\langle M\rangle,$ is étale and strict.
\end{enumerate}
Let $\frakX \ra \frakU \ra \frakY$ be a factorization of the immersion $\Delta:\frakX \ra \frakY$ into a closed immersion followed by an open one and let $\calI_{\frakX/\frakS}$ be the ideal of $\frakX \ra \frakU.$ Suppose that there exist $m_1,\hdots,m_d \in \Gamma\left (\frakX,\calM_{\frakX}\right )$ lifting local coordinates of $X\ra S.$ Let $\eta_1,\hdots,\eta_d \in \Gamma \left (\frakX,\Delta^{-1}\calI_{\frakX/\frakS} \right )$ be as defined in \ref{parag77} and consider them as sections of $\Ox_{R_{\frakX,n}}.$ Then, there exists an isomorphism of $\Ox_{\frakX}$-algebras:
\begin{equation}\label{isoR}
\Ox_{\frakX}\{x_1,\hdots,x_d\} \xrightarrow{\sim} q_{1*}\Ox_{R_{\frakX,n}}
\end{equation}
sending $x_i$ to $\frac{\eta_i}{p^n}$.
\end{proposition}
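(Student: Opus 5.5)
The plan is to reduce to an explicit local description of $\frakY$ over $\frakX$ via the first projection, and then compute the dilatation directly with the affine formula \eqref{dilaff}.

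\emph{Step 1: étale coordinates on $\frakU$.} The projection $q_1:\frakY \ra \frakX$ is strict and smooth (\ref{cor76}, \ref{prop74}). Its relative differentials are $q_2^*\omega^1_{\frakX/\frakS}$, and these are generated by $\op{dlog}q_2^\flat m_i = d\mu(m_i)/\mu(m_i)$ (\ref{parag46}). So the sections $\eta_1,\hdots,\eta_d$ define an $\frakX$-morphism
$$\frakU \ra \widehat{\mathbb{A}}^d_{\frakX}=\op{Spf}\Ox_{\frakX}\{t_1,\hdots,t_d\},\qquad t_i \mapsto \eta_i.$$
This morphism is étale: modulo $p$ it is a morphism of smooth $X$-schemes inducing an isomorphism on relative differentials. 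The closed immersion $\Delta$ maps onto the zero section $t=0$. Since $\eta(m_i)$ locally generate $\calI$ (\ref{parag77}), we get $\calI=(\eta_1,\hdots,\eta_d)$ on $\frakU$ after shrinking around $\Delta(\frakX)$; dilatations only see a neighbourhood of $\frakX_n$, so this shrinking is harmless. The chart hypotheses (1) and (2) are what guarantee that the $m_i$ exist globally and that the framed product $\frakY$ is controlled, via \ref{prop73}, as $\frakX\times_{\frakS}^{\op{log}}\frakX\times_{B\langle M\oplus M\rangle}^{\op{log}}B\langle \tilde M\rangle$. This lets us carry out Step 1 globally on $\frakX$ rather than only locally.

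\emph{Step 2: transport the dilatation.} The zero section $\frakX \ra \widehat{\mathbb{A}}^d_{\frakX}$ and $\Delta$ are compatible with the étale map of Step 1. By \ref{propdiletale}, the étale map induces an isomorphism $\frakY_{(\frakX_n/p^n)}\xrightarrow{\sim}(\widehat{\mathbb{A}}^d_{\frakX})_{(\frakX_n/p^n)}$, and $\frakX_n$ is cut out by $(p^n,t_1,\hdots,t_d)$. Applying \eqref{dilaff} with generators $a_0=p^n$, $a_i=t_i$ gives $B=\Ox_{\frakX}\{t\}[x]/(p^nx_i-t_i)$, completed and with $p^n$-torsion removed. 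Eliminating $t_i=p^nx_i$, this is $\Ox_{\frakX}\{x_1,\hdots,x_d\}$, which is already $p$-torsion free because $\frakX$ is flat over $W$ (\ref{parag86}). Therefore $q_{1*}\Ox_{R_{\frakX,n}}\cong \Ox_{\frakX}\{x\}$, with $x_i$ corresponding to $\eta_i/p^n$. Since $R_{\frakX,n}\ra \frakX$ is affine, we may pass to pushforward.

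\emph{Main obstacle.} The hard part is the étaleness in Step 1 together with the global nature of the coordinate map. One has to check that $\mu(m_i)$, which is a priori a section of $\Delta^{-1}(1+\calI)$, really lifts to $\Ox_{\frakU}$ on a neighbourhood of $\frakX$. Concretely, $\mu(m_i)=\alpha_{\frakY}(q_2^\flat m_i - q_1^\flat m_i)$ is defined wherever this difference lies in $\calM^{gp}$ and is a unit. I would first try to obtain this from strictness of $q_1,q_2$ and the framing, and then deduce étaleness modulo $p$ from the conormal isomorphism \eqref{iso}.
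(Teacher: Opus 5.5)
Your argument is correct, but it takes a different route from the paper's.

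\textbf{The paper's route.} The paper uses the chart hypotheses (1) and (2) essentially. It applies \ref{propdiletale} to the strict étale map $\frakY\ra\frakX\times_{\frakS,[M]}^{\op{log}}\frakT$, and \ref{propdilflat} to the flat map $h$. This reduces to the model case $\frakX=\frakT=\frakS\times_{B\langle P\rangle}B\langle M\rangle$, where $\frakY=\op{Spf}D$ is explicit. It then proves by hand (Lemma \ref{lemR}) that $C\{x\}\ra\bigl(D\{x\}/(p^nx_i-a_i)\bigr)_{/p\text{-tor}}$ is an isomorphism. That proof is a degree-by-degree induction resting on Lemma \ref{liftbase} and the regularity isomorphism \eqref{grsym}.

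\textbf{Your route.} You use the $\eta_i$ themselves as étale coordinates on a neighbourhood of the diagonal over $\frakX$ (via $q_1$). A single application of \ref{propdiletale} then reduces everything to the dilatation of the zero section of $\widehat{\mathbb{A}}^d_{\frakX}$. That dilatation is $\op{Spf}\Ox_{\frakX}\{x_1,\hdots,x_d\}$ directly from the universal property \ref{erafdil1}, since $\frakX$ is $p$-torsion free.

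This needs only that $q_1$ is strict and smooth and that the $m_i$ are global. The chart hypotheses play no role in your argument. Your sentence crediting them with the global existence of the $m_i$ is therefore inaccurate, since that is a separate hypothesis of the statement, but nothing depends on it.

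\textbf{Comparison.} Your route is shorter, more general, and avoids the graded computation of \ref{lemR}. The paper's route buys a completely explicit presentation in the chart model.

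\textbf{Points to tighten.}
The obstacle you flag resolves as you suggest. $s_i=q_2^{\flat}m_i-q_1^{\flat}m_i$ is a global section of $\calM_{\frakY}^{gp}$. The locus where its germ lies in $\calM_{\frakY}^*$ is open, because a germ lying in a subsheaf lies in it on a neighbourhood. This locus contains $\Delta(\frakX)$. On it, $\mu_i=\alpha_{\frakY}(s_i)$ is a unit, and $\op{dlog}q_2^{\flat}m_i=d\eta_i/\mu_i$ in $\omega^1_{\frakU/\frakX}$, since $q_1^{\flat}m_i$ has zero $\op{dlog}$ relative to $\frakX$.

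Checking étaleness modulo $p$ is enough here only because $\frakU$ and $\widehat{\mathbb{A}}^d_{\frakX}$ are both flat over $W$, by the local flatness criterion. Alternatively, run the same Jacobian argument modulo every $p^k$.

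You should also equip $\widehat{\mathbb{A}}^d_{\frakX}$ with the log structure pulled back from $\frakX$. Then your map is strict étale, hence log étale, as \ref{propdiletale} requires.
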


\begin{proof}
Set $\frakT=\frakS \times_{B\langle P\rangle}B\langle M\rangle.$ By \ref{prop73} and \ref{prop74}, we have
$$\frakY = \left (\frakX \times_{\frakS}^{\op{log}}\frakX \right )\times _{B\langle \left (M\oplus_PM\right )^{sat}\rangle }B\langle N \rangle,$$
$$\frakT \times_{\frakS,[M]}^{\op{log}}\frakT = \left (\frakT \times_{\frakS}^{\op{log}}\frakT \right )\times _{B\langle \left (M\oplus_PM\right )^{sat}\rangle }B\langle N \rangle=\frakS \times_{B\langle P\rangle} B\langle N\rangle,$$
where $N$ is the inverse image of $M$ by
$\left (M\oplus_PM\right )^{gp} \ra M^{gp},\ (x,y)\mapsto x+y.$
The morphism $g:\frakX \ra \frakT$ induces an étale and strict morphism
$\frakY \ra \left (\frakX \times_{\frakS}^{\op{log}}\frakT \right )\times _{B\langle \left (M\oplus_PM\right )^{sat}\rangle }B\langle N \rangle= \frakX \times_{\frakS,[M]}^{\op{log}}\frakT,$
fitting into a commutative diagram
$$
\begin{tikzcd}
\frakX \ar[swap]{d}{\Delta} \ar{dr} & \\
\frakY \ar{r} & \frakX \times_{\frakS,[M]}^{\op{log}}\frakT.
\end{tikzcd}
$$
By \ref{propdiletale}, we get an isomorphism
$R_{\frakX,n}=\frakY_{(\frakX_n/p^n)} \xrightarrow{\sim} \left (\frakX \times_{\frakS,[M]}^{\op{log}}\frakT\right )_{(\frakX_n/p^n)}.$
The strict and étale morphism $g$ induces a strict and étale, hence flat, morphism 
$h:\frakX \times_{\frakS,[M]}^{\op{log}}\frakT \ra \frakT \times_{\frakS,[M]}^{\op{log}}\frakT$
fitting into the commutative diagram
\begin{equation}\label{diagest1}
\begin{tikzcd}
\frakX \ar{r}{g} \ar{d} & \frakT \ar{d} \\
\frakX \times_{\frakS,[M]}^{\op{log}}\frakT \ar{r}{h} \ar{d} & \frakT \times_{\frakS,[M]}^{\op{log}}\frakT \ar{d} \\
\frakX \ar{r}{g} & \frakT,
\end{tikzcd}
\end{equation}
where the upper vertical arrows are the exact diagonal morphisms and the lower vertical arrows are the projections on the first factor. Since the lower square and the outer rectangle of \eqref{diagest1} are cartesian, so is the upper square.
Then, by \ref{propdilflat}, $h$ induces an isomorphism
\begin{alignat*}{2}
\left (\frakX \times_{\frakS,[M]}^{\op{log}}\frakT\right )_{(\frakX_n/p^n)}  & \xrightarrow{\sim} \left (\frakT \times_{\frakS,[M]}^{\op{log}}\frakT\right )_{(\frakT_n/p^n)} \times_{\frakT \times_{\frakS,[M]}^{\op{log}}\frakT}\left (\frakX \times_{\frakS,[M]}^{\op{log}}\frakT\right ) \\
& \xrightarrow{\sim} \frakX \times_{\frakT} \left (\frakT \times_{\frakS,[M]}^{\op{log}}\frakT\right )_{(\frakT_n/p^n)},
\end{alignat*}
where $\left (\frakT \times_{\frakS,[M]}^{\op{log}}\frakT\right )_{(\frakT_n/p^n)}$ is considered as a logarithmic scheme over $\frakT$ via the first projection.
We can reduce to the case $\frakX=\frakT=\frakS \times_{B\langle P\rangle}B\langle M\rangle.$ We can also suppose that $\frakS=\op{Spf}A$ for a $p$-adic algebra $A.$ In this case,
\begin{alignat*}{2}
\frakX &= \frakS \times_{B\langle P\rangle}B\langle M\rangle = \op{Spf} \left ( A \widehat{\otimes}_{\Z_p\langle P \rangle} \Z_p \langle M\rangle \right ) = \op{Spf} C, \\
\frakY &= \frakS \times_{B\langle P \rangle} B\langle N \rangle = \op{Spf} \left ( A\widehat{\otimes}_{\Z_p\langle P\rangle } \Z_p\langle N\rangle \right ) = \op{Spf} D,
\end{alignat*}
where $\widehat{\otimes}$ is the $p$-adically completed tensor product. Let $I$ be the kernel of the exact diagonal $D\ra C$ and $a_1,\hdots,a_d\in I$ corresponding to $\eta_1,\hdots,\eta_d.$ Then, by definition of $R_{\frakX,n}$ \eqref{dilaff}, we have
\begin{alignat*}{2}
R_{\frakX,n} &= \op{Spf}\left ( \left ( \frac{D\{ x_1,\hdots,x_d\}}{\left (p^nx_1-a_1,\hdots,p^nx_d-a_d \right )} \right )_{/p\text{-tor}} \right ),
\end{alignat*}
where $p$-tor denotes the $p$-torsion ideal \eqref{Not2}.
The result then follows from the lemma \ref{lemR} below.
\end{proof}

\begin{lemma}\label{liftbase}
Keep the assumptions of \ref{parag86} and suppose that $\frakS=\op{Spf}A$ for a $p$-adic Noetherian algebra $A$ and $\frakX=\frakS \times_{B\langle P\rangle}B\langle M\rangle=\op{Spf} C.$ The morphism $\frakX \ra \frakS$ corresponds then to a continuous morphism $\xi:A \ra C.$ In this case, $\frakY=\frakS \times_{B\langle P \rangle} B\langle N \rangle=\op{Spf} D,$ where $N$ is the inverse image of $M$ by
$$\left (M\oplus_PM\right )^{gp} \ra M^{gp},\ (x,y)\mapsto x+y,$$
and
$$
D=A\widehat{\otimes}_{\Z_p\langle P\rangle } \Z_p\langle N\rangle.
$$
Let $I$ be the ideal of the exact diagonal $D \ra C$ \eqref{BM}, $I_k$ its reduction modulo $p^k$ for $k\ge 1$ and $a_1,\hdots,a_d\in I$ whose images in $I_1/I_1^2$ are a basis of the $\left ( C/pC \right )$-module $I_1/I_1^2.$ Then, for every positive integer $k,$ the images of $a_1,\hdots,a_d$ in $I_k/I_k^2$ form a basis of the $\left ( C/p^kC \right )$-module $I_k/I_k^2.$
\end{lemma}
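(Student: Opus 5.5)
The plan is to show that $I_k/I_k^2$ is a locally free $C/p^kC$-module of rank $d$, and then use Nakayama's lemma to promote the mod $p$ basis to a basis mod $p^k$.

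First, identify $I_k/I_k^2$ with the conormal module of the reduced diagonal. Reduction modulo $p^k$ of the exact diagonal $D\ra C$ gives the exact diagonal $\frakX_k\ra \frakY_k$. Since $D\ra C$ is surjective with kernel $I$, the ideal of $\frakX_k$ in $\frakY_k$ is $I_k=(I+p^kD)/p^kD$. By \ref{prop45}, applied to the framed reduction $(\frakX_k,Q)\ra(\frakS_k,P)$ (equivalently by \eqref{isoomega1} at level $k$), we get a canonical isomorphism
$$I_k/I_k^2\simeq \Gamma\left(\frakX_k,\omega^1_{\frakX_k/\frakS_k}\right).$$
Because $\frakX_k\ra\frakS_k$ is log smooth, this is a locally free $C/p^kC$-module. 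In the present chart situation its rank is $d=\op{rk}(M^{gp}/P^{gp})$, and it is in fact free with basis the $\op{dlog}$ of generators of $M^{gp}/P^{gp}$ modulo torsion.

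Second, check compatibility with reduction. The surjection $I_k\ra I_1$ induces a map $I_k/I_k^2\ra I_1/I_1^2$. Under the isomorphism above this map corresponds to the reduction map $\omega^1_{\frakX_k/\frakS_k}\ra\omega^1_{\frakX_1/\frakS_1}$, which is naturality of \ref{prop45} with respect to the strict closed immersions $\frakX_1\ra\frakX_k$ and $\frakY_1\ra\frakY_k$. Since $\omega^1_{\frakX_k/\frakS_k}$ is locally free and its formation commutes with base change, this map identifies $I_1/I_1^2$ with $(I_k/I_k^2)\otimes_{C/p^k}C/p$.

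Third, apply Nakayama's lemma. Let $\varphi:(C/p^kC)^d\ra I_k/I_k^2$ be the map sending the standard basis to the images of $a_1,\hdots,a_d$. Reducing $\varphi$ modulo $p$ gives an isomorphism by hypothesis. Since $p$ is nilpotent in $C/p^kC$ and the target is finitely generated, Nakayama's lemma makes $\varphi$ surjective. A surjection between finite locally free modules of the same rank $d$ is an isomorphism: locally it is a surjective endomorphism of a free module of finite rank, hence bijective. So the images of the $a_i$ form a basis of $I_k/I_k^2$.

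The main obstacle is the second step: making sure that the identification of \ref{prop45} at levels $k$ and $1$ commutes with the reduction maps, and that the ideal of $\frakX_k$ in $\frakY_k$ really is the image of $I$. The latter holds because $D\ra C$ is surjective, so $\frakX_k=\frakX\times_{\frakY}\frakY_k$. For the compatibility itself, I would argue concretely through \ref{parag46}: the section $\mu(m)-1$ corresponds to $\op{dlog}m$ at every level, and the $\eta$'s are compatible under reduction by the commutative diagram in \ref{parag77}.
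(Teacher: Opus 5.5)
Your proof is correct, but it propagates the mod-$p$ basis by a different route from the paper.

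\textbf{The paper's route.} The paper works with the $p$-adic module $\Omega=I/I^2$. It uses the freeness of every $I_k/I_k^2$ (via \eqref{iso}) only to show that $\Omega$ has no $p$-torsion. From this it deduces $p^k\Omega/p^{k+1}\Omega\simeq (p^k)/(p^{k+1})\otimes\Omega/p\Omega$. It then applies Bourbaki's criterion for filtered modules (isomorphism on graded pieces, complete source, separated target) to conclude that $C^d\ra\Omega$, $(\beta_i)\mapsto\sum\beta_ia_i$, is an isomorphism. The level-$k$ statement is the reduction of this.

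\textbf{Your route.} You stay at a fixed finite level. You use three inputs: local freeness of rank $d$ of $I_k/I_k^2\simeq\omega^1_{\frakX_k/\frakS_k}$, the identification $(I_k/I_k^2)\otimes C/p\simeq I_1/I_1^2$, and Nakayama for the nilpotent element $p$.

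\textbf{Comparison.} Your argument is more elementary: it needs no torsion-freeness, separatedness or completeness of $I/I^2$. The paper's argument gives a slightly stronger statement, namely that $I/I^2$ itself is free over $C$ on $a_1,\hdots,a_d$.

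\textbf{Two small remarks.} First, the step you flag as the main obstacle does not need naturality of \ref{prop45}. The first projection gives a section $\pi:C\ra D$ of the diagonal $D\ra C$, so $D=\pi(C)\oplus I$. Hence $I\cap p^kD=p^kI$ and $I_k/I_k^2=I/(I^2+p^kI)$. Its reduction modulo $p$ is visibly $I/(I^2+pI)=I_1/I_1^2$.

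Second, the frame relevant to $\frakY=\frakS\times_{B\langle P\rangle}B\langle N\rangle=\frakX\times^{\op{log}}_{\frakS,[M]}\frakX$ is the one given by the chart $M$, not $Q$. This does not affect your argument, since \ref{prop45} applies to that frame just as well.
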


\begin{proof}
Let $\Omega=I/I^2$ and consider the morphism
\begin{equation}\label{eq1295}
C^d \ra \Omega,\ (\beta_1,\hdots,\beta_d) \mapsto \sum_{i=1}^d\beta_ia_i.
\end{equation}
Let $x\in I$ such that $p\ov{x}=0$ in $\Omega$ and $k$ a positive integer. There exist $x_i,y_i\in I$ such that $px=\sum x_iy_i$ in $I.$ It follows that, in $I_k/I_k^2,$ we have $px=0.$ Since $\frakX_k \ra \frakS_k$ is log smooth and by \eqref{iso}, $I_k/I_k^2$ is a free $C/p^kC$-module. It follows that the image of $x$ in $I_k/I_k^2$ belongs to $p^{k-1}(I_k/I_k^2).$ The image of $x$ in $I/I^2$ belongs then to $ p^{k-1}I/I^2.$ This being true for every $k$ and $I/I^2$ being separated, we deduce that $x=0.$ It follows that $I/I^2$ is flat over $\Z_p$ and so, by (\cite{Bourbaki} III §5 theorem 1), the canonical morphism
\begin{equation}\label{eq1296}
\left ((p^k)/(p^{k+1})\right ) \otimes_{\mathbb{F}_p} \left (\Omega/p\Omega\right ) \xrightarrow{\sim} p^k\Omega/p^{k+1}\Omega
\end{equation}
is an isomorphism.
Since \eqref{eq1295} is an isomorphism modulo $p,$ we conclude by \eqref{eq1296} and (\cite{Bourbaki} III §2, 8, corollary 3). 
\end{proof}

\begin{lemma}\label{lemR}
Keep the assumptions of \ref{liftbase} and recall the notation \ref{Not2}. Let $r$ be a positive integer. The morphism of topological $C$-algebras
$$\varphi:C\{x_1,\hdots,x_d\} \ra \left ( \frac{D\{ x_1,\hdots,x_d\}}{\left (p^rx_1-a_1,\hdots,p^rx_d-a_d \right )} \right )_{/p\text{-tor}},$$
sending $x_i$ to $x_i$ and $C$ to $D$ via the first projection, is an isomorphism.
\end{lemma}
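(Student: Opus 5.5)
We construct an explicit inverse. Set $E=\left(D\{x_1,\hdots,x_d\}/(p^rx_i-a_i)\right)_{/p\text{-tor}}$. The key input is a description of $D$ as a formal power series ring over $C$. The exact diagonal $D\ra C$ has a section, namely the first projection $C\ra D$. By \ref{liftbase}, for every $k$ the images of $a_1,\hdots,a_d$ form a basis of $I_k/I_k^2$. Since the projections $\frakY\ra\frakX$ are smooth, \eqref{grsym} gives $\op{Sym}^m(I_k/I_k^2)\simeq I_k^m/I_k^{m+1}$. Together these imply that $C/p^k[y_1,\hdots,y_d]\ra D/p^k$, $y_i\mapsto a_i$, induces an isomorphism
\[
C/p^k[[y_1,\hdots,y_d]]\xrightarrow{\sim}\varprojlim_m D/(p^k,I^m).
\]

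\textbf{Step 1 (reduction to completion along $I$).} I would replace $D$ by its completion along $I+(p)$, i.e. by $C[[y_1,\hdots,y_d]]$ with $y_i\mapsto a_i$. This replacement should not change $E$. Indeed, in $E$ each $a_i=p^rx_i$ is topologically nilpotent for the $p$-adic topology, so the map $D\ra E$ is continuous for the $(I+(p))$-adic topology. Since $E$ is $p$-adically complete and $I$ is generated by $a_1,\hdots,a_d$ up to $p$-adic closure, the map $D\ra E$ extends to $\widehat D=C[[y]]$. Moreover, the dilatation of $\widehat D$ coincides with $E$, because $D\ra\widehat D$ is flat and becomes an isomorphism after base change to $D/I$ (compare \ref{propdilflat}). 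This step is where I expect the main obstacle. One must check carefully that the Noetherian completion along $I+(p)$ does not alter the dilatation. I would argue modulo $p^k$: there $a_i^{m}=p^{rm}x_i^m$ vanishes for $rm\ge k$, so the $D/p^k$-algebra $E/p^k$ factors through $D/(p^k,I^{m})$ for $m$ large.

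\textbf{Step 2 (explicit computation).} With $D=C[[y_1,\hdots,y_d]]$ and $a_i=y_i$, we have
\[
C[[y]]\{x\}/(p^rx_i-y_i)\;\simeq\;C\{x\}\quad\text{via } y_i\mapsto p^rx_i.
\]
This holds because substituting $y_i=p^rx_i$ into a power series gives a $p$-adically convergent series in $C\{x\}$, and the inverse map sends $x_i\mapsto x_i$. Modulo $p^k$, both sides are $C/p^k[x]$, so the isomorphism can be checked level by level. The ring $C\{x\}$ is $p$-torsion free since $C$ is flat over $\Z_p$ by \ref{parag86}, so passing to the quotient by $p$-torsion changes nothing. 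The composite $C\{x\}\ra E$ is exactly $\varphi$, which proves that $\varphi$ is an isomorphism.

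\textbf{Step 3 (compatibility).} Finally, I would verify that the identifications send $C$ to $D$ through the first projection, as required in the statement of $\varphi$. This holds because the section $C\ra D$ used in Step 1 was precisely that projection.
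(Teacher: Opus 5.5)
Your argument is correct, given one input discussed in the second paragraph, and it takes a genuinely different route from the paper's.

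\textbf{Comparison with the paper.} The paper proves the two halves separately.
\begin{itemize}
\item Injectivity: it takes $f=\sum\alpha_Jx^J$ in the kernel and writes $p^l\pi(f)=\sum_i g_i(p^rx_i-a_i)$. It then kills $p^l\alpha_J$ degree by degree, using the freeness of $(a^J)_{|J|=n}$ in $I_k^n/I_k^{n+1}$ (\ref{liftbase}, \eqref{grsym}) and the flatness of $C$.
\item Surjectivity: it writes $y=\pi\Delta(y)+(y-\pi\Delta(y))$ with $y-\pi\Delta(y)\in I$ and iterates $p$-adically.
\end{itemize}
You compress the same graded freeness into the isomorphism $D/(p^k,I^m)\simeq (C/p^k)[y]/(y)^m$. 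You then observe that modulo $p^k$ the relations $a_i=p^rx_i$ make $I$ nilpotent. Both directions follow from this one computation, and you get slightly more: $D\{x\}/(p^rx_i-a_i)$ already has no $p$-torsion.

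The detour through $\widehat D=C[[y]]$ and \ref{propdilflat} is unnecessary. Moreover, \ref{propdilflat} is only stated for formal schemes locally of finite type over $\op{Spf}W$, which $\op{Spf}C[[y]]$ is not. For $m\ge k$ you have directly
\[
D\{x\}/(p^rx_i-a_i,p^k)\simeq \bigl(D/(p^k,I^m)\bigr)[x]/(p^rx_i-a_i)\simeq (C/p^k)[x].
\]
Since $D\{x\}/(p^rx_i-a_i)$ is Noetherian, hence $p$-adically complete, $\varphi$ is already an isomorphism onto it. What the paper's split buys is that its injectivity half does not use that the $a_i$ generate $I$.

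\textbf{Caveat.} The first isomorphism above (your Step 1) needs $I=(a_1,\hdots,a_d)$. Your justification, ``$I$ is generated by the $a_i$ up to $p$-adic closure'', does not follow from \ref{liftbase}. Generation of $I/(I^2+p^kI)$ only yields generation of $I$ after inverting some element of $1+I+pD$, that is, on a neighbourhood of the diagonal.

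The paper's surjectivity step silently uses the same fact (compare the remark in \ref{parag77} that the $\eta(m_i)$ locally generate $\calI$). Without it the statement is false. Take $\frakS=\op{Spf}W$ with the trivial log structure, $M=\N$, and $p$ odd. Then $C=W\{t\}$, $D=C\{s,s^{-1}\}$ and $I=(s-1)$, and $a_1=s^2-1$ satisfies the hypotheses of \ref{liftbase}. However, the two maps from the target of $\varphi$ to $W$ given by $t,x\mapsto 0$ and $s\mapsto \pm 1$ agree on the image of $\varphi$ but differ on $s$, so $\varphi$ is not surjective.

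You should therefore record $I=(a_1,\hdots,a_d)$ as an explicit input, obtained by shrinking $\frakY$ around the diagonal, exactly as the paper implicitly does.
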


\begin{proof}
Denote by $\pi:C \ra D$ the morphism corresponding to the first projection $\frakY \ra \frakX$ and $\Delta:D \ra C$ the morphism corresponding to the exact diagonal $\frakX \ra \frakY.$ Note that $\Delta \circ \pi=\op{Id}_{C}.$
For $g\in D \{x_1,\hdots,x_d\}$ and $J=(J_1,\hdots,J_d)\in \N^d,$ set
$x^J=\prod_{j=1}^dx_j^{J_j}$ and $a^J=\prod_{j=1}^da_j^{J_j}$
and denote by $g_J$ the coefficient of $x^J$ in $g.$
Let $f=\sum_{J\in \N^d}\alpha_Jx^J\in \op{Ker}\varphi$ with $\alpha_J\in C$ for all $J\in \N^d.$ Then there exists a positive integer $l$ and $g_1,\hdots,g_d \in D \{x_1,\hdots,x_d\}$ such that
\begin{equation}\label{eq1281}
\sum_{J\in \N^d}p^l\pi(\alpha_J)x^J=\sum_{i=1}^dg_i(p^rx_i-a_i).
\end{equation}
We will prove by induction on $n$ that $p^l\alpha_{J}=0$ for all $J\in \N^d$ such that $|J|=n$ and that
$$
\sum_{\substack{1\le i\le d \\ |J|=n}}g_{i,J}a^{J+\epsilon_i}=0,
$$
where $\epsilon_i\in \N^d$ is the multi-index whose all coefficients are zero except for the $i$th which is equal to 1.
Taking the constant coefficients of both sides in \eqref{eq1281}, we get
$$
p^l\pi (\alpha_0)=-\sum_{i=1}^dg_{i,0}a_i.
$$
Applying $\Delta$ to both sides, we get
$p^l\Delta\circ \pi (\alpha_0)=p^l\alpha_0=0.$
It follows then that
$\sum_{i=1}^dg_{i,0}a_i=0.$
The first step of the induction is thus proven.
Let $n\ge 0$ be an integer and suppose that $p^l\alpha_{J}=0$ for all $J\in \N^d$ such that $|J|=n$ and that
$$
\sum_{\substack{1\le i\le d \\ |J|=n}}g_{i,J}a^{J+\epsilon_i}=0.
$$
Taking the monomials of degree $n+1$ in \eqref{eq1281}, we get
$$
\sum_{|J|=n+1}p^l\pi(\alpha_J)x^J=\sum_{\substack{1\le i\le d\\ |J|=n}}p^rg_{i,J}x^{J+\epsilon_i}-\sum_{\substack{1\le i\le d\\ |J|=n+1}}g_{i,J}a_ix^J.
$$
Taking $x_i=a_i$ for all $1\le i\le d$ and using the induction hypothesis, we get
\begin{equation}\label{eq1284}
\sum_{|J|=n+1}p^l\pi(\alpha_J)a^J=-\sum_{\substack{1\le i\le d\\ |J|=n+1}}g_{i,J}a^{J+\epsilon_i}.
\end{equation}
It follows that, in $I^{n+1}/I^{n+2},$
$\sum_{|J|=n+1}p^l\pi(\alpha_J)a^J=0.$
Let $k$ be a positive integer. In the $\left (C/p^kC\right )$-module $I_k^{n+1}/I_k^{n+2},$ we have
$\sum_{|J|=n+1}p^l\alpha_Ja^J=0.$
By \ref{liftbase}, the family $(a_1,\hdots,a_d)$ is a basis of the $\left (C/p^kC\right )$-module $I_k/I_k^2.$ Then, by \eqref{grsym}, $(a^J)_{|J|=n+1}$ is free in the $\left (C/p^kC\right )$-module $I_k^{n+1}/I_k^{n+2}.$ It follows that
$p^l\alpha_J=0$ for $|J|=n+1.$
Then, by \eqref{eq1284}, we get
$$\sum_{\substack{1\le i\le d\\ |J|=n+1}}g_{i,J}a^{J+\epsilon_i}=0.$$
This concludes the induction. Since $C$ is flat over $\Z_p,$ we get $\alpha_J=0$ for all $J\in \N^d.$ This concludes the injectivity of $\varphi.$ The surjectivity follows from the fact that, for $y\in D,$
$y=\pi \circ \Delta(y)+y-\pi \circ \Delta(y)$
and $y-\pi \circ \Delta(y)\in I.$
\end{proof}

\begin{parag}\label{par1212}
Keep the assumptions and notation of \ref{parag86}. By \ref{formaletaleframelift}, there exists an étale covering $\left ( \frakS_i \ra \frakS \right )_{i\in I},$ and, for every $i\in I,$ an étale covering $\left (\frakX_{ij} \ra \frakX_i:=\frakX\times_{\frakS}\frakS_i \right )_{j\in I_i}$ and charts $P_i \ra M_{ij}$ of $\frakX_{ij} \ra \frakS_i,$ fitting into a commutative diagram
$$
\begin{tikzcd}
\frakX_{ij} \ar{r} \ar{d} & \frakS_i \ar{d} \\
B\langle M_{ij} \rangle \ar{r} \ar{d} & B\langle P_i \rangle \ar{d} \\
\left [Q\right ] \ar{r} & \left [ P\right ], 
\end{tikzcd}
$$
and satisfying:
\begin{enumerate}
\item $P_i^{gp} \ra M_{ij}^{gp}$ is injective and the torsion subgroup of its cokernel has a finite order coprime with $p.$
\item The morphism $\frakX_{ij} \ra \frakS_i\times_{B\langle P_i \rangle} B\langle M_{ij} \rangle,$ induced by $\frakX_{ij}\ra \frakS_i$ and the chart $\frakX_{ij} \ra B\langle M_{ij}\rangle,$ is étale and strict.
\end{enumerate}
For any $i\in I$ and $j\in I_i,$ let $X_{ij}$ and $S_i$ be the special fibers of $\frakX_{ij}$ and $\frakS_i$ respectively. After eventually shrinking $X_{ij},$ we can suppose that $X_{ij} \ra S_i$ has local coordinates $m_1,\hdots,m_d\in \Gamma \left (X_{ij},\calM_{X_{ij}} \right ).$ By \ref{prop69}, the morphism $\calM_{\frakX_{ij}}\ra \calM_{X_{ij}}$ is surjective. Hence, after eventually a furthermore shrinking of $X_{ij},$ there exists $\widetilde{m}_1,\hdots,\widetilde{m}_d\in \Gamma\left (\frakX_{ij},\calM_{\frakX_{ij}}\right )$ that lift $m_1,\hdots,m_d.$ We can then define $\widetilde{\eta}_1,\hdots,\widetilde{\eta}_d\in \Gamma\left (\frakU_i,\Delta^{-1}\calI_{\frakX/\frakS} \right )$ from $\widetilde{m}_d,\hdots,\widetilde{m}_d$ as in \ref{parag77}. This proves that the isomorphism \eqref{isoR} exists étale locally on $\frakX$ and $\frakS.$
\end{parag}

\begin{proposition}\label{Qlogflat}
For any positive integer $n,$ the logarithmic formal scheme $R_{\frakX,n}$ is log flat and flat over $\frakX$ and flat over $\op{Spf}W$ \eqref{dxuflat}.
\end{proposition}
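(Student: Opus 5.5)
The plan is to argue étale locally, using the explicit description of $R_{\frakX,n}$ from \ref{locdescRQ} and \ref{par1212}. All three properties are étale local on $\frakX$ and $\frakS$. Flatness of the underlying formal schemes is étale local because étale morphisms are faithfully flat on coverings. Log flatness is étale local because it is defined through the reductions modulo $p^k$ (\ref{logflatdef}), and log flatness of logarithmic schemes is local for the étale topology. Flatness over $\op{Spf}W$ is the injectivity of multiplication by $p$, which is also local. Moreover, $R_{\frakX,n}$ is compatible with étale base change on $\frakX$ and $\frakS$ by \ref{propdiletale} and \ref{propdilflat}. By \ref{par1212} we may therefore assume that $\frakS=\op{Spf}A$ and that $\frakX$ has a chart as in \ref{locdescRQ}, with lifts $m_1,\hdots,m_d$ of local coordinates. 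Then \eqref{isoR} gives an isomorphism of $\Ox_{\frakX}$-algebras
$$q_{1*}\Ox_{R_{\frakX,n}}\simeq \Ox_{\frakX}\{x_1,\hdots,x_d\},$$
and $q_1$ is affine.

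\textbf{Flatness over $\frakX$.} Reducing modulo $p^k$, we get $\Ox_{\frakX_k}[x_1,\hdots,x_d]$, which is free, hence flat, over $\Ox_{\frakX_k}$ for every $k$. The restricted power series ring $C\{x_1,\hdots,x_d\}$ is the $p$-adic completion of a free $C$-module. Since $C$ is Noetherian, it is flat over $C$, for instance by the local flatness criterion applied to the $p$-adic system, or by the standard fact that $C\{x\}$ is flat over $C$ for Noetherian $C$. Flatness of $q_1$ on the underlying formal schemes follows.

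\textbf{Flatness over $\op{Spf}W$.} $\frakX$ is flat over $\op{Spf}W$ by the assumptions of \ref{parag86}. Since $R_{\frakX,n}\ra\frakX$ is flat, $R_{\frakX,n}$ is also flat over $\op{Spf}W$: multiplication by $p$ remains injective on $C\{x_1,\hdots,x_d\}$, coefficientwise. This also holds directly from its construction as a dilatation (the quotient by $p$-torsion).

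\textbf{Log flatness over $\frakX$.} By construction the logarithmic structure on $R_{\frakX,n}$ is the pullback of that of $\frakY$, via the strict morphism $R_{\frakX,n}\ra\frakY$. The projection $q_1:\frakY\ra\frakX$ is strict by \ref{cor76}, so $q_1:R_{\frakX,n}\ra\frakX$ is strict. Modulo $p^k$, a strict morphism of fine logarithmic schemes whose underlying morphism is flat is log flat: take a chart of $\frakX_k$, which is also a chart of $(R_{\frakX,n})_k$ with the identity morphism of monoids. The criterion of (\cite{Ogus2018} IV 4.1.1) then reduces exactly to flatness of the underlying morphism, which we have just proved.

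\textbf{Main obstacle.} The delicate point is the flatness of $C\{x_1,\hdots,x_d\}$ over $C$ in the $p$-adic formal setting, combined with checking that the local identification \eqref{isoR} really yields flatness of every reduction $(R_{\frakX,n})_k\ra\frakX_k$. This should be handled by the modulo $p^k$ reduction, where the ring is a polynomial ring, together with the definition of flatness over $\op{Spf}W$ from \ref{dxuflat}.
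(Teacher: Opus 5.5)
Your proposal is correct and follows essentially the same route as the paper: the local description \eqref{isoR} (via \ref{locdescRQ} and \ref{par1212}) gives flatness of each reduction $(R_{\frakX,n})_k\ra\frakX_k$, strictness upgrades this to log flatness, and flatness over $\op{Spf}W$ then follows from the flatness of $\frakX$. Your extra remarks are additions the paper leaves implicit: the flatness of $C\{x_1,\hdots,x_d\}$ over the Noetherian ring $C$, and the alternative argument via the quotient by $p$-torsion in the dilatation.
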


\begin{proof}
By \ref{locdescRQ} and \ref{par1212}, for every positive integer $k,$ $R_{\frakX,n,k} \ra \frakX_k$ is flat. Since it is also strict, it is log flat. We conclude by the flatness of $\frakX \ra \op{Spf}W.$
\end{proof}

\begin{proposition}\label{propfree}
Let $n$ be a positive integer and consider $P_{\frakX/\frakS,n}$ as a logarithmic formal scheme over $\frakX$ by the first projection $q_1:P_{\frakX/\frakS,n} \ra \frakX.$
Under the assumptions of \ref{locdescRQ}, we have an isomorphism of $\Ox_{\frakX}$-algebras
$$\Ox_{\frakX}\langle \langle x_1,\hdots,x_d \rangle \rangle \xrightarrow{\sim} \calP_{\frakX/\frakS,n}$$
sending $x_i$ to $\frac{\eta_i}{p^n},$ where $\Ox_{\frakX}\langle \langle x_1,\hdots,x_d \rangle \rangle$ is the $p$-adic completion of the PD algebra $\Ox_{\frakX}\langle x_1,\hdots,x_d \rangle.$
\end{proposition}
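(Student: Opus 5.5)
By \ref{locdescRQ} we already have an isomorphism of $\Ox_{\frakX}$-algebras $\Ox_{\frakX}\{x_1,\hdots,x_d\}\xrightarrow{\sim} q_{1*}\Ox_{R_{\frakX,n}}$ sending $x_i$ to $\eta_i/p^n$. Under it, the ideal of the strict immersion $\frakX\ra R_{\frakX,n}$ corresponds to $(x_1,\hdots,x_d)$. To check this, it is enough to note that the composite $\frakX\ra R_{\frakX,n}\ra\frakY$ is the diagonal. Hence $x_i=\eta_i/p^n$ maps to $0$ in $\Ox_{\frakX}$, because $\frakX$ is $p$-torsion free, and the ideal is generated by the $x_i$ because the quotient by $(x_i)$ is already $\Ox_{\frakX}$. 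The plan is to reduce modulo $p^k$ and compute PD-envelopes levelwise, then pass to the limit.

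Fix $k\ge 1$. Modulo $p^k$ we have $\Ox_{R_{\frakX,n,k}}\simeq \Ox_{\frakX_k}[x_1,\hdots,x_d]$. This holds because the reduction of the $p$-adically completed polynomial ring $\Ox_{\frakX}\{x\}$ modulo $p^k$ is the polynomial ring over $\Ox_{\frakX_k}$. The immersion $\frakX_k\ra R_{\frakX,n,k}$ is the zero section $x_i\mapsto 0$. Its PD-envelope, compatible with the PD-structure on $(p)$ (which extends since everything is flat over $W$ before reduction; I use the convention of \cite{Ber74} I 2.8 as in \ref{parag86}), is the PD-polynomial algebra $\Ox_{\frakX_k}\langle x_1,\hdots,x_d\rangle$. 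This is the standard computation: the PD-envelope of the zero section of affine space over a base $B$ is $B\langle x\rangle$ (\cite{Ber74} I 4.3/2.5 style). The base $\Ox_{\frakX_k}$ carries the PD-ideal $(p)$ with compatible extension, so no extra divided powers of $p$ are adjoined. Hence $\calP_{\frakX/\frakS,n,k}\simeq \Ox_{\frakX_k}\langle x_1,\hdots,x_d\rangle$, with $x_i\mapsto \eta_i/p^n$.

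These isomorphisms are compatible with $k$, by the base-change isomorphism $P_{\frakX/\frakS,n,k}\simeq P_{\frakX/\frakS,n,k+1}\times_{\frakS_{k+1}}\frakS_k$ recalled in \ref{parag86}. Taking the inverse limit over $k$ gives $\calP_{\frakX/\frakS,n}=\varprojlim_k \Ox_{\frakX_k}\langle x\rangle$. Since $\Ox_{\frakX}\langle x\rangle$ is free over $\Ox_{\frakX}$ on the $x^{[J]}$, its reduction mod $p^k$ is $\Ox_{\frakX_k}\langle x\rangle$. So this limit is exactly the $p$-adic completion $\Ox_{\frakX}\langle\langle x_1,\hdots,x_d\rangle\rangle$.

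The main obstacle is the compatibility of the PD-envelope with the PD-structure on $p$. One must check that the envelope of $(x_1,\hdots,x_d)$ in $\Ox_{\frakX_k}[x]$ relative to $(W_k,(p),\gamma)$ does not acquire extra torsion or collapse. I would argue through the universal property: $\Ox_{\frakX_k}\langle x\rangle$ is the PD-envelope of $(x)$ over $\Ox_{\frakX_k}$, and the ideals $(x)$ and $(p)$ intersect trivially in $\Ox_{\frakX_k}[x]$ in the relevant sense, so compatibility with $\gamma$ is automatic. If needed, I would instead apply the flat-base-change property of PD-envelopes along the flat morphism $\Ox_{\frakX_k}\ra\Ox_{\frakX_k}[x]$ (\cite{Ber74} I 2.7.1), reducing to the universal case over $\Z/p^k[x]$.
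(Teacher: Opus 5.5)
Your proposal is correct and follows the paper's proof. The paper also reduces the description \eqref{isoR} of $R_{\frakX,n}$ modulo $p^k$, identifies $\calP_{\frakX/\frakS,n,k}$ with $\Ox_{\frakX_k}\langle x_1,\hdots,x_d\rangle$ as the PD-envelope of the zero section, and passes to the limit; you supply the details it leaves implicit.

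One small correction: $(x)$ and $(p)$ do not intersect trivially in $\Ox_{\frakX_k}[x]$ for $k\ge 2$. Justify compatibility either by checking $\gamma_m(p)z^m=p^mz^{[m]}$ on $p\,\Ox_{\frakX_k}\langle x\rangle_+$, or by your flat base change, which should be along $(\Z/p^k\Z)[x]\ra\Ox_{\frakX_k}[x]$ (flat because $\frakX$ is flat over $W$) rather than $\Ox_{\frakX_k}\ra\Ox_{\frakX_k}[x]$.
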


\begin{proof}
Let $k$ be a positive integer and $\eta_{i,k}$ the image of $\frac{\eta_i}{p^n}$ in $\calP_{\frakX/\frakS,n,k}.$ By the local description of $R_{\frakX,n}$ given in \ref{locdescRQ}, there exists an isomorphism of $\Ox_{\frakX_k}$-algebras
$$
\Ox_{\frakX_k}\langle x_1,\hdots,x_d \rangle \xrightarrow{\sim} \calP_{\frakX/\frakS,n,k}
$$
sending $x_i$ to $\eta_{i,k}.$
The result then follows by taking the projective limit for $k\ge 1.$
\end{proof}

\begin{proposition}\label{propflat}
For any nonnegative integers $n$ and $r,$ the formal scheme $P_{\frakX / \frakS,n}(r)$ is flat and log flat over $\frakX$ \eqref{logflatdef} and flat over $\op{Spf}W$ \eqref{dxuflat}.
\end{proposition}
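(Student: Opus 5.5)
We reduce everything to $r=1$ and then to a local computation. By \ref{rem87} we have $P_{\frakX/\frakS,n}(r)\simeq P_{\frakX/\frakS,n}\times_{\frakX}\cdots\times_{\frakX}P_{\frakX/\frakS,n}$ ($r$ factors), where each factor is viewed over $\frakX$ via the last projection of the previous factor and the first projection of the next. Flatness, and log flatness for strict morphisms, are stable under base change and composition. The projection $P_{\frakX/\frakS,n}(r)\ra\frakX$ onto the first factor is an iterated composition of base changes of $q_1:P_{\frakX/\frakS,n}\ra\frakX$ and $q_2:P_{\frakX/\frakS,n}\ra\frakX$. Using the inverse $\eta$ of the groupoid structure (\ref{Kokoprop126}), which exchanges $q_1$ and $q_2$, it therefore suffices to prove that $q_1:P_{\frakX/\frakS,n}\ra\frakX$ is flat and log flat. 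All these morphisms are strict, since the log structure on $P$ is pulled back from $\frakY(r)$ and the projections of $\frakY(r)$ are strict (\ref{cor76}). Log flatness is then equivalent to flatness of the underlying morphisms modulo each $p^k$. We argue modulo $p^k$ for all $k$ and use the definition \ref{logflatdef}.

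For $n\ge1$ we use \ref{propfree} together with the localisation in \ref{par1212}. Étale locally on $\frakX$ and $\frakS$, the algebra $q_{1*}\calP_{\frakX/\frakS,n,k}$ is isomorphic to the PD-polynomial algebra $\Ox_{\frakX_k}\langle x_1,\hdots,x_d\rangle$. This algebra is free as an $\Ox_{\frakX_k}$-module, with basis the $x^{[J]}$, hence flat. Étale descent of flatness then gives that $P_{\frakX/\frakS,n,k}\ra\frakX_k$ is flat, and log flat because it is strict.

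For $n=0$ the dilatation step is absent, and $P_{\frakX/\frakS,0,k}$ is the PD-envelope of the regular immersion $\frakX_k\ra\frakY_k$ (see \eqref{grsym}, or Kato's log PD-envelope, by \ref{parag86}). Locally we take $\eta_1,\hdots,\eta_d$ as in \ref{parag77}, generating $\calI$ with images forming a basis of $\calI_k/\calI_k^2$ by \ref{liftbase}. The standard computation of PD-envelopes of regular immersions with a smooth retraction (\cite{Ber74} I 4.5.1, or \cite{Kat89} 6.5 in the log setting) then gives $q_{1*}\calP_{\frakX/\frakS,0,k}\simeq\Ox_{\frakX_k}\langle T_1,\hdots,T_d\rangle$. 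This again yields flatness modulo every $p^k$. One should check that the PD-structure on $p$ (compatibility over $W_k$) causes no trouble; this is exactly the extension of scalars statement already used in \ref{parag86}.

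Finally, flatness over $\op{Spf}W$ follows from \ref{dxuflat}. Each $P_{\frakX/\frakS,n}(r)_k$ is flat over $\frakX_k$, and $\frakX_k$ is flat over $W_k$ (\ref{parag86}). Hence $P_{\frakX/\frakS,n}(r)_k$ is flat over $W_k$ for all $k$, which is the criterion of \ref{dxuflat}.

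The main obstacle I expect is the $n=0$ local description. One must make sure that the PD-envelope over $\Z/p^k$, taken compatibly with the canonical PD-structure on $(p)$, is still the free PD-polynomial algebra in the $\eta_i$, and that Kato's log PD-envelope agrees with the one built on the exactified diagonal. I would settle this first through the identification in \ref{parag86} and \cite{Kat89} 6.5.
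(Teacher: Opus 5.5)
Your argument is correct and follows the paper's proof. The local PD-polynomial description of \ref{propfree} gives flatness of the projections modulo every $p^k$, strictness upgrades this to log flatness, and composing with the flatness of $\frakX_k$ over $W/(p^k)$ gives flatness over $\op{Spf}W$ by \ref{dxuflat}. Your reduction from general $r$ to $r=1$ via \ref{rem87} and the groupoid inverse, and your separate treatment of $n=0$ via Kato's local description of the log PD-envelope (\cite{Kat89} 6.5), just make explicit two cases that the paper's one-line appeal to \ref{propfree} (stated only for $r=1$ and $n\ge 1$) leaves implicit.
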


\begin{proof}
By hypothesis, $\frakX$ is flat over $\op{Spf}W.$
By \ref{propfree}, the projections $P_{\frakX/\frakS,n,k}(r) \ra \frakX_k$ are flat for every positive integer $k.$ Since they are also strict, they are log flat. It follows that $P_{\frakX / \frakS,n,k}(r)$ is log flat over $\op{Spec}\left (W/(p^k)\right )$ for every positive integer $k.$ The result follows \eqref{logflatdef}.
\end{proof}

\begin{parag}\label{loccoord}
We will often make the following hypothesis:
suppose that we have local coordinates $m_1,\hdots,m_d\in \Gamma(X,\calM_X)$ for $X\ra S$ (\ref{P2}) that lift to local sections $\widetilde{m}_1,\hdots,\widetilde{m}_d\in \Gamma(\frakX,\calM_{\frakX}).$ For every $1\le i\le d,$ let $m_i'=\pi^{\flat}m_i$ \eqref{diag51} and $\widetilde{\eta}_i=\eta(\widetilde{m}_i) \in \Gamma(\frakX,\Delta^{-1}\calI_{\frakX/\frakS})$ be as defined in \ref{parag77} and $\eta_i$ the reduction of $\widetilde{\eta}_i$ modulo $p.$ Suppose also that we have a chart of $\frakX \ra \frakS$ satisfying the conditions of \ref{locdescRQ}. Let $n$ be a positive integer. By \ref{locdescRQ} and  \ref{propfree}, we have isomorphisms:
\begin{alignat}{2}
\calR_{\frakX,n} & \xrightarrow{\sim} \Ox_{\frakX}\left \{ \frac{\widetilde{\eta}_1}{p^n},\hdots,\frac{\widetilde{\eta}_d}{p^n} \right \}, \\
\calP_{\frakX/\frakS,n} & \xrightarrow{\sim} \Ox_{\frakX}\left \langle \left \langle \frac{\widetilde{\eta}_1}{p^n},\hdots,\frac{\widetilde{\eta}_d}{p^n} \right \rangle \right \rangle. \label{eqPkraz6}
\end{alignat}
Let $\eta_{i(r),n}$ be the image of $\frac{\widetilde{\eta}_i}{p^n}$ in $\calR_n:=\calR_{\frakX,n}/p\calR_{\frakX,n}$. For $n=1,$ we denote $ \eta_{i(r),1}$ simply by $\eta_{i(r)}.$ For any $I\in \N^d,$ we set
$$
\eta^I=\prod_{i=1}^d\eta_i^{I_i},\ \eta_{(r)}^I=\prod_{i=1}^d\eta_{i(r)}^{I_i}.
$$
If $X'$ lifts to a log smooth logarithmic $\frakS$-scheme $\frakX',$ we define $\eta_i',$ $\widetilde{\eta}_i'$ and $\eta_{i(r)}'$ in a similar way from $m_i'.$ We also denote by $(\partial_1',\hdots,\partial_d')$ the dual basis of $(\op{dlog}m_1',\hdots,\op{dlog}m_d').$
\end{parag}

\begin{proposition}\label{HopffrakR}
Let
$
\delta:\calR_{\frakX,n} \ra \calR_{\frakX,n}\otimes_{\Ox_{\frakX}}\calR_{\frakX,n},$ 
$\pi:\calR_{\frakX,n} \ra \Ox_{\frakX}$ and
$\sigma:\calR_{\frakX,n} \ra \calR_{\frakX,n}
$
be the morphisms defining the Hopf algebra structure on $\calR_{\frakX,n}$ \eqref{Kokoprop126}. Under the hypothesis \ref{loccoord},
we have
\begin{alignat*}{2}
\delta(\widetilde{\eta}_i) &= 1\otimes \widetilde{\eta}_i+\widetilde{\eta}_i\otimes 1+\widetilde{\eta}_i\otimes \widetilde{\eta}_i, \\
\pi(\widetilde{\eta}_i) &= 0,\\
\sigma(\widetilde{\eta}_i) &= (1+\widetilde{\eta}_i)^{-1}-1=-\widetilde{\eta}_i+\widetilde{\eta}_i^2-\hdots
\end{alignat*}
\end{proposition}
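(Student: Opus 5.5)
The plan is to transport the identities already proved on $\frakY$ (or its open $\frakU$) in \ref{era2prop611} and \ref{era2prop612} to $R_{\frakX,n}$. The groupoid morphisms of $R_{\frakX,n}$ are induced from those of $\frakY$ via the universal property of dilatations (\ref{erafdil1}, \ref{rem87}, \ref{Kokoprop126}). The section $\widetilde{\eta}_i$ of $\calR_{\frakX,n}$ is the image of $\eta(\widetilde{m}_i)\in\Delta^{-1}\calI$ under the canonical morphism $g:R_{\frakX,n}\ra\frakY$, which factors through $\frakU$ as a map of topological spaces by \ref{erafprop13}. It therefore suffices to check that $\delta$, $\pi$ and $\sigma$ are compatible with $g^{\#}$ and with the corresponding structure maps on $\frakU$.

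The key steps are as follows.

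First, I write down the commutative squares. The $(1,3)$-projection $p_{13}:\frakY(2)\ra\frakY$ lifts uniquely to $R_{\frakX,n}(2)\ra R_{\frakX,n}$, compatibly with the isomorphisms $R_{\frakX,n}\times_{\frakX}R_{\frakX,n}\simeq R_{\frakX,n}(2)$ and $\frakY\times_{\frakX}\frakY\simeq\frakY(2)$ of \ref{rem87}. Likewise the factor exchange $\sigma$ on $\frakY$ lifts to $R_{\frakX,n}$, and the unit $\iota:\frakX\ra R_{\frakX,n}$ composed with $g$ is the exact diagonal $\Delta$.

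Second, I apply $g^{\#}$, or rather $(g\times g)^{\#}$, to the formula of \ref{era2prop611}. The identification of $\frakU(2)$ with $\frakU\times_{\frakX}\frakU$ made there matches the one in \ref{rem87}, so this gives
$$\delta(\widetilde{\eta}_i)=1\otimes\widetilde{\eta}_i+\widetilde{\eta}_i\otimes1+\widetilde{\eta}_i\otimes\widetilde{\eta}_i.$$
Similarly, \ref{era2prop612} gives $\sigma(\widetilde{\eta}_i)=(1+\widetilde{\eta}_i)^{-1}-1$.

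Third, the counit $\pi$ is $\iota^{\#}$. Since $\Delta^{\#}$ kills $\calI$ and $\eta(\widetilde{m}_i)\in\Delta^{-1}\calI$, we get $\pi(\widetilde{\eta}_i)=0$.

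Finally, the geometric series expansion of $\sigma(\widetilde{\eta}_i)$ makes sense in $\calR_{\frakX,n}$: there $\widetilde{\eta}_i=p^n\cdot(\widetilde{\eta}_i/p^n)$, so the series $\sum_k(-\widetilde{\eta}_i)^k$ converges $p$-adically.

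The main obstacle is the bookkeeping in the second step. The formulas on $\frakY$ were proved after applying $\Delta^{-1}$, that is, as identities of sections of inverse-image sheaves along the diagonal, whereas $\delta$ is a morphism of sheaves on $\frakX_{\mathrm{zar}}$ via $\omega$ (\ref{paragomega}). I would handle this by observing that $\omega_*\Ox_{R_{\frakX,n}}$ receives a ring map from $\Delta^{-1}\Ox_{\frakU}$, because the underlying map of $R_{\frakX,n}\ra\frakU$ factors through $|\frakX|$. All the structure maps are compatible with these ring maps, so identities in $\Delta^{-1}\Ox_{\frakU}$ and in $\Delta(2)^{-1}\Ox_{\frakU(2)}$ push forward to identities in $\calR_{\frakX,n}$ and in $\calR_{\frakX,n}\otimes_{\Ox_{\frakX}}\calR_{\frakX,n}$, the latter completed.
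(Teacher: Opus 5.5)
Your proposal is correct and follows the paper's own proof: you pull the identities of \ref{era2prop611} and \ref{era2prop612} back along $R_{\frakX,n}\ra\frakY$, and you get convergence of the series from $\widetilde{\eta}_i\in p^n\calR_{\frakX,n}$. Two details that the paper's one-line proof leaves implicit are made explicit in your version: the counit identity, obtained via $\iota^{\#}$ and the fact that $\Delta^{\#}$ kills $\calI$, and the passage from $\Delta^{-1}\Ox_{\frakU}$ to $\omega_*\Ox_{R_{\frakX,n}}$.
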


\begin{proof}
This is a result of \ref{era2prop611} and \ref{era2prop612}. Note that the series $-\widetilde{\eta}_i+\widetilde{\eta}_i^2-\hdots$ is convergent since $\widetilde{\eta}_i=p^n\frac{\widetilde{\eta}_i}{p^n}\in p\calR_{\frakX,n}.$
\end{proof}

\begin{proposition}\label{HopfR}
Let $n$ be a positive integer, $\calR_n=\calR_{\frakX,n}/p\calR_{\frakX,n}$ and
$
\delta:\calR_n \ra \calR_n\otimes_{\Ox_X}\calR_n,$ 
$\pi:\calR_n \ra \Ox_X$ and 
$\sigma:\calR_n \ra \calR_n
$
the morphisms defining the Hopf algebra structure on $\calR_n.$ Under the hypothesis \ref{loccoord},
we have
\begin{alignat*}{2}
\delta\left (\eta_{i(r),n} \right ) &= 1\otimes \eta_{i(r),n}+\eta_{i(r),n}\otimes 1, \\
\pi\left (\eta_{i(r),n} \right ) &= 0,\\
\sigma\left (\eta_{i(r),n} \right ) &= -\eta_{i(r),n}.
\end{alignat*}
\end{proposition}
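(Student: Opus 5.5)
The plan is to deduce these formulas from Proposition \ref{HopffrakR} by writing $\widetilde{\eta}_i=p^n\frac{\widetilde{\eta}_i}{p^n}$ and reducing modulo $p$. The structure maps $\delta,\pi,\sigma$ on $\calR_n$ are the reductions modulo $p$ of those on $\calR_{\frakX,n}$. Since $\calR_{\frakX,n}$ is $p$-torsion free (Proposition \ref{Qlogflat}), and $\calR_{\frakX,n}\otimes_{\Ox_{\frakX}}\calR_{\frakX,n}$ is locally the ($p$-adically completed) free algebra $\Ox_{\frakX}\{x_1,\hdots,x_d,y_1,\hdots,y_d\}$ by \ref{locdescRQ} and Remark \ref{rem87}, hence also $p$-torsion free, it suffices to compute $\delta,\pi,\sigma$ on $\frac{\widetilde{\eta}_i}{p^n}$ in $\calR_{\frakX,n}$ by dividing the formulas of \ref{HopffrakR} by $p^n$, and then reduce.

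\textbf{Step 1 (coproduct).} From $\delta(\widetilde{\eta}_i)=1\otimes\widetilde{\eta}_i+\widetilde{\eta}_i\otimes1+\widetilde{\eta}_i\otimes\widetilde{\eta}_i$ and $\widetilde{\eta}_i\otimes\widetilde{\eta}_i=p^{2n}\frac{\widetilde{\eta}_i}{p^n}\otimes\frac{\widetilde{\eta}_i}{p^n}$, torsion-freeness gives
$$\delta\left(\frac{\widetilde{\eta}_i}{p^n}\right)=1\otimes\frac{\widetilde{\eta}_i}{p^n}+\frac{\widetilde{\eta}_i}{p^n}\otimes1+p^n\,\frac{\widetilde{\eta}_i}{p^n}\otimes\frac{\widetilde{\eta}_i}{p^n}.$$
Since $n\ge1$, the last term vanishes modulo $p$, which gives the first formula.

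\textbf{Step 2 (counit).} We have $p^n\pi(\frac{\widetilde{\eta}_i}{p^n})=\pi(\widetilde{\eta}_i)=0$ in $\Ox_{\frakX}$, and $\Ox_{\frakX}$ is flat over $\op{Spf}W$, so $\pi(\frac{\widetilde{\eta}_i}{p^n})=0$.

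\textbf{Step 3 (antipode).} We have $\sigma(\widetilde{\eta}_i)=\sum_{k\ge1}(-1)^k\widetilde{\eta}_i^k=\sum_{k\ge1}(-1)^kp^{nk}\left(\frac{\widetilde{\eta}_i}{p^n}\right)^k$, and this series converges $p$-adically. Dividing by $p^n$ gives
$$\sigma\left(\frac{\widetilde{\eta}_i}{p^n}\right)=-\frac{\widetilde{\eta}_i}{p^n}+\sum_{k\ge2}(-1)^kp^{n(k-1)}\left(\frac{\widetilde{\eta}_i}{p^n}\right)^k,$$
and reducing modulo $p$ gives $-\eta_{i(r),n}$.

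The only delicate point is the torsion-freeness of $\calR_{\frakX,n}\otimes_{\Ox_{\frakX}}\calR_{\frakX,n}$ (completed), which is needed to justify the divisions. This follows from the local description as a restricted power series ring over the flat $\Ox_{\frakX}$, via the isomorphism $R_{\frakX,n}\times_{\frakX}R_{\frakX,n}\simeq R_{\frakX,n}(2)$ together with Proposition \ref{Qlogflat}.
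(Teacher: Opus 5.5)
Your proposal is correct and follows the paper's own argument: deduce the formulas from \ref{HopffrakR} by writing $\widetilde{\eta}_i=p^n\frac{\widetilde{\eta}_i}{p^n}$, divide by $p^n$ using $p$-torsion-freeness (\ref{Qlogflat}), and reduce modulo $p$. You are slightly more careful than the paper, which leaves implicit the torsion-freeness of the completed tensor product $\calR_{\frakX,n}\widehat{\otimes}_{\Ox_{\frakX}}\calR_{\frakX,n}$; you justify that explicitly, and it is what the coproduct computation needs.
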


\begin{proof}
This results from \ref{HopffrakR}, the fact that $\widetilde{\eta}_i=p^n\frac{\widetilde{\eta}_i}{p^n}\in p\calR_{\frakX,n}$ and the flatness of $R_{\frakX,n}$ over $\op{Spf}W$ \eqref{Qlogflat}.
\end{proof}

\begin{proposition}\label{HopffrakP}
Let
$
\delta:\calP_{\frakX / \frakS,n} \ra \calP_{\frakX / \frakS,n}\otimes_{\Ox_{\frakX}}\calP_{\frakX / \frakS,n},$ 
$\pi:\calP_{\frakX / \frakS,n} \ra \Ox_{\frakX}$ and 
$\sigma:\calP_{\frakX / \frakS,n} \ra \calP_{\frakX / \frakS,n}$
be the morphisms defining the Hopf algebra structure on $\calP_{\frakX / \frakS,n}.$ Under the hypothesis \ref{loccoord},
we have
\begin{alignat*}{2}
\delta(\widetilde{\eta}_i) &= 1\otimes \widetilde{\eta}_i+\widetilde{\eta}_i\otimes 1+\widetilde{\eta}_i\otimes \widetilde{\eta}_i, \\
\pi(\widetilde{\eta}_i) &= 0,\\
\sigma(\widetilde{\eta}_i) &= (1+\widetilde{\eta}_i)^{-1}-1=-\widetilde{\eta}_i+\widetilde{\eta}_i^2-\hdots
\end{alignat*}
\end{proposition}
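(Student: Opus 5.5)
The plan is to deduce the three formulas for $\calP_{\frakX/\frakS,n}$ from the corresponding ones for $\calR_{\frakX,n}$ (Proposition \ref{HopffrakR}), or directly from \ref{era2prop611} and \ref{era2prop612}. The key input is functoriality: the groupoid structure maps $\alpha$, $\iota$, $\eta$ on $P_{\frakX/\frakS,n}$ are defined in \ref{Kokoprop126} and are all induced from the corresponding maps on $\frakY(r)$ through the canonical morphisms $P_{\frakX/\frakS,n}(r)\ra R_{\frakX,n}(r)\ra \frakY(r)$. These morphisms are compatible with the isomorphisms $P_{\frakX/\frakS,n}(r)\times_{\frakX}P_{\frakX/\frakS,n}(s)\simeq P_{\frakX/\frakS,n}(r+s)$ of \ref{rem87}, which fit into the commutative diagram given there. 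Consequently the canonical ring homomorphism $\rho:\calR_{\frakX,n}\ra \calP_{\frakX/\frakS,n}$, and already $\Ox_{\frakY}\ra \calP_{\frakX/\frakS,n}$, is a morphism of formal Hopf algebras. It intertwines $\delta$, $\pi$ and $\sigma$, with $\rho\otimes\rho$ used on the target of $\delta$.

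Next I would identify $\widetilde{\eta}_i\in\calP_{\frakX/\frakS,n}$ as the image under $\rho$ of $\widetilde{\eta}_i\in \calR_{\frakX,n}$. This is how the element enters the isomorphism \eqref{eqPkraz6}: $\widetilde{\eta}_i/p^n$ is the image of the section $\widetilde{\eta}_i/p^n$ of $\calR_{\frakX,n}$. The formulas then follow by applying $\rho$ to the identities of \ref{HopffrakR}:
\[
\delta(\widetilde{\eta}_i)=\delta(\rho(\widetilde{\eta}_i))=(\rho\otimes\rho)(\delta(\widetilde{\eta}_i))=1\otimes \widetilde{\eta}_i+\widetilde{\eta}_i\otimes 1+\widetilde{\eta}_i\otimes\widetilde{\eta}_i .
\]
The counit is treated the same way, since $\iota:\frakX\ra P_{\frakX/\frakS,n}$ factors through $\frakX\ra R_{\frakX,n}$. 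For the antipode, $\sigma(\widetilde{\eta}_i)=\rho((1+\widetilde{\eta}_i)^{-1}-1)$, and $\rho$ is continuous for the $p$-adic topologies. The series $\sum_{k\ge1}(-\widetilde{\eta}_i)^k$ converges in $\calP_{\frakX/\frakS,n}$ because $\widetilde{\eta}_i=p^n\cdot(\widetilde{\eta}_i/p^n)\in p\,\calP_{\frakX/\frakS,n}$. Hence $1+\widetilde{\eta}_i$ is a unit and the expansion is legitimate.

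The main point requiring care is the compatibility of $\rho$ with the comultiplication. Concretely, the $(1,3)$-projection $P_{\frakX/\frakS,n}(2)\ra P_{\frakX/\frakS,n}$ and the exchange map $\eta$ must be the maps induced by the universal properties of PD-envelopes and dilatations (\ref{erafdil1}) from those on $R_{\frakX,n}(2)$. I expect this to hold by uniqueness in those universal properties: both composites agree after composing with the maps to $\frakY$, and the targets are flat over $\op{Spf}W$ by \ref{propflat}. One must also check that completed tensor products are respected. This works level by level modulo $p^k$, where the PD-envelope map is a morphism of schemes over $R_{\frakX,n}(r)_k$, followed by passing to the limit.
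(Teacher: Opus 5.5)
Your proposal is correct and is essentially the paper's argument. The paper just says the formulas follow from \ref{era2prop611} and \ref{era2prop612}, implicitly transporting them to $\calP_{\frakX/\frakS,n}$ along the canonical map from $\frakY$, since the groupoid structure of \ref{Kokoprop126} is induced from that of $\frakY(r)$. It justifies the antipode series exactly as you do, via $\widetilde{\eta}_i=p^n\frac{\widetilde{\eta}_i}{p^n}\in p\calP_{\frakX/\frakS,n}$. Routing through \ref{HopffrakR} is an inessential detour, and your explicit check that the structure maps are compatible spells out what the paper leaves implicit.
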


\begin{proof}
This is a result of \ref{era2prop611} and \ref{era2prop612}. Note that the series $-\widetilde{\eta}_i+\widetilde{\eta}_i^2-\hdots$ is convergent since $\widetilde{\eta}_i=p^n\frac{\widetilde{\eta}_i}{p^n}\in p\calP_{\frakX / \frakS,n}.$
\end{proof}

\begin{definition}\label{defhpdstrat}
Let $k\ge 1$ and $n\ge 0$ be integers and $\calE$ an $\Ox_{\frakX_k}$-module. Let $P_{n,k}$ be the logarithmic scheme obtained from $P_{\frakX/\frakS,n}$ by reduction modulo $p^k.$ 
An \emph{$n$-stratification on $\calE$} (resp. \emph{$n$-HPD-stratification on $\calE$}) is a stratification (resp. HPD stratification) (\cite{DXU19} 5.4) on $\calE$ with respect to the Hopf algebra corresponding to the groupoid $P_{n,k}$ (\ref{Kokoprop126} and \ref{paragomega}).
We denote by $n$-$\op{MHS}(\frakX_k/\frakS_k)$ the category of $\Ox_{\frakX_k}$-modules equipped with $n$-HPD-stratifications.
\end{definition}

\begin{parag}\label{paragfinalMuzan1}
We end this section by proving that stratifications with respect to the groupoid $P_{\frakX/\frakS,n}$ are equivalent to integrable quasi-nilpotent $p^n$-connections.
For integers $n\ge 0$ and $k\ge 1,$ let $\ov{\calI}_{\frakX/\frakS,n}$ be the PD-ideal of $P_{\frakX/\frakS,n}$ and $\ov{\calI}_{n,k}$ its reduction modulo $p^k.$ For all positive integers $k,$ the universal property of PD-envelopes implies the existence of a morphism
$$
\left ( P_{\frakX/\frakS,n} \right )_k \ra \left ( P_{\frakX/\frakS,0} \right )_k
$$
fitting into the commutative diagram
$$
\begin{tikzcd}
\frakX_k \ar[equal]{r} \ar{d} & \frakX_k \ar{d} \ar{dr} & \\
\left ( P_{\frakX/\frakS,n} \right )_k \ar[bend right=30]{rr} \ar{r} & \frakY_k & \left ( P_{\frakX/\frakS,0} \right )_k, \ar{l}
\end{tikzcd}
$$
where $\frakX_k \ra \frakY_k$ is the exact diagonal immersion and the other morphisms are the canonical ones.
By taking the inductive limit, these morphisms induce a morphism of formal groupoids
\begin{equation}\label{finalMuzan1}
P_{\frakX/\frakS,n} \ra P_{\frakX/\frakS,0}.
\end{equation}
Under the assumption \ref{loccoord}, the corresponding morphism of rings is
$$
\calP_{\frakX/\frakS,0} \ra \calP_{\frakX/\frakS,n},\ \eta_i \mapsto \eta_i=p^n\frac{\eta_i}{p^n}.
$$
The image of $\ov{\calI}_{\frakX / \frakS,0}$ by this morphism is $p^n\ov{\calI}_{\frakX / \frakS , n}.$ It induces, by flatness of $P_{\frakX / \frakS,n}$ over $\op{Spf}\Z_p,$ an isomorphism
$
\ov{\calI}_{\frakX / \frakS,0} \xrightarrow{\sim} \ov{\calI}_{\frakX / \frakS , n}.
$
Reducing modulo $p^k$ and using \eqref{eqtakrizIIsquare}
\begin{equation}\label{finalMuzan3}
\ov{\calI}_{0,k}^{ \{ 1 \} } := \ov{\calI}_{0,k}/\ov{\calI}_{0,k}^{[2]} \xrightarrow{\sim} \omega^1_{\frakX_k / \frakS_k },
\end{equation}
we get an isomorphism of $\Ox_{\frakX_k}$-modules
$$\beta: \omega^1_{\frakX_k/\frakS_k} \xrightarrow{\sim} \ov{\calI}_{n,k}^{\{1\}} := \ov{\calI}_{n,k}/\ov{\calI}_{n,k}^{[2]}.$$
Under the assumption \ref{loccoord}, if $\widehat{m}_i$ and $\widehat{\eta}_i$ are the reductions of $\widetilde{m}_i$ and $\frac{\widetilde{\eta}_i}{p^n}$ modulo $p^k$ respectively, then
\begin{equation}\label{KhamineiBeta}
\beta(\op{dlog}\widehat{m}_i)=\widehat{\eta}_i.
\end{equation}
Let $p_{1,n},p_{2,n}:P_{\frakX / \frakS , n} \ra \frakX$ be the canonical projections. We abusively denote by $p_{1,n}$ and $p_{2,n}$ their reductions modulo $p^k.$ For a local section $a$ of $\Ox_{\frakX_k},$ the differential $da$ corresponds, by \eqref{finalMuzan3}, to $p_{2,0}^{\#}a-p_{1,0}^{\#}a.$ Consider the derivation
$$
d':\begin{array}[t]{clc}
\Ox_{\frakX_k} & \ra & \ov{\calI}_{n,k}^{\{1\}} \\
a & \mapsto & p_{2,n}^{\#}(a)-p_{1,n}^{\#}(a).
\end{array}
$$
By construction of \eqref{finalMuzan1}, the diagram
$$
\begin{tikzcd}
P_{\frakX / \frakS , n} \ar{dr}{p_{i,n}} \ar{d} & \\
P_{\frakX / \frakS , 0} \ar[swap]{r}{p_{i,0}} & \frakX
\end{tikzcd}
$$
is commutative. It follows that, for any local section $a$ of $\Ox_{\frakX_k},$ we have
\begin{equation}\label{finalMuzan4}
p^n\beta(da) = p^n\beta \left ( p_{2,0}^{\#} a - p_{1,0}^{\#}a \right ) = p_{2,n}^{\#} a - p_{1,n}^{\#} a =d'(a).
\end{equation}
\end{parag}

\begin{proposition}\label{propfinalMuzan1}
Keep the notation of \ref{paragfinalMuzan1}. The data of a $p^n$-connection on an $\Ox_{\frakX_k}$-module $\calE$
$$
\nabla:\calE \ra \calE \otimes_{\Ox_{\frakX_k}}\omega^1_{\frakX_k/\frakS_k}
$$
is equivalent to the data of an additive morphism
$$\nabla':\calE \ra \calE \otimes_{\Ox_{\frakX_k}}\ov{\calI}_{n,k}^{\{1\}}$$
satisfying the Leibniz rule
$$\nabla'(ax)=a\nabla'(x)+x\otimes d'(a),$$
for all local sections $a$ and $x$ of $\Ox_{\frakX_k}$ and $\calE$ respectively. This equivalence is given by
$$
\nabla \mapsto \nabla'=\left ( \op{Id}_{\calE} \otimes \beta \right ) \circ \nabla.
$$
\end{proposition}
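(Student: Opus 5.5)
The argument is a direct check. Since $\beta:\omega^1_{\frakX_k/\frakS_k}\xrightarrow{\sim}\ov{\calI}_{n,k}^{\{1\}}$ is an isomorphism of $\Ox_{\frakX_k}$-modules, the map $\nabla\mapsto(\op{Id}_{\calE}\otimes\beta)\circ\nabla$ is a bijection between additive morphisms $\calE\ra\calE\otimes\omega^1_{\frakX_k/\frakS_k}$ and additive morphisms $\calE\ra\calE\otimes\ov{\calI}_{n,k}^{\{1\}}$. Its inverse is $\nabla'\mapsto(\op{Id}_{\calE}\otimes\beta^{-1})\circ\nabla'$. It therefore suffices to show that this bijection exchanges the two Leibniz rules. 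The $f^{-1}\Ox_{\frakS}$-linearity required of a $p^n$-connection in \ref{parKhaminei} then follows automatically in both directions. This is because $\frakS$ has trivial structure over $W$ here, and because both $d$ and $d'$ kill sections coming from $\frakS$: indeed $p_{1,n}^{\#}$ and $p_{2,n}^{\#}$ agree on $f^{-1}\Ox_{\frakS}$.

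For the key step, let $\nabla$ be a $p^n$-connection, so $\nabla(ax)=a\nabla(x)+p^nx\otimes da$. Applying $\op{Id}\otimes\beta$, which is $\Ox_{\frakX_k}$-linear, gives
$$\nabla'(ax)=a\nabla'(x)+x\otimes p^n\beta(da).$$
By \eqref{finalMuzan4} we have $p^n\beta(da)=d'(a)$, which is exactly the required Leibniz rule for $\nabla'$. Conversely, suppose $\nabla'$ satisfies $\nabla'(ax)=a\nabla'(x)+x\otimes d'(a)$. Applying $\op{Id}\otimes\beta^{-1}$ and using $\beta^{-1}(d'(a))=\beta^{-1}(p^n\beta(da))=p^n\,da$, again by \eqref{finalMuzan4}, we recover $\nabla(ax)=a\nabla(x)+p^nx\otimes da$.

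The only point that needs care is \eqref{finalMuzan4} itself, namely the identification $d'=p^n\beta\circ d$. This is where the definition of $\beta$ enters. The morphism $\calP_{\frakX/\frakS,0}\ra\calP_{\frakX/\frakS,n}$ sends $\ov{\calI}_{\frakX/\frakS,0}$ isomorphically onto $p^n\ov{\calI}_{\frakX/\frakS,n}$, and $\beta$ is obtained by dividing by $p^n$, which uses flatness over $\op{Spf}\Z_p$. Together with the compatibility $p_{i,n}=p_{i,0}\circ(P_{\frakX/\frakS,n}\ra P_{\frakX/\frakS,0})$, this yields \eqref{finalMuzan4}. Since this identity is already established in \ref{paragfinalMuzan1}, the remaining verification is purely formal.
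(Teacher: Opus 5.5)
Your argument is correct and is the same as the paper's, whose proof is only the remark that, by \eqref{finalMuzan4}, composing with the isomorphism $\op{Id}_{\calE}\otimes\beta$ exchanges the two Leibniz rules; you have just written out both directions. The $f^{-1}\Ox_{\frakS}$-linearity does follow from $d$ and $d'$ killing $f^{-1}\Ox_{\frakS}$, so your aside that $\frakS$ is trivial over $W$ is unnecessary, and it is not true in this section anyway. One caveat on your recap of \eqref{finalMuzan4}: the PD-morphism $\calP_{\frakX/\frakS,0}\ra\calP_{\frakX/\frakS,n}$ sends $\widetilde{\eta}_i^{[j]}$ to $p^{nj}\bigl(\widetilde{\eta}_i/p^n\bigr)^{[j]}$, so for $n\ge 1$ it maps $\ov{\calI}_{\frakX/\frakS,0}$ into, but in general not onto, $p^n\ov{\calI}_{\frakX/\frakS,n}$; what becomes an isomorphism after dividing by $p^n$ is the induced map on $\ov{\calI}/\ov{\calI}^{[2]}$, and that is all $\beta$ and \eqref{finalMuzan4} need.
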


\begin{proof}
The Leibniz rule satisfied by $\nabla'$ follows from \eqref{finalMuzan4}.
\end{proof}

\begin{definition}
Let $\nabla$ be a $p^n$-connection on an $\Ox_{\frakX_k}$-module $\calE$ and $\nabla':\calE\ra \calE\otimes_{\Ox_{\frakX_k}}\ov{\calI}_{n,k}^{\{1\}}$ the corresponding additive morphism by \ref{propfinalMuzan1}. We say that $\nabla'$ is \emph{integrable} if $\nabla$ is integrable.
\end{definition}

\begin{parag}\label{compdiffopn}
Let $k\ge 1$ and $n\ge 0$ be integers and $\calE$ an $\Ox_{\frakX_k}$-module. Let $P_{n,k}$ be the logarithmic scheme obtained from $P_{\frakX/\frakS,n}$ by reduction modulo $p^k,$ $\ov{\calI}_{n,k}$ the PD-ideal of $P_{n,k},$ $\calP_{n,k}$ the structural ring of $P_{n,k}$ and $\calP_{n,k}^{\{l\}}=\calP_{n,k}/\ov{\calI}_{n,k}^{[l+1]}$ for any integer $l\ge 0.$ We recall how the composition of differential operators
$$
f:\calP_{n,k}^{\{l'\}} \otimes_{\Ox_{\frakX_k}} \calE \ra \calE,\ g:\calP_{n,k}^{\{l\}} \otimes_{\Ox_{\frakX_k}} \calE \ra \calE
$$
is defined. Let 
$$
\delta^{l,l'}:\calP^{\{l+l'\}}_{n,k} \ra \calP^{\{l\}}_{n,k} \otimes_{\Ox_{\frakX_k}} \calP^{\{l'\}}_{n,k}
$$
be the morphism induced by the comultiplication map of the Hopf algebra $\calP_{n,k}.$
The composition $g \circ f$ is defined as the composition
\begin{equation}\label{compjapon}
g\circ f:\calP^{\{l+l'\}}_{n,k}\otimes \calE \xrightarrow{\delta^{l,l'}} \calP^{\{l\}}_{n,k} \otimes \calP^{\{l'\}}_{n,k} \otimes \calE \xrightarrow{\op{Id} \otimes f} \calP^{\{l\}}_{n,k} \otimes \calE \xrightarrow{g} \calE.
\end{equation}
\end{parag}

\begin{lemma}\label{lemjapon}
Let $k\ge 1$ and $n\ge 0$ be integers and suppose \ref{loccoord} is satisfied. For any $1\le i\le d,$ let $\xi_i$ be the image of $\frac{\widetilde{\eta}_i}{p^n} \in \calP_{\frakX / \frakS , n}$ in $\calP_{n,k}:=\calP_{\frakX/\frakS,n}/(p^k)$ and set, for any $I=(I_1,\hdots,I_d)\in \N^d,$
$$\xi^{[I]}=\prod_{i=1}^d\xi_i^{[I_i]}.$$
Let $\delta:\calP_{n,k} \ra \calP_{n,k}\otimes_{\Ox_{\frakX_k}}\calP_{n,k}$ be the comultiplication map of the Hopf algebra $\calP_{n,k}.$ . Then $\left (\xi^{[I]}\right )_{I\in \N^d}$ is a basis for the $\Ox_{\frakX_k}$-module $\calP_{n,k}.$ Let $(\partial_I)_{I\in \N^d}$ be its dual basis. Then, for any $k\in \N,$ $1\le i\le d$ and $I=(I_1,\hdots,I_d)\in \N^d,$ we have
\begin{enumerate}
\item $\partial_I\circ \partial_{\epsilon_i}=\partial_{I+\epsilon_i}+p^nI_i\partial_I,$
\item $\partial_{k\epsilon_i}=\prod_{j=0}^{k-1}(\partial_{\epsilon_i}-p^nj),$
\item $\partial_I=\prod_{i=1}^d\prod_{j=0}^{I_i-1}(\partial_{\epsilon_i}-p^nj),$
\end{enumerate}
where the composition is the composition of differential operators \eqref{compjapon}.
\end{lemma}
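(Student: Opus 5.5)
The basis statement comes straight from \ref{propfree}. Under \ref{loccoord} we have $\calP_{\frakX/\frakS,n}\cong \Ox_{\frakX}\langle\langle x_1,\hdots,x_d\rangle\rangle$ with $x_i\mapsto \widetilde{\eta}_i/p^n$. Reducing modulo $p^k$ turns the completed PD-polynomial algebra into the ordinary PD-polynomial algebra $\Ox_{\frakX_k}\langle x_1,\hdots,x_d\rangle$. Since the completion of a free module with basis $x^{[I]}$ reduces mod $p^k$ to the free module on the same basis, $(\xi^{[I]})_{I}$ is an $\Ox_{\frakX_k}$-basis of $\calP_{n,k}$, and the dual family $(\partial_I)$ makes sense.

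For (1) I would mimic the proof of \ref{prop17} (Ogus 1.1.5), with the comultiplication adjusted for the $p^n$-scaling. By \ref{HopffrakP}, $\delta(\widetilde{\eta}_i)=1\otimes\widetilde{\eta}_i+\widetilde{\eta}_i\otimes 1+\widetilde{\eta}_i\otimes\widetilde{\eta}_i$. Dividing by $p^n$ (legitimate by $p$-torsion freeness, \ref{propflat}) and reducing gives
$$\delta(\xi_i)=1\otimes\xi_i+\xi_i\otimes1+p^n\,\xi_i\otimes\xi_i .$$
Since $\delta$ is a PD-morphism, we get $\delta(\xi_i^{[m]})=\sum_{a+b+c=m}p^{nc}\,\frac{(b+c)!\,(a+c)!}{a!\,b!\,c!}\,\xi_i^{[a+c]}\otimes\xi_i^{[b+c]}$, using the divided-power expansion of a sum, $(\xi\otimes\xi)^{[c]}=c!\,\xi^{[c]}\otimes\xi^{[c]}$, and $\xi^{[u]}\xi^{[v]}=\binom{u+v}{u}\xi^{[u+v]}$. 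Taking products over $i$ gives $\delta(\xi^{[J]})$.

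Next evaluate $\partial_I\circ\partial_{\epsilon_i}$ on $\xi^{[J]}$ using \eqref{compjapon}: apply $\delta$, then $\partial_{\epsilon_i}$ on the right factor, then $\partial_I$ on the left factor. Only the terms whose right factor is $\xi_i^{[1]}$ survive $\partial_{\epsilon_i}$. Coordinate by coordinate, these are:
\begin{itemize}
\item $b=1,\ c=0$, giving left factor $\xi^{[J-\epsilon_i]}$ with coefficient $1$;
\item $b=0,\ c=1$, giving left factor $\xi_i^{[J_i]}$ with coefficient $p^n\frac{J_i!\cdot 1}{(J_i-1)!}=p^nJ_i$.
\end{itemize}
In the other coordinates $j\neq i$ only $b=c=0$ survives. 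Pairing with $\partial_I$ then gives $(\partial_I\circ\partial_{\epsilon_i})(\xi^{[J]})=\delta_{I,J-\epsilon_i}+p^nJ_i\delta_{I,J}$. This is $\partial_{I+\epsilon_i}+p^nI_i\partial_I$ evaluated at $\xi^{[J]}$, proving (1).

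For (2), rewrite (1) as $\partial_{I+\epsilon_i}=\partial_I\circ(\partial_{\epsilon_i}-p^nI_i)$, where $p^nI_i$ means that scalar times the identity. Taking $I=j\epsilon_i$ and inducting on $k$ gives $\partial_{k\epsilon_i}=\prod_{j=0}^{k-1}(\partial_{\epsilon_i}-p^nj)$. Applying (1) repeatedly, one coordinate at a time, builds $\partial_I$ from $\partial_0=\op{Id}$ and gives (3), provided the factors commute.

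Commutativity of $\partial_{\epsilon_i}$ and $\partial_{\epsilon_j}$ follows from the same formula: by (1) applied twice, both $\partial_{\epsilon_j}\circ\partial_{\epsilon_i}$ and $\partial_{\epsilon_i}\circ\partial_{\epsilon_j}$ equal $\partial_{\epsilon_i+\epsilon_j}$ when $i\neq j$. Alternatively, commutativity follows from cocommutativity of $\delta$ in the computed form. The main obstacle is the bookkeeping of the $\delta(\xi^{[J]})$ coefficients, in particular checking that the $c=1$ term has coefficient exactly $p^nJ_i$. There is also a convention point: the order of composition in \eqref{compjapon} determines which tensor factor $\partial_{\epsilon_i}$ acts on. The formula for $\delta(\xi_i)$ is symmetric, so either convention gives the same answer.
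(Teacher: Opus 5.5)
Your proposal is correct and takes essentially the paper's route. The paper derives (1) from the expansion of $\delta(\xi^{[I]})$ in Lemma \ref{lem116f}, which is your computation: divide by $p^n$ using flatness, then expand the divided powers. It then argues on the basis $\xi^{[J]}$ as in \ref{prop17} and obtains (2) and (3) from (1). Your version just fills in the details the paper leaves implicit, including the commutation of the $\partial_{\epsilon_i}$ that (3) needs.
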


\begin{proof}
Using \ref{lem116f}, the proof of the first formula is similar to \ref{prop17}. The other two follow from the first.
\end{proof}

\begin{parag}\label{paragJapon2}
Let $n\ge 0$ and $k\ge 1$ be integers and consider an integrable $p^n$-connection $\calE \ra \calE \otimes_{\Ox_{\frakX_k}}\omega^1_{\frakX_k/\frakS_k}$ on an $\Ox_{\frakX_k}$-module $\calE.$ Let
$$\nabla:\calE \ra \calE \otimes_{\Ox_{\frakX_k}}\ov{\calI}_{n,k}^{\{1\}}$$
be the corresponding connection given by \ref{propfinalMuzan1}. Suppose that the hypothesis \ref{loccoord} is satisfied and consider the notation introduced in \ref{lem116f}. We have a canonical isomorphism
$$\calP_{n,k}\xrightarrow{\sim} \Ox_{\frakX_k}\langle \xi_1,\hdots,\xi_d \rangle.$$
Denote by $\left ( \partial_I \right )_{I\in \N^d}$ the dual basis of $\left ( \xi^I \right )_{I\in \N^d}.$ Let $\epsilon_1$ be the $\calP_{n,k}$-linear morphism defined by
$$
\epsilon_1:\begin{array}[t]{clc}
\calP_{n,k}^{\{1\}} \otimes_{\Ox_{\frakX_k}}\calE & \ra & \calE\otimes_{\Ox_{\frakX_k}}\calP_{n,k}^{\{1\}} \\
1\otimes x & \mapsto & \nabla(x)+x\otimes 1,
\end{array}
$$
and, for $i\le 1\le d,$ the differential operator
$$
\nabla \left ( \partial_{\epsilon_i} \right ): \calP_{n,k}^{ \{ 1 \} } \otimes_{\Ox_{\frakX_k}} \calE \xrightarrow{\epsilon_1} \calE \otimes_{\Ox_{\frakX_k}} \calP_{n,k}^{ \{ 1 \} } \xrightarrow{\op{Id}_{\calE} \otimes \partial_{\epsilon_i}} \calE.
$$
The integrability of $\nabla$ implies that the differential operators $\nabla (\partial_{\epsilon_i})$ pairwise commute (here, we consider composition as differential operators \eqref{compdiffopn}) and we can define $\nabla (\partial_N),$ for any multi-index $N=(n_1,\hdots,n_d)\in\mathbb{N}^d,$ by
$$
\nabla (\partial_N)=\prod_{i=1}^d\prod_{j=0}^{n_i-1} \left ( \nabla (\partial_{\epsilon_i})-p^nj \right ),
$$
where $p^nj$ denotes $p^nj \op{Id}_{\calE},$ considered as a differential operator.
\end{parag}

\begin{definition}\label{pnqn}
Keep the notation of the proof of \ref{paragJapon2}. We say that an integrable $p^n$-connection $\nabla$ on a $\Ox_{\frakX_k}$-module $\calE$ is \emph{quasi-nilpotent} if, for every local section $x$ of $\calE,$ there exists, locally on $\frakX_k,$ an integer $m$ such that
$$
\nabla\left (\partial_I \right )(1 \otimes x)=0
$$
for all $I\in \N^d$ such that $|I|\ge m.$
\end{definition}

\begin{proposition}\label{equivstrat}
Let $k\ge 1$ and $n\ge 0$ be integers and $\calE$ an $\Ox_{\frakX_k}$-module. Let $P_{n,k}$ be the logarithmic scheme obtained from $P_{\frakX/\frakS,n}$ by reduction modulo $p^k,$ $\ov{\calI}_{n,k}$ the PD-ideal of $P_{n,k},$ $\calP_{n,k}$ the structural ring of $P_{n,k}$ and $\calP_{n,k}^{\{l\}}=\calP_{n,k}/\ov{\calI}_{n,k}^{[l+1]}$ for any integer $l\ge 0.$ The following data are equivalent:
\begin{enumerate}
\item A $\calP_{n,k}$-stratification $(\varepsilon_l)_{l\ge 0}$ on $\calE.$
\item A sequence of $\Ox_{\frakX_k}$-linear morphisms $\theta_l:\calE\ra \calE\otimes_{\Ox_{\frakX_k}}\calP_{n,k}^{\{l\}}$ satisfying the following conditions:
\begin{itemize}
\item $\theta_0=\op{Id}_{\calE}.$
\item For any integer $l\ge 0,$ the morphism $\theta_l$ is equal to the composition
$$\calE\xrightarrow{\theta_{l+1}}\calE\otimes_{\Ox_{\frakX_k}}\calP^{\{l+1\}}_{n,k}\ra \calE\otimes_{\Ox_{\frakX_k}}\calP^{\{l\}}_{n,k}$$
where the second arrow is the canonical projection.
\item For all integers $l,l'\ge 0$
\begin{equation}\label{diagstrconn1}
\begin{tikzcd}
\calE\ar{r}{\theta_{l+l'}}\ar{d}{\theta_{l'}} & \calE\otimes_{\Ox_{\frakX_k}}\calP^{\{l+l'\}}_{n,k}\ar{d}{\op{Id}\otimes \delta^{l,l'}}\\
\calE\otimes_{\Ox_{\frakX_k}}\calP^{\{l'\}}_{n,k}\ar{r}{\theta_l\otimes \op{Id}}& \calE\otimes_{\Ox_{\frakX_k}}\calP^{\{l\}}_{n,k}\otimes_{\Ox_{\frakX_k}}\calP^{\{l'\}}_{n,k},
\end{tikzcd}
\end{equation}
where $\delta^{l,l'}:\calP^{\{l+l'\}}_{n,k} \ra \calP^{\{l\}}_{n,k} \otimes_{\Ox_{\frakX_k}} \calP^{\{l'\}}_{n,k}$ is induced by the comultiplication map of the Hopf algebra $\calP_{n,k}.$
\end{itemize}
\item An integrable $p^n$-connection $\nabla:\calE\ra \calE\otimes_{\Ox_{\frakX_k}}\omega^1_{\frakX_k/\frakS_k}.$
\end{enumerate}
There exists also an equivalence between the data of a stratification
$$
\calP_{n,k} \otimes_{\Ox_{\frakX_k}} \calE \ra \calE \otimes_{\Ox_{\frakX_k}} \calP_{n,k}
$$
and an integrable quasi-nilpotent connection
$$
\nabla:\calE \ra \calE\otimes_{\Ox_{\frakX_k}} \omega^1_{\frakX_k/\frakS_k}.
$$
The connection is obtained from the stratification as follows:
$$
\nabla : \begin{array}[t]{clclclc}
\calE & \ra & \calE \otimes \ov{\calI}_{n,k} & \ra & \calE\otimes \ov{\calI}_{n,k}^{\{1\}} & \xrightarrow{\sim} & \omega^1_{\frakX_k/\frakS_k}\\
x & \mapsto & \epsilon(1\otimes x)-x\otimes 1 & & & &
\end{array},
$$
where the second arrow is the canonical projection.
\end{proposition}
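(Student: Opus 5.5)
The plan is to follow Berthelot's classical argument (\cite{Ber74} II 1.4.4 and II 4.1.3), which is already behind \ref{prop39}. The inputs are the local freeness of the PD-algebra from \ref{propfree}, the Hopf structure of \ref{HopffrakP}, and the identification $\ov{\calI}^{\{1\}}_{n,k}\simeq \omega^1_{\frakX_k/\frakS_k}$ via $\beta$ (\ref{paragfinalMuzan1}, \ref{propfinalMuzan1}).

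\textbf{(1)$\Leftrightarrow$(2).} This is formal. Given $\varepsilon_l$, set $\theta_l(x)=\varepsilon_l(1\otimes x)$. Conversely, $\varepsilon_l$ is the $\calP^{\{l\}}_{n,k}$-linear extension of $\theta_l$. Under this dictionary:
\begin{enumerate}
\item the cocycle condition $p_{13}^*\varepsilon=p_{23}^*\varepsilon\circ p_{12}^*\varepsilon$ becomes the diagram \eqref{diagstrconn1}, using the isomorphism $P_{n}(2)\simeq P_n\times_{\frakX}P_n$ of \ref{rem87} and the definition of $\delta$;
\item the fact that each $\varepsilon_l$ is an isomorphism follows from the cocycle condition together with the groupoid inverse $\sigma$, exactly as in \cite{Ber74} II 1.3.
\end{enumerate}

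\textbf{(2)$\Rightarrow$(3).} Take $\theta_1$. By $\theta_0=\op{Id}$, the map $x\mapsto\theta_1(x)-x\otimes1$ lands in $\calE\otimes\ov{\calI}^{\{1\}}_{n,k}$. One checks it satisfies the Leibniz rule with $d'$: $\theta_1$ is linear for the structure through $p_2$, and $p_2^\#a-p_1^\#a=d'(a)$. By \ref{propfinalMuzan1} it therefore gives a $p^n$-connection.

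For integrability I would work locally under \ref{loccoord}. By \ref{HopffrakP}, modulo $\ov{\calI}^{[2]}\otimes\ov{\calI}+\ov{\calI}\otimes\ov{\calI}^{[2]}$ the map $\delta^{1,1}$ sends $\xi_i\xi_j$ to $\xi_i\otimes\xi_j+\xi_j\otimes\xi_i$, up to a term $p^n\xi_i\otimes\xi_i$ when $i=j$. The reason is the identity $\widetilde\eta_i=p^n\xi_i$ combined with $\delta(\widetilde\eta_i)=1\otimes\widetilde\eta_i+\widetilde\eta_i\otimes1+\widetilde\eta_i\otimes\widetilde\eta_i$, divided by $p^n$ thanks to flatness (\ref{propflat}). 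Writing \eqref{diagstrconn1} with $l=l'=1$ on the coefficient of $\xi_i\xi_j$ then gives $\nabla(\partial_{\epsilon_i})\nabla(\partial_{\epsilon_j})=\nabla(\partial_{\epsilon_j})\nabla(\partial_{\epsilon_i})$. Since $[\partial_{\epsilon_i},\partial_{\epsilon_j}]=0$ for dlog coordinates, \eqref{art1371} shows this is equivalent to $K(\nabla)=0$.

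\textbf{(3)$\Rightarrow$(2).} Conversely, given an integrable $p^n$-connection, define $\nabla(\partial_I)$ as in \ref{paragJapon2} and set
$$\theta_l(x)=\sum_{|I|\le l}\nabla(\partial_I)(x)\otimes\xi^{[I]}.$$
This is a finite sum because $\calP^{\{l\}}_{n,k}$ is free on the $\xi^{[I]}$ with $|I|\le l$, by \ref{propfree}. Compatibility with truncation is immediate. Checking $\Ox$-linearity and \eqref{diagstrconn1} reduces, by duality, to the multiplicativity $\nabla(\partial_I\circ\partial_J)=\nabla(\partial_I)\circ\nabla(\partial_J)$. This follows from the formulas of \ref{lemjapon}: both sides are the same polynomial in the commuting operators $\nabla(\partial_{\epsilon_i})$. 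The two constructions are inverse to each other, since a stratification is determined by its dual family of differential operators and these are generated by the $\partial_{\epsilon_i}$ (\ref{lemjapon}(3)). Local constructions glue because they are canonical.

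\textbf{The full-stratification statement.} A stratification $\epsilon$ on the whole of $\calP_{n,k}$ is the same as a compatible family $(\varepsilon_l)$ for which the series $\sum_I\nabla(\partial_I)(x)\otimes\xi^{[I]}$ defines an element of $\calE\otimes\calP_{n,k}$, which is a direct sum over $I$. This means that locally only finitely many $\nabla(\partial_I)(x)$ are nonzero, which is precisely quasi-nilpotence in the sense of \ref{pnqn}. The formula for $\nabla$ given in the statement is just the first-order truncation composed with $\beta^{-1}$.

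The main obstacle I expect is the bookkeeping in \ref{lemjapon}. We have to verify that the $p^n$-twisted composition rule $\partial_I\circ\partial_{\epsilon_i}=\partial_{I+\epsilon_i}+p^nI_i\partial_I$ indeed follows from $\delta(\xi_i)=1\otimes\xi_i+\xi_i\otimes1+p^n\xi_i\otimes\xi_i$. This requires showing that the divided powers $\xi^{[I]}$ behave as expected under $\delta$ in $\calP_{n,k}$, where $p$ may fail to be invertible. I would handle it by first computing in the flat lift $\calP_{\frakX/\frakS,n}$ and then reducing modulo $p^k$.
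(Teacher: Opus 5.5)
Your proposal follows essentially the same route as the paper's appendix proof: linearization for (1)$\Leftrightarrow$(2), the map $x\mapsto\theta_1(x)-x\otimes 1$ together with \ref{propfinalMuzan1} for (2)$\Rightarrow$(3), and $\theta_l(x)=\sum_{|I|\le l}\nabla(\partial_I)(x)\otimes\xi^{[I]}$ with multiplicativity of $\nabla$ and the coproduct formula \ref{lem116f} (computed in the flat lift) for (3)$\Rightarrow$(2). The differences are cosmetic, and you spell out the integrability and quasi-nilpotence steps that the paper leaves implicit. For the composition formula for $\partial_J\circ\partial_I$, you read it off \ref{lem116f} by duality, whereas the paper proves it by induction on $|J|$. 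One small caveat: $\Ox$-linearity of $\theta_l$ requires the higher Leibniz rule for the $\nabla(\partial_K)$, proved by induction from the first-order one. It does not follow from the polynomial identity among the commuting $\nabla(\partial_{\epsilon_i})$ alone. The paper also passes over this point.
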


\begin{proof}
See appendix.
\end{proof}

\section{A stratified interpretation of the logarithmic Shiho functor}

\begin{parag}\label{parag91}

In this section, we consider a perfect field of characteristic $p>0.$ We denote by $W$ its ring of Witt vectors and we equip $\op{Spf}W$ with the trivial logarithmic structure. We consider a morphism $\theta:P\ra Q$ of fs monoids and a log smooth morphism of framed logarithmic $p$-adic formal schemes $(f,\theta):(\frakX,Q)\ra (\frakS,P),$ such that $\frakS$ is log flat locally of finite type over $\op{Spf}W$ (\cite{Ahmed2010} 2.3.13). Note that $f$ is log flat since it is log smooth (\cite{Ogus2018} IV 4.1.2) so, by \ref{Wflat}, the formal schemes $\frakX$ and $\frakS$ are flat over $\op{Spf}W$ \eqref{dxuflat}. We also consider the formal groupoids $R_{\frakX,n}$ and $P_{\frakX/\frakS,n}$ defined in \ref{parag86}. Set $X=\frakX_1$ and $S=\frakS_1.$

Denote by $F_1:X\ra X'$ the exact relative Frobenius of $X$ over $S.$ Let $Q',$ $F_P:P\ra P$ and $F_{Q/P}:Q' \ra Q$ be as defined in \ref{thm410}.
By \ref{thm410}, the logarithmic scheme $X'$ is naturally equipped with a frame $X'\ra [Q']$ such that $(F_1,F_{Q/P})$ is a morphism of framed logarithmic schemes. We suppose that $(X',Q')$ (resp. $F_1:X\ra X'$) lifts to a framed logarithmic $p$-adic formal scheme $(\frakX',Q')$ (resp. to an $(\frakS,P)$-morphism $F:(\frakX,Q)\ra (\frakX',Q')$) such that $\frakX'$ is log smooth and locally of finite type over $\frakS.$
Set
$\frakY=\frakX\times_{\frakS,[Q]}^{\op{log}}\frakX,\ \frakY'=\frakX'\times_{\frakS,[Q']}^{\op{log}}\frakX'$ (see \ref{prop74}).
Denote, for every integers $n\ge 0$ and $k\ge 1,$ by $\frakX_k,$ $\frakS_k$ and $P_{\frakX/\frakS,n,k}$ the logarithmic schemes obtained from $\frakX,$ $\frakS$ and $P_{\frakX/\frakS,n}$  by reduction modulo $p^k,$ as in \ref{prop69}. Set $Y=\frakY_1,$ $P_n=P_{\frakX/\frakS,n,1}$ and $P'_n=P_{\frakX'/\frakS,n,1}.$ Denote by $\calP_n$ and $\calP'_n$ the structural rings of $P_n$ and $P'_n$ respectively.
\end{parag}

\begin{lemma}\label{lem92}
Denote by $p_1,p_2:\frakY\ra \frakX,$ $p_1',p_2':\frakY'\ra \frakX'$ the canonical projections and by $\Delta:\frakX \ra \frakY$ and $\Delta':\frakX' \ra \frakY'$ the strict diagonal immersions. Let $G:\frakY\ra \frakY'$ be the morphism induced by $F:(\frakX,Q)\ra (\frakX',Q').$ Then $G$ induces a morphism of formal groupoids:
\begin{equation}\label{phi}
\varphi:P_{\frakX/\frakS,0}\ra P_{\frakX'/\frakS,1},
\end{equation}
fitting into the commutative diagram
$$
\begin{tikzcd}
P_{\frakX/\frakS,0} \ar{r}{\varphi} \ar{d} & P_{\frakX'/\frakS,1} \ar{d} \\
\frakY \ar{r}{G} & \frakY'
\end{tikzcd}
$$
where the vertical arrows are the canonical ones.
In addition, suppose that \ref{loccoord} is satisfied and let $\varphi^{\#}$ be the morphism of structural rings of $\varphi.$ By \ref{lem12}, there exists a local invertible section $u_i$ of $\calM_{\frakX}$ and a local section $b_i$ of $\Ox_{\frakX}$ such that $F^{\flat}(m_i')=pm_i+u_i$ and $\alpha_{\frakX}(u_i)=1+pb_i.$ Then, for $1\le i\le d,$
\begin{equation}\label{eqKoko1323}
\varphi^{\#} \left (\frac{\widetilde{\eta}'_i}{p} \right ) = \left ((p-1)!\widetilde{\eta}_i^{[p]} + \sum_{k=1}^{p-1}\frac{(p-1)!}{k!(p-k)!}\widetilde{\eta}_i^k \right ) \alpha_{\frakY}\left (p_2^{\flat}u_i-p_1^{\flat}u_i \right )+ \frac{p_2^{\#}b_i-p_1^{\#}b_i}{\alpha_{\frakY}(p_1^{\flat}u_i)}.
\end{equation}
\end{lemma}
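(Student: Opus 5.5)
The plan is to build $\varphi$ level by level modulo $p^k$, using the universal properties of dilatations and PD-envelopes. The explicit formula \eqref{eqKoko1323} will then come from \ref{lemcalc}.

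\emph{Construction.} First I show that the composition $P_{\frakX/\frakS,0}\ra\frakY\xrightarrow{G}\frakY'$ factors through the dilatation $R_{\frakX',1}=\frakY'_{(\frakX'_1/p)}$. By \ref{propflat}, $P_{\frakX/\frakS,0}$ is flat over $\op{Spf}W$, so by \ref{erafdil1}(2) it suffices to check that the reduction $(P_{\frakX/\frakS,0})_1=P_0\ra Y'$ factors through the diagonal $X'\ra Y'$. For this, note the following.
\begin{itemize}
\item $G\circ\Delta=\Delta'\circ F$.
\item Locally, the ideal of $X'$ in $Y'$ is generated by the $\eta'_i$.
\item By \eqref{eq1321} reduced mod $p$, and since $\alpha_{\frakY}(p_2^{\flat}u-p_1^{\flat}u)\equiv 1$ mod $p$ by \eqref{eq1322}, we get $G^{\#}(\eta'_i)\equiv \eta_i^p$ mod $p$.
\item In $P_0$ we have $\eta_i^p=p!\eta_i^{[p]}=0$.
\end{itemize}
Hence the ideal of $X'$ pulls back to $0$ in $P_0$. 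By \ref{era3proplogstrformal} the log structures cause no issue, since all the relevant maps are strict.

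This gives $P_{\frakX/\frakS,0}\ra R_{\frakX',1}$. Reducing mod $p^k$, the ideal of $\frakX'_k$ in $(R_{\frakX',1})_k$ is generated by the $\eta'_i/p$. Their images must lie in the PD-ideal $\ov{\calI}_{0,k}$ and be compatible with its PD-structure. Granting this, the universal property of PD-envelopes gives compatible maps $P_{\frakX/\frakS,0,k}\ra P_{\frakX'/\frakS,1,k}$. Passing to the limit (\ref{prop18}) yields $\varphi$, and the square with $G$ commutes by construction. Compatibility with $\alpha,\iota,\eta$ of \ref{Kokoprop126} follows from the uniqueness parts of the universal properties, since $G$ commutes with the $(1,3)$-projections, the diagonals and the swaps of $\frakY(2),\frakY'(2)$.

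\emph{Formula.} Starting from \eqref{eq1321}, I expand
$$(1+\widetilde\eta_i)^p\alpha_{\frakY}(p_2^{\flat}u_i-p_1^{\flat}u_i)-1,$$
with $\widetilde\eta_i^p=p!\,\widetilde\eta_i^{[p]}$. Write
$$\alpha_{\frakY}(p_2^{\flat}u_i-p_1^{\flat}u_i)-1=\frac{p(p_2^{\#}b_i-p_1^{\#}b_i)}{1+pp_1^{\#}b_i}$$
by \eqref{eq1322}, and note $1+pp_1^{\#}b_i=\alpha_{\frakY}(p_1^{\flat}u_i)$. Every term is then divisible by $p$. By flatness of $P_{\frakX/\frakS,0}$ and $P_{\frakX'/\frakS,1}$ over $\Z_p$ (\ref{propflat}), we may divide by $p$, which yields \eqref{eqKoko1323}.

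This same computation also settles the PD-compatibility needed above. The right-hand side of \eqref{eqKoko1323} lies in $\ov{\calI}_{\frakX/\frakS,0}$, since $\widetilde\eta_i,\widetilde\eta_i^{[p]}$ do and $p_2^{\#}b_i-p_1^{\#}b_i\in\ov{\calI}$. An element of a PD-ideal that admits divided powers is automatically compatible with the PD-structure of the target envelope.

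\emph{Main obstacle.} The main difficulty is the factorization through the PD-envelope mod $p^k$ for $k>1$. The defining ideal there is the ideal of $\frakX'_k$ inside the dilatation, not merely inside $\frakY'_k$, and I must check that its image lands in $\ov{\calI}_{0,k}$. The first approach I would try is to do this étale locally using \ref{locdescRQ}/\ref{propfree}, where the ideal is generated by the $\eta'_i/p$ and the formula above settles it. The local maps then glue by uniqueness.
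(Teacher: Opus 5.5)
Your proposal is correct and follows the paper's proof. You show that $G^{\#}(\widetilde{\eta}'_i)$ vanishes in $P_0$, which is equivalent to the paper's condition that it lies in $p\calP_{\frakX/\frakS,0}$, using \eqref{eq1321}, \eqref{eq1322} and $\widetilde{\eta}_i^p=p!\,\widetilde{\eta}_i^{[p]}$. You then obtain $P_{\frakX/\frakS,0}\ra R_{\frakX',1}$ from flatness and the universal property of the dilatation, get $\varphi$ from the universal property of PD-envelopes, and recover \eqref{eqKoko1323} by dividing by $p$.

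One remark: your ``main obstacle'' is automatic. By the uniqueness in \ref{erafdil1}(2), using that $\frakX$ is flat, the composite $\frakX\ra P_{\frakX/\frakS,0}\ra R_{\frakX',1}$ equals $\frakX\xrightarrow{F}\frakX'\ra R_{\frakX',1}$. Hence the ideal of $\frakX'_k$ lands in $\ov{\calI}_{0,k}$ without any \'etale-local check.
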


\begin{proof}
We have the following commutative diagram
$$
\begin{tikzcd}
P_{\frakX/\frakS,0} \ar{dd} \ar[dashed]{dr} & P_{\frakX'/\frakS,1} \ar{d} \\
 & R_{\frakX',1} \ar{d} \\
\frakY\ar{r}{G}\ar{d}\ar[swap, bend right=30]{dd}{p_i} & \frakY'\ar{d}\ar[bend right=-30]{dd}{p'_i} \\
\frakX^2_{\frakS}\ar{r}{F^2}\ar{d} & \frakX'^2_{\frakS}\ar{d} \\
\frakX\ar{r}{F} & \frakX'
\end{tikzcd}
$$
Let $\frakX \ra \frakU \ra \frakY$ and $\frakX' \ra \frakU' \ra \frakY'$ be factorizations of $\Delta$ and $\Delta'$ respectively into a closed immersion followed by an open one such that $G(\frakU) \subset \frakU'.$
Since $P_{\frakX/\frakS,0}$ is flat over $\op{Spf}\Z_p$ by \ref{propflat}, to prove the existence of the dashed arrow, it is sufficient to prove that the image of the ideal of $X' \ra \frakU'$ in $\calP_{\frakX/\frakS,0}$ is generated by $p.$ For that, we can work étale locally and hence suppose that \ref{loccoord} is satisfied.
Since the local sections $\widetilde{\eta}'_1,\hdots,\widetilde{\eta}'_d$ locally generate the ideal of $\frakX' \ra \frakU'$ \eqref{parag77}, it is sufficient to prove that the image of $\widetilde{\eta}'_i$ in $P_{\frakX/\frakS,0}$ belongs to $p\Ox_{P_{\frakX/\frakS,0}}.$ Denote by $G^{\#}$ the morphism of structure sheaves associated to $G.$  By \ref{lemcalc}, we have
\begin{equation}\label{eq921}
G^{\#}\left (\widetilde{\eta}'_i\right ) = \alpha_{\frakY}(p_2^{\flat}F^{\flat}\widetilde{m}'_i-p_1^{\flat}F^{\flat}\widetilde{m}'_i)-1 
= \left (\widetilde{\eta}_i^p+\sum_{k=1}^{p-1}\begin{pmatrix}p\\k \end{pmatrix}\widetilde{\eta}_i^k+1 \right )\alpha_{\frakY}(p_2^{\flat}u_i-p_1^{\flat}u_i)-1.
\end{equation}
In $P_{\frakX/\frakS,0},$ this is equal to
$$\left (p!\widetilde{\eta}_i^{[p]}+\sum_{k=1}^{p-1}\begin{pmatrix}p\\k \end{pmatrix}\widetilde{\eta}_i^k\right )\alpha_{\frakY}(p_2^{\flat}u_i-p_1^{\flat}u_i)+\alpha_{\frakY}(p_2^{\flat}u_i-p_1^{\flat}u_i)-1.$$
All we need is to prove that
$\alpha_{\frakY}(p_2^{\flat}u_i-p_1^{\flat}u_i)-1$
is a section of $p\Ox_{\frakY},$ which is indeed the case:
\begin{alignat*}{2}
\alpha_{\frakY}(p_2^{\flat}u_i-p_1^{\flat}u_i)-1 &= \frac{\alpha_{\frakY}(p_2^{\flat}u_i)-\alpha_{\frakY}(p_1^{\flat}u_i)}{\alpha_{\frakY}(p_1^{\flat}u_i)} 
= \frac{p_2^{\#}(\alpha_{\frakX}(u_i))-p_1^{\#}(\alpha_{\frakX}(u_i))}{\alpha_{\frakY}(p_1^{\flat}u_i)} \\
&= p \frac{p_2^{\#}b_i-p_1^{\#}b_i}{\alpha_{\frakY}(p_1^{\flat}u_i)}.
\end{alignat*}
We deduce that there exists a unique morphism $P_{\frakX/\frakS,0}\ra R_{\frakX',1}$ making the following diagram commutative
$$
\begin{tikzcd}
P_{\frakX/\frakS,0}\ar{r} \ar{d} & R_{\frakX',1}\ar{d} \\
\frakY\ar{r} & \frakY'
\end{tikzcd}
$$
Then by the universal property of the PD-envelope, there exists a unique morphism $\varphi:P_{\frakX / \frakS,0}\ra P_{\frakX' / \frakS,1}$ making the following diagram commutative
$$\begin{tikzcd}
\frakX\ar{r}\ar{d} & \frakX'\ar{d}\ar{dr} & \\
P_{\frakX / \frakS,0}\ar{r}\ar[bend right=30,swap]{rr}{\varphi} & R_{\frakX',1}& P_{\frakX'/\frakS,1}\ar{l}.
\end{tikzcd}$$
\end{proof}

\begin{remark}\label{rem93}
Using the canonical isomorphisms (\ref{rem87})
$$P_{\frakX / \frakS,0}\times_{\frakX}P_{\frakX / \frakS,0}\xrightarrow{\sim}P_{\frakX / \frakS,0}(2),\ P_{\frakX' / \frakS,1}\times_{\frakX'}P_{\frakX' / \frakS,1}\xrightarrow{\sim}P_{\frakX' / \frakS,1}(2)$$
and the morphism $\varphi:P_{\frakX / \frakS,0} \ra P_{\frakX' / \frakS,1}$ defined in \ref{lem92}, we deduce the existence of a morphism
\begin{equation}\label{eq931}
\varphi(2):P_{\frakX/\frakS,0}(2)\ra P_{\frakX'/\frakS,1}(2).
\end{equation}
\end{remark}

\begin{proposition}\label{Khaminei145}
The morphism $\varphi : P_{\frakX/\frakS,0} \ra P_{\frakX'/\frakS,1}$ \eqref{phi} is log flat.
\end{proposition}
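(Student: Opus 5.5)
Since log flatness of a morphism of $p$-adic logarithmic formal schemes is defined modulo $p^k$ for every $k$ (\ref{logflatdef}), I will show that each reduction $\varphi_k:P_{\frakX/\frakS,0,k}\ra P_{\frakX'/\frakS,1,k}$ is log flat. The strategy is to factor $\varphi$ through the base change of $F$ along a projection, and to show that the remaining factor is flat and strict.

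Let $q_1:P_{\frakX/\frakS,0}\ra\frakX$ and $q'_1:P_{\frakX'/\frakS,1}\ra\frakX'$ be the first projections. By \ref{lem92}, $\varphi$ lies over $F$ through the first projections. Set $\frakZ=\frakX\times_{\frakX'}P_{\frakX'/\frakS,1}$, the fiber product taken along $q'_1$. Since $q'_1$ is strict (the log structure on $P_{\frakX'/\frakS,1}$ is pulled back from $\frakY'$, whose projections are strict by \ref{cor76}), the ordinary fiber product already lies in the fs category. Then $\varphi$ factors as
$$P_{\frakX/\frakS,0}\xrightarrow{\psi}\frakZ\xrightarrow{\mathrm{pr}_2}P_{\frakX'/\frakS,1}.$$
The map $\mathrm{pr}_2$ is a base change of $F$, which is log flat by hypothesis (\ref{thmlogflat}). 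Log flatness is stable under base change, so $\mathrm{pr}_2$ is log flat modulo each $p^k$. The map $\psi$ is strict, because both $P_{\frakX/\frakS,0}$ and $\frakZ$ carry log structures that are strict over $\frakX$. Hence it suffices to prove that the underlying morphism of $\psi$ is flat modulo $p^k$, since a composition of log flat morphisms is log flat.

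Flatness of $\psi$ is étale local, so I would assume \ref{loccoord}. By \ref{propfree}, the structure ring of $\frakZ$ is $\Ox_{\frakX}\langle\langle y_1,\hdots,y_d\rangle\rangle$ with $y_i=\widetilde{\eta}'_i/p$, and that of $P_{\frakX/\frakS,0}$ is $\Ox_{\frakX}\langle\langle \widetilde{\eta}_1,\hdots,\widetilde{\eta}_d\rangle\rangle$. By \eqref{eqKoko1323}, $\psi^{\#}$ is the $\Ox_{\frakX}$-PD-algebra map sending $y_i$ to
$$\left((p-1)!\,\widetilde{\eta}_i^{[p]}+\sum_{k=1}^{p-1}\tfrac{(p-1)!}{k!(p-k)!}\widetilde{\eta}_i^k\right)\alpha_{\frakY}(p_2^{\flat}u_i-p_1^{\flat}u_i)+c_i,$$
where $c_i=(p_2^{\#}b_i-p_1^{\#}b_i)/\alpha_{\frakY}(p_1^{\flat}u_i)$ lies in the PD-ideal. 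Modulo $p$, one has $\alpha_{\frakY}(p_2^{\flat}u_i-p_1^{\flat}u_i)\equiv1$, $(p-1)!\equiv-1$ and $\frac{(p-1)!}{k!(p-k)!}\equiv\frac{(-1)^{k-1}}{k}$. So $y_i\mapsto -\widetilde{\eta}_i^{[p]}+g_i(\widetilde{\eta}_i)+\bar c_i$, where $g_i$ is a polynomial of degree $<p$ with no constant term.

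The key claim is that $\Ox_X\langle\eta_1,\hdots,\eta_d\rangle$ is free over $\Ox_X\langle y_1,\hdots,y_d\rangle$ through $\psi^{\#}$. A candidate basis is $\{\eta^{[J]}\eta^{I}:0\le I_i<p,\ J\}$ arranged suitably, that is, the monomials $\prod_i \eta_i^{r_i}\cdot(\text{PD-monomials in }\eta_i^{[p]})$ with $r_i<p$. In characteristic $p$ one has $\Ox\langle\eta\rangle=\Ox[\eta]/(\eta^p)\otimes\Ox\langle\eta^{[p]}\rangle$. Moreover, $y^{[m]}\mapsto(-\eta^{[p]}+\hdots)^{[m]}$, whose leading term is $\pm\frac{(pm)!}{(p!)^m m!}\eta^{[pm]}$, and this coefficient is a unit. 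A filtration by PD-degree argument, comparing with the associated graded where the lower-order terms $g_i$ and $\bar c_i$ disappear, then gives freeness mod $p$. Since $\bar c_i$ involves $b_i$ pulled back by both projections, I expect it to have positive filtration degree and to drop out of the graded pieces.

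Flatness over $\Z/p^k$ then follows from flatness mod $p$ by the local flatness criterion, since both sides are flat over $\op{Spf}W$ (\ref{propflat}). This lifting step, together with the fact that the algebras are completions rather than finite type, is the main obstacle. I would handle it by working with the finite PD-filtration quotients $\calP^{\{l\}}$ on each side, which are finite free over $\Ox_{\frakX_k}$, and by checking the freeness of the filtered pieces compatibly.
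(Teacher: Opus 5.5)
Your reduction is the paper's: factor $\varphi$ through $\frakX\times_{\frakX'}P_{\frakX'/\frakS,1}$. The second factor is a base change of the log flat $F$, and the first is strict. Via \eqref{eqKoko1323}, this reduces everything to flatness of $\Ox_{\frakX}\langle\langle \widetilde{\eta}'_1/p,\dots,\widetilde{\eta}'_d/p\rangle\rangle\to\Ox_{\frakX}\langle\langle\widetilde{\eta}_1,\dots,\widetilde{\eta}_d\rangle\rangle$. Several of your steps are correct:
\begin{itemize}
\item your congruences for $(p-1)!$, $\frac{(p-1)!}{k!(p-k)!}$ and $\alpha_{\frakY}(p_2^{\flat}u_i-p_1^{\flat}u_i)$;
\item the fact that $\frac{(pm)!}{(p!)^m m!}$ is a unit;
\item the lifting from $p$ to $p^k$, which is not a real obstacle: a map between $\Z/p^k$-flat modules that is an isomorphism mod $p$ is an isomorphism, and no finiteness is needed.
\end{itemize}

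The gap is the term $\bar c_i=\overline{p_2^{\#}b_i-p_1^{\#}b_i}$, which is exactly what the paper has to work for. You only expect it to drop out of a PD-degree filtration, and for the total PD-degree filtration this is false.
\begin{itemize}
\item A priori its expansion $\sum_{K\neq 0}\partial_K(b_i)\eta^{[K]}$ could contain $\eta_i^{[p]}$ itself, which would cancel your leading term.
\item Even the terms that do occur can have total degree larger than $p$. On a torus with $m_j=x_j$ and a lift with $b_1=x_1^{p-1}x_2^{p-1}$, $\bar c_1$ contains $x_1^{p-1}x_2^{p-1}\eta_1^{p-1}\eta_2^{p-1}$, of degree $2p-2>p$ when $p\ge 3$. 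So the graded image of $y_1$ is not $-\eta_1^{[p]}$.
\end{itemize}

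The missing fact is that $\bar c_i$ has no divided-power part: it lies in $A=\bigoplus_{I\in\llbracket 0,p-1\rrbracket^d}\Ox_X\,\eta^I$. The paper gets this as follows:
\begin{itemize}
\item $p_2^{\#}b_i-p_1^{\#}b_i$ comes from the diagonal ideal of $\Ox_{\frakY}$, which is generated by $\widetilde\eta_1,\dots,\widetilde\eta_d$.
\item By \eqref{grsym} it equals $\sum_{I\in\llbracket0,p-1\rrbracket^d}\alpha_I\widetilde\eta^I+c$, with $c$ in the $((p-1)d+1)$-st power of that ideal.
\item Every $\widetilde\eta^K$ with some $K_j\ge p$ lies in $p\calP_{\frakX/\frakS,0}$, because $\widetilde\eta_j^p=p!\,\widetilde\eta_j^{[p]}$. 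Equivalently, $\partial_K$ kills $\Ox_X$ when some $K_j\ge p$, by \ref{prop412} and \ref{prop17}.
\end{itemize}

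With this fact in hand, filter by degree in $z_j=\eta_j^{[p]}$ rather than by total degree, using $\Ox_X\langle\eta\rangle\cong A\otimes_{\Ox_X}\Ox_X\langle z_1,\dots,z_d\rangle$. Then:
\begin{itemize}
\item $y_i\mapsto -z_i+f_i$, with $f_i$ in the augmentation ideal of $A$;
\item $f_i^{[b]}$ has $z$-degree at most $\lfloor b/p\rfloor<b$;
\item hence $y^{[J]}\eta^I\mapsto\pm z^{[J]}\eta^I+(\text{terms of lower $z$-degree})$;
\item so $\Ox_X\langle y\rangle\otimes_{\Ox_X}A\to\Ox_X\langle\eta\rangle$ is an isomorphism.
\end{itemize}
This is the Shiho argument the paper invokes.
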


\begin{proof}
Consider the canonical projections $p_1:P_{\frakX/\frakS,0} \ra \frakX$ and $p_1':P_{\frakX'/\frakX,1} \ra \frakX'.$ The morphism $\varphi$ factors as
$$
\varphi:P_{\frakX/\frakS,0} \xrightarrow{\widetilde{\varphi}} P_{\frakX'/\frakS,1} \times_{\frakX'}\frakX \ra P_{\frakX'/\frakS,1}.
$$
The second morphism is log flat since so is $F:\frakX\ra \frakX',$ it is hence sufficient to prove that the first morphism is log flat. Since it is strict, it is sufficient to prove that it is flat. We may suppose that the hypothesis \ref{loccoord} is satisfied. The morphism of structural rings of $\widetilde{\varphi}$ identifies then with a morphism
$$
\Ox_{\frakX} \left \langle \left \langle \frac{\widetilde{\eta}_1'}{p},\hdots ,\frac{\widetilde{\eta}_d'}{p} \right \rangle \right \rangle \ra \Ox_{\frakX} \left \langle \left \langle \widetilde{\eta}_1,\hdots,\widetilde{\eta}_d\right \rangle \right \rangle.
$$
By \ref{lemcalc}, keeping the same notation of \ref{lemcalc}, this morphism sends $\frac{\widetilde{\eta}_i'}{p}$ to a section of the form
\begin{equation}\label{Khaminei1451}
\left ( (p-1)!\widetilde{\eta}_i^{[p]}+\sum_{k=1}^{p-1} \frac{(p-1)!}{k!(p-k)!}\widetilde{\eta}_i^k \right ) a+xy,
\end{equation}
where $y=p_2^{\#}b-p_1^{\#}b$ is in the ideal $I$ generated by $\widetilde{\eta}_1,\hdots,\widetilde{\eta}_d,$ $x=(1+pp_1^{\#}b)^{-1}$ and $a-1=pxy.$
Since $(\widetilde{\eta}_1,\hdots,\widetilde{\eta}_d)$ is a basis of the $\Ox_{\frakX}$-module $I/I^2$ and by \eqref{grsym}, we prove by induction that any $b\in I$ is written in the form
$$
b=\sum_{I\in \llbracket 0,p-1 \rrbracket^d,\ I\neq 0}\alpha_I\widetilde{\eta}^I+c,
$$
where $\alpha_I\in \Ox_{\frakX}$ and $c\in I^{(p-1)d+1}.$
Since, for $I\in \N^d\setminus \llbracket 0,p-1 \rrbracket^d,$ $\widetilde{\eta}^I\in p\calP_{\frakX/\frakS,0},$ the section \eqref{Khaminei1451} is equal to
$$
-\widetilde{\eta}_i^{[p]} + \sum_{\substack{k\in \llbracket 0,p-1\rrbracket ^d\\ k\neq 0}} u_{i,k}\widetilde{\eta}^k + pv,
$$
for local sections $u_{i,k}$ of $\Ox_{\frakX}$ and $v$ of $\Ox_{\frakX} \left \langle \left \langle \widetilde{\eta}_1,\hdots,\widetilde{\eta}_d\right \rangle \right \rangle.$
The rest of the proof is the same as in (\cite{Shiho} 3.3).
\end{proof}

\begin{lemma}\label{closedimm}
The diagonal immersion $\frakX \ra \frakX \times_{\frakX'}^{\op{log}}\frakX$ and the exact diagonal immersion $\frakX \ra \frakX \times_{\frakX',[Q]}^{\op{log}}\frakX$ are both closed.
\end{lemma}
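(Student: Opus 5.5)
The plan is to realize each diagonal immersion as a section of a first projection, show that this projection is affine (hence separated), and conclude with the standard fact that a section of a separated morphism is a closed immersion. Since all our formal schemes are $p$-adic, closedness can be checked on the reductions modulo $p^k$ for every $k$; there the ordinary scheme-theoretic statement applies, and the results pass to the limit as in \ref{prop18}.

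\emph{The first immersion.} Let $q_1:\frakX\times_{\frakX'}^{\op{log}}\frakX \ra \frakX$ be the first projection. The diagonal $\frakX \ra \frakX\times_{\frakX'}^{\op{log}}\frakX$ is a section of $q_1$, so it suffices to show $q_1$ is affine.
\begin{enumerate}
\item First I show that the underlying morphism of $F$ is affine. Modulo $p$, $F_1:X\ra X'$ is the exact relative Frobenius. It is a homeomorphism, and by the chart description in \ref{PFrob} it is integral on structure sheaves: it is étale locally a base change of $A_1[Q']\ra A_1[Q]$, which is finite because $v$ is Kummer (\ref{propfrob12}), composed with the relative Frobenius of a smooth morphism (\ref{lemX333}). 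Hence $F_1$ is finite, in particular affine. An adic morphism of $p$-adic formal schemes whose reduction modulo $p$ is affine is itself affine.
\item It follows that the projection $\frakX\times_{\frakX'}\frakX\ra\frakX$, from the fiber product taken in logarithmic formal schemes, is affine, being a base change of $F$.
\item By \ref{parag27}, $\frakX\times_{\frakX'}^{\op{log}}\frakX$ is obtained from $\frakX\times_{\frakX'}\frakX$ by the base change $B\langle Q^{sat}\rangle\ra B\langle M\oplus_{M'}M\rangle$, after passing to the integral quotient. Both steps are affine morphisms: the integral quotient is a closed immersion, and the saturation map is induced by a morphism of monoids, so it is affine.
\end{enumerate}
Together these show $q_1$ is affine, hence separated, and the first diagonal is a closed immersion.

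\emph{The second immersion.} By \ref{prop73} and \ref{prop74}, the morphism $\frakX\times_{\frakX',[Q]}^{\op{log}}\frakX\ra \frakX\times_{\frakX'}^{\op{log}}\frakX$ is affine. Composing with the affine projection above, the first projection $\frakX\times_{\frakX',[Q]}^{\op{log}}\frakX\ra\frakX$ is affine. The exact diagonal of \ref{prop76} is a section of this projection, so it is a closed immersion by the same argument.

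\emph{Expected obstacle.} The step I expect to need the most care is the affineness of $F$ on underlying formal schemes. It rests on the finiteness of the exact relative Frobenius, which I read off from the chart description of \ref{thmlogflat}/\ref{lemX333}, together with the standard lifting of affineness from the special fibre to the $p$-adic formal scheme. The remaining steps follow formally from \ref{parag27} and \ref{prop73}.
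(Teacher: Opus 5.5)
Your proof is correct and takes essentially the same approach as the paper: the paper also deduces that $F$ is affine from the affineness of $F_1$, obtains that the diagonal $\frakX\ra\frakX\times_{\frakX'}\frakX$ is closed, and then uses that the comparison maps $\frakX\times_{\frakX'}^{\op{log}}\frakX\ra\frakX\times_{\frakX'}\frakX$ and $\frakX\times_{\frakX',[Q]}^{\op{log}}\frakX\ra\frakX\times_{\frakX'}^{\op{log}}\frakX$ are affine, cancelling the closed immersion against these separated maps (\cite{SP} 07RK), which is the same argument as your ``section of an affine projection'' formulation. Your chart-based justification that $F_1$ is finite, via \ref{PFrob} and \ref{lemX333}, spells out a step the paper only asserts.
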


\begin{proof}
The exact relative Frobenius $F_1:X\ra X'$ is affine hence so is $F:\frakX \ra \frakX'.$ The diagonal immersion $\frakX \ra \frakX \times_{\frakX'} \frakX$ is thus closed. We conclude by the fact that the canonical morphisms $\frakX \times_{\frakX'}^{\op{log}}\frakX \ra \frakX \times_{\frakX'} \frakX$ and $\frakX \times_{\frakX',[Q]}^{\op{log}}\frakX \ra \frakX \times_{\frakX'}^{\op{log}} \frakX$ are affine and (\cite{SP} \href{https://stacks.math.columbia.edu/tag/07RK}{07RK}).
\end{proof}

\begin{lemma}\label{lem96}
The ideal $\calI_{\frakX/\frakX'}$ of the diagonal immersion $\Delta_{\frakX/\frakX'}:\frakX\ra \frakX\times^{\op{log}}_{\frakX',[Q]}\frakX,$ which is closed by \ref{closedimm}, has a unique PD structure and the canonical morphism $\frakX\times_{\frakX',[Q]}^{\op{log}}\frakX \ra \frakX\times_{\frakS,[Q]}^{\op{log}}\frakX=\frakY$ induces a morphism $\psi:\frakX\times_{\frakX',[Q]}^{\op{log}}\frakX\ra P_{\frakX / \frakS,0}.$
\end{lemma}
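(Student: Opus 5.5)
The plan is to work étale locally, where hypothesis \ref{loccoord} holds, and first compute the ideal $\calI_{\frakX/\frakX'}$ explicitly. Set $\frakZ=\frakX\times_{\frakX',[Q]}^{\op{log}}\frakX$ and let $q_1,q_2:\frakZ\ra\frakX$ be the projections. For a local section $m$ of $\calM_{\frakX}$, the exactness of $\Delta_{\frakX/\frakX'}$ gives, by \eqref{era2exactseq}, a section $\eta_{\frakZ}(m)\in\calI_{\frakX/\frakX'}$ with $q_1^{\flat}m+\lambda(1+\eta_{\frakZ}(m))=q_2^{\flat}m$. Since $q_1\circ F$ and $q_2\circ F$ agree over $\frakX'$, we have $q_1^{\flat}F^{\flat}m'=q_2^{\flat}F^{\flat}m'$. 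Using $F^{\flat}(m_i')=pm_i+u_i$ with $\alpha_{\frakX}(u_i)=1+pb_i$ from \ref{lem12}, the computation of \ref{lemcalc} gives
\[
(1+\eta_{\frakZ}(\widetilde m_i))^p\,\frac{1+pq_2^{\#}b_i}{1+pq_1^{\#}b_i}=1 .
\]

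The next step is to show that every section $x$ of $\calI_{\frakX/\frakX'}$ satisfies $x^p\in p\calI_{\frakX/\frakX'}$. To do this, I will show that $\calI_{\frakX/\frakX'}$ is generated by $p$-th-power-type relations coming from the Frobenius. Modulo $p$, $\frakZ_1=X\times_{X',[Q]}^{\op{log}}X$, and since $F_1^{\#}$ contains all $p$-th powers ($a^p=F_1^{\#}\pi^{\#}a$), we get $q_2^{\#}a^p=q_1^{\#}a^p$ in $\Ox_{\frakZ_1}$. Hence $(q_2^{\#}a-q_1^{\#}a)^p=0$ modulo $p$. The same relation holds for the $\eta$'s by the displayed identity above: it gives $\eta^p+\sum_{k=1}^{p-1}\binom pk\eta^k\in p\,\calI$, so $\eta^p\in p\calI_{\frakX/\frakX'}$. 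Since $\frakX\ra\frakZ$ has ideal locally generated by the $\eta_{\frakZ}(\widetilde m_i)$ together with elements $q_2^{\#}a-q_1^{\#}a$, where $\omega^1_{\frakX/\frakX'}$ is spanned by these, every generator $g$ satisfies $g^p\in p\calI_{\frakX/\frakX'}$. Using $(g+h)^p\equiv g^p+h^p \pmod{p\calI}$, since the cross terms carry $\binom pk$, we conclude that $x^p\in p\calI_{\frakX/\frakX'}$ for all $x$.

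Uniqueness and existence of the PD structure then come from flatness. $\frakZ$ is flat over $\op{Spf}W$: its projections are strict and the morphism is log flat by \ref{thmlogflat}, so flatness follows as in \ref{propflat}. Hence $\Ox_{\frakZ}$ is $p$-torsion free, and any PD structure must satisfy $x^{[p]}=x^p/p$, which is unique. Conversely, defining $x^{[n]}=x^n/n!$ makes sense. Writing $n=pq+r$ and iterating $x^p=p y$ with $y\in\calI$ shows that $x^n/n!$ lies in $\calI$, using $v_p(n!)=\sum\lfloor n/p^j\rfloor$ and the recursive use of $y^p\in p\calI$. This is the standard criterion (Berthelot I 1.2.x) for an ideal $I$ in a $\Z_{(p)}$-flat ring with $p\in$ the base PD ideal and $I^{(p)}\subset pI$. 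The axioms hold automatically in a torsion-free ring.

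Finally, $\psi$ comes from the universal property. The morphism $\frakZ\ra\frakY$ over $\frakS$ is compatible with diagonals, so it maps $\calI_{\frakX/\frakS}$ into $\calI_{\frakX/\frakX'}$. Reducing modulo $p^k$, the PD structure on $\calI_{\frakX/\frakX'}$, which is compatible with that on $(p)$, gives a PD thickening $\frakX_k\ra\frakZ_k$ over $\frakY_k$. By the universal property of $(P_{\frakX/\frakS,0})_k$, the PD envelope of $\frakX_k\ra\frakY_k$, we obtain compatible maps $\frakZ_k\ra (P_{\frakX/\frakS,0})_k$. These are strict because the log structures are pulled back from $\frakY$, and they pass to the limit by \ref{prop18}. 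The main obstacle is the local generation of $\calI_{\frakX/\frakX'}$ together with the $p$-th power estimate, namely checking that the generators really satisfy $g^p\in p\calI$.
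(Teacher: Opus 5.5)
Your proposal is correct and follows essentially the paper's route: the relation $F^{\flat}m'=pm+u$ of \ref{lem12} controls $\eta(m)^p$, the identity $F\circ q_1=F\circ q_2$ controls $(q_2^{\#}a-q_1^{\#}a)^p$, log flatness of $F$ gives $p$-torsion-freeness of $\Ox_{\frakZ}$, and the universal property of the PD envelope yields $\psi$. The one difference is that you finish with the standard criterion ($x^p\in p\calI$ for all $x\in\calI$ in a $p$-torsion-free $\Z_{(p)}$-algebra) instead of the paper's explicit induction on $p^r$ proving $k!$-divisibility of powers of generators, which is cleaner; note only that upgrading $g^p\in p\Ox_{\frakZ}$ to $g^p\in p\calI$ uses that $\Ox_{\frakZ}/\calI=\Ox_{\frakX}$ is $p$-torsion free.
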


\begin{proof}
Denote by $q_1,q_2:\frakX\times_{\frakX',[Q]}^{\op{log}}\frakX \ra \frakX$ the canonical projections.
For any local section $m$ of $\calM_{\frakX},$ consider the local section $\mu(m)$ of $\Delta_{\frakX/\frakX'}^{-1} \left (1+\calI_{\frakX/\frakX'} \right )$ defined as in \ref{parag77} and $\eta(m)=\mu(m)-1.$ Recall that we have an exact sequence \eqref{era2exactseq}
$$
0 \ra \Delta_{\frakX/\frakX'}^{-1} \left (1+\calI_{\frakX/\frakX'} \right ) \xrightarrow{\lambda}  \Delta_{\frakX/\frakX'}^{-1}\calM_{\frakX \times_{\frakX'}^{\op{log}}\frakX} \ra \calM_{\frakX}  \ra 0,
$$
and that
$
\Delta_{\frakX/\frakX'}^{-1}q_1^{\flat}m +\lambda \left ( \mu(m) \right ) = \Delta_{\frakX/\frakX'}^{-1}q_2^{\flat}m.
$
It follows that
$$
\left (\Delta_{\frakX/\frakX'}^{-1}\alpha_{\frakX\times_{\frakX',[Q]}^{\op{log}}\frakX} \right ) \left (\Delta_{\frakX/\frakX'}^{-1}q_1^{\flat}m \right )+ \mu(m) = \left (\Delta_{\frakX/\frakX'}^{-1}\alpha_{\frakX\times_{\frakX',[Q]}^{\op{log}}\frakX} \right ) \left ( \Delta_{\frakX/\frakX'}^{-1}q_2^{\flat}m \right ).
$$
From here on out, we drop $\Delta_{\frakX/\frakX'}^{-1}$ to lighten the notation. The ideal $\calI_{\frakX/\frakX'}$ is locally generated by sections of the form $\eta(m).$ 
By \ref{thmlogflat} and \ref{logflatfiber}, the lifting $F:\frakX\ra \frakX'$ is log flat. Then the base change $\frakX \times_{\frakX'}^{\op{log}}\frakX \ra \frakX$ is also log flat. Composing with the log étale morphism $\frakX\times_{\frakX',[Q]}^{\op{log}}\frakX \ra \frakX\times_{\frakX'}^{\op{log}}\frakX,$ we obtain the log flat projection $\frakX\times_{\frakX',[Q]}^{\op{log}}\frakX \ra \frakX.$ This projection is also strict and thus flat on the underlying formal schemes. Since $\frakX$ is flat over $\op{Spf}\Z_p,$ the formal scheme $\frakX \times_{\frakX',[Q]}^{\op{log}}\frakX$ is also flat over $\op{Spf}\Z_p.$ It is thus sufficient to prove that for any
local section $m$ of $\calM_{\frakX}$ and any positive integer $k,$
$\eta(m)^k\in k!\Ox_{\frakX\times_{\frakX',[Q]}^{\op{log}}\frakX}.$
We start by proving that for any local section $x$ of $\Ox_{\frakX}$ and any positive integer $k,$
$$(q_2^{\#}x-q_1^{\#}x)^k \in k!\Ox_{\frakX\times_{\frakX',[Q]}^{\op{log}}\frakX}.$$
We proceed by induction on $k:$
for $1\le k\le p-1,$ the result is obvious since $k$ is invertible in $\Ox_{\frakX\times_{\frakX'}\frakX}.$ We now check the case $k=p:$
let $x_1$ be the image of $x$ in $\Ox_X.$ Consider the morphism
$\pi:X'\ra X$
defined in \eqref{diag51}. Locally, $\pi^{\#}x_1$ admits a lifting $x'$ to $\Ox_{\frakX'}.$ There exists a local section $a$ of $\Ox_{\frakX}$ such that $F^{\#}(x')=x^p-pa.$
\begin{alignat*}{2}
0 &= q_2^{\#} (F^{\#}(x'))-q_1^{\#}(F^{\#}(x')) \\
&= q_2^{\#}(x)^p-q_1^{\#}(x)^p-p(q_2^{\#}(a)-q_1^{\#}(a)) \\
&= (q_2^{\#}(x)-q_1^{\#}(x))^p+\sum_{k=1}^{p-1}\begin{pmatrix}p\\k \end{pmatrix}q_2^{\#}(x)^k(q_2^{\#}(x)-q_1^{\#}(x))^k-p(q_2^{\#}(a)-q_1^{\#}(a)) \\
&= (q_2^{\#}(x)-q_1^{\#}(x))^p-pb(q_2^{\#}(x)-q_1^{\#}(x))-p(q_2^{\#}(a)-q_1^{\#}(a)),
\end{alignat*}
where $b=\sum_{k=1}^{p-1}\frac{(p-1)!}{k!(p-k)!}q_2^{\#}(x)^k\left ( q_2^{\#}(x)-q_1^{\#}(x) \right )^{k-1}.$
So
$$(q_2^{\#}(x)-q_1^{\#}(x))^p=pb(q_2^{\#}(x)-q_1^{\#}(x))+p(q_2^{\#}(a)-q_1^{\#}(a)).$$
Now let $k>p$ and suppose the result is true for all integers $i\le k.$ Consider the largest integer $r$ such that $p^r \le k.$
\begin{alignat*}{2}
(q_2^{\#}(x)-q_1^{\#}(x))^{p^r} &= (pb(q_2^{\#}(x)-q_1^{\#}(x))+p(q_2^{\#}(a)-q_1^{\#}(a)))^{p^{r-1}} \\
&= \sum_{l=0}^{p^{r-1}}\begin{pmatrix}p^{r-1}\\ l \end{pmatrix}p^{p^{r-1}}b^l(q_2^{\#}(x)-q_1^{\#}(x))^l(q_2^{\#}(a)-q_1^{\#}(a))^{p^{r-1}-l}.
\end{alignat*}
By the induction hypothesis, the $l^{\text{th}}$ term of this sum is a local section of
$$
p^{p^{r-1}}\begin{pmatrix}p^{r-1}\\ l \end{pmatrix}l!(p^{r-1}-l)!\Ox_{\frakX\times_{\frakX'}^{\op{log}}\frakX}=p^{p^{r-1}}(p^{r-1})!\Ox_{\frakX\times_{\frakX'}^{\op{log}}\frakX}=\left (p^r\right )!\Ox_{\frakX\times_{\frakX'}^{\op{log}}\frakX}.
$$
If $k=p^r,$ we get that $\left (q_2^{\#}(x)-q_1^{\#}(x)\right )^k$ is a local section of $k!\Ox_{\frakX\times_{\frakX'}^{\op{log}}\frakX}.$ If $k>p^r,$ then
$$
\left (q_2^{\#}(x)-q_1^{\#}(x)\right )^k=\left (q_2^{\#}(x)-q_1^{\#}(x)\right )^{p^r}\left (q_2^{\#}(x)-q_1^{\#}(x)\right )^{k-p^r}
$$
is a local section of $\left (p^r\right )! \left (k-p^r \right )!\Ox_{\frakX\times_{\frakX'}^{\op{log}}\frakX}=k!\Ox_{\frakX\times_{\frakX'}^{\op{log}}\frakX}.$

Now let $m,\ m',\ u$ and $b$ as in \ref{lem12} and $\mu(F^{\flat}m'),$ $\mu(m)$ and $\eta(m)$ be as in \ref{parag77}. Since $F \circ q_1 =F \circ q_2$, we have $\mu(F^{\flat}m')=1.$
Then
$$
1=\mu(F^{\flat}m')=\mu(pm+u)=\mu(m)^p\mu(u).
$$
Since $u$ is invertible in $\calM_{\frakX},$ we also have
\begin{alignat*}{2}
\mu(u) &= \alpha_{\frakX \times_{\frakX'}^{\op{log}} \frakX} \left (q_2^{\flat}u-q_1^{\flat}u \right ) = \frac{\alpha_{\frakX \times_{\frakX'}^{\op{log}} \frakX} \left (q_2^{\flat}u \right )}{\alpha_{\frakX \times_{\frakX'}^{\op{log}} \frakX} \left (q_1^{\flat}u \right )} = \frac{q_2^{\#} \alpha_{\frakX} \left (u \right )}{q_1^{\#}\alpha_{\frakX} \left (u \right )} = \frac{1+pq_2^{\#}b}{1+pq_1^{\#}b}.
\end{alignat*}
We deduce that
$
1 = (\eta(m)+1)^p \frac{1+pq_2^{\#}b}{1+pq_1^{\#}b}.
$
Then
\begin{alignat*}{2}
(\eta(m)+1)^p&=(1+pq_1^{\#}b)(1-pq_2^{\#}b+p^2(q_2^{\#}b)^2-\hdots ) = 1-p(q_2^{\#}b-q_1^{\#}b)\sum_{n=1}^{\infty}(-pq_2^{\#}b)^{n-1}\\
&=1+pc(q_2^{\#}b-q_1^{\#}b),
\end{alignat*}
where $c=\sum_{n=1}^{\infty}(-pq_2^{\#}b)^{n-1}.$
It follows that
\begin{alignat*}{2}
\eta(m)^p &= -\sum_{k=1}^{p-1}\begin{pmatrix}p\\k \end{pmatrix}\eta(m)^k+pc(q_2^{\#}b-q_1^{\#}b) = pd\eta(m)+pc(q_2^{\#}b-q_1^{\#}b).
\end{alignat*}
We then prove by induction on $k,$ in the same way we did above, that
$
\eta(m)^k \in k!\Ox_{\frakX\times_{\frakX',[Q]}^{\op{log}}\frakX}.
$
The canonical morphism $\frakX\times_{\frakX',[Q]}^{\op{log}}\frakX\ra \frakY=\frakX\times_{\frakS,[Q]}^{\op{log}}\frakX$
and the fact that the ideal of the diagonal immersion $\frakX\ra \frakX\times_{\frakX'}^{\op{log}}\frakX$ has a unique PD-structure, imply the existence of a PD-morphism $\psi:\frakX\times_{\frakX',[Q]}^{\op{log}}\frakX\ra P_{\frakX / \frakS,0}$ fitting into the following commutative diagram
$$
\begin{tikzcd}
 & & P_{\frakX / \frakS,0} \ar{d} \\
 & \frakX\times^{\op{log}}_{\frakX',[Q]}\frakX \ar{ur}{\psi} \ar{r} \ar{d} & \frakY\ar{d} \\
\frakX \ar{ur} \ar{r} & \frakX\times_{\frakX'}^{\op{log}}\frakX \ar{r} & \frakX \times_{\frakS}^{\op{log}}\frakX.
\end{tikzcd}
$$
\end{proof}

\begin{lemma}\label{lem95}
The diagonal immersion $X\ra X \times_{X'}^{\op{log}} X$ is exact and so $X\times_{X',[Q]}^{\op{log}}X=X\times^{\op{log}}_{X'}X.$
\end{lemma}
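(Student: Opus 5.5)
To prove that the diagonal $X\ra X\times_{X'}^{\op{log}}X$ is exact, I will show that the induced map on characteristic monoids is an isomorphism at every geometric point. Exactness is the key point: once the diagonal is exact (hence strict), the Kato--Saito exactification \ref{prop45}, applied to the morphism $X\ra X'$ with its frames, is unnecessary. Indeed, the log étale morphism $X\times_{X',[Q]}^{\op{log}}X\ra X\times_{X'}^{\op{log}}X$ is then an isomorphism.

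The first step is to compute the fs fiber product using the chart $v:Q'\ra Q$ of $F$ from \ref{PFrob}; this is Kummer by \ref{propfrob12}. Locally, $X\times_{X'}^{\op{log}}X$ has chart $(Q\oplus_{Q'}Q)^{sat}$. The diagonal corresponds to the sum map $\Sigma:(Q\oplus_{Q'}Q)^{sat}\ra Q$. Since $v$ is Kummer, $Q^{gp}/Q'^{gp}$ is killed by a power of $p$. For any $(x,y)$ in the saturation, some multiple $p^N(x,y)$ lies in the image of $Q'$ up to units. The characteristic monoid of $(Q\oplus_{Q'}Q)^{sat}$ is therefore $\ov{Q}\times G$, where $G$ is the finite $p$-group $Q^{gp}/Q'^{gp}$ embedded via $(x,-x)$. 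The torsion part corresponds to units of the fiber product.

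The next step is to identify these torsion units. Elements $(x,-x)$ with $x\in Q^{gp}$ become units in the saturation, because some multiple is trivial. Hence the characteristic monoid $\ov{\calM}$ of the fs fiber product at a point over the diagonal is $\ov{Q}$ (after localizing at the face). The map $\Sigma$ then induces the identity $\ov{Q}\ra\ov{Q}$, so the diagonal is strict.

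For a cleaner version, I would argue stalkwise. At a geometric point $\ov{x}$, set $M=\ov{\calM}_{X,\ov{x}}$ and $M'=\ov{\calM}_{X',\ov{x}'}$. By \ref{FKummer}, $M'\ra M$ is Kummer, so the stalk of $\ov{\calM}$ of the fs product at the diagonal point is the sharpening of $(M\oplus_{M'}M)^{sat}$. I will show that this sharpening is $M$ via the sum map. Kernel elements $(a,-b)$ with $a=b$ in $M^{gp}$ have a multiple lying in $M'^{gp}$, which becomes zero in the pushout group, so they are torsion and thus units. Surjectivity is clear. Strictness then gives exactness, since an immersion that is strict is exact.

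The main obstacle is the sharpening claim. It requires that, at points of the diagonal, the torsion classes $(x,-x)$ genuinely map to units in $\calM$ of the fs fiber product. This holds because $\Z[\text{torsion}]$ over characteristic $p$ is a local Artinian factor when the torsion is a $p$-group. I would verify this via the explicit chart $X\times_{A_1[Q'']}\ldots$ and the structure $A_1[(Q\oplus_{Q'}Q)^{sat}]$.

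Finally, the equality $X\times_{X',[Q]}^{\op{log}}X=X\times_{X'}^{\op{log}}X$ follows by the construction in \ref{prop73}. The frame-theoretic fiber product is the fiber product of $X\times_{X'}^{\op{log}}X$ with $[Q]$ over $[Q\oplus Q]$. Since the diagonal already lands in the locus where the two frames agree (strictness), the defining presheaf condition is automatic, so the canonical map is an isomorphism.
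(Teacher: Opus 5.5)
Your exactness argument is sound and rests on the same fact as the paper's. The kernel of $\Sigma^{gp}$ on $(M\oplus_{M'}M)^{gp}$ is torsion; the paper phrases this as $\op{coker}(i\circ p_2)^{gp}$ being torsion. Hence this kernel lies in the units of the saturation. The paper turns this into exactness of the chart map via (\cite{Ogus2018} I 4.2.1.2), while you work on stalks at diagonal points. There is one small step to add: at $\Delta(\ov x)$ the characteristic stalk is the quotient of $(M\oplus_{M'}M)^{sat}$ by the face $\Sigma^{-1}(0)$, because invertibility at $\Delta(\ov x)$ is detected after restriction to $X$. Your torsion computation then identifies this face with the unit group. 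Your ``main obstacle'' is not one: $G$ already lies in the unit group of the monoid $(M\oplus_{M'}M)^{sat}$, and monoid homomorphisms send units to units, so no Artinian argument is needed there.

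The gap is in the second assertion, and in the unsupported ``Indeed'' of your first paragraph. Put $Z=X\times_{X'}^{\op{log}}X$ with projections $q_1,q_2$, and let $\phi:Q\ra\Gamma(X,\ov{\calM}_X)$ be the frame. The canonical map $X\times_{X',[Q]}^{\op{log}}X\ra Z$ is a monomorphism of presheaves. So it is an isomorphism if and only if the $Z$-point $\op{Id}_Z$ lies in the subpresheaf, i.e.\ if and only if $q_1^*\phi=q_2^*\phi$ in $\Gamma(Z,\ov{\calM}_Z)$, on all of $Z$. That the diagonal lands where the frames agree is a tautology. Strictness at the points $\Delta(\ov x)$ only gives $q_1^{\flat}=q_2^{\flat}$ on $\ov{\calM}$ over some open neighbourhood of the diagonal, and nothing in your argument covers the rest of $Z$.

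One clean fix uses that $F^{\flat}(\pi^{\flat}m)=F_X^{\flat}(m)=pm$ and $F\circ q_1=F\circ q_2$. Together these give $p\,q_1^{\flat}(m)=p\,q_2^{\flat}(m)$ for every local section $m$ of $\calM_X$. Since $\ov{\calM}_{Z,\ov z}$ is sharp and saturated, $\ov{\calM}_{Z,\ov z}^{gp}$ is torsion free, so $q_1^{\flat}=q_2^{\flat}$ on $\ov{\calM}$ at every point of $Z$. This even proves the equality first, after which exactness follows from \ref{prop45} applied to the framed morphism $(F,F_{Q/P})$ of \ref{thm410}.

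Alternatively, follow the paper's implicit route. On a chart $P\ra M$ as in \ref{thmlogflat}, the map $\Sigma:(M\oplus_{M'}M)^{sat}\ra M$ is exact as a map of monoids: it is the projection $M\oplus G\ra M$. So the exactification performed in the construction of \ref{prop73} changes nothing. A third option is to show that $Z$ is homeomorphic to $X$ via the diagonal, so that a neighbourhood of the diagonal is all of $Z$. That is where your observation that $\mathbb{F}_p[G]$ is local Artinian genuinely belongs.
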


\begin{proof}
let $Z$ be the fiber product of $F_1:X \ra X'$ by itself in the category of fine logarithmic schemes.
Since the class of exact morphisms is stable by base change in the category of fine logarithmic schemes (\cite{Ogus2018} III 2.2.1.3), the second projection $p:Z\ra X$ is exact. Etale locally on $X$ and $S,$ the morphism $X\ra S$ admits a chart $\theta:P\ra M$ such that $P^{gp} \ra M^{gp}$ is injective, the torsion subgroup of its cokernel has a finite order coprime with $p$ and the morphism $X \ra S\times_{A[P]}A[M],$ induced by $X\ra S$ and $X\ra A[M],$ is strict and smooth. Let $M'$ and $F_{M/P}:M' \ra M$ be as defined in \ref{PFrob}. The exact relative Frobenius $F_1:X\ra X'$ admits $F_{M/P}:(x,y)\mapsto px+\theta^{gp} (y)$ as a chart.
So, étale locally, the second projection $p_2:Z \ra X$ (resp. the canonical morphism $X\times_{X'}^{\op{log}}X\ra Z$) has a chart given by
\begin{alignat*}{2}
q_1 &: M\ra (M\oplus_{M'}M)^{int},\ x\mapsto (x,0) \quad
( \text{resp.}\ i : (M\oplus_{M'}M)^{int}\ra (M\oplus_{M'}M)^{sat} ).
\end{alignat*}
Let
$\Delta:(M\oplus_{M'}M)^{sat}\ra M,\ (x,y)\mapsto x+y$
be a local chart of the diagonal immersion $X\ra X\times_{X'}^{\op{log}}X.$ It is sufficient to prove that $\Delta$ is strict or, equivalently, exact.
We thus have the following commutative diagram of monoids :
$$\begin{tikzcd}
(M\oplus_{M'}M)^{sat}\ar{r}{\Delta} & M\\
(M\oplus_{M'}M)^{int}\ar{u}{i} & \\
M\ar{u}{p_2}\ar[swap]{uur}{\op{Id}_M} &
\end{tikzcd}$$
corresponding to
$$
\begin{tikzcd}
X\ar{r}\ar{ddr} & X\times_{X'}^{\op{log}}X\ar{d} & \\
 & Z\ar{d}{p_2} \\
 & X
\end{tikzcd}
$$
The composition $\Delta\circ i\circ p_2$ is exact and $(M\oplus_{M'}M)^{sat}$ is saturated, so, by (\cite{Ogus2018} I 4.2.1.2), it is sufficient to prove that $\op{coker}(i\circ p_2)^{gp}$ is a torsion group. The morphism $(i\circ p_2)^{gp}$ is
$M^{gp}\ra M^{gp}\oplus_{M'^{gp}}M^{gp},\ x\mapsto (0,x).$
Let $(x,y)\in M^{gp}\oplus_{M'^{gp}}M^{gp}.$ Then
$p(x,y)=(\varphi(x,0),py)=(0,py-\varphi(x,0))$
and that concludes the proof.
\end{proof}

\begin{corollaire}\label{cor1014}
The diagonal immersion $\frakX \ra \frakX \times_{\frakX'}^{\op{log}} \frakX$ is exact and so $\frakX \times_{\frakX',[Q]}^{\op{log}}\frakX = \frakX \times_{\frakX'}^{\op{log}} \frakX.$
\end{corollaire}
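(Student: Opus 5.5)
We reduce to \ref{lem95} by reduction modulo powers of $p$. The claim is that the strict immersion $\frakX\ra\frakX\times_{\frakX'}^{\op{log}}\frakX$ is exact. Exactness is a statement about the induced morphism of characteristic monoids $\ov{\calM}$ at geometric points. Since all the logarithmic structures involved are pulled back along the strict closed immersions $X\ra\frakX$ and $X\ra\frakX\times_{\frakX'}^{\op{log}}\frakX$, these monoids coincide with those of the special fibers. So it suffices to show that the special fiber of $\frakX\times_{\frakX'}^{\op{log}}\frakX$ is $X\times_{X'}^{\op{log}}X$, with the diagonal reducing to the diagonal of \ref{lem95}.

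For this, we use the local description of fs fiber products in \ref{parag27}. Locally we take charts $M\ra\Gamma(\frakX,\calM_{\frakX})$ and $M'\ra\Gamma(\frakX',\calM_{\frakX'})$ compatible with $F$. These reduce to charts of $X$ and $X'$, because reduction modulo $p$ is strict. The fs fiber product $\frakX\times_{\frakX'}^{\op{log}}\frakX$ is then $(\frakX\times_{\frakX'}\frakX)\times_{B\langle M\oplus_{M'}M\rangle}B\langle (M\oplus_{M'}M)^{sat}\rangle$. Reducing modulo $p$ commutes with these fiber products, so we get $X\times_{X'}^{\op{log}}X$.

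One subtlety is that the chart of $F$ lifting $F_{M/P}$ need not be exactly $F_{M/P}$. I would avoid this issue altogether by noting that exactness at $\ov x$ depends only on $\ov{\calM}_{\ov x}$. The map of characteristic monoids of the fs fiber product is computed from the $\ov{\calM}$'s of the factors, and for each factor these are identical to those of the special fibers. Hence the diagonal of $\frakX\times_{\frakX'}^{\op{log}}\frakX$ is exact wherever the diagonal $X\ra X\times_{X'}^{\op{log}}X$ is, and \ref{lem95} gives the latter.

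Once the diagonal is known to be exact, the equality $\frakX\times_{\frakX',[Q]}^{\op{log}}\frakX=\frakX\times_{\frakX'}^{\op{log}}\frakX$ follows formally. The morphism $g:\frakX\times_{\frakX',[Q]}^{\op{log}}\frakX\ra\frakX\times_{\frakX'}^{\op{log}}\frakX$ is log étale (\ref{prop74}) and affine. Moreover, both $\frakX\times_{\frakX',[Q]}^{\op{log}}\frakX$ and $\frakX\times_{\frakX'}^{\op{log}}\frakX$ are exact thickenings of $\frakX$ through which the diagonal factors. For the comparison I would check the equality of presheaves directly. A $T$-point of $\frakX\times_{\frakX'}^{\op{log}}\frakX$ is a pair $(a,b)$, and we must show that the two induced maps $Q\ra\Gamma(T,\ov{\calM}_T)$ agree. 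Exactness of the diagonal means that $\ov{\calM}$ of the fiber product equals $\ov{\calM}_{\frakX}$ along the diagonal, and the fiber product is supported on the diagonal topologically. It follows that $q_1^{\flat}$ and $q_2^{\flat}$ agree on $\ov{\calM}$, hence so do their compositions with the frame $Q$. Thus $g$ is an isomorphism of presheaves, and therefore an isomorphism by representability.

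The main obstacle is this last topological step. It requires knowing that $\frakX\ra\frakX\times_{\frakX'}^{\op{log}}\frakX$ is a homeomorphism, so that $\ov{\calM}$ can be compared everywhere and not just along the diagonal. I would obtain it from $F_1$ being purely inseparable, hence radicial, so that $X\times_{X'}X\ra X$ is a homeomorphism, and then pass to the fs and formal versions, which have the same underlying space. I would also use \ref{KhamineiExact}, which says that $q_2^{\flat}m-q_1^{\flat}m$ is a unit.
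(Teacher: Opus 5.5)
Your exactness argument is the paper's. Since $X\ra\frakX$ and $X\times^{\op{log}}_{X'}X\ra\frakX\times^{\op{log}}_{\frakX'}\frakX$ are strict, exactness of the diagonal can be read off on special fibers, where it is Lemma \ref{lem95}. The paper gives no separate argument for $\frakX\times^{\op{log}}_{\frakX',[Q]}\frakX=\frakX\times^{\op{log}}_{\frakX'}\frakX$. Your presheaf argument fills this in, and its structure is right: the $[Q]$-condition only cuts out the $T$-points $(a,b)$ at which the two pulled-back frames agree. If the diagonal is strict and a homeomorphism, then $q_1^{\flat}=q_2^{\flat}$ on $\ov{\calM}$, so every $T$-point qualifies.

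The step that fails as you justify it is the homeomorphism. An fs fiber product need not have the same underlying space as the scheme-theoretic one, even when the underlying morphism of schemes is radicial. Take the standard log point $\Sp k$ (log structure $\N$) over a field of characteristic $\neq 2$, with the Kummer morphism given by $\N\ra\N$, $1\mapsto 2$. The underlying morphism is the identity, but saturation adjoins $\Z/2$, and the fs self-product has underlying scheme $\Sp(k\times k)$. So radiciality of $F_1$ alone gives you nothing. What you need is that the adjoined torsion is $p$-torsion in characteristic $p$.

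Here is how to supply it. Take a chart $v:M'\ra M$ of $F_1$ as in \ref{PFrob}; then $v$ is Kummer and $pM\subset v(M')$. By Lemma \ref{dirsum2}, $(M\oplus_{M'}M)^{sat}\cong M\oplus G$ with $G=M^{gp}/v^{gp}(M'^{gp})$ killed by $p$. Hence $p\,(M\oplus_{M'}M)^{sat}\subset(M\oplus_{M'}M)^{int}$, so $A_1[(M\oplus_{M'}M)^{sat}]\ra A_1[M\oplus_{M'}M]$ is radicial. Its base change $X\times^{\op{log}}_{X'}X\ra X\times_{X'}X$ is therefore radicial too. The diagonal is then a surjective closed immersion, and your argument goes through.

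Alternatively, you can bypass the topology, and even exactness. Suppose $a,b:T\ra\frakX$ satisfy $F\circ a=F\circ b$. Lemma \ref{lem12} gives $p\,a^{\flat}(m)=p\,b^{\flat}(m)$ in $\ov{\calM}_T$ for local sections $m$ of $\ov{\calM}_{\frakX}$, with étale sites identified through the universal homeomorphism $F$. Since $T$ is fs, $\ov{\calM}_T^{gp}$ is torsion-free, so the two frames $Q\ra\Gamma(T,\ov{\calM}_T)$ coincide.
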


\begin{proof}
The exactness of the diagonal immersion $\frakX \ra \frakX \times_{\frakX'}^{\op{log}} \frakX$ follows from \ref{lem95} and the fact that the canonical morphisms $X \ra \frakX$ and $X\times_{X'}^{\op{log}}X \ra \frakX \times_{\frakX'}^{\op{log}}\frakX$ are strict.
\end{proof}

\begin{lemma}\label{lem98}
The composition $\varphi\circ\psi:\frakX\times_{\frakX',[Q]}^{\op{log}}\frakX\ra P_{\frakX' / \frakS,1},$ of the morphisms $\varphi$ and $\psi$ defined in \ref{lem92} and \ref{lem96}, factors through the canonical immersion $\frakX'\ra P_{\frakX' / \frakS,1}.$
\end{lemma}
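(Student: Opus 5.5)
We need to show that $\varphi\circ\psi$ factors through $\iota':\frakX'\ra P_{\frakX'/\frakS,1}$. Write $\frakZ=\frakX\times_{\frakX',[Q]}^{\op{log}}\frakX$. The plan is to reduce everything to a statement about the structure sheaves of $p$-adic formal schemes flat over $\op{Spf}\Z_p$, and then settle that statement by a local computation with the generators $\widetilde{\eta}'_i/p$. Factorization of $\frakZ\ra P_{\frakX'/\frakS,1}$ through the closed immersion $\iota'$ (which is strict, and topologically a homeomorphism) means exactly two things: the PD-ideal $\ov{\calI}_{\frakX'/\frakS,1}$ is sent to zero in $\Ox_{\frakZ}$, and the composite is compatible with the projections to $\frakX'$. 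The log structures cause no trouble, since $P_{\frakX'/\frakS,1}$ carries the log structure pulled back along $\iota'$-compatible strict maps, so \ref{era3proplogstrformal} applies. For the projections, both $p_1'\circ\varphi\circ\psi$ and $p_2'\circ\varphi\circ\psi$ equal $F\circ q_i$, by the commutative diagrams of \ref{lem92} and \ref{lem96}, and $F\circ q_1=F\circ q_2$ because $\frakZ$ lives over $\frakX\times_{\frakX'}\frakX$. So the candidate factorization is $\frakZ\xrightarrow{q_1}\frakX\xrightarrow{F}\frakX'$, and we need to show that $\iota'\circ F\circ q_1=\varphi\circ\psi$.

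To compare the two maps, I would work étale locally under hypothesis \ref{loccoord}. By \ref{propfree}, $\calP_{\frakX'/\frakS,1}$ is topologically generated as an $\Ox_{\frakX'}$-algebra, via $p_1'$, by the PD-polynomials in $\widetilde{\eta}'_i/p$. Moreover $\frakZ$ is flat over $\op{Spf}\Z_p$, as shown in the proof of \ref{lem96}, so $\Ox_{\frakZ}$ is $p$-torsion free. It therefore suffices to check that $(\varphi\circ\psi)^{\#}(\widetilde{\eta}'_i)=0$ in $\Ox_{\frakZ}$. Divided powers then vanish automatically: $p^k k!$ times the $k$-th divided power of $\widetilde{\eta}'_i/p$ is a power of the image of $\widetilde{\eta}'_i$, and torsion-freeness forces the divided power itself to be zero.

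Now $(\varphi\circ\psi)^{\#}(\widetilde{\eta}'_i)$ is the image of $G^{\#}(\widetilde{\eta}'_i)$ under $\frakZ\ra\frakY$. By \ref{lemcalc} this image equals $\mu(F^{\flat}\widetilde{m}'_i)-1$ computed on $\frakZ$, that is, the element $\alpha(q_2^{\flat}F^{\flat}\widetilde{m}'_i-q_1^{\flat}F^{\flat}\widetilde{m}'_i)-1$. Since $F\circ q_1=F\circ q_2$ as morphisms of log formal schemes, $q_1^{\flat}F^{\flat}=q_2^{\flat}F^{\flat}$, so the difference is $0$ and this element is $1-1=0$. This is exactly the identity $\mu(F^{\flat}m')=1$ already used in the proof of \ref{lem96}.

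The main obstacle is bookkeeping: checking that the $\mu$ defined on $\frakY'$, pulled back along $\frakZ\ra\frakY\xrightarrow{G}\frakY'$, agrees with the $\mu$ defined directly on $\frakZ$. I would settle this by functoriality of the exact sequences \eqref{era2exactseq} under the morphisms of exact diagonal immersions $\frakX\ra\frakZ$, $\frakX\ra\frakY$ and $\frakX'\ra\frakY'$, as in the proof of \ref{lemcalc}.
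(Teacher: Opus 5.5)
Your proposal is correct and follows essentially the same route as the paper. Both reduce the lemma to showing that $(\varphi\circ\psi)^{\#}$ kills the ideal of the strict immersion $\iota'$, and both obtain this from the functoriality of the exact sequences \eqref{era2exactseq} together with $q_1^{\flat}F^{\flat}=q_2^{\flat}F^{\flat}$, which forces $\mu(m')\mapsto 1$. One step the paper leaves implicit: the PD-ideal is generated by the divided powers of $\widetilde{\eta}'_i/p$, not by $\widetilde{\eta}'_i$ itself. Your explicit use of the $p$-torsion-freeness of $\Ox_{\frakZ}$ to pass from $(\varphi\circ\psi)^{\#}(\widetilde{\eta}'_i)=0$ to the vanishing of those divided powers fills that gap.
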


\begin{proof}
Let $r_1,r_2:P_{\frakX'/\frakS,1}\ra \frakX'$ and $q_1,q_2:\frakX\times_{\frakX',[Q]}^{\op{log}}\frakX\ra \frakX$ be the canonical projections and $\Delta_{\frakX/\frakX'}:\frakX \ra \frakX \times_{\frakX',[Q]}^{\op{log}} \frakX$ the exact diagonal immersion. We denote by $\calI_{\frakX/\frakX'}$ its ideal \eqref{closedimm}. By \ref{lem96}, $\Delta_{\frakX/\frakX'}$ is a universal homeomorphism.
By \ref{parag86}, the formal scheme $P_{\frakX'/\frakS,1}$ is constructed as the inductive limit of $(P_{\frakX'/\frakS,1,k})_{k\ge 1},$ where $P_{\frakX'/\frakS,1,k}$ is the PD-envelope of $\frakX'_k\ra R_{\frakX',1,k}.$ Since $\Ox_{P_{\frakX'/\frakS,1,k}}$ is killed by $p^k,$ the ideal $\ov{\calI}_{\frakX'/\frakS,1,k}$ of the canonical immersion $\frakX'\ra P_{\frakX'/\frakS,1,k}$ is a nilideal and so $\frakX'_k\ra P_{\frakX'/\frakS,1,k}$ is a universal homeomorphism, hence so is $\iota':\frakX'\ra P_{\frakX'/\frakS,1}.$ To prove the lemma, it is sufficient to prove that the image of the ideal $\ov{\calI}_{\frakX'/\frakS,1}$ of $\iota',$ by $(\varphi\circ \psi)^{\#},$ vanishes. 
We have a commutative diagram with exact rows \eqref{era2exactseq}
$$
\begin{tikzcd}
0 \ar{r} & F^{-1}\iota'^{-1}\left (1+\ov{\calI}_{\frakX'/\frakS,1} \right ) \ar{r}{F^{-1}\lambda'} \ar{d}{\Delta_{\frakX/\frakX'}^{-1}\left (\varphi\circ \psi \right )^{\#}} & F^{-1}\iota'^{-1} \calM_{P_{\frakX'/\frakS,1}} \ar{d}{\Delta_{\frakX/\frakX'}^{-1}\left (\varphi \circ \psi \right )^{\flat}} \ar{r} & F^{-1}\calM_{\frakX'} \ar{d}{F^{\flat}} \ar{r} & 0 \\
0 \ar{r} & \Delta_{\frakX/\frakX'}^{-1} \left (1+\calI_{\frakX/\frakX'} \right ) \ar{r}{\lambda} & \Delta_{\frakX/\frakX'}^{-1}\calM_{\frakX \times_{\frakX'}^{\op{log}}\frakX} \ar{r} & \calM_{\frakX}  \ar{r} & 0
\end{tikzcd}
$$
We identify the étale sites of $\frakX$ and $\frakX'$ via $F,$ those of $\frakX$ and $\frakX\times_{\frakX',[Q]}^{\op{log}}\frakX$ via $\Delta_{\frakX/\frakX'}$ and those of $\frakX$ and $P_{\frakX'/\frakS,1}$ via $\iota'.$ We hence drop $F^{-1},$ $\iota'^{-1}$ and $\Delta_{\frakX/\frakX'}^{-1}$ from our future notations. Let $m'$ be a local section of $\calM_{\frakX'}$ and $\mu(m')$ be as defined in \ref{parag77}. We just have to prove that
$$
\left ( \varphi \circ \psi \right )^{\#} (\mu(m')-1)=0.
$$
For that, we just have to prove that
$
\lambda \left (\left ( \varphi \circ \psi \right )^{\#} (\mu(m')) \right )=0.
$
We have
$
r_1^{\flat}m'+\lambda'(\mu(m'))=r_2^{\flat}m'.
$
Applying $\left ( \varphi \circ \psi \right )^{\flat},$ by the commutativity of
$$
\begin{tikzcd}
\frakX \times_{\frakX'}^{\op{log}}\frakX \ar{r}{\varphi \circ \psi} \ar{d}{q_i} & P_{\frakX'/\frakS,1} \ar{d}{r_i} \\
\frakX \ar{r}{F} & \frakX',
\end{tikzcd}
$$
we get
$
q_1^{\flat}F^{\flat}m'+\left ( \varphi \circ \psi \right )^{\flat}\lambda'(\mu(m'))=q_2^{\flat}F^{\flat}m'.
$
Since $q_1^{\flat}F^{\flat}=q_2^{\flat}F^{\flat},$ we deduce that
$$
\left ( \varphi \circ \psi \right )^{\flat}\lambda'(\mu(m'))=0.
$$
It follows that
$
\lambda \left ( (\varphi \circ \psi )^{\#}(\mu(m')) \right )=\left ( \varphi \circ \psi \right )^{\flat}\lambda'(\mu(m'))=0.
$
\end{proof}

\begin{parag}
Lemmas \ref{lem92}, \ref{lem96} and \ref{lem98} and corollary \ref{cor1014}  imply the existence of the following commutative diagram
\begin{equation}\label{totdiag1}
\begin{tikzcd}
 & & \frakX'\ar{d}{\iota'} \\
 & P_{\frakX/\frakS,0} \ar{r}{\varphi} \ar{d} & P_{\frakX'/\frakS,1}\ar{d} \\
\frakX\times_{\frakX'}^{\op{log}}\frakX \ar{ur}{\psi} \ar[bend right=-30]{uurr} \ar{r} & \frakX\times^{\op{log}}_{\frakS,[Q]}\frakX \ar{r}{F^2} & \frakX'\times_{\frakS,[Q']}^{\op{log}}\frakX'
\end{tikzcd}
\end{equation}
The reduction of the previous diagram modulo $p$ yields the following commutative diagram
\begin{equation}\label{totdiag2}
\begin{tikzcd}
 & & X'\ar{d}{\iota_1'} \\
 & P_0 \ar{r}{\varphi_1} \ar{d} & P'_1\ar{d} \\
X\times_{X'}^{\op{log}}X \ar{ur}{\psi_1} \ar[bend right=-30]{uurr} \ar{r} & X\times^{\op{log}}_{S,[Q]}X \ar{r}{F_1^2} & X'\times_{S,[Q']}^{\op{log}}X'
\end{tikzcd}
\end{equation}

\end{parag}

\begin{parag}
Let $p_1,p_2:P_{\frakX / \frakS,0}\ra \frakX$ and $p_1',p_2':P_{\frakX'/\frakS,1}\ra \frakX'$ be the canonical projections. For $1\le i<j\le 3,$ let $p_{ij}:P_{\frakX/\frakS,0}(2)\ra P_{\frakX/\frakS,0}$ (resp. $p_{ij}':P_{\frakX'/\frakS,1}(2)\ra P_{\frakX'/\frakS,1}$) be the morphism induced by the $(i,j)$-projection $\frakY(2)\ra \frakY$ (resp. $\frakY'(2)\ra \frakY'$). Also let $\iota:\frakX\ra P_{\frakX / \frakS,0}$ and $\iota':\frakX'\ra P_{\frakX' / \frakS,1}$ be the canonical immersions.
Consider the morphism $\varphi:P_{\frakX / \frakS,0}\ra P_{\frakX' / \frakS,1}$ defined in \ref{lem92}.
Let $(\calE',\epsilon')$ be an object of $1\text{-}\op{MHS}(\frakX'/\frakS).$ The $1$-HPD-stratification
$\epsilon':p_2'^*\calE'\ra p_1'^*\calE'$
induces an isomorphism
\begin{equation}\label{eq981}
\varphi^*\epsilon':\varphi^*p_2'^*\calE' \xrightarrow{\sim} \varphi^*p_1'^*\calE'.
\end{equation}
By the commutative diagram
$$
\begin{tikzcd}
P_{\frakX / \frakS,0}\ar{r}{\varphi} \ar{d}{p_i} & P_{\frakX'/ \frakS,1} \ar{d}{p_i'} \\
\frakX\ar{r}{F} & \frakX'
\end{tikzcd}
$$
the isomorphism (\ref{eq981}) is equal to
\begin{equation}\label{eq982}
\varphi^*\epsilon':p_2^*F^*\calE' \xrightarrow{\sim} p_1^*F^*\calE'.
\end{equation}
By the commutative diagram
$$
\begin{tikzcd}
 & \frakX\ar{r}{F} \ar{d}{\iota} \ar[swap]{dl}{\op{Id}_{\frakX}} & \frakX'\ar{d}{\iota'}\ar{dr}{\op{Id}_{\frakX'}} & \\
\frakX & P_{\frakX / \frakS,0}\ar{l}{p_i} \ar{r}{\varphi} & P_{\frakX' / \frakS,1} \ar[swap]{r}{p_i'} & \frakX'
\end{tikzcd}
$$
and the fact that $\iota'^*\epsilon'=\op{Id}_{\calE'},$ we have
$\iota^*\varphi^*\epsilon'=\op{Id}_{F^*\calE}.$
Consider the morphism $\varphi(2):P_{\frakX / \frakS,0}(2) \ra P_{\frakX' / \frakS,1}(2)$ defined in \ref{rem93}.
The $1$-HPD-stratification $\epsilon'$ satisfies the cocycle condition
$p_{13}'^*\epsilon'=p_{23}'^*\epsilon'\circ p_{12}'^*\epsilon',$
which implies
\begin{equation}\label{eq983}
\varphi(2)^*p_{13}'^*\epsilon'=\varphi(2)^*p_{23}'^*\epsilon'\circ \varphi(2)^*p_{12}'^*\epsilon'.
\end{equation}
By the commutativity of the diagram
$$
\begin{tikzcd}
P_{\frakX / \frakS,0}(2) \ar{r}{p_{ij}} \ar{d}{\varphi(2)} & P_{\frakX / \frakS,0}\ar{d}{\varphi} \\
P_{\frakX' / \frakS,1}(2) \ar{r}{p'_{ij}} & P_{\frakX' / \frakS,1}
\end{tikzcd}
$$
the equality (\ref{eq983}) becomes
$p_{13}^*\varphi^*\epsilon'=p_{23}^*\varphi^*\epsilon'\circ p_{12}^*\varphi^*\epsilon'.$
We conclude that $\varphi^*\epsilon'$ is an HPD-stratification on $F^*\calE'$ and so we have, for all integers $n\ge 1,$ functors
\begin{equation}\label{eq994}
\Psi:\begin{array}[t]{clc}
1\text{-}\op{MHS}(\frakX'/\frakS) & \ra & \op{MHS}(\frakX/\frakS), \\
(\calE',\epsilon') & \mapsto & (F^*\calE',\varphi^*\epsilon')
\end{array}
\quad
\Psi_n:\begin{array}[t]{clc}
1\text{-}\op{MHS}(\frakX_n'/\frakS_n) & \ra & \op{MHS}(\frakX_n/\frakS_n) \\
(\calE',\epsilon') & \mapsto & (F_n^*\calE',\varphi_n^*\epsilon').
\end{array}
\end{equation}
\end{parag}

\section[Full faithfulness]{Full faithfulness of $\Psi$}

\begin{lemma}\label{dirsum}
Let $u:M\ra N$ be a Kummer morphism of fs monoids. Then $\Z[N \backslash u(M)]$ is a $\Z[M]$-submodule of $\Z[N].$
\end{lemma}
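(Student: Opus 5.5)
We must show that $\Z[N\setminus u(M)]$, the free abelian subgroup of $\Z[N]$ spanned by the monomials $e^n$ with $n\notin u(M)$, is stable under multiplication by $\Z[M]$. Since $\Z[M]$ is spanned by the monomials $e^m$ and $e^m\cdot e^n=e^{u(m)+n}$, it is enough to prove the following claim: for $m\in M$ and $n\in N$ with $n\notin u(M)$, we have $u(m)+n\notin u(M)$.

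We argue by contradiction. Suppose $u(m)+n=u(m')$ for some $m'\in M$. We then use the Kummer hypothesis on $n$ to bring everything into $M^{gp}$. By definition (\ref{defkummer}) there is a positive integer $k$ with $kn=u(m'')$ for some $m''\in M$. Multiplying the relation by $k$ gives
$$u(km)+u(m'')=u(km').$$
Since $u$ is injective and $M$ is integral (being fs), we get $km'=km+m''$ in $M$. In $M^{gp}$ this reads $m''=k(m'-m)$. Set $g=m'-m\in M^{gp}$. Then $u^{gp}(g)=n$, because $n=u(m')-u(m)$ in $N^{gp}$ and $N$ is integral. It therefore remains to show that $g\in M$, since then $n=u(g)\in u(M)$, which is the desired contradiction.

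This last step is the main obstacle, and the key input is saturation. We know $kg=m''\in M$ with $k\ge 1$. Since $M$ is saturated, this forces $g\in M$. Hence $n=u^{gp}(g)=u(g)\in u(M)$, contradicting the choice of $n$.

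So $e^m\cdot e^n$ is a basis element of $\Z[N\setminus u(M)]$ whenever $e^n$ is. By $\Z$-linearity, $\Z[M]\cdot \Z[N\setminus u(M)]\subset \Z[N\setminus u(M)]$, which proves the lemma. As a byproduct, $\Z[N]=\Z[u(M)]\oplus\Z[N\setminus u(M)]$ as $\Z[M]$-modules, where $\Z[u(M)]\cong\Z[M]$ by injectivity of $u$. This direct sum decomposition is presumably what the subsequent descent argument uses.
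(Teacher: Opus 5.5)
Your proposal is correct and follows the paper's own argument: you show that $u(m)+n\in u(M)$ forces $n\in u(M)$ by taking $kn=u(m'')$ from the Kummer property, using injectivity of $u$ to get $k(m'-m)=m''\in M$, and then using saturation of $M$ to conclude $m'-m\in M$. Your closing remark that this gives the $\Z[M]$-module decomposition $\Z[N]=\Z[u(M)]\oplus\Z[N\setminus u(M)]$ is exactly how the paper uses the lemma in \ref{exactseq}.
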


\begin{proof}
We have to prove that for any $a\in M$ and $b\in N,$ if $u(a)+b\in u(M)$ then $b\in u(M).$ So let $a,t\in M$ and $b\in N$ such that $u(a)+b=u(t).$ Since $u$ is Kummer, there exists a positive integer $n$ and $x\in M$ such that $nb=u(x).$ Then $u(x)=u(n(t-a))$ and so $x=n(t-a)\in M.$ It follows that $t-a\in M^{sat}=M$ and so $b=u(t-a)\in u(M).$
\end{proof}

\begin{lemma}\label{dirsum2}
Let $u:M\ra N$ be a Kummer morphism of fs monoids. Then the morphism
$$v:\begin{array}[t]{clc}
N^{gp}\oplus N^{gp} & \ra & N^{gp} \oplus (N^{gp}/u^{gp}(M^{gp})) \\
(x,y) & \mapsto & (x+y,\ov{y})
\end{array}$$
induces an isomorphism
$(N\oplus_MN)^{sat}\xrightarrow{\sim}N\oplus (N^{gp}/u^{gp}(M^{gp})),$
whose inverse is induced by
$$w:\begin{array}[t]{clc}
N^{gp}\oplus N^{gp} & \ra & N^{gp}\oplus_{M^{gp}} N^{gp} \\
(x,y) & \mapsto & (x-y,y).
\end{array}$$
\end{lemma}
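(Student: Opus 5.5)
We have to check three things: $v$ and $w$ are mutually inverse group homomorphisms on the appropriate groups; $v$ maps $(N\oplus_MN)^{sat}$ into $N\oplus (N^{gp}/u^{gp}(M^{gp}))$; and $w$ maps this monoid back into $(N\oplus_MN)^{sat}$. Here $u$ is Kummer, hence injective, and $N^{gp}/u^{gp}(M^{gp})$ is a torsion group. Indeed, every $y\in N$ has some $ny\in u(M)$, and $N$ generates $N^{gp}$.

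We begin with the group level. The group $(N\oplus_MN)^{gp}=N^{gp}\oplus_{M^{gp}}N^{gp}$ is the quotient of $N^{gp}\oplus N^{gp}$ by the elements $(u(m),-u(m))$. The map $v$ kills these elements, since $u(m)-u(m)=0$ and $\ov{-u(m)}=0$. Hence $v$ descends to $\bar v:N^{gp}\oplus_{M^{gp}}N^{gp}\ra N^{gp}\oplus (N^{gp}/u^{gp}(M^{gp}))$.

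The formula for $w$ has to be read on $N^{gp}\oplus (N^{gp}/u^{gp}(M^{gp}))$: we send $(x,\ov{y})$ to the class of $(x-y,y)$. This is well defined, because replacing $y$ by $y+u(m)$ changes $(x-y,y)$ by $(-u(m),u(m))$, which is zero in the amalgamated sum. A direct check gives $\bar v\circ w=\op{Id}$ and $w\circ \bar v=\op{Id}$:
\begin{equation*}
w(\bar v(x,y))=w(x+y,\ov{y})=(x,y),\qquad \bar v(w(x,\ov{y}))=\bar v(x-y,y)=(x,\ov{y}).
\end{equation*}
So $\bar v$ is a group isomorphism.

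Next we identify the images of the monoids. The saturation $(N\oplus_MN)^{sat}$ is the set of $z\in N^{gp}\oplus_{M^{gp}}N^{gp}$ such that $kz$ lies in the image of $N\oplus N$ for some $k\ge 1$. The target monoid $N\oplus G$, with $G=N^{gp}/u^{gp}(M^{gp})$ a group, is saturated, because $N$ is.

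First, $\bar v$ sends the image of $N\oplus N$ into $N\oplus G$, since $x+y\in N$ whenever $x,y\in N$. Now take $z$ in the saturation with $kz$ in the image of $N\oplus N$. Write $\bar v(z)=(a,g)$. Then $ka\in N$, so $a\in N$ because $N$ is saturated. Hence $\bar v(z)\in N\oplus G$.

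Conversely, let $(x,\ov{y})\in N\oplus G$ with $x\in N$ and $y\in N^{gp}$. Since $G$ is torsion, $n\ov{y}=0$ for some $n\ge 1$, so we may choose the representative with $ny\in u^{gp}(M^{gp})$. Then
\begin{equation*}
n\,w(x,\ov{y})=(nx-ny,ny)=(nx,0)+(-ny,ny).
\end{equation*}
Write $ny=u(m)$ with $m\in M^{gp}$. The element $(-u(m),u(m))$ is zero in $N^{gp}\oplus_{M^{gp}}N^{gp}$, so $n\,w(x,\ov{y})=(nx,0)$. This lies in the image of $N\oplus N$ because $nx\in N$. Hence $w(x,\ov{y})$ lies in the saturation of the image of $N\oplus N$, which is $(N\oplus_MN)^{sat}$.

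Since $w$ is a group isomorphism, a monoid isomorphism follows once we know that the saturation of the image of $N\oplus N$ is exactly the preimage under $\bar v$ of $N\oplus G$. The two inclusions above establish this. The only delicate step is the last one, where we need that adding $(-u(m),u(m))$ with $m\in M^{gp}$ is trivial in the group amalgamated sum.
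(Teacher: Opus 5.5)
Your proof is correct and follows the same route as the paper's: descend $v$ and $w$ to the group-level amalgamated sum $N^{gp}\oplus_{M^{gp}}N^{gp}$, use saturation of $N$ to show that $v$ sends $(N\oplus_MN)^{sat}$ into $N\oplus (N^{gp}/u^{gp}(M^{gp}))$, and use that $N^{gp}/u^{gp}(M^{gp})$ is torsion (from the Kummer hypothesis) to show that $w$ sends $N\oplus (N^{gp}/u^{gp}(M^{gp}))$ back into $(N\oplus_MN)^{sat}$. You also write out the mutual-inverse computation and explain why the two inclusions give a monoid isomorphism; the paper only asserts that $v$ and $w$ are ``clearly'' inverse.
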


\begin{proof}
For any $x\in N^{gp}$ and $t\in M^{gp},$ $v(u^{gp}(t),-u^{gp}(t))=0$ and $w(x,u^{gp}(t))=(x,0)$ so $v$ and $w$ induce morphisms
$$v:\begin{array}[t]{clc}
N^{gp}\oplus_{M^{gp}} N^{gp} & \ra & N^{gp} \oplus (N^{gp}/u^{gp}(M^{gp})) \\
(x,y) & \mapsto & (x+y,\ov{y}).
\end{array}$$
$$w:\begin{array}[t]{clc}
N^{gp}\oplus (N^{gp}/u^{gp}(M^{gp})) & \ra & N^{gp}\oplus_{M^{gp}} N^{gp} \\
(x,y) & \mapsto & (x-y,y).
\end{array}$$
Now let $(x,y)\in N^{gp}\oplus N^{gp}.$ If $\ov{(x,y)}\in (N\oplus_MN)^{sat}\subset N^{gp}\oplus_{M^{gp}}N^{gp}$ then there exists a positive integer $n,$ $t\in M^{gp}$ and $z,s \in N$ such that
$n(x,y)=(z,s)+(u^{gp}(t),-u^{gp}(t)) \in N^{gp}\oplus N^{gp}.$
Then
$n(x+y)=z+s\in N.$
Since $N$ is saturated, $x+y\in N$ and so $v$ induces a morphism
$$v:\begin{array}[t]{clc}
(N\oplus_{M} N)^{sat} & \ra & N \oplus (N^{gp}/u^{gp}(M^{gp})).\end{array}$$
Let $x\in N$ and $y\in N^{gp}.$ There exists a positive integer $n$ such that $ny\in u^{gp}(M^{gp}).$ Then $n(x-y,y)=(nx,0)$ in $N^{gp}\oplus_{M^{gp}}N^{gp}$ and so $w$ induces a morphism
$$w:\begin{array}[t]{clc}
N\oplus (N^{gp}/u^{gp}(M^{gp})) & \ra & (N\oplus_{M} N)^{sat}.
\end{array}$$
The morphisms $v$ and $w$ are clearly inverse to each other.
\end{proof}

\begin{lemma}\label{exactseq}
Let $u:M\ra N$ be a Kummer morphism of fs monoids. Consider the homomorphism
$$d:\begin{array}[t]{clc}\Z [N] & \ra & \Z[N]\otimes_{\Z} \Z\left [ N^{gp}/u^{gp}(M^{gp})\right ] \\
e^n & \mapsto & e^n\otimes e^{\ov{n}}-e^n\otimes 1.\end{array}$$
Then the sequence of $\Z[M]$-modules
\begin{equation}
0 \ra \Z[M] \xrightarrow{\Z [u]} \Z[N] \xrightarrow{d} \Z[N]\otimes_{\Z} \Z\left [ N^{gp}/u^{gp}(M^{gp})\right ]
\end{equation}
is homotopic to zero i.e. there exist homomorphisms
$s:\Z[N] \ra \Z[M]$
and
$$s':\Z[N]\otimes_{\Z} \Z\left [ N^{gp}/u^{gp}(M^{gp})\right ] \ra \Z[N]$$
such that $s \circ \Z[u]=\op{Id}_{\Z[M]}$ and $s' \circ d+\Z[u] \circ s=\op{Id}_{\Z[N]}.$
\end{lemma}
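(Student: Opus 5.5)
The plan is to use the decomposition of $\Z[N]$ given by Lemma \ref{dirsum}. Write $G=N^{gp}/u^{gp}(M^{gp})$; it is a torsion group because $u$ is Kummer. Since $u$ is injective, $\Z[u]$ identifies $\Z[M]$ with $\Z[u(M)]$. Lemma \ref{dirsum} shows that $\Z[N\backslash u(M)]$ is a $\Z[M]$-submodule. Hence $\Z[N]=\Z[u(M)]\oplus \Z[N\backslash u(M)]$ as $\Z[M]$-modules.

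For $s$, I take the projection onto the first summand followed by $\Z[u]^{-1}$. Concretely, $s(e^n)=e^m$ if $n=u(m)$, and $s(e^n)=0$ otherwise. This map is $\Z[M]$-linear by the direct sum decomposition, and $s\circ \Z[u]=\op{Id}$ is immediate.

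For $s'$, I first record two facts about elements $n\in N$:
\begin{itemize}
\item $n\in u(M)$ if and only if $\ov{n}=0$ in $G$. If $\ov{n}=0$ then $n=u^{gp}(t)$ with $t\in M^{gp}$. For some $k>0$ we have $kn\in u(M)$, so $kt\in M$ by injectivity of $u$, and $t\in M$ by saturation of $M$.
\item Consequently $d(e^n)=0$ when $n\in u(M)$, and $d(e^n)=e^n\otimes e^{\ov{n}}-e^n\otimes 1$ with $\ov{n}\neq 0$ otherwise.
\end{itemize}
I then define $s'$ on the basis $e^n\otimes e^g$ ($n\in N$, $g\in G$) by
$$s'(e^n\otimes e^g)=\begin{cases} e^n & \text{if } g=\ov{n}\neq 0,\\ 0 & \text{otherwise.}\end{cases}$$
The check of the homotopy identity then splits into two cases. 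If $n\in u(M)$, then $s'd(e^n)=0$ and $\Z[u]s(e^n)=e^n$. If $n\notin u(M)$, then $s'd(e^n)=e^n-0=e^n$ and $\Z[u]s(e^n)=0$. So $s'\circ d+\Z[u]\circ s=\op{Id}_{\Z[N]}$.

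The main point needing care is $\Z[M]$-linearity of $s'$ (and of $d$), where $\Z[M]$ acts on the target through the first factor. For $m\in M$ we have $\ov{u(m)+n}=\ov{n}$ in $G$. So multiplying by $e^{u(m)}$ preserves the condition $g=\ov{n}$, and it also preserves $\ov{n}\neq 0$. Hence $s'(e^{u(m)}\cdot e^n\otimes e^g)=e^{u(m)}s'(e^n\otimes e^g)$. The same invariance gives linearity of $d$. If only additive maps are required, even this check is unnecessary, and the whole argument is the casework above.
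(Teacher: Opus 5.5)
Your proof is correct and is essentially the paper's: the same splitting $\Z[N]=\Z[u(M)]\oplus\Z[N\backslash u(M)]$ from Lemma \ref{dirsum}, the same $s$, and the same two-case verification. You also make explicit the fact $n\in u(M)\iff \ov{n}=0$, which the paper uses silently. The only difference is that the paper takes $s'(e^n\otimes e^g)=e^n$ whenever $g\neq 0$, i.e.\ the base change of $\Z[N^{gp}/u^{gp}(M^{gp})]\ra\Z$; this map is even $\Z[N]$-linear and needs no separate linearity check, while your extra condition $g=\ov{n}$ is harmless.
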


\begin{proof}
By \ref{dirsum}, the $\Z[M]$-module $\Z[N]$ decomposes as
$\Z[N]=\Z[M]\oplus \Z[N\backslash M].$
Let
$s:\Z[N] \ra \Z[M]$
be the projection on $\Z[M]$ and let
$s':\Z[N]\otimes_{\Z} \Z\left [ N^{gp}/u^{gp}(M^{gp})\right ] \ra \Z[N]$
be the base change of
$$\Z\left [N^{gp}/u^{gp}(M^{gp}) \right ] \ra \Z,\ e^{\ov{n}} \mapsto \begin{cases}1\ \op{if}\ \ov{n}\neq 0\\ 0\ \op{else} \end{cases}$$
by $\Z \hookrightarrow \Z[N].$ 
Then
$s\circ \Z[u]= \op{Id}_{\Z[M]}$
and
$s' \circ d+d\circ s= \op{Id}_{\Z[N]}.$
\end{proof}

\begin{proposition}[\cite{Kat19} 3.4.1]\label{Katolftop}
Let $u:M\ra N$ be a Kummer morphism of fs monoids and $T$ an fs logarithmic scheme equipped with a chart $T\ra A[M]$ and $T'=T\times_{A[M]}A[N].$
Let $\calE$ be a quasi-coherent $\Ox_T$-module, $T''=T'\times_{T}^{\op{log}}T'$ and $p:T' \ra T$ and $q_1,q_2:T'' \ra T'$ the canonical projections. Then the sequence of $\Ox_{T}$-modules
$$0 \ra \calE \ra p_*p^*\calE \xrightarrow{p_*(q_2^{\#}-q_1^{\#})} (p\circ q_1)_*(p\circ q_1)^*\calE$$
is homotopic to zero.
\end{proposition}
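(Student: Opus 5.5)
The plan is to reduce the statement to the affine, chart-level situation and then tensor the algebraic homotopy of \ref{exactseq} with $\calE$.

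First, the question is local on $T$ for the étale topology, and all three terms are quasi-coherent and compatible with flat base change. Since $T$ has a global chart $T\ra A[M]$, the morphism $p:T'=T\times_{A[M]}A[N]\ra T$ is affine; we may assume $T=\Sp R$ and $\calE=\widetilde{E}$. Then $T'=\Sp(R\otimes_{\Z[M]}\Z[N])$, and $p_*p^*\calE$ corresponds to $E\otimes_{\Z[M]}\Z[N]$. One must check that the map $q_1^{\#},q_2^{\#}$ to the third term is the map induced by $d$ of \ref{exactseq}.

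The key point is to identify $T''$. Because the fs fiber product is computed chart-wise (\ref{parag27} for schemes), we have
$$T''=T\times_{A[M]}A\left [(N\oplus_MN)^{sat}\right ].$$
By \ref{dirsum2}, this is $T\times_{A[M]}A[N\oplus G]$ with $G=N^{gp}/u^{gp}(M^{gp})$. Hence $T''=\Sp\bigl(R\otimes_{\Z[M]}\Z[N]\otimes_{\Z}\Z[G]\bigr)$, and $p\circ q_1$ is affine with
$$(p\circ q_1)_*(p\circ q_1)^*\calE \leftrightarrow E\otimes_{\Z[M]}\Z[N]\otimes_\Z\Z[G].$$
Next I would trace the two projections through the isomorphism $w$ of \ref{dirsum2}. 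The element $e^n$ in the first factor goes to $(n,0)\mapsto (n,0)$, that is $e^n\otimes 1$. The element $e^n$ in the second factor goes to $(0,n)\mapsto v(0,n)=(n,\ov{n})$, that is $e^n\otimes e^{\ov n}$. So $q_2^{\#}-q_1^{\#}$ is exactly the map $d$ of \ref{exactseq}, base changed along $\Z[M]\ra R$ and tensored with $E$.

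Finally, \ref{exactseq} gives $\Z[M]$-linear maps $s,s'$ with $s\circ\Z[u]=\op{Id}$ and $s'\circ d+\Z[u]\circ s=\op{Id}$. The linearity of $s$ uses \ref{dirsum}; for $s'$, one should check that it can be taken $\Z[M]$-linear, possibly after adjusting it to the projection relative to the decomposition. Applying $E\otimes_{\Z[M]}-$ preserves these identities. This gives the homotopy on global sections over each affine, and the homotopies glue because they are built functorially from the chart.

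The main obstacle I expect is the careful identification of $T''$ with the fs fiber product: showing that the integral-then-saturated pushout chart gives the fs fiber product without an extra étale or closed piece. A second point is checking that $s'$ in \ref{exactseq} is genuinely $\Z[M]$-linear. If not, I would instead define $s'$ as $\op{Id}_{\Z[N]}\otimes\epsilon$, where $\epsilon$ is a $\Z$-linear retraction on $\Z[G]$ adapted to the grading by $G$. This works because $\Z[N]$ is $G$-graded through $N\ra G$, with degree $0$ part $\Z[M]$ by \ref{dirsum}.
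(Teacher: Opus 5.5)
Your proposal is correct and follows essentially the same route as the paper: reduce to $T$ affine, write $T''=T\times_{A[M]}A[(N\oplus_MN)^{sat}]$, use \ref{dirsum2} to identify $p_*(q_2^{\#}-q_1^{\#})$ with the map $d$ of \ref{exactseq}, and tensor the resulting $\Z[M]$-linear homotopy with $E$. Your fallback $s'=\op{Id}_{\Z[N]}\otimes\epsilon$ is exactly the paper's choice, and no adjustment is needed: $\Z[M]$ acts on $\Z[N]\otimes_{\Z}\Z[N^{gp}/u^{gp}(M^{gp})]$ through the first factor, so $s'$ is $\Z[N]$-linear and hence $\Z[M]$-linear.
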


\begin{proof}
We can suppose that $T$ is affine. Since $p$ is affine, $T'$ and $T''$ are both affine. Let $E$ be the module corresponding to $\calE.$ It is sufficient to prove that the sequence
$$0 \ra E \ra \Gamma(U,\calE)\otimes_{\Z[M]} \Z[N] \ra E\otimes_{\Z[M]}\Z[(N\oplus_MN)^{sat}]$$
is homotopic to $0.$ It is thus sufficient to prove that the sequence
$$0 \ra \Z[M] \xrightarrow{\Z[u]} \Z[N] \xrightarrow{\alpha} \Z[(N\oplus_MN)^{sat}]$$
is homotopic to zero, where
$
\alpha:\begin{array}[t]{clc}
\Z[N] & \ra & \Z[(N\oplus_MN)^{sat}] \\
e^n & \mapsto & e^{(0,n)}-e^{(n,0)}
\end{array}.$
By \ref{dirsum2}, there exists an isomorphism
$$\beta:\Z[(N\oplus_MN)^{sat}]  \xrightarrow{\sim} \Z [N\oplus N^{gp}/u^{gp}(M^{gp})] \xrightarrow{\sim} \Z[N] \otimes_{\Z} \Z[N^{gp}/u^{gp}(M^{gp})]$$
such that
$\beta\circ \alpha(e^n)=e^n\otimes e^{\ov{n}}-e^n\otimes 1 \ \forall n\in N.$
The result then follows from \ref{exactseq}.
\end{proof}

\begin{proposition}[\cite{INT} 1.3]\label{INTlog}
Let $f:X\ra S$ be a morphism of fs logarithmic schemes which is log flat, of Kummer type and locally of finite presentation, $x\in X$ and $y=f(x).$ Then, fppf locally around $x$ and $y,$ there exists a chart $\theta:P\ra Q$ of $f$ such that
\begin{enumerate}
\item $\theta$ is a Kummer morphism of fs monoids.
\item The morphism $f_1:X \ra T=S\times_{A[P]} A[Q],$ induced by $f$ and the chart $X \ra A[Q],$ is flat, surjective and locally of finite presentation.
\end{enumerate}
\end{proposition}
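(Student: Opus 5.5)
We reduce to Kato's structure theorem for log flat morphisms. Work fppf locally around $x\in X$ and $y=f(x)$. First choose, étale locally on $S$, a chart $S\ra A[P]$ with $P\cong \ov{\calM}_{S,\ov{y}}$ exact at $\ov{y}$. The Kummer hypothesis says $\ov{\calM}_{S,\ov{y}}\ra \ov{\calM}_{X,\ov{x}}$ is Kummer. Set $Q:=\ov{\calM}_{X,\ov{x}}$, which is fs and sharp. The composite $P\ra \ov{\calM}_{S,\ov{y}}\ra Q$ is then Kummer: it is injective because $P$ is identified with $\ov{\calM}_{S,\ov{y}}$, and some multiple of every element of $Q$ comes from $P$.

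Next we lift $Q\ra \ov{\calM}_{X,\ov{x}}$ to a chart $Q\ra \calM_X$ compatible with $P$. Since $Q^{gp}$ is finitely generated free and torsion free, we can lift elements of $Q$ to $\calM_{X,\ov{x}}$. Compatibility with the given images of $P$ may fail only up to units. To repair this we extract roots of units of the form $n$-th roots, with $n$ the Kummer index. This is exactly why the localization is fppf rather than étale, as $n$ may be divisible by the characteristic. Concretely, we pass to $X\times_{A[P]}A[\tfrac1n P]$-type Kummer covers, or adjoin $n$-th roots of the discrepancy units, which is a finite flat fppf cover. On the resulting cover we obtain a chart $\theta:P\ra Q$ of $f$ with $\theta$ Kummer, giving (1).

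For (2), consider $f_1:X\ra T=S\times_{A[P]}A[Q]$. By the definition of log flatness (\cite{Ogus2018} IV 4.1.1) and Kato's criterion, log flatness with a chart satisfying $\theta^{gp}$ injective means that $f_1$ is flat after the fppf localizations above. Ogus IV 4.1 gives a chart-wise characterization: for a log flat morphism, given a chart $\theta$ that is exact at the point, the induced morphism to $S\times_{A[P]}A[Q]$ is flat. Since $f_1$ is strict and $f$ is locally of finite presentation, $f_1$ is locally of finite presentation. Surjectivity can be arranged by replacing $T$ by the open image of the flat, locally finitely presented map $f_1$, which is open by EGA IV 2.4.6.

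The main obstacle is the construction of a chart that is simultaneously Kummer and compatible on both sides, and making the root extraction precise. This is the content of \cite{INT} 1.3 and Kato's \cite{Kat19} §2, which we would cite at that step.
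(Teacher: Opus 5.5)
Your step (1) is fine: a neat chart on $S$, $Q=\ov{\calM}_{X,\ov{x}}$, and the compatibility repaired by adjoining roots of units on $X$. Step (2), however, has a genuine gap.

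The definition of log flatness (\cite{Ogus2018} IV 4.1.1) only provides, locally on $X$ and $S$, \emph{some} injective chart $\theta_1:P_1\ra Q_1$ for which $X\ra S\times_{A[P_1]}A[Q_1]$ is flat. That chart is in general neither neat nor Kummer, and it has no a priori relation with the chart $P\ra Q$ you built. There is no ``chart-wise characterization'' saying that any chart of a log flat morphism with $\theta^{gp}$ injective (or exact at the point) gives a flat $f_1$. Here is a counterexample:
\begin{itemize}
\item take $\op{char}k=p$, $S=X=\Sp k$ with trivial log structure, and $f=\op{Id}$;
\item take the charts $\mathbb{Z}\ra k^*$, $1\mapsto 1$, on both sides, and $\theta=\times p$.
\end{itemize}
Then $\theta$ is Kummer, $\theta^{gp}$ is injective and the charts are exact. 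But $T=\Sp k[w]/(w-1)^p$, and $f_1$ is the inclusion of the reduced point, which is not flat. So flatness of $f_1$ depends on the choice of chart. Deferring exactly this point to \cite{INT} 1.3 is circular, since that is the statement itself; the paper gives no proof and simply quotes \cite{INT} 1.3.

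The missing idea is to produce the Kummer chart \emph{from} the flat chart $\theta_1$. One way to do this:
\begin{enumerate}
\item Localize at the face $F\subset Q_1$ of elements invertible at $\ov{x}$ and at $G=\theta_1^{-1}(F)$. This replaces $S\times_{A[P_1]}A[Q_1]$ by an open subscheme, so flatness is kept. Moreover $\ov{(P_1)_G}=\ov{\calM}_{S,\ov{y}}\ra\ov{(Q_1)_F}=\ov{\calM}_{X,\ov{x}}$ is Kummer.
\item Split off the unit groups $U=(P_1)_G^{\times}\hookrightarrow V=(Q_1)_F^{\times}$. The cross term $\ov{(P_1)_G}\ra V$ of $\theta_1$ disappears after enlarging $V$ to a finite-index overgroup $H$ (the pushout along the Kummer map). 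Concretely, this means passing to the finite flat cover $X\times_{\Sp\mathbb{Z}[V]}\Sp\mathbb{Z}[H]$ of $X$. This is where fppf, rather than étale, localization is really needed.
\item Now $\theta_1$ is the direct sum of $U\hookrightarrow H$ and the Kummer map. Hence $S\times_{A[(P_1)_G]}A[H\oplus\ov{(Q_1)_F}]\ra S\times_{A[\ov{(P_1)_G}]}A[\ov{(Q_1)_F}]$ is a base change of $\Sp\mathbb{Z}[H]\ra\Sp\mathbb{Z}[U]$, so it is free. Composing gives flatness of $f_1$ for the Kummer chart.
\end{enumerate}
If you want to keep your own neat chart instead, you must prove that flatness of $f_1$ at $x$ is the same for all neat Kummer charts. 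That is not a formal consequence of the definition.

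Finally, you cannot get surjectivity by replacing $T$ with the open image of $f_1$, since $T=S\times_{A[P]}A[Q]$ is fixed by the chart. Shrink $S$ instead. For $P$ neat at $\ov{y}$, the finite morphism $T\ra S$ has a single point over $y$, namely $f_1(x)$, which lies in the open set $f_1(X)$. So it suffices to remove from $S$ the closed image of $T\setminus f_1(X)$.
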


\begin{theorem}\label{logflatdescent}
Let $f:X \ra S$ be a quasi-compact surjective morphism of fs logarithmic schemes satisfying the following condition: 
For any $x\in X$ and $y=f(x),$ there exists, fppf locally around $x$ and $y,$ a chart $\theta:P\ra Q$ of $f$ satisfying:
\begin{enumerate}
\item $\theta$ is a Kummer morphism of fs monoids.
\item The morphism $f_1:X \ra T=S\times_{A[P]} A[Q],$ induced by $f$ and the chart $X \ra A[Q],$ is faithfully flat.
\end{enumerate}
Let $q_1,q_2:X\times_S^{\op{log}}X \ra X$ be the canonical projections and $g=f\circ q_1=f\circ q_2$ the structural morphism. For any quasi-coherent $\Ox_{S}$-module $\calE,$ the sequence
$$0 \ra \calE \ra f_*f^*\calE \rightarrow g_*g^*\calE,$$
where the second arrow is the difference of the morphisms induced by $q_1$ and $q_2,$ is exact.
\end{theorem}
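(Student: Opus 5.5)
The plan is to reduce, by descent along fppf coverings and along $f$ itself, to the two building blocks already available: classical faithfully flat descent for the strict flat part $f_1$, and the Kummer statement \ref{Katolftop} for the chart part $T\ra S$.

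\emph{Locality.} Exactness of $0\ra \calE\ra f_*f^*\calE\ra g_*g^*\calE$ is a statement about sheaves on $S$. Since $f$ is quasi-compact and all modules involved are quasi-coherent, it suffices to check exactness after an fppf (in fact faithfully flat) base change $S_0\ra S$. Such a base change is strict, so it commutes with the formation of $X\times_S^{\op{log}}X$, and flat base change identifies $f_*f^*\calE$ and $g_*g^*\calE$ with their pullbacks. The hypotheses are also stable under this base change. Because $f$ is surjective and quasi-compact, we may therefore assume $S$ is affine and that $X$ is covered by finitely many fppf-local pieces $X_i\ra X$ on which charts $\theta_i:P_i\ra Q_i$ as in the statement exist.

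\emph{Refinement lemma.} I would first prove: if $X_0\ra X$ is an fppf covering that is strict, then exactness for $X_0\ra S$ implies exactness for $X\ra S$. The map $\calE\ra f_*f^*\calE$ factors through $\calE\ra (f h)_*(f h)^*\calE$, which gives injectivity. For the middle term, take a section of $f_*f^*\calE$ killed by $q_2^*-q_1^*$. Its pullback to $X_0$ is killed by the difference map for $X_0\times_S^{\op{log}}X_0$, which maps to $X\times_S^{\op{log}}X$, so it comes from $\calE$. This uses injectivity of $f^*\calE\ra h_*h^*f^*\calE$, which is ordinary fpqc descent since $h$ is strict and faithfully flat. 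Hence we may replace $X$ by $\coprod X_i$; strictness ensures that the log fiber products only change by strict flat base change.

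\emph{Composite case.} For a single chart, write $f$ as $X\xrightarrow{f_1}T\xrightarrow{p}S$ with $T=S\times_{A[P]}A[Q]$. The map $f_1$ is strict and faithfully flat, and $p$ is the Kummer base change treated in \ref{Katolftop}. Exactness for $p$ holds, even with a homotopy, by \ref{Katolftop}. Exactness for $f_1$ applied to $p^*\calE$ and to the pullback to $T\times_S^{\op{log}}T$ is classical faithfully flat descent, since strict morphisms have $X\times_T^{\op{log}}X=X\times_TX$. To combine the two, take $s\in f_*f^*\calE$ with $q_1^*s=q_2^*s$ on $X\times_S^{\op{log}}X$. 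Pulling back along $X\times_TX\ra X\times_S^{\op{log}}X$ shows that $s$ descends to $t\in p_*p^*\calE$. It remains to check that $t$ satisfies the cocycle condition on $T\times_S^{\op{log}}T$. This follows because $X\times_S^{\op{log}}X\ra T\times_S^{\op{log}}T$ is the base change of $f_1\times f_1$. That map is strict, since $X\times_S^{\op{log}}X=(X\times X)\times_{T\times T}(T\times_S^{\op{log}}T)$ when the $f_1$ are strict, and it is faithfully flat, so pullback along it is injective on quasi-coherent sections.

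\emph{Main obstacle.} I expect the hardest step to be this last compatibility: checking that the log fiber product $X\times_S^{\op{log}}X$ is indeed the strict flat base change of $T\times_S^{\op{log}}T$. This needs care with saturation, using that fs fiber products along strict morphisms are computed as ordinary fiber products. A second, related difficulty is that the charts exist only fppf locally on $X$ and $S$, which requires the refinement lemma to be applied on both sides together with the quasi-compactness bookkeeping.
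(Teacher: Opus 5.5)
Your proposal is correct in its core and follows essentially the same route as the paper. Both arguments reduce by fppf base change on $S$ (the paper's Lemma~\ref{lemwiwfppf}), factor $f$ as $X\xrightarrow{f_1}T\ra S$, use \ref{Katolftop} for the Kummer part and classical faithfully flat descent for the strict part $f_1$, and close the diagram chase with the injectivity of pullback along the strict faithfully flat map $X\times_S^{\op{log}}X\ra T\times_S^{\op{log}}T$.

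Your refinement lemma for strict fppf coverings of $X$ is a genuine addition, since the paper simply assumes a global chart after localizing around a single point $x$. Note, however, that gluing pieces $X_i$ with different charts still requires controlling the cross terms $X_i\times_S^{\op{log}}X_j$ (i.e.\ descent for $\coprod_i T_i\ra S$), which is slightly more than the single-chart statement \ref{Katolftop}.
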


\begin{proof}
We prove that for any $y\in S,$ the sequence
$$0 \ra \calE_y \ra (f_*f^*\calE)_y \ra (g_*g^*\calE)_y$$
is exact.
Let $y\in S.$ Since $f$ is surjective, there exists $x\in X$ such that $f(x)=y.$ By \ref{lemwiwfppf} we can suppose that $f$ has a chart $\theta:P\ra Q$ such that $\theta:P \ra Q$ is a Kummer morphism of fs monoids and the morphism $f_1:X \ra T=S\times_{A[P]} A[Q],$ induced by $f$ and the chart $X \ra A[Q],$ is faithfully flat and locally of finite presentation.
Let $f_2:T \ra S,$ $p_1,p_2:X\times_{T}^{\op{log}}X \ra X$ and $r_1,r_2:T \times_{S}^{\op{log}}T \ra T$ be the canonical projections and $g_1:X\times_T^{\op{log}}X\ra T$ and $g_2:T\times_S^{\op{log}}T \ra S$ the strutural morphisms. Note that $f_1:X\ra T$ is strict so $X\times_T^{\op{log}}X=X\times_TX.$
By \ref{Katolftop}, the sequence
\begin{equation}\label{exseq1}
0 \ra \calE \ra f_{2*}f_2^*\calE \ra g_{2*}g_2^*\calE,
\end{equation}
where the second arrow is the difference between the morphisms induced by $r_1$ and $r_2,$ is exact. The morphism $f_1$ is quasi-compact and faithfully flat, so, by (\cite{Raynaud71} Exposé VIII 1.7), the sequence
\begin{equation}\label{exseq2}
0 \ra f_2^*\calE \ra f_{1*}f^*\calE \ra g_{1*}g_1^*f_2^*\calE
\end{equation}
is exact.
The morphism $f_1\times_{S}f_1:X\times_{S}X \ra T\times_{S}T$ is strict and faithfully flat so $f_1\times_{S}^{\op{log}}f_1:X\times_{S}^{\op{log}}X \ra T\times_{S}^{\op{log}}T$ is also faithfully flat (see \ref{parag42} for the notation $\times_S^{\op{log}}$ and $\times_S$). It follows that the canonical morphism
\begin{equation}\label{exmor1}
g_2^*\calE \ra (f_1\times_S^{\op{log}}f_1)_*(f_1\times_S^{\op{log}}f_1)^*g_2^*\calE=(f_1\times_S^{\op{log}}f_1)_*g^*\calE
\end{equation}
is injective. Now considering the commutative diagram
$$
\begin{tikzcd}
X\times_T^{\op{log}}X \ar{r}{h} \ar{d}{g_1} & X\times_S^{\op{log}}X \ar{d}{g} \\
T\ar{r}{f_2} & S,
\end{tikzcd}
$$
we have a canonical morphism
\begin{equation}\label{exmor2}
g^*\calE \ra h_*h^*g^*\calE=h_*g_1^*f_2^*\calE.
\end{equation}
We have the following commutative diagram:
$$
\begin{tikzcd}
 & X\times_T^{\op{log}}X \ar{dl}{h} \ar{d} \ar[bend right =-60]{ddr}{g_1}  & \\
X\times_S^{\op{log}}X \ar{r} \ar{d} \ar{dr}{f_1\times_S^{\op{log}}f_1} & T\times_S^{\op{log}}X \ar{r} \ar{d} & X \ar[swap]{d}{f_1} \ar[bend right=-30]{dd}{f} \\
X\times_S^{\op{log}}T \ar{r} \ar{d} & T\times_S^{\op{log}}T \ar{r} \ar{d} \ar[swap]{dr}{g_2} & T\ar[swap]{d}{f_2} \\
X \ar[swap]{r}{f_1} & T \ar[swap]{r}{f_2} & S
\end{tikzcd}
$$
It proves that the exact sequences \eqref{exseq1} and \eqref{exseq2} and the morphisms \eqref{exmor1} and \eqref{exmor2} fit into the commutative diagram
$$
\begin{tikzcd}
 & & 0\ar{d} & 0\ar{d} \\
0 \ar{r} & \calE \ar{r} \ar{dr} & f_{2*}f_2^*\calE \ar{r} \ar{d} & g_{2*}g_2^*\calE \ar{d} \\
 & & f_*f^*\calE \ar{r} \ar{d} & g_*g^*\calE \ar{dl} \\
& & f_{2*}g_{1*}g_1^*f_2^*\calE &
\end{tikzcd}
$$
The assertion follows then from a simple diagram chase.
\end{proof}

\begin{lemma}\label{lemwiwfppf}
Keep the hypothesis of \ref{logflatdescent}. Let $(S_i \xrightarrow{u_i}S)_{i\in I}$ be an fppf covering and consider, for any $i\in I,$ the cartesian square
$$
\begin{tikzcd}
X_i \ar{r}{v_i} \ar{d}{f_i} & X \ar{d}{f} \\
S_i \ar{r}{u_i} & S.
\end{tikzcd}
$$
Let $g_i:X_i\times_{S_i}^{\op{log}}X_i \ra S_i$ be the structural morphism for any $i\in I.$
If, for all $i\in I,$ the sequence of $\Ox_{S_i}$-modules
$$0 \ra u_i^*\calE \ra f_{i*}f_i^*u_i^*\calE \ra g_{i*}g_i^*u_i^*\calE$$
is exact, then the sequence of $\Ox_S$-modules
$$0 \ra \calE \ra f_*f^*\calE \ra g_*g^*\calE$$
is exact.
\end{lemma}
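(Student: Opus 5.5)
Since the statement concerns exactness of a sequence of sheaves on $S$, I will reduce to a stalkwise check and then use that pulling back along an fppf covering detects injectivity and exactness. The plan is to show that the whole situation is compatible with base change along each $u_i$, and then invoke fppf descent for quasi-coherent modules.

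\emph{Base change compatibility.} First, I will note that $X_i=X\times_S S_i$ may be taken in the category of fs logarithmic schemes. Since $u_i$ is an fppf covering, it is in particular strict, so this coincides with the scheme-theoretic fiber product and $X_i\times_{S_i}^{\op{log}}X_i=(X\times_S^{\op{log}}X)\times_S S_i$. The morphisms $f$ and $g$ are quasi-compact; for $g$ this holds because $X\times_S^{\op{log}}X\ra X\times_SX$ is affine. Next I will check that $f$ and $g$ are quasi-separated, or else work Zariski locally on $S$ with $X$ quasi-compact. I expect this to be the main technical point: one must ensure flat base change applies, i.e.\ $u_i^*f_*f^*\calE\simeq f_{i*}v_i^*f^*\calE=f_{i*}f_i^*u_i^*\calE$, and similarly for $g$. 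I would first try to justify quasi-separatedness from the local chart description: locally on $S$, $X\ra T$ is faithfully flat of finite presentation and $T\ra S$ is affine. If that fails, I would restrict to affine opens of $S$ over which $X$ is covered by finitely many affines.

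\emph{Descent of exactness.} The sequence $0\ra\calE\ra f_*f^*\calE\ra g_*g^*\calE$ is a sequence of quasi-coherent $\Ox_S$-modules, by quasi-compactness and quasi-separatedness (\cite{SP} 01XJ). Pulling it back along $u_i$ and applying the base change isomorphisms above identifies it with the sequence of the hypothesis, which is exact. The morphism $\coprod u_i$ is faithfully flat. Working locally, we may take a quasi-compact open of $S$ covered by finitely many images, so that $u=\coprod_{\text{finite}}u_i$ is faithfully flat and quasi-compact. Pullback along $u$ is then exact and faithful on quasi-coherent modules. Hence the kernel and image defects, which are quasi-coherent, vanish after pullback and therefore vanish on $S$. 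This gives exactness on $S$.
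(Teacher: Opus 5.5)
Your proposal is correct and follows essentially the same route as the paper. The paper likewise notes that the squares over $u_i$ (for $X$ and for $X\times_S^{\op{log}}X$) are cartesian, uses flat base change to identify the hypothesis sequence with $u_i^*$ of the original one, and concludes by faithfully flat descent. The paper does not address the quasi-separatedness issue you flag, and you can sidestep it: the diagram chase only needs the base-change map $u_i^*f_*f^*\calE\ra f_{i*}f_i^*u_i^*\calE$ to be injective. This holds for quasi-compact $f$: cover $f^{-1}(V)$ by finitely many affines and tensor the resulting embedding into a finite product with the flat algebra. The vanishing of the homology can then be detected stalkwise after the flat surjective pullback, with no quasi-coherence needed.
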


\begin{proof}
First, note that the following squares are cartesian in the category of logarithmic schemes:
$$
\begin{tikzcd}
X_i \ar{r}{v_i} \ar{d}{f_i} & X \ar{d}{f} & X_i\times_{S_i}^{\op{log}}X_i \ar{r}{w_i} \ar{d}{g_i} & X\times_S^{\op{log}}X \ar{d}{g} \\
S_i \ar{r}{u_i} & S & S_i\ar{r}{u_i} & S
\end{tikzcd}
$$
The exact sequence
$$0 \ra u_i^*\calE \ra f_{i*}f_i^*u_i^*\calE \ra g_{i*}g_i^*u_i^*\calE$$
becomes equal to
$$0 \ra u_i^*\calE \ra f_{i*}v_i^*f^*\calE \ra g_{i*}w_i^*g^*\calE.$$
The morphism $u_i$ is flat so, by flat base change, we obtain the exact sequence
$$0 \ra u_i^*\calE \ra u_i^*f_*f^*\calE \ra u_i^*g_*g^*\calE.$$
The result then follows by faithfully flat descent.
\end{proof}

\begin{definition}[\cite{Kat19} 2.3]\label{logflattop}
Let $T$ be an fs logarithmic scheme. A family of morphisms $(T_i \xrightarrow{f_i}T)_{i\in I}$ of fs logarithmic schemes is said to be \emph{a covering for the log flat topology}, if the following conditions are satisfied:
\begin{enumerate}
\item $f_i$ is log flat, locally of finite presentation and of Kummer type for any $i\in I.$
\item Set theoretically,
$T=\bigcup_{i\in I}f_i(T_i).$
\end{enumerate}
This defines a pretopology on the category of fs logarithmic schemes.
\end{definition}

\begin{exmp}\label{Flogflatcover}
Let $f:X \ra S$ be a log smooth morphism of fine logarithmic schemes of characteristic $p,$ $F:X\ra X'$ the exact relative Frobenius and $g:X' \ra S$ the canonical projection (\ref{PFrob}). By \ref{FKummer}, $F$ is of Kummer type. By \ref{thmlogflat}, $F$ is log flat. Since $f=g\circ F$ and $f$ and $g$ are locally of finite presentation, $F$ is also locally of finite presentation. Finally, $F$ is a homeomorphism on the underlying topological spaces. We conclude that $(F:X\ra X')$ is a log flat covering.
\end{exmp}

\begin{theorem}\label{eraflogflatdescent}
Let $T$ be an fs logarithmic scheme, $(T_i \xrightarrow{f_i} T)_{i\in I}$ a log flat covering \eqref{logflattop} and $\calE$ a quasi-coherent $\Ox_T$-module. For all $i,j\in I,$ let $f_{ij}:T_i\times_T^{\op{log}}T_j \ra T$ be the canonical morphism. The sequence of $\Ox_T$-modules
$$0 \ra \calE \ra \prod_{i\in I}f_{i*}f_i^*\calE \ra \prod_{i,j\in I}f_{ij*}f_{ij}^*\calE,$$
where the last arrow is the difference between the morphisms induced by the projections $T_i\times_T^{\op{log}}T_j \ra T_i$ and $T_i\times_T^{\op{log}}T_j \ra T_j,$ is exact.
\end{theorem}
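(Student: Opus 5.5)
The plan is to reduce to Theorem \ref{logflatdescent} by replacing the covering with a single morphism, after localizing on $T$. Exactness of a sequence of sheaves is checked at stalks, or locally on $T$ for the étale or Zariski topology. Exactness at $\calE$ (injectivity) and at $\prod_i f_{i*}f_i^*\calE$ are both local statements on $T$. Moreover, pushforwards commute with restriction to opens of $T$, and fiber products $T_i\times_T^{\op{log}}T_j$ are compatible with restriction. So we may assume $T$ affine.

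The first step handles a covering with finitely many members. Since the $f_i$ are locally of finite presentation and log flat of Kummer type, each point of $T_i$ has an open neighbourhood which is still a log flat Kummer morphism. We pass from $(T_i)$ to a refinement by quasi-compact opens $U_{i\alpha}\subset T_i$, each of which is affine. It is then enough to show that exactness for the refined covering implies exactness for the original one. The refinement maps $f_{i*}f_i^*\calE\ra\prod_\alpha (f_i|_{U_{i\alpha}})_*(\ldots)$ are injective by Zariski descent on $T_i$, and the same holds on $T_i\times^{\op{log}}_T T_j$. A standard diagram chase then transfers equalizer exactness. So we reduce to coverings by affine $T_i$.

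The second step reduces to finite coverings. Images of log flat, locally finitely presented morphisms are open: by \ref{INTlog} they are, fppf locally, compositions of a flat finitely presented map with a Kummer chart base change $T\times_{A[P]}A[Q]\ra T$, which is finite surjective and open. Since $T$ is quasi-compact, finitely many $f_i(T_i)$ then cover $T$. Exactness of the equalizer for a sub-family implies it for the whole family by the same kind of chase: an element killed by the difference map is determined by its components on the subfamily, and compatibility is checked via the maps to the $(i,j)$ terms.

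For a finite family of affines we set $X=\coprod_i T_i$ and $f:X\ra T$. This map is quasi-compact and surjective, and $X\times_T^{\op{log}}X=\coprod_{i,j}T_i\times_T^{\op{log}}T_j$. Hence the desired sequence is exactly the one in \ref{logflatdescent}. It remains to verify the local chart condition of \ref{logflatdescent}. Proposition \ref{INTlog} gives, fppf locally, a Kummer chart $\theta:P\ra Q$ such that $X\ra T\times_{A[P]}A[Q]$ is flat, surjective and locally of finite presentation. Shrinking to an affine neighbourhood makes it quasi-compact and faithfully flat onto its image, which is open. Replacing the target neighbourhood by this image gives faithful flatness, and fppf localization is allowed by Lemma \ref{lemwiwfppf}.

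I expect the main obstacle to be this last point. Theorem \ref{logflatdescent} asks for faithful flatness onto $T\times_{A[P]}A[Q]$, while \ref{INTlog} delivers surjectivity only after fppf localization around $x$. The reconciliation should go by shrinking $T\times_{A[P]}A[Q]$ to the open image and using that the Kummer chart map is still surjective onto a neighbourhood of $y$. One must also check that \ref{logflatdescent}'s internal reduction via \ref{lemwiwfppf} tolerates replacing $S$ by such a neighbourhood.
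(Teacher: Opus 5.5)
Your proposal follows the paper's own route: localize on $T$, use openness of the images to reduce to finitely many members, set $X=\coprod T_i$ so that $X\times^{\op{log}}_TX=\coprod T_i\times^{\op{log}}_TT_j$, and apply \ref{INTlog} and \ref{logflatdescent}. You also add something the paper's proof omits, namely refining to affine members so that $f$ is quasi-compact; for the openness step, though, cite (\cite{Kat19} 2.5) as the paper does, because ``finite and surjective'' does not by itself make $T\times_{A[P]}A[Q]\ra T$ open. The obstacle in your last paragraph does not exist: ``flat and surjective'' in \ref{INTlog} is exactly ``faithfully flat'', and both \ref{INTlog} and \ref{logflatdescent} are phrased fppf locally around $x$ and $y$, so the chart hypothesis holds verbatim for $f$ (which is log flat, locally of finite presentation and of Kummer type because each $f_i$ is). You should therefore drop the shrink-to-an-affine and replace-the-target-by-its-image step, which would change $f$ and would leave a target that is generally not of the form $S'\times_{A[P]}A[Q]$.
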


\begin{proof}
Let $X=\coprod_{i\in I}T_i$ and $f:X\ra T$ the morphism induced by $(f_i)_{i\in I}.$ Since the assertion is local on $T,$ and $f$ is open by (\cite{Kat19} 2.5), we can suppose that $I$ is finite. We then apply \ref{INTlog} and \ref{logflatdescent} to $f.$
\end{proof}

\begin{parag}
For the remaining of this section, we take the notation and assumption of \ref{paragJapon1} and \ref{parag86}. We consider a perfect field of positive characteristic $p,$ denote its ring of Witt vectors by $W,$ equip $\op{Spf}W$ with the trivial logarithmic structure and consider a log smooth morphism of framed logarithmic $p$-adic formal schemes $f:(\frakX,Q)\ra (\frakS,P),$ such that $\frakS$ is log flat and locally of finite type over $\op{Spf}W.$ Note that $f$ is log flat since it is log smooth (\cite{Ogus2018} IV 4.1.2) so, by \ref{Wflat}, the formal schemes $\frakX$ and $\frakS$ are flat over $\op{Spf}W$ \eqref{dxuflat}.
We denote by $F_1:X \ra X'$ the exact relative Frobenius and suppose that it lifts to a morphism of framed fs logarithmic $p$-adic formal schemes $F:(\frakX,Q) \ra (\frakX',Q')$ over $(\frakS,P),$ such that $\frakX'$ is log smooth over $\frakS$ and where $Q'$ is defined in \ref{PFrob}.
\end{parag}

\begin{proposition}\label{propFnlogtop}
For any positive integer $n,$ the morphism $F_n:\frakX_n \ra \frakX'_n$ forms a log flat covering \eqref{logflattop}.
\end{proposition}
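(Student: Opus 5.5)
We have to check the four conditions of \ref{logflattop} for $F_n:\frakX_n\ra \frakX'_n$: log flat, locally of finite presentation, of Kummer type, and surjective. The plan is to reduce each condition to the special fiber $F_1:X\ra X'$, for which \ref{Flogflatcover} already gives everything, using the nilpotent thickenings $X\ra\frakX_n$ and $X'\ra\frakX'_n$.

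\emph{Kummer type and surjectivity.} The closed immersions $X\ra \frakX_n$ and $X'\ra\frakX'_n$ are strict (\ref{prop69}) and are homeomorphisms on the underlying spaces. Hence, for a geometric point $\ov{x}$ of $\frakX_n$, the morphism $\ov{\calM}_{\frakX'_n,F_n(\ov{x})}\ra \ov{\calM}_{\frakX_n,\ov{x}}$ identifies with $\ov{\calM}_{X',F_1(\ov{x})}\ra\ov{\calM}_{X,\ov{x}}$. The latter is Kummer by \ref{FKummer}, so $F_n$ is of Kummer type. Since $F_1$ is a homeomorphism (\ref{Flogflatcover}), $F_n$ is also a homeomorphism, and in particular it is surjective.

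\emph{Local finite presentation.} Both $\frakX_n$ and $\frakX'_n$ are log smooth, hence locally of finite presentation, over $\frakS_n$. This is where we use the assumption that $\frakX'$ is log smooth over $\frakS$. It follows that $F_n$ is locally of finite presentation, exactly as in \ref{Flogflatcover}.

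\emph{Log flatness.} This is the step I expect to be the main obstacle. The plan is to apply the fiberwise criterion \ref{logflatfiber} with $f=F$, $g:\frakX'\ra\frakS$ and $g\circ F=f:\frakX\ra\frakS$. The composition $f$ is log smooth and therefore log flat (\cite{Ogus2018} IV 4.1.2). The hypothesis of \ref{logflatfiber} concerns the fiber over a point $z$ of $\frakS$. Since $p$ is nilpotent on $\frakX_n$, such a fiber lives in characteristic $p$, and there $F$ restricts to the exact relative Frobenius of the fiber of $X\ra S$. That restriction is log flat by \ref{thmlogflat}, because log smooth morphisms and the exact relative Frobenius are compatible with base change along $z\ra S$. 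We would then conclude that $F$ is log flat near every point, and hence that each $F_n$ is log flat. This is exactly the argument already invoked in the proof of \ref{lem96} ("By \ref{thmlogflat} and \ref{logflatfiber}, the lifting $F$ is log flat"), so we may take it as given.

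If base change compatibility of the exact relative Frobenius with the fibers turns out to be delicate, the fallback is a local chart computation. Using charts as in \ref{formaletaleframelift}, we would compare $\frakX_n\ra\frakX'_n\times_{B\langle Q'\rangle}B\langle Q\rangle$ with the corresponding morphism on the special fiber. That morphism is flat on the special fiber by the proof of \ref{thmlogflat}, and both sides are flat over $\frakS_n$, so the local flatness criterion lifts flatness to $\frakX_n$. Injectivity of the chart $Q'\ra Q$ is given by \ref{propfrob12}.
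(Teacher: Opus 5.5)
Your proposal is correct and follows the paper's proof essentially step by step. Kummer type and the homeomorphism property come from identifying $\ov{\calM}_{\frakX_n}$ and $\ov{\calM}_{\frakX'_n}$ with $\ov{\calM}_X$ and $\ov{\calM}_{X'}$ through the strict nilpotent thickenings and invoking \ref{FKummer}; local finite presentation comes from $\frakX_n$ and $\frakX'_n$ being locally of finite presentation over $\frakS_n$; and log flatness comes from the fiberwise criterion \ref{logflatfiber} combined with \ref{thmlogflat} on the characteristic $p$ fibers. Your chart-based fallback is not needed, and as written it would need more care, for instance in upgrading the frame-level compatibility of $F$ to genuine charts.
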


\begin{proof}
The lifting $F_n$ is clearly a homeomorphism on the underlying topological spaces.
We have the commutative diagram
$$
\begin{tikzcd}
F_n^{-1}\ov{\calM}_{\frakX'_n} \ar{r}{\sim} \ar[swap]{d}{F_n^{\flat}} & F_1^{-1}\ov{\calM}_{X'} \ar{d}{F_1^{\flat}} \\
\ov{\calM}_{\frakX_n} \ar{r}{\sim} & \ov{\calM}_X.
\end{tikzcd}
$$
Since $F_1$ is of Kummer type \eqref{FKummer}, so is $F_n.$ The fact that $F_n$ is locally of finite presentation follows from the fact that $F_n$ is an $\frakS_n$-morphism and $\frakX_n$ and $\frakX'_n$ are locally of finite presentation over $\frakS_n.$ Finally, the log flatness of $F_n$ follows from \ref{logflatfiber}.
\end{proof}

\begin{theorem}\label{THMX1}
Let $n$ be a positive integer. Denote by $1\text{-}\op{MHS}^{\text{qcoh}}(\frakX_n'/\frakS_n)$ and $\op{MHS}^{\text{qcoh}}(\frakX_n/\frakS_n)$ the full subcategories of $1\text{-}\op{MHS}(\frakX_n'/\frakS_n)$ and $\op{MHS}(\frakX_n/\frakS_n)$ \eqref{defhpdstrat} consisting of quasi-coherent modules. Then, the functor
$$
\Psi_n:\begin{array}[t]{clc}
1\text{-}\op{MHS}^{\text{qcoh}}(\frakX_n'/\frakS_n) & \ra & \op{MHS}^{\text{qcoh}}(\frakX_n/\frakS_n)
\end{array}
$$
induced by \eqref{eq994}, is fully faithful.
\end{theorem}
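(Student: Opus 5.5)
Faithfulness is immediate from the descent sequence, so the real content is fullness. Throughout I work with $F_n:\frakX_n\ra \frakX'_n$, which is a log flat covering by \ref{propFnlogtop}.

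\emph{Faithfulness.} Let $u,v:\calE'\ra \calF'$ be morphisms with $F_n^*u=F_n^*v$. By \ref{eraflogflatdescent} applied to the covering $F_n$ and the quasi-coherent module $\calF'$, the map $\calF'\ra F_{n*}F_n^*\calF'$ is injective. Hence $u=v$.

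\emph{Fullness: descending the underlying map.} Let $h:F_n^*\calE'\ra F_n^*\calF'$ be horizontal, i.e. compatible with $\varphi_n^*\epsilon'_{\calE'}$ and $\varphi_n^*\epsilon'_{\calF'}$. Put $Z=\frakX_n\times_{\frakX'_n}^{\op{log}}\frakX_n$ with projections $q_1,q_2$. By \ref{cor1014} this is $\frakX_n\times_{\frakX'_n,[Q]}^{\op{log}}\frakX_n$, so we have $\psi_n:Z\ra P_{0,n}$ from \ref{lem96}, and \ref{lem98} says that $\varphi_n\circ\psi_n$ factors through $\iota':\frakX'_n\ra P'_{1,n}$.

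\begin{enumerate}
\item Pull back the horizontality square along $\psi_n$. Since $\iota'^*\epsilon'=\op{Id}$, the stratifications $\psi_n^*\varphi_n^*\epsilon'$ become the canonical identifications $q_2^*F_n^*\calE'=q_1^*F_n^*\calE'$.
\item The pulled-back square then reads $q_1^*h=q_2^*h$ as maps $q_1^*F_n^*\calE'\ra q_1^*F_n^*\calF'$.
\item Use the exact sequence of \ref{eraflogflatdescent} for $\calF'$ and the covering $F_n$. Combined with adjunction $\op{Hom}(F_n^*\calE',F_n^*\calF')=\op{Hom}(\calE',F_{n*}F_n^*\calF')$, the equality $q_1^*h=q_2^*h$ says exactly that the adjoint $\calE'\ra F_{n*}F_n^*\calF'$ has equalized image.
\item That image therefore lands in $\calF'$, giving $h=F_n^*u$ for a unique $u:\calE'\ra\calF'$.
\end{enumerate}
The same argument works for morphisms into $\calF'\otimes\calP'_{1,n}$, which is not a priori quasi-coherent over $\frakX'_n$; I handle this in the next step.

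\emph{Fullness: horizontality of $u$.} I must show $\epsilon'_{\calF'}\circ p_2'^*u=p_1'^*u\circ\epsilon'_{\calE'}$ on $P'_{1,n}$. We know this equality after pulling back by $\varphi_n$. The plan is to view both sides as $\Ox_{\frakX'_n}$-linear maps $\calE'\ra \calF'\otimes\calP'_{1,n}$ and reduce to the injectivity of
\[
\calF'\otimes\calP'_{1,n}\ra F_{n*}F_n^*(\calF'\otimes\calP'_{1,n}).
\]
\begin{enumerate}
\item By \ref{propfree}, $\calP'_{1,n}$ is locally a direct limit of free modules, so $\calF'\otimes\calP'_{1,n}$ is quasi-coherent and \ref{eraflogflatdescent} applies to it.
\item The question is whether $\varphi_n^*$ on this tensor product agrees with $F_n^*$ along the factorization $P_{0,n}\ra P'_{1,n}\times_{\frakX'}\frakX\ra P'_{1,n}$ used in \ref{Khaminei145}.
\item For the first map $\widetilde\varphi$ of that factorization, I need it to be faithfully flat, hence injective on pullbacks. \ref{Khaminei145} gives flatness. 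For surjectivity I plan to argue that $\widetilde\varphi$ is a homeomorphism, since both sides are homeomorphic to $\frakX$.
\item For the second map, the base change of $F_n$, injectivity follows from \ref{eraflogflatdescent} applied to the quasi-coherent module of step 1.
\end{enumerate}

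\emph{Main obstacle.} I expect the difficulty to sit in the horizontality step, specifically in making the faithful flatness of $\widetilde\varphi$ and the quasi-coherence of $\calF'\otimes\calP'_{1,n}$ rigorous. There $P'_{1,n}$ is a PD-envelope, so its structure sheaf is only locally a direct limit of free modules and the descent statement has to be applied to that limit with care. The fullness argument via $\psi_n$ itself is straightforward.
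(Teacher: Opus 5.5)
Your proposal is correct and follows the paper's proof. You pull the horizontality square back along $\psi_n$, use \ref{lem98}, and descend the underlying morphism through the exact sequence of \ref{eraflogflatdescent} for the log flat covering $F_n$ (\ref{propFnlogtop}); horizontality then comes from the log flatness of $\varphi$ (\ref{Khaminei145}) together with descent, where you split that step into faithful flatness of the strict homeomorphism $\widetilde{\varphi}$ and descent along the base change of $F_n$ for the quasi-coherent module $\mathcal{F}'\otimes\calP'_{1,n}$, which just spells out the paper's single appeal to \ref{Khaminei145} and \ref{eraflogflatdescent}.
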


\begin{proof}
We identify $\frakX_{n,\text{ét}}$ and $\frakX'_{n,\text{ét}}$ via the universal homeomorphism $F_n.$ Recall the diagram (\ref{totdiag1})
\begin{equation}
\begin{tikzcd}
 & & \frakX'\ar{d}{\iota'} \\
 & P_{\frakX/\frakS,0} \ar{r}{\varphi} \ar{d} & P_{\frakX'/\frakS,1}\ar{d} \\
\frakX\times_{\frakX'}^{\op{log}}\frakX \ar{ur}{\psi} \ar[bend right=-30]{uurr} \ar{r} & \frakX\times^{\op{log}}_{\frakS,[Q]}\frakX \ar{r}{F^2} & \frakX'\times_{\frakS,[Q']}^{\op{log}}\frakX'
\end{tikzcd}
\end{equation}
The functor $\Psi_n$ is then defined by
$$\Psi_n(\calE',\epsilon')=(F_n^*\calE',\varphi_n^*\epsilon')$$
for any object $(\calE',\epsilon')$ of $1\text{-}\op{MHS}^{\text{qcoh}}(\frakX_n'/\frakS_n).$

Let $(\calE'_1,\epsilon'_1)$ and $(\calE'_2,\epsilon'_2)$ be two objects of $1\text{-}\op{MHS}^{\text{qcoh}}(\frakX_n'/\frakS_n),$ $(\calE_1,\epsilon_1)$ and $(\calE_2,\epsilon_2)$ their images by $\Psi_n$ and
$u:(\calE_1,\epsilon_1) \ra (\calE_2,\epsilon_2)$
a morphism of $\op{MHS}^{\text{qcoh}}(\frakX_n/\frakS_n).$
Let $p_1,p_2:\left (P_{\frakX/\frakS,0}\right )_n \ra \frakX_n$ and $q_1,q_2:\frakX_n \times_{\frakX_n'}^{\op{log}} \frakX_n \ra \frakX_n$ be the canonical projections.
Since $u$ is a morphism of stratified modules, the diagram on the left is commutative and that implies the commutativity of the diagram on the right:
$$
\begin{tikzcd}
p_2^*\calE_1 \ar{r}{\epsilon_1} \ar[swap]{d}{p_2^*u} & p_1^*\calE_1 \ar{d}{p_1^*u} & & q_2^*\calE_1 \ar[swap]{d}{q_2^*u} \ar{r}{\psi_n^*\epsilon_1} & q_1^*\calE_1 \ar{d}{q_1^*u} \\
p_2^*\calE_2 \ar{r}{\epsilon_2} & p_1^*\calE_2 & & q_2^*\calE_2 \ar{r}{\psi_n^*\epsilon_2} & q_1^*\calE_2
\end{tikzcd}
$$
We deduce the commutativity of the following diagram (without the dotted arrow)
$$
\begin{tikzcd}
0 \ar{r} & \calE_1' \ar{r} \ar[dashed]{d}{u'} & \calE_1  \ar{r} \ar{d}{u} & \calE'_1\otimes_{\Ox_{\frakX_n'}}\Ox_{\frakX_n \times_{\frakX_n'}^{\op{log}}\frakX_n} \ar{d}{q_2^*u-q_1^*u} \\
0 \ar{r} & \calE_2' \ar{r} & \calE_2 \ar{r} & \calE'_2\otimes_{\Ox_{\frakX_n'}}\Ox_{\frakX_n \times_{\frakX_n'}^{\op{log}}\frakX_n},
\end{tikzcd}
$$
where the rows are given in \ref{eraflogflatdescent}. These rows are exact by \ref{eraflogflatdescent} and \ref{propFnlogtop}. We deduce the existence of the morphism $u':\calE'_1 \ra \calE'_2.$ It remains to prove that $u'$ is compatible with the stratifications $\epsilon_1'$ and $\epsilon_2'.$ Since $\varphi$ is log flat \eqref{Khaminei145}, it is sufficient, by \ref{eraflogflatdescent}, to prove that $F^*u'=u$ is compatible with $\varphi^*\epsilon'_i=\epsilon_i.$ This is true by definition of $u$ and the full faithfulness of $\Psi_n$ follows.
\end{proof}

\begin{theorem}\label{THMX2}
Let $n$ be a positive integer. The functor $\Phi_n$ \eqref{krazphin} preserves quasi-nilpotent connections, the diagram
$$
\begin{tikzcd}
1\text{-}\op{MHS}^{qcoh}(\frakX_n'/\frakS_n) \ar{rr}{\Psi_n} \ar[swap,sloped]{d}{\sim} & & \op{MHS}^{qcoh}(\frakX_n/\frakS_n) \ar[sloped]{d}{\sim} \\
p\text{-}\op{MIC}^{qcoh,qn}(\frakX_n'/\frakS_n) \ar{rr}{\Phi_n} & & \op{MIC}^{qcoh,qn}(\frakX_n/\frakS_n),
\end{tikzcd}
$$
where qn denotes quasi-nilpotent objects \eqref{krazphin}, is commutative and $\Phi_n$ is fully faithful.
\end{theorem}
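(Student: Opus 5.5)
The plan is to reduce everything to a local computation in coordinates, using \ref{equivstrat} for the vertical equivalences and \ref{THMX1} for full faithfulness. Once the square is known to commute (up to natural isomorphism), full faithfulness of $\Phi_n$ on quasi-coherent quasi-nilpotent objects follows formally. Indeed, the vertical arrows are equivalences by \ref{equivstrat}: for $\frakX'$ with $n=1$ and for $\frakX$ with $n=0$, with the quasi-nilpotence built into the passage from HPD-stratifications to connections. Then $\Phi_n$ is conjugate to $\Psi_n$, which is fully faithful by \ref{THMX1}. Preservation of quasi-nilpotence comes out of the same step, since $\Psi_n$ lands in HPD-stratified modules and these correspond to quasi-nilpotent connections. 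So the real content is the identification $\Psi_n(\calE',\epsilon') \simeq \Phi_n(\calE',\nabla')$, where $\nabla'$ is the $p$-connection attached to $\epsilon'$.

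To compare the two connections on $F_n^*\calE'$, I would use the recipe at the end of \ref{equivstrat}. The connection attached to $\varphi_n^*\epsilon'$ is $x\mapsto \varphi^*\epsilon'(1\otimes x)-x\otimes 1$, taken modulo $\ov{\calI}^{[2]}_{0,n}$ and transported to $\omega^1_{\frakX_n/\frakS_n}$ via \eqref{finalMuzan3}. Both sides satisfy the Leibniz rule in the variable $a\in\Ox_{\frakX_n}$. For $\Phi_n$ this is \eqref{conn}; for the stratification side it follows from $p_2^\#a-p_1^\#a\mapsto da$, i.e.\ \eqref{finalMuzan4} with $n=0$. It therefore suffices to check equality on sections $F_n^*x'$ with $x'\in\calE'$. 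The question is étale local, so I would assume \ref{loccoord}. Writing $\epsilon'(1\otimes x')=\sum_I \nabla'(\partial'_I)(x')\otimes \xi'^{[I]}$ with $\xi'_i=\widetilde{\eta}'_i/p$, we get
$$\varphi^*\epsilon'(1\otimes F^*x')=\sum_I F^*\nabla'(\partial'_I)(x')\otimes\varphi^\#(\xi'^{[I]}).$$

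The key computation is the image of $\varphi^\#(\xi'_i)$ in $\ov{\calI}^{\{1\}}_{0,n}$, which I would read off from \eqref{eqKoko1323}. There, $(p-1)!\widetilde\eta_i^{[p]}$ lies in $\ov{\calI}^{[2]}$ since $p\ge2$. The terms with $\widetilde\eta_i^k$, $k\ge2$, also die. The term $k=1$ gives $\widetilde\eta_i\cdot\alpha_{\frakY}(p_2^\flat u_i-p_1^\flat u_i)\equiv\widetilde\eta_i$, because that factor is $\equiv1$ modulo the ideal. The last term is $(p_2^\#b_i-p_1^\#b_i)/\alpha(p_1^\flat u_i)$, which modulo $\ov{\calI}^{[2]}$ is $db_i/(1+pb_i)$. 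By \eqref{KhamineiBeta} this gives $\operatorname{dlog}m_i+db_i/(1+pb_i)=(p^{-1}dF_{n+1})(F_n^*\operatorname{dlog}m_i')$ by \eqref{surp2}. For $|I|\ge2$, $\varphi^\#(\xi'^{[I]})$ lies in $\ov{\calI}^{[2]}$, because $\varphi$ is a PD-morphism and each $\varphi^\#(\xi'_i)$ lies in the PD-ideal. Hence the induced connection is
$$F^*x'\mapsto\sum_i F^*\nabla'(\partial'_i)(x')\otimes p^{-1}dF_{n+1}(\operatorname{dlog}m'_i)=\zeta(F^*\nabla'(x')),$$
which agrees with \eqref{conn}. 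Both sides are natural in $(\calE',\epsilon')$, so the square commutes.

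The main obstacle I expect is bookkeeping: matching the normalizations of the vertical equivalences for $n=1$ on $\frakX'$, i.e.\ that $\nabla'$ corresponds to $\epsilon'$ through $\beta$ with $\xi'_i=\widetilde\eta'_i/p$ dual to $\operatorname{dlog}m_i'$. The other delicate point is that \eqref{eqKoko1323} is computed in $\calP_{\frakX/\frakS,0}$ before reduction modulo $p^n$, and that the index $n+1$ in \eqref{surp2} is compatible with working modulo $p^n$. Independence of the choice of lifts $m_i,m_i',u_i,b_i$ is automatic, since both sides are intrinsically defined.
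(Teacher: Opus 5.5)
Your proposal is correct and follows the paper's proof. The paper likewise reduces everything to commutativity of the square, with full faithfulness and preservation of quasi-nilpotence then coming from \ref{THMX1} and \ref{equivstrat}. It checks commutativity under \ref{loccoord} by reducing $\varphi^{\#}(\widetilde{\eta}'_i/p)$ from \eqref{eqKoko1323} modulo the second divided power of the PD-ideal to get $\op{dlog}m_i+db_i/(1+pb_i)$, and comparing this with \eqref{surp2}. Your Leibniz-rule reduction to sections $F_n^*x'$ and your PD-morphism argument for the terms with $|I|\ge 2$ make explicit steps the paper leaves implicit.
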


\begin{proof}
It is sufficient to prove that the diagram
$$
\begin{tikzcd}
1\text{-}\op{MHS}^{qcoh}(\frakX_n'/\frakS_n) \ar{rr}{\Psi_n} \ar[hook]{d} & & \op{MHS}^{qcoh}(\frakX_n/\frakS_n) \ar[hook]{d} \\
p\text{-}\op{MIC}^{qcoh}(\frakX_n'/\frakS_n) \ar{rr}{\Phi_n} & & \op{MIC}^{qcoh}(\frakX_n/\frakS_n)
\end{tikzcd}
$$
is commutative.
Let $(\calE',\epsilon')$ be an object of the upper left category. Its image by $\Psi_n$ is
$$
(\calE,\epsilon)=\Psi_n(\calE',\epsilon')=(F_n^*\calE',\varphi_n^*\epsilon').
$$
Suppose the hypothesis \ref{loccoord} is satisfied. By \ref{lem12}, for every $1\le i\le d,$ there exists a local invertible section $u_i$ of $\calM_{\frakX}$ and a local section $b_i$ of $\Ox_{\frakX}$ such that $F^{\flat}(\widetilde{m}_i')=p\widetilde{m}_i+u_i$ and $\alpha_{\frakX}(u_i)=1+pb_i.$
For $1\le i\le d,$ by \eqref{eqKoko1323}, we have
$$
\varphi^{\#}\left (\frac{\widetilde{\eta}_i'}{p} \right )=\left ((p-1)!\widetilde{\eta}_i^{[p]}+\sum_{k=1}^{p-1}\frac{(p-1)!}{k!(p-k)!}\widetilde{\eta}_i^k \right )\alpha_{\frakY}(p_2^{\flat}u_i-p_1^{\flat}u_i)+\frac{p_2^{\#}b_i-p_1^{\#}b_i}{1+pp_1^{\#}b_i},
$$
where $p_1,p_2:\frakY \ra \frakX$ are the canonical projections.
Denote by $\widehat{\eta}_i$ (resp. $\widehat{\eta}_i'$) the class of $\widetilde{\eta}_i$ (resp. $\frac{\widetilde{\eta}_i'}{p}$) modulo $p^n,$ by $\calP'$ the structural ring of $P_{\frakX'/\frakS,1},$ by $\calP'_n$ its reduction modulo $p^n$ and by $\calI'_n$ its PD ideal. Similarly, let $\calI$ be the PD ideal of $P_{\frakX/\frakS,0}$ and $\calI_n$ its reduction modulo $p^n.$ Denote by $\calP$ the structure sheaf of $P_{\frakX/\frakS,0}$ and by $\calP_n$ its reduction modulo $p^n.$ Then $\left (\widehat{\eta}'^{[I]} \right )_{I\in \N^d}$ is a basis of the $\Ox_{\frakX'_n}$-module $\calP'_n.$ Denote by $(\partial'_I)$ its dual basis. It follows that, for a local section $x'$ of $\calE',$
$$
\epsilon':\calP'_n \otimes_{\Ox_{\frakX'_n}}\calE' \ra \calE' \otimes_{\Ox_{\frakX'_n}}\calP'_n,\ 1\otimes x'\mapsto \sum_{I\in\N^d}(\partial_I'\cdot x')\otimes \widehat{\eta}'^{[I]},
$$
$$
\varphi_n^*\epsilon': \calP_n \otimes_{\Ox_{\frakX_n}}(F_n^*\calE') \ra (F_n^*\calE')\otimes_{\Ox_{\frakX_n}} \calP_n,\ 1\otimes 1\otimes x' \mapsto \sum_{I \in \N^d}1\otimes \left (\partial'_I\cdot x' \right )\otimes \varphi_n^{\#}(\widehat{\eta}'^{[I]}).
$$
Composing $\varphi_n^*\epsilon'$ with the canonical projection
$$
\pi:\left (F_n^*\calE'\right ) \otimes _{\Ox_{\frakX_n}} \calP_n \ra \left (F_n^*\calE'\right ) \otimes_{\Ox_{\frakX_n}} \calP_n/\calI_n^{[2]},
$$
and since the image of $\alpha_{\frakY}(p_2^{\flat}u_i-p_1^{\flat}u_i)-1$ in $\calP$ belongs to $\calI,$ we get
$$
\left (\pi\circ \varphi_n^*\epsilon'\right ) \left (1\otimes 1\otimes x' \right ) =  1\otimes x'\otimes 1+\sum_{i=1}^d1\otimes \left (\partial_{\epsilon_i}'\cdot x' \right )\otimes \left (\widehat{\eta}_i+\frac{p_{2}^{\#}b_i-p_1^{\#}b_i}{1+pp_1^{\#}b_i} \right ).
$$
Identifying $\calI_n/\calI_n^{[2]}$ with $\omega^1_{\frakX_n/\frakS_n}$ via the canonical isomorphism \eqref{eqtakrizIIsquare}, we get 
$$
\left (\pi\circ \varphi_n^*\epsilon'\right ) \left (1\otimes 1\otimes x' \right ) -  1\otimes x'\otimes 1 = \sum_{i=1}^d1\otimes \left (\partial_{\epsilon_i}'\cdot x' \right )\otimes \left (\op{dlog}m_i+\frac{db_i}{1+pb_i} \right ).
$$
Let $\nabla'$ be the $p$-connection corresponding to $\epsilon'.$ Denote by $\calI'$ be the PD-ideal of $\calP_{\frakX'/\frakS,1}$ and $\calI'_n$ its reduction modulo $p^n.$ Then, by \eqref{KhamineiBeta}, we have
$$
\nabla':\begin{array}[t]{clclc}
\calE' & \ra & \calE'\otimes_{\Ox_{\frakX_n}} \left (\calI'_n/\calI_n'^{[2]} \right ) & \xrightarrow{\sim} & \calE' \otimes_{\Ox_{\frakX_n}} \omega^1_{\frakX_n'/\frakS_n} \\
x' & \mapsto & \sum_{i=1}^d(\partial_{\epsilon_i}'\cdot x') \otimes \ov{\widehat{\eta}_i'} & \mapsto & \sum_{i=1}^d(\partial_{\epsilon_i}'\cdot x') \otimes \op{dlog}m_i'.
\end{array}
$$
Set
$$
(\calE,\nabla)=\Phi_n(\calE',\nabla').
$$
If $x'$ is a local section of $\calE',$ then, by definition of $\Phi_n$ and \eqref{surp2},
$$
\nabla(1\otimes x')=\sum_{i=1}^d1\otimes (\partial_{\epsilon_i}'\cdot x') \otimes \left (\op{dlog}m_i +\frac{db_i}{1+pb_i} \right ).
$$
\end{proof}

\appendix

\section{Appendix: \texorpdfstring{$p^n$}{p} %
     -connections and stratifications}

The goal of this appendix is to prove proposition \ref{equivstrat}. We keep the notations of section 11 and we start by proving some lemmas.

\begin{lemma}\label{lem116f}
Let $k\ge 1$ and $n\ge 0$ be integers and suppose \ref{loccoord} is satisfied. For any $1\le i\le d,$ let $\xi_i$ be the image of $\frac{\widetilde{\eta}_i}{p^n} \in \calP_{\frakX / \frakS , n}$ in $\calP_{n,k}:=\calP_{\frakX/\frakS,n}/(p^k)$ and set, for any $I=(I_1,\hdots,I_d)\in \N^d,$
$$\xi^{[I]}=\prod_{i=1}^d\xi_i^{[I_i]}.$$
Let $\delta:\calP_{n,k} \ra \calP_{n,k}\otimes_{\Ox_{\frakX_k}}\calP_{n,k}$ the comultiplication map of the Hopf algebra $\calP_{n,k}.$ Then
$$\delta\left (\xi^{[I]}\right )=\sum_{\substack{a,b,c\in \N^d \\ a+b+c=I}} p^{n|c|} c!\begin{pmatrix}a+c \\ c \end{pmatrix} \begin{pmatrix}b+c \\ c \end{pmatrix}\xi^{[b+c]}\otimes \xi^{[a+c]}.$$
\end{lemma}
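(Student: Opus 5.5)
We compute $\delta(\xi_i)$ first and then use that $\delta$ is a PD-morphism of algebras. By \ref{HopffrakP}, in $\calP_{\frakX/\frakS,n}$ we have $\delta(\widetilde{\eta}_i)=1\otimes\widetilde{\eta}_i+\widetilde{\eta}_i\otimes 1+\widetilde{\eta}_i\otimes\widetilde{\eta}_i$. Writing $\widetilde{\eta}_i=p^n\frac{\widetilde{\eta}_i}{p^n}$, the tensor product $\calP_{\frakX/\frakS,n}\widehat{\otimes}\calP_{\frakX/\frakS,n}$ is flat over $\Z_p$ by \ref{propflat} and \ref{rem87}. So we may divide by $p^n$ and obtain
$$\delta\Big(\tfrac{\widetilde{\eta}_i}{p^n}\Big)=1\otimes\tfrac{\widetilde{\eta}_i}{p^n}+\tfrac{\widetilde{\eta}_i}{p^n}\otimes 1+p^n\,\tfrac{\widetilde{\eta}_i}{p^n}\otimes\tfrac{\widetilde{\eta}_i}{p^n}.$$
Reducing modulo $p^k$ gives $\delta(\xi_i)=1\otimes\xi_i+\xi_i\otimes 1+p^n\xi_i\otimes\xi_i$. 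The order of the tensor factors should be matched with the convention in the statement; this is harmless, since the displayed formula is symmetric after swapping $a$ and $b$.

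The comultiplication $\delta$ comes from a morphism of PD-envelopes, namely the $(1,3)$-projection $P_{\frakX/\frakS,n}(2)\ra P_{\frakX/\frakS,n}$ composed with the isomorphism of \ref{rem87}. It is therefore a PD-morphism, so $\delta(\xi_i^{[m]})=\delta(\xi_i)^{[m]}$, where the divided powers on the right are taken in the PD-ideal of $\calP_{n,k}\otimes\calP_{n,k}$. Since $\delta(\xi_i)$ is a sum of three terms of that PD-ideal, we expand $(x+y+z)^{[m]}=\sum_{a+b+c=m}x^{[b]}y^{[a]}z^{[c]}$ with $x=1\otimes\xi_i$, $y=\xi_i\otimes 1$ and $z=p^n\xi_i\otimes\xi_i$.

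Next we evaluate $z^{[c]}$. We have $(p^n\,\xi_i\otimes\xi_i)^{[c]}=p^{nc}(\xi_i\otimes\xi_i)^{[c]}=p^{nc}c!\,\xi_i^{[c]}\otimes\xi_i^{[c]}$, using the identity $(uv)^{[c]}=c!\,u^{[c]}v^{[c]}$, which holds because $(u\otimes v)^{[c]}=u^c\otimes v^{[c]}=c!\,u^{[c]}\otimes v^{[c]}$. Then
$$x^{[b]}y^{[a]}z^{[c]}=p^{nc}c!\,\big(\xi_i^{[a]}\xi_i^{[c]}\big)\otimes\big(\xi_i^{[b]}\xi_i^{[c]}\big)=p^{nc}c!\binom{a+c}{c}\binom{b+c}{c}\,\xi_i^{[a+c]}\otimes\xi_i^{[b+c]}.$$
Taking the product over $i$ and using multiplicativity of $\delta$ yields the multi-index formula. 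Each factor of the statement's formula is a product of one-variable factors, so it factors coordinatewise.

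The main obstacle is justifying these divided power manipulations rigorously, in particular the identity $z^{[c]}=p^{nc}c!\,\xi^{[c]}\otimes\xi^{[c]}$ inside the tensor PD-algebra. To avoid subtleties, I would first work in $\calP_{\frakX/\frakS,n}\widehat{\otimes}\calP_{\frakX/\frakS,n}$, which is $p$-torsion free. There, multiplying both sides by $m!$ reduces the identity to the ordinary binomial and trinomial expansion of $\delta(\xi_i)^m$, and we can then divide by $m!$. The identification of this tensor product with $\Ox_{\frakX}\langle\langle\ \rangle\rangle$ in $2d$ variables comes from \ref{propfree} and \ref{rem87}. Finally we reduce modulo $p^k$.
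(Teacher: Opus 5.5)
Your proposal is correct and follows the paper's proof essentially step by step. You use \ref{HopffrakP} and $p$-torsion-freeness to divide by $p^n$, which gives $\delta(\xi_i)=1\otimes\xi_i+\xi_i\otimes 1+p^n\xi_i\otimes\xi_i$; you then expand the divided power of this three-term sum, evaluate $(p^n\xi_i\otimes\xi_i)^{[c]}=p^{nc}c!\,\xi_i^{[c]}\otimes\xi_i^{[c]}$, and multiply over the coordinates. Two of your points are slightly more precise than the paper's wording without changing the argument: the flatness actually needed is that of the target $P_{\frakX/\frakS,n}(2)$ (covered by \ref{propflat} with $r=2$), and the relabelling $a\leftrightarrow b$ is harmless.
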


\begin{proof}
Let $\widetilde{\delta} : \calP_{\frakX / \frakS,n} \ra \calP_{\frakX / \frakS,n} \otimes_{\Ox_{\frakX}} \calP_{\frakX / \frakS,n}$ be the comultiplication map. By \ref{HopffrakP}, we have
\begin{alignat*}{2}
p^n\widetilde{\delta}\left ( \frac{\widetilde{\eta}_i}{p^n} \right ) = \widetilde{\delta} \left ( \widetilde{\eta}_i \right ) &= 1 \otimes \widetilde{\eta}_i + \widetilde{\eta}_i \otimes 1 + \widetilde{\eta}_i \otimes \widetilde{\eta}_i \\
&= p^n \left ( 1 \otimes \frac{\widetilde{\eta}_i}{p^n} + \frac{\widetilde{\eta}_i}{p^n} \otimes 1 + p^n \frac{\widetilde{\eta}_i}{p^n} \otimes \frac{\widetilde{\eta}_i}{p^n} \right ).
\end{alignat*}
By the flatness of $P_{\frakX / \frakS,n}$ over $\op{Spf} \Z_p$ \eqref{propflat}, we deduce that
$$
\widetilde{\delta}\left ( \frac{\widetilde{\eta}_i}{p^n} \right ) = 1 \otimes \frac{\widetilde{\eta}_i}{p^n} + \frac{\widetilde{\eta}_i}{p^n} \otimes 1 + p^n \frac{\widetilde{\eta}_i}{p^n} \otimes \frac{\widetilde{\eta}_i}{p^n}.
$$
It follows that
$$\delta(\xi_i)=1\otimes \xi_i+\xi_i\otimes 1+p^n\xi_i\otimes \xi_i.$$
Then, for $n\in \N,$
\begin{alignat*}{2}
\delta(\xi_i^{[n]}) &=  \left (1\otimes \xi_i+\xi_i\otimes 1+p^n\xi_i\otimes \xi_i\right )^{[n]} \\
&= \sum_{\substack{a,b,c\in \N \\a+b+c=n}}(1\otimes \xi_i)^{[a]}(\xi_i \otimes 1)^{[b]}(p^n\xi_i\otimes \xi_i)^{[c]} \\
&= \sum_{\substack{a,b,c\in \N \\a+b+c=n}}p^{nc}c!(\xi_i^{[b]}\xi_i^{[c]})\otimes (\xi_i^{[a]}\xi_i^{[c]}) \\
&= \sum_{\substack{a,b,c\in \N \\a+b+c=n}}p^{nc}c!\begin{pmatrix}a+c\\c \end{pmatrix}\begin{pmatrix}b+c \\ c \end{pmatrix} \xi_i^{[b+c]}\otimes \xi_i^{[a+c]}.
\end{alignat*}
The same holds for multi-indices hence the result.
\end{proof}

\begin{proof}[Proof of \ref{equivstrat}]
The proof is similar to \ref{prop39} so we just prove how (3) implies (2) and give a sketch of the other equivalences.
For simplicity, we suppose $k=1$ and we drop the subscript $k$ from our notation.

Suppose we are given the data (1). The data (2) is then obtained by setting
$$
\theta_l:\begin{array}[t]{clclc}
\calE & \ra & \calP_n^{\{l\}}\otimes_{\Ox_X} \calE & \xrightarrow{\epsilon_l} & \calE \otimes_{\Ox_X}\calP_n^{\{l\}} \\
x & \mapsto & 1\otimes x & \mapsto & \epsilon_l(1\otimes x).
\end{array}
$$
Conversly, if (2) is given then $\epsilon_l$ is the $\calP_n^{\{l\}}$-linearization of $\theta_l.$

Suppose we are given the data (2). The data (3) is obtained as follows: let
$$
\nabla:\begin{array}[t]{clc}
\calE & \ra & \calE\otimes_{\Ox_X}\calP_n^{\{1\}} \\
x & \mapsto & \theta_1( x)-x\otimes 1.
\end{array}
$$
Since $\theta_0=\op{Id}_{\calE},$ we get
$$\nabla(\calE) \subset \calE \otimes_{\Ox_X}\ov{\calI}_n^{\{1\}}.$$
In addition, if we denote by $p_1,p_2:P_{n} \ra X$ the canonical projections, then, for any local sections $a$ and $x$ of $\Ox_X$ and $\calE$ respectively, we have
\begin{alignat*}{2}
\nabla(ax) &= \theta_1(ax)-(ax)\otimes 1 = p_2^{\#}(a)\theta_1(x)-p_1^{\#}(a) (x\otimes 1) \\
&= p_2^{\#}(a) \left ( \theta_1(x)-x\otimes 1 \right )+ x\otimes \left ( p_2^{\#}(a) - p_1^{\#}(a) \right ) = a\nabla(x)+x\otimes d'(a).
\end{alignat*}
By \ref{propfinalMuzan1}, $\nabla$ corresponds to a $p^n$-connection on $\calE.$ The integrability follows from the commutativity of \eqref{diagstrconn1}.
Suppose now that an integrable $p^n$-connection $\calE \ra \calE \otimes_{\Ox_X}\omega^1_{X/S}$ is given. Let
$$\nabla:\calE \ra \calE \otimes_{\Ox_X}\ov{\calI}_{n}^{\{1\}}$$
be the corresponding connection given by \ref{propfinalMuzan1}. We construct the morphisms $\theta_l$ of (2) étale locally on $X.$ Suppose that the hypothesis \ref{loccoord} is satisfied and consider the notation introduced in \ref{lem116f}. We have a canonical isomorphism
$$\calP_{n}\xrightarrow{\sim} \Ox_X\langle \xi_1,\hdots,\xi_d \rangle.$$
Denote by $\left ( \partial_I \right )_{I\in \N^d}$ the dual basis of $\left ( \xi^{[I]} \right )_{I\in \N^d}.$ Let $\epsilon_1$ be the $\calP_n$-linear morphism defined by
$$
\epsilon_1:\begin{array}[t]{clc}
\calP_n^{\{1\}} \otimes_{\Ox_X}\calE & \ra & \calE\otimes_{\Ox_X}\calP_n^{\{1\}} \\
1\otimes x & \mapsto & \nabla(x)+x\otimes 1.
\end{array}
$$
We define an $\Ox_X$-linear morphisms
$$
\nabla_l:\mathscr{Hom}_{\Ox_X} \left ( \calP_n^{ \{l\} },\Ox_X \right ) \ra \mathscr{Hom}_{\Ox_X} \left ( \calP_n^{ \{l\} }\otimes_{\Ox_X} \calE,\calE \right ).
$$
For that, it is sufficient to define $\nabla_l$ on the basis $\left (\partial_I \right )_{|I| \le l}.$
For $l,l'\in \N$ and $I\in \N^d$ such that $l\ge l' \ge |I|,$ we consider a linear morphism of
$\mathscr{Hom}_{\Ox_X} \left ( \calP_n^{ \{l'\} }\otimes_{\Ox_X} \calE,\calE \right )$
as a linear morphism of
$\mathscr{Hom}_{\Ox_X} \left ( \calP_n^{ \{l\} }\otimes_{\Ox_X} \calE,\calE \right )$
via the canonical projection $\calP_n^{ \{ l \} } \ra \calP_n^{ \{ l' \} }.$ 
For any $\Ox_X$-linear morphism $f:\calP_n^{\{1\}}\ra \Ox_X,$ let $\nabla_1(f)=(\op{Id}_{\calE}\otimes f)\circ \epsilon_1.$
The integrability of $\nabla$ implies that the differential operators $\nabla_1(\partial_{\epsilon_i})$ pairwise commute and we can define $\nabla_{l}(\partial_N),$ for any multi-index $N=(n_1,\hdots,n_d)\in\mathbb{N}^d$ of length $|N|\le l,$ by
$$
\nabla_l(\partial_N)=\prod_{i=1}^d\prod_{j=0}^{n_i-1}(\nabla_1(\partial_{\epsilon_i})-p^nj),
$$
where $p^nj$ denotes $p^nj \op{Id}_{\calE},$ considered as a differential operator of order 1.
Then
$$
\nabla_l\left (\partial_I \right ) = \nabla_{l'}\left ( \partial_I \right ).
$$
We can hence drop the subscript $l$ in the rest of this proof. We can also show that, for $I,J\in \N^d,$
$
\nabla \left ( \partial_I \circ \partial_J \right ) = \nabla \left ( \partial_I \right ) \circ \nabla \left ( \partial_J \right ).
$
Set
$
\theta_l:\begin{array}[t]{clc}
\calE & \ra & \calE \otimes_{\Ox_X}\calP_n^{\{l\}}. \\
x & \mapsto & \sum_{|I| \le l}\nabla(\partial_I)(1\otimes x) \otimes \xi^{[I]}
\end{array}
$
We check the commutativity of the diagram \eqref{diagstrconn1}. On one hand, by \ref{lem116f}, we have
\begin{equation}\label{eqfinalMuzan5}
\begin{alignedat}{2}
(\op{Id}_{\calE}\otimes \delta^{l,l'})\circ \theta_{l+l'}(x) &= (\op{Id}_{\calE}\otimes \delta^{l,l'})\left (\sum_{|I|\le l+l'} \nabla(\partial_I)(1\otimes x)\otimes \xi^{[I]} \right ) \\
&= \sum_{|I|\le l+l'}\sum_{\substack{a+b+c=I \\ |b+c|\le l \\ |a+c|\le l'}} p^{n|c|} c!\begin{pmatrix}a+c \\ c \end{pmatrix} \begin{pmatrix}b+c \\ c \end{pmatrix}\nabla(\partial_I)(1\otimes x) \otimes \xi^{[b+c]}\otimes \xi^{[a+c]}.
\end{alignedat}
\end{equation}
On the other hand, we have
\begin{equation}\label{eqfinalMuzan6}
\begin{alignedat}{2}
\theta_{l'}(x) &= \sum_{|I|\le l'} \nabla(\partial_I)(1\otimes x)\otimes \xi^{[I]}. \\
(\theta_l \otimes \op{Id})\circ \theta_{l'}(x) &= \sum_{\substack{|I|\le l' \\ |J|\le l}} \nabla(\partial_J)(1\otimes (\nabla(\partial_I)(1\otimes x)))\otimes \xi^{[J]}\otimes \xi^{[I]} \\
&= \sum_{\substack{|I|\le l' \\ |J|\le l}} \nabla(\partial_J) \circ \nabla(\partial_I)(1\otimes x)\otimes \xi^{[J]}\otimes \xi^{[I]} \\
&= \sum_{\substack{|I|\le l' \\ |J|\le l}} \nabla(\partial_J \circ \partial_I)(1\otimes x)\otimes \xi^{[J]}\otimes \xi^{[I]}.
\end{alignedat}
\end{equation}
We have to prove that \eqref{eqfinalMuzan5} and \eqref{eqfinalMuzan6} are equal. For that, we prove that, for $I,J \in \N^d$ such that $|J| \le l$ and $|I| \le l',$
$$
\partial_J \circ \partial_I = \sum_{\substack{a+c=I \\ b+c=J}} p^{n|c|}c!\begin{pmatrix}a+c \\ c \end{pmatrix} \begin{pmatrix}b+c \\ c \end{pmatrix}\partial_{a+b+c}.
$$
We proceed by induction on $|J|.$ For $|J|=0,$ the equality is clear.
Fix an integer $m$ and suppose the result is true for all $J$ and $I$ such that $|J|=m.$ Let $1\le i\le d.$
Then
\begin{alignat*}{2}
\partial_{J+\epsilon_i}\circ \partial_I &= (\partial_{\epsilon_i}\circ \partial_J -p^nJ_i\partial_J)\circ \partial_I \\
&= \sum_{\substack{a+c=I\\ b+c=J}} p^{n|c|}c! \begin{pmatrix}a+c \\ c \end{pmatrix} \begin{pmatrix}b+c \\ c \end{pmatrix} \left (\partial_{\epsilon_i} \circ \partial_{a+b+c}-p^n J_i \partial_{a+b+c} \right ) \\
&=  \sum_{\substack{a+c=I\\ b+c=J}} p^{n|c|}c! \begin{pmatrix}a+c \\ c \end{pmatrix} \begin{pmatrix}b+c \\ c \end{pmatrix} \left (\partial_{a+b+c+\epsilon_i} +p^n(a_i+b_i+c_i) \partial_{a+b+c}-p^n J_i \partial_{a+b+c} \right ) \\
&= \sum_{\substack{a+c=I\\ b+c=J}} p^{n|c|}c! \begin{pmatrix}a+c \\ c \end{pmatrix} \begin{pmatrix}b+c \\ c \end{pmatrix} \left (\partial_{a+b+c+\epsilon_i}+p^n a_i \partial_{a+b+c} \right ).
\end{alignat*}
Applying this to $\xi^{[K]}$ for $K\in \N^d,$ and considering the convention $\begin{pmatrix} \alpha \\ \beta \end{pmatrix}=0$ if $\beta<0,$ we get
\begin{align}\label{yeaa1}
\begin{split}
p^{n|I+J-K|+n}(I+J-K+\epsilon_i)! \begin{pmatrix}I\\K-J-\epsilon_i \end{pmatrix} \begin{pmatrix}J\\ K-I-\epsilon_i \end{pmatrix}\\+p^{n|I+J-K|+n}(I+J-K)!\begin{pmatrix}I\\K-J\end{pmatrix} \begin{pmatrix}J\\ K-I \end{pmatrix}(K_i-J_i).
\end{split}
\end{align}
Applying
$$
\sum_{\substack{a+c=I \\ b+c=J+\epsilon_i}} p^{n|c|}c!\begin{pmatrix}a+c \\ c \end{pmatrix} \begin{pmatrix}b+c \\ c \end{pmatrix}\partial_{a+b+c}
$$
to $\xi^{[K]},$ we get
\begin{equation}\label{yeaa2}
p^{n|I+J-K|+n}(I+J-K+\epsilon_i)!\begin{pmatrix}I \\ K-J-\epsilon_i \end{pmatrix} \begin{pmatrix}J+\epsilon_i \\ K-I \end{pmatrix}.
\end{equation}
To show that \eqref{yeaa1} and \eqref{yeaa2} are equal, recalling the notation \ref{Not9}, we have to prove the equality
\begin{alignat*}{2}
&(I_i+J_i-K_i+1) \begin{pmatrix}I_i\\K_i-J_i-1 \end{pmatrix} \begin{pmatrix}J_i\\ K_i-I_i-1 \end{pmatrix}+\begin{pmatrix}I_i\\K_i-J_i\end{pmatrix} \begin{pmatrix}J_i\\ K_i-I_i \end{pmatrix}(K_i-J_i) \\
=& (I_i+J_i-K_i+1)\begin{pmatrix}I_i \\ K_i-J_i-1 \end{pmatrix} \begin{pmatrix}J_i+1 \\ K_i-I_i \end{pmatrix}.
\end{alignat*}
This is equivalent to
\begin{alignat*}{2}
&\begin{pmatrix}I_i\\K_i-J_i\end{pmatrix} \begin{pmatrix}J_i\\ K_i-I_i \end{pmatrix}(K_i-J_i) \\
=& (I_i+J_i-K_i+1)\begin{pmatrix}I_i \\ K_i-J_i-1 \end{pmatrix} \left ( \begin{pmatrix}J_i+1 \\ K_i-I_i \end{pmatrix} - \begin{pmatrix}J_i\\ K_i-I_i-1 \end{pmatrix} \right ).
\end{alignat*}
But
$$
\begin{pmatrix}J_i+1 \\ K_i-I_i \end{pmatrix} - \begin{pmatrix}J_i\\ K_i-I_i-1 \end{pmatrix}= \begin{pmatrix}J_i \\ K_i-I_i \end{pmatrix}.
$$
So we have to prove that
\begin{equation*}
\begin{pmatrix}I_i\\K_i-J_i\end{pmatrix} (K_i-J_i) = (I_i+J_i-K_i+1)\begin{pmatrix}I_i \\ K_i-J_i-1 \end{pmatrix}.
\end{equation*}
This is immediate.
\end{proof}

\end{document}